\newdimen\proofrulebreadth \proofrulebreadth=.05em
\newdimen\proofdotseparation \proofdotseparation=1.25ex
\newdimen\proofrulebaseline \proofrulebaseline=2ex
\let\then\relax
\def\hfi{\hskip0pt plus.0001fil}
\mathchardef\squigto="3A3B
\newif\ifinsideprooftree\insideprooftreefalse
\newif\ifonleftofproofrule\onleftofproofrulefalse
\newif\ifproofdots\proofdotsfalse
\newif\ifdoubleproof\doubleprooffalse
\let\wereinproofbit\relax
\newdimen\shortenproofleft
\newdimen\shortenproofright
\newdimen\proofbelowshift
\newbox\proofabove
\newbox\proofbelow
\newbox\proofrulename
\def\shiftproofbelow{\let\next\relax\afterassignment\setshiftproofbelow\dimen0 }
\def\shiftproofbelowneg{\def\next{\multiply\dimen0 by-1 }%
\afterassignment\setshiftproofbelow\dimen0 }
\def\setshiftproofbelow{\next\proofbelowshift=\dimen0 }
\def\setproofrulebreadth{\proofrulebreadth}
\def\prooftree{
%
\ifnum  \lastpenalty=1
\then   \unpenalty
\else   \onleftofproofrulefalse
\fi
%
\ifonleftofproofrule
\else   \ifinsideprooftree
        \then   \hskip.5em plus1fil
        \fi
\fi
%
\bgroup
\setbox\proofbelow=\hbox{}\setbox\proofrulename=\hbox{}%
\let\justifies\proofover\let\leadsto\proofoverdots\let\Justifies\proofoverdbl
\let\using\proofusing\let\[\prooftree
\ifinsideprooftree\let\]\endprooftree\fi
\proofdotsfalse\doubleprooffalse
\let\thickness\setproofrulebreadth
\let\shiftright\shiftproofbelow \let\shift\shiftproofbelow
\let\shiftleft\shiftproofbelowneg
\let\ifwasinsideprooftree\ifinsideprooftree
\insideprooftreetrue
%
\setbox\proofabove=\hbox\bgroup$\displaystyle 
\let\wereinproofbit\prooftree
%
\shortenproofleft=0pt \shortenproofright=0pt \proofbelowshift=0pt
%
\onleftofproofruletrue\penalty1
}
\def\eproofbit{
%
\ifx    \wereinproofbit\prooftree
\then   \ifcase \lastpenalty
        \then   \shortenproofright=0pt  
        \or     \unpenalty\hfil         
        \or     \unpenalty\unskip       
        \else   \shortenproofright=0pt  
        \fi
\fi
%
\global\dimen0=\shortenproofleft
\global\dimen1=\shortenproofright
\global\dimen2=\proofrulebreadth
\global\dimen3=\proofbelowshift
\global\dimen4=\proofdotseparation
\global\count255=\proofdotnumber
%
$\egroup  
%
\shortenproofleft=\dimen0
\shortenproofright=\dimen1
\proofrulebreadth=\dimen2
\proofbelowshift=\dimen3
\proofdotseparation=\dimen4
\proofdotnumber=\count255
}
\def\proofover{
\eproofbit 
\setbox\proofbelow=\hbox\bgroup 
\let\wereinproofbit\proofover
$\displaystyle
}%
\def\proofoverdbl{
\eproofbit 
\doubleprooftrue
\setbox\proofbelow=\hbox\bgroup 
\let\wereinproofbit\proofoverdbl
$\displaystyle
}%
\def\proofoverdots{
\eproofbit 
\proofdotstrue
\setbox\proofbelow=\hbox\bgroup 
\let\wereinproofbit\proofoverdots
$\displaystyle
}%
\def\proofusing{
\eproofbit 
\setbox\proofrulename=\hbox\bgroup 
\let\wereinproofbit\proofusing
\kern0.3em$
}
\def\endprooftree{
\eproofbit 
  \dimen5 =0pt
%
\dimen0=\wd\proofabove \advance\dimen0-\shortenproofleft
\advance\dimen0-\shortenproofright
%
\dimen1=.5\dimen0 \advance\dimen1-.5\wd\proofbelow
\dimen4=\dimen1
\advance\dimen1\proofbelowshift \advance\dimen4-\proofbelowshift
%
\ifdim  \dimen1<0pt
\then   \advance\shortenproofleft\dimen1
        \advance\dimen0-\dimen1
        \dimen1=0pt
        \ifdim  \shortenproofleft<0pt
        \then   \setbox\proofabove=\hbox{%
                        \kern-\shortenproofleft\unhbox\proofabove}%
                \shortenproofleft=0pt
        \fi
\fi
%
\ifdim  \dimen4<0pt
\then   \advance\shortenproofright\dimen4
        \advance\dimen0-\dimen4
        \dimen4=0pt
\fi
%
\ifdim  \shortenproofright<\wd\proofrulename
\then   \shortenproofright=\wd\proofrulename
\fi
%
\dimen2=\shortenproofleft \advance\dimen2 by\dimen1
\dimen3=\shortenproofright\advance\dimen3 by\dimen4
%
\ifproofdots
\then
        \dimen6=\shortenproofleft \advance\dimen6 .5\dimen0
        \setbox1=\vbox to\proofdotseparation{\vss\hbox{$\cdot$}\vss}%
        \setbox0=\hbox{%
                \advance\dimen6-.5\wd1
                \kern\dimen6
                $\vcenter to\proofdotnumber\proofdotseparation
                        {\leaders\box1\vfill}$%
                \unhbox\proofrulename}%
\else   \dimen6=\fontdimen22\the\textfont2 
        \dimen7=\dimen6
        \advance\dimen6by.5\proofrulebreadth
        \advance\dimen7by-.5\proofrulebreadth
        \setbox0=\hbox{%
                \kern\shortenproofleft
                \ifdoubleproof
                \then   \hbox to\dimen0{%
                        $\mathsurround0pt\mathord=\mkern-6mu%
                        \cleaders\hbox{$\mkern-2mu=\mkern-2mu$}\hfill
                        \mkern-6mu\mathord=$}%
                \else   \vrule height\dimen6 depth-\dimen7 width\dimen0
                \fi
                \unhbox\proofrulename}%
        \ht0=\dimen6 \dp0=-\dimen7
\fi
%
\let\doll\relax
\ifwasinsideprooftree
\then   \let\VBOX\vbox
\else   \ifmmode\else$\let\doll=$\fi
        \let\VBOX\vcenter
\fi
\VBOX   {\baselineskip\proofrulebaseline \lineskip.2ex
        \expandafter\lineskiplimit\ifproofdots0ex\else-0.6ex\fi
        \hbox   spread\dimen5   {\hfi\unhbox\proofabove\hfi}%
        \hbox{\box0}%
        \hbox   {\kern\dimen2 \box\proofbelow}}\doll%
%
\global\dimen2=\dimen2
\global\dimen3=\dimen3
\egroup 
\ifonleftofproofrule
\then   \shortenproofleft=\dimen2
\fi
\shortenproofright=\dimen3
%
\onleftofproofrulefalse
\ifinsideprooftree
\then   \hskip.5em plus 1fil \penalty2
\fi
}
\newcommand{\xycenter}[1]{\vcenter{\hbox{\xymatrix{#1}}}}
\newcommand{\ie}{\text{i.e.\ }}
\newcommand{\eg}{\text{e.g.}}
\newcommand{\resp}{\text{resp.\ }}
\newcommand{\myemph}{\textit} 
\numberwithin{equation}{section}
\numberwithin{section}{chapter}
\newtheorem{theorem}{Theorem}[section]
\newtheorem*{theorem*}{Theorem}
\newtheorem{lemma}[theorem]{Lemma} 
\newtheorem{proposition}[theorem]{Proposition} 
\newtheorem{corollary}[theorem]{Corollary}
\theoremstyle{definition}
\newtheorem{definition}[theorem]{Definition}	
\newtheorem*{definition*}{Definition}	
\theoremstyle{remark}
\newtheorem{remark}[theorem]{Remark} 
\newtheorem*{remark*}{Remark} 
\newtheorem{example}[theorem]{Example}
\newtheorem*{example*}{Example}
\newtheorem*{examples*}{Examples}
\newcommand{\defeq}{=_{\mathrm{def}}}
\newcommand{\co}{\colon}
\newcommand{\iso}{\cong} 
\newcommand{\op}{\mathrm{op}}
\newcommand{\coend}{\int}  
\newcommand{\mat}{\to}
\newcommand{\sym}{\to}
\newcommand{\HOM}{\mathrm{HOM}}
\newcommand{\REG}{\mathrm{REG}}
\newcommand{\Nat}{\mathbb{N}}
\newcommand{\Ob}{\mathrm{Obj}}
\newcommand{\id}{\mathrm{Id}}
\newcommand{\yon}{\mathrm{y}}
\newcommand{\Mon}{\mathrm{Mon}}
\newcommand{\CMon}{\mathrm{CMon}}
\newcommand{\ev}{\mathrm{ev}}
\newcommand{\Alg}{\mathrm{Alg}}
\newcommand{\colim}{\operatorname{colim}}
\newcommand{\overbar}[1]{\mkern 1.5mu\overline{\mkern-1.5mu#1\mkern-1.5mu}\mkern 1.5mu}
\renewcommand{\vec}[1]{\overbar{#1}}
\newcommand{\tensorvcat}{\otimes}
\newcommand{\tensorloc}{\mathbin{\widehat{\otimes}}}
\newcommand{\conv}{\ast}
\newcommand{\obj}[1]{#1}
\newcommand{\objX}{\obj{X}}
\newcommand{\objY}{\obj{Y}}
\newcommand{\objU}{\obj{U}}
\newcommand{\objV}{\obj{V}}
\newcommand{\objW}{\obj{W}}
\newcommand{\objZ}{\obj{Z}}
\newcommand{\objC}{\obj{C}}
\newcommand{\objK}{\obj{K}}
\newcommand{\Obj}{\mathrm{Obj}}
\newcommand{\cat}[1]{\mathbb{#1}}
\newcommand{\catA}{\cat{A}}
\newcommand{\catB}{\cat{B}}
\newcommand{\catC}{\cat{C}}
\newcommand{\catD}{\cat{D}}
\newcommand{\catK}{\cat{K}}
\newcommand{\catM}{\cat{M}}
\newcommand{\catN}{\cat{N}}
\newcommand{\catP}{\cat{P}}
\newcommand{\catX}{\cat{X}}
\newcommand{\catY}{\cat{Y}}
\newcommand{\catZ}{\cat{Z}}
\newcommand{\catW}{\cat{W}}
\newcommand{\moncatV}{\mathcal{V}}
\newcommand{\catV}{\mathcal{V}}
\newcommand{\psh}{P}
\newcommand{\pshX}{\psh({\catX})}
\newcommand{\pshY}{\psh({\catY})}
\newcommand{\pshM}{\psh({\catM})}
\newcommand{\pshN}{\psh({\catN})}
\newcommand{\pshA}{\psh({\catA})}
\newcommand{\pshB}{\psh({\catB})}
\newcommand{\bcat}[1]{\mathcal{#1}}
\newcommand{\bcatE}{\bcat{E}}
\newcommand{\bcatF}{\bcat{F}}
\newcommand{\bcatG}{\bcat{G}}
\newcommand{\pcat}[1]{\mathcal{#1}}
\newcommand{\pcatE}{\pcat{E}}
\newcommand{\pcatF}{\pcat{F}}
\newcommand{\rigR}{\mathcal{R}}
\newcommand{\rigS}{\mathcal{S}}
\newcommand{\concretecatfont}{\mathrm}
\newcommand{\zerocat}{\concretecatfont{0}}
\newcommand{\onecat}{\concretecatfont{1}}
\newcommand{\Scat}{\concretecatfont{S}}
\newcommand{\Set}{\concretecatfont{Set}}
\newcommand{\Ab}{\concretecatfont{Ab}}
\newcommand{\UnitCat}{\concretecatfont{I}}
\newcommand{\bicatfont}{\mathrm}
\newcommand{\Cat}{\bicatfont{Cat}}
\newcommand{\CAT}{\bicatfont{CAT}}
\newcommand{\CCAT}{\bicatfont{CCAT}}
\newcommand{\SMCat}{\bicatfont{SMonCat}}
\newcommand{\CatV}{\Cat_{\catV}}
\newcommand{\CATV}{\CAT_{\catV}}
\newcommand{\CCATV}{\CCAT_{\catV}}
\newcommand{\PCATV}{\bicatfont{PCAT}_{\catV}}
\newcommand{\lax}{\mathrm{lax}}
\newcommand{\LMCatV}{\bicatfont{MonCat}_\catV^\lax}
\newcommand{\LMCATV}{\bicatfont{MONCAT}_\catV^\lax}
\newcommand{\LSMCatV}{\bicatfont{SMonCat}_\catV^\lax}
\newcommand{\LSMCATV}{\bicatfont{SMONCAT}_\catV^\lax}
\newcommand{\LSCatV}{\bicatfont{SMonCat}_\catV^\lax}
\newcommand{\MCatV}{\bicatfont{MonCat}_\catV}
\newcommand{\SMCatV}{\bicatfont{SMonCat}_\catV}
\newcommand{\RigV}{\bicatfont{Rig}_\catV}
\newcommand{\SRigV}{\bicatfont{SRig}_\catV}
\newcommand{\LRigV}{\bicatfont{Rig}_\catV^\lax}
\newcommand{\LSRigV}{\bicatfont{SRig}_\catV^\lax}
\newcommand{\DistV}{\bicatfont{Dist}_\catV}
\newcommand{\MDistV}{\bicatfont{MonDist}_\catV}
\newcommand{\SMDistV}{\bicatfont{SMonDist}_\catV}
\newcommand{\LMDistV}{\bicatfont{MonDist}_\catV^\lax}
\newcommand{\LSMDistV}{\bicatfont{SMonDist}_\catV^\lax}
\newcommand{\SDistV}{S\text{-}\bicatfont{Dist}_\catV}
\newcommand{\MatV}{\bicatfont{Mat}_\catV}
\newcommand{\SMatV}{S\text{-}\bicatfont{Mat}_\catV}
\newcommand{\SymV}{\bicatfont{Sym}_\catV}
\newcommand{\CatSymV}{\bicatfont{CatSym}_\catV}
\newcommand{\OpdV}{\bicatfont{OpdBim}_\catV}
\newcommand{\OpdR}{\bicatfont{OpdBim}_\rigR}
\newcommand{\Mnd}{\mathrm{Mnd}}
\newcommand{\MndE}{\Mnd(\bcatE)}
\newcommand{\EM}{\mathrm{EM}}
\newcommand{\emE}{\EM(\bcatE)}
\newcommand{\Res}{\mathrm{Res}}
\newcommand{\Bim}{\mathrm{Bim}}
\newcommand{\bimE}{\Bim(\bcatE)}
\newcommand{\bimF}{\Bim(\bcatF)}
\newcommand{\Id}{\mathrm{Id}}
\newcommand{\myJ}{\mathrm{J}}
\newcommand{\JE}{\myJ_\bcatE}
\newcommand{\JF}{\myJ_\bcatF}
\author[]{Nicola Gambino}
\address{School of Mathematics, University of Leeds, Leeds LS2 9JT, United Kingdom}
\email{n.gambino@leeds.ac.uk}
\author[]{Andr\'e Joyal}
\address{Department de Math\`ematiques, Universit\`e du Qu\`ebec \`a Montr\'eal,
 Case Postale 8888, Succursale Centre-Ville, Mont\'real (Qu\`ebec) H3C 3P8,
 Canada}
\email{joyal.andre@uqam.ca}
\title[]{On operads, bimodules and analytic functors}
\keywords{Operad, bimodule, bicategory, symmetric sequence, analytic functor}
\subjclass[2010]{Primary: 18D50; Secondary: 55P48, 18D05, 18C15}
\date{June 4th, 2014; Revised February 9th, 2015; Revised May 24th, 2015}
\begin{document}

\frontmatter

\begin{abstract}
We develop further the theory of operads and analytic functors. In particular, we introduce the bicategory $\OpdV$ of operad bimodules, that has operads as $0$-cells, operad bimodules as $1$-cells and operad bimodule maps as 2-cells, and prove that it is cartesian closed. In order to obtain this result, we extend the theory of distributors and the formal theory of monads. 
\end{abstract}

\maketitle

\tableofcontents

\mainmatter

\chapter*{Introduction}

\subsection*{Operads}
Operads originated in algebraic topology~\cite{BoardmanJ:homias,MayJP:geoils} and, while they continue to be very important in that area 
(see, \eg,~\cite{BergerC:axihto,ElmendorfAD:rinma,ElmendorfAD:percma,RezkC:spaasc}), they have found applications in  several other branches of 
mathematics, including  geometry~\cite{GinzburgV:kosdo,KapranovM:modmto},  algebra~\cite{LodayJL:algo,MillesJ:andqca},
combinatorics~\cite{AguiarM:monfsh,LivernetM:frolma} and category 
theory~\cite{BaezJ:higdan,BataninM:mongcn,LeinsterT:higohc}. Recently, operads have started to be considered also in
theoretical computer science~\cite{CurienPL:opecdl}. See~\cite{FresseB:modoof,GetzlerE:oper,MarklM:opeatp,SmirnovV:simomat} for recent accounts of the theory of operads.

There are several variants of operads in the literature (symmetric or non-symmetric, many-sorted or single-sorted, enriched or non-enriched). Here, by an operad we mean a many-sorted (sometimes called coloured) symmetric operad,  enriched over a fixed symmetric monoidal closed presentable category $\catV$. Thus, we call operad what other authors call a $\catV$-enriched symmetric multicategory. 
An operad $A$ has a set of objects
$|A|$ and, for every tuple~$(x_1, \ldots, x_n, x) \in |A|^{n+1}$, an object $A[x_1, \ldots, x_n; x] \in \catV$ of operations with inputs of 
sort~$x_1, \ldots, x_n$ and output of sort $x$. 
An operation $f \in A[x_1, x_2, x_3; x]$, \ie an arrow $f \co I \to A[x_1, x_2, x_3; x]$ in $\catV$, 
can be represented as the tree 
\begin{small}
\[
\vcenter{\hbox{\xymatrix@C=0.3cm@R=0.8cm{
            &  \ar@/_1pc/@{-}[drrr]_(0.3){x_1}       &  & &   \ar@{-}[d]_(0.4){x_2}   &  & &   \ar@/^1pc/@{-}[dlll]^(0.3){x_3} \\
            &   &  & & *+[F] {f} \ar@{-}[d]^(0.6){x}   &  & & \\ 
            &   &  &  & \quad & &   }  }} 
\] 
\end{small}
Even if single-sorted operads suffice for many purposes, many-sorted operads are essential for some applications, such as those 
developed in~\cite{ElmendorfAD:rinma,ElmendorfAD:percma}. However, the theory of operads is less developed than that of
single-sorted operads. For example, recent monographs on operads~\cite{FresseB:modoof,MarklM:opeatp} deal almost 
exclusively with single-sorted operads. 
 One of the original motivations for this work was to develop further
the theory of operads, so as to facilitate  applications. As we will see, dropping the restriction of working with 
single-sorted operads not only does not create problems, but in fact brings to light new mathematical structures. 

Operads are typically studied for their categories of algebras. For an operad $A$, an $A$-algebra
 consists of a family $T \in \catV^{|A|}$ equipped with a left~$A$-action, which can be represented 
 as a composition law for trees of the form
\[
\vcenter{\hbox{\xymatrix@C=0.3cm@R=0.8cm{
            & *+[F] {t_1}  \ar@/_1pc/@{-}[drrr]_(0.3){x_1}       &  & &   *+[F] {t_2} \ar@{-}[d]_(0.5){x_2}   &  & &   *+[F] {t_3} \ar@/^1pc/@{-}[dlll]^(0.3){x_3} \\
            &   &  & & *+[F] {f} \ar@{-}[d]^(0.6){x}   &  & & \\ 
            &   &  &  & \quad & &   }  }} 
\] 
where $f \in A[x_1, x_2, x_3; x]$ and $t_i \in T(x_i)$ for $i = 1, 2, 3$. One can consider 
algebras not only in~$\catV$, but also in any symmetric $\catV$-rig, \ie symmetric monoidal closed
presentable $\catV$-category $\rigR = (\rigR, \diamond, e)$. For an operad $A$, we write $\Alg_\rigR(A)$ for the category of 
$A$-algebras in~$\rigR$. The forgeful functor $U \co \Alg_\rigR(A) \to \rigR^{|A|}$ has a left adjoint $F \co \rigR^{|A|} \to \Alg_\rigR(A)$, the free $A$-algebra functor. 
Informally, we may view the category $\rigR^{|A|}$ as an affine variety, the operad~$A$ as a system of equations, and 
the category $\Alg_\rigR(A)$ as the sub-variety of~$\rigR^{|A|}$ defined by the system of equations $A$.
The left action on an $A$-algebra 
The notions of an operad bimodule and operad bimodule map subsume those of
algebra and algebra map, as we explain below, and play an
important role in the theory of operads~\cite{DwyerW:boavtp,FresseB:modoof,KapranovM:modmto,MarklM:modo,RezkC:spaasc}. 
   Explicitly, for operads $A$ and $B$, a $(B,A)$-bimodule in a $\catV$-rig $\rigR$ is a family of objects~$M[x_1, \ldots, x_n; y] \in \rigR$, indexed by 
sequences~$(x_1, \ldots, x_n) \in |A|^n$  and~$y \in |B|$, subject to suitable functoriality conditions, equipped with a left~$B$-action and a right $A$-action that commute with each other. Informally, an element~$m \in M[x_1, x_2; y]$ may be represented as a tree of the form
\[
\vcenter{\hbox{\xymatrix@C=0.3cm@R=0.8cm{
            &  \ar@/_1pc/@{-}[drrr]_(0.3){x_1}       &  & &      &  & &   \ar@/^1pc/@{-}[dlll]^(0.3){x_2} \\
            &   &  & & *+[F] {m} \ar@{-}[d]^(0.6){y}   &  & & \\
            &   &  &  & \quad & &   }  }}
\]
The right $A$-action can be viewed as composition operation for trees of the form
 \[
         \smallskip
\vcenter{\hbox{\xymatrix@C=1cm@R=0.8cm{
\ar@{-}[dr]_(0.3){x_{1,1}} &  \ar@{-}[d]_(0.3){x_{1,2}} & \ar@{-}[dl]^(0.3){x_{1,3}}  &     & & \ar@{-}[dr]_(0.3){x_{2, 1}} & &  \ar@{-}[dl]^(0.3){x_{2,2}}     \\
            & *+[F]{f_1}    \ar@/_1pc/@{-}[drrr]_{x_1} &  & &  & & *+[F]{f_2}  \ar@/^1pc/@{-}[dll]^{x_2}  &   \\
            &   &  & & *+[F]{m} \ar@{-}[d]^y   &  & & \\
            &   &  & & \quad &  & &   } }}
            \]
where $f_1 \in A[x_{1,1}, x_{1,2}, x_{1,3}; x_1]$ and $f_2 \in A[x_{2,1}, x_{2,2}; x_2]$, while
 the left $B$-action as a composition operation for trees of the form 
 \[
         \smallskip
\vcenter{\hbox{\xymatrix@C=1cm@R=0.8cm{
      \ar@{-}[d]_(0.3){x_{1,1}}    &  \ar@{-}[dr]_(0.3){x_{2,1}} & &     \ar@{-}[dl]^(0.4){x_{2,2}}  \\
                  *+[F]{m_1}    \ar@{-}[dr]_{y_1}  & & *+[F]{m_2}  \ar@{-}[dl]^{y_2}  &    \\
                  & *+[F]{g} \ar@{-}[d]^y     & & \\
                  \quad &  & &   } }}
            \]
            where $m_1 \in M[m_{1,1}; y_1]$, $m_2 \in M[x_{2,1}, x_{2,2}; y_2]$ and $g \in B[y_1, y_2; y]$.
            We write $\OpdR[A,B]$ for the category of $(B,A)$-bimodules and bimodule maps in $\rigR$. As
            we will see, a useful way of understanding operad bimodules is to observe that a $(B,A)$-bimodule $M$ determines a functor 
\[
\Alg_\rigR(M) \co \Alg_\rigR(A) \to \Alg_\rigR(B) \, . 
\]
which fits into the commutative diagram of the form
\[
\xymatrix@C=1.5cm{
\Alg_\rigR(A) \ar[r]^{\Alg_\rigR(M)} & \Alg_\rigR(B) \ar[d]^U\\
\rigR^{|A|} \ar[r]_{M} \ar[u]^F & \rigR^{|B|} \, . }
\]
Here, the vertical arrows are the evident free and forgetful functors and, for $T \in \rigR^{|A|}$ and $y \in |B|$, we have
\[
M(T)(y) \defeq  \int^{\bar{x} \in S(|A|)} M[\bar{x}; y] \otimes T^{\bar{x}}  \, ,
\]
where $S(|A|)$ is the free symmetric monoidal category on $|A|$, which has tuples of elements of $|A|$ as
objects and, for such a tuple $\bar{x} = (x_1, \ldots, x_n)$, we let $T^{\bar{x}} \defeq T(x_1) \diamond \ldots \diamond T(x_n)$.
We refer to such functors  as \emph{analytic functors}, since they are a natural extension the analytic functors originally
introduced in~\cite{JoyalA:fonaes} and their generalization defined in~\cite{FioreM:carcbg}. Examples of  
analytic functors between categories of operad algebras include generalizations of the extension and restriction functors 
for single-sorted operads studied in~\cite{FresseB:modoof}. 

\medskip

One of the upshots of this paper is that
for two operads~$A$ and~$B$, there are new operads $A \sqcap B$ and  $B^A$ such 
that we have natural equivalences 
\[
\Alg_\rigR(A  \sqcap B)  \simeq \Alg_\rigR(A) \times \Alg_\rigR(B) \, , \quad  \Alg_\rigR(B^A)  \simeq \OpdR[A, B] \, .
\]
Thus, $A \sqcap B$ is the operad whose algebras are pairs consisting of an $A$-algebra and a $B$-algebra, while~$B^A$ is the operad whose algebras are $(B,A)$-bimodules. We have~$| A \sqcap B| = |A| \sqcup |B|$, an instance of a duality phenomenon which pervades this paper. Furthermore, there is an operad $\top$ (the operads whose set of sorts is empty) such that, for any operad $A$, there are equivalences 
\[
\OpdR[A, \top] \simeq 1 \, , \quad \OpdR[ \top, A] \simeq \Alg_\rigR(A) \, ,
\] 
where $1$ is the terminal category. Here, note that the second equivalence shows that operad bimodules subsume operad algebras.

\medskip

A fundamental notion in our development is that of a bicategory, introduced in~\cite{BenabouJ:intb}, which generalizes the notion of a monoidal category in the same
way in which the notion of a category generalizes that of a monoid. Indeed, it will be 
very useful for us to regard the notion of an operad as a special case of the general notion 
of a monad in a bicategory~\cite{BenabouJ:intb}, which generalizes the  notion of a monoid in a
monoidal category~\cite{MacLaneS:catwm}. Taking further ideas in~\cite{BaezJ:higdan,FioreM:carcbg}, we introduce a bicategory, 
called the bicategory of symmetric sequences and 
denoted by~$\SymV$, and show that monads therein are exactly operads. This  
extends to
the many-sorted case of the well-known fact that single-sorted operads can be viewed as monoids in the category of 
single-sorted symmetric sequences equipped with the substitution monoidal 
structure~\cite{KellyGM:opejpm,SmirnovV:coccts}. Furthermore, the notions of an
operad bimodule and an operad bimodule map  can be seen as instances of the the general
notions of a bimodule and a bimodule map in a  bicategory~\cite{StreetR:enrcc}.

The characterization of operads, operad bimodules and operad bimodule maps as monads, monad bimodules  and monad 
bimodule maps in the bicategory~$\SymV$, makes it natural to assemble them  in a bicategory, called the bicategory of operad bimodules
and denoted by~$\OpdV$, using the so-called bimodule construction~\cite{StreetR:enrcc}.
This construction takes what we call a tame bicategory  $\bcatE$ (\ie a bicategory whose hom-categories have reflexive coequalizers and whose composition functors preserve coequalizers in each variable) and returns  a new 
 bicategory, called the bicategory of bimodules  in~$\bcatE$ and denoted by~$\bimE$,
which has monads, monad bimodules  and monad bimodule maps in~$\bcatE$ as 
 0-cells, 1-cells and 2-cells, respectively. The composition operation in $\bimE$ is defined using the assumption that $\bcatE$ is 
 tame, generalizing the definition of the  tensor product of ring bimodules.  
We will  prove that the bicategory $\SymV$ of symmetric sequences is a tame bicategory (Corollary~\ref{thm:smattame}),
a result that allows us to define the bicategory of operad bimodules by letting
\[
\OpdV \defeq \Bim(\SymV) \, . 
\]
Remarkably, the  composition of bimodules in~$\OpdV$ obtained by specializing to this case the general definition of composition
in bicategories of bimodules is a generalization to many-sorted operads of the circle-over construction  for single-sorted operads defined in~\cite{RezkC:spaasc}. Furthermore, we will show that composition in~$\OpdV$
corresponds to composition of  analytic functors between categories of operad algebras. 
The category $\SymV$ can be naturally viewed as a sub-bicategory of~$\OpdV$ and in between the two there is a further bicategory,
called the bicategory of categorical symmetric sequences and denoted by~$\CatSymV$, so that we have inclusions:
\[
\SymV \subseteq \CatSymV \subseteq \OpdV \, .
\]
When $\catV = \Set$, the bicategory $\CatSymV$ is exactly the bicategory of generalized species of structures defined in~\cite{FioreM:carcbg}.  

\subsection*{Main results}
 Our main results show that~$\CatSymV$ and~$\OpdV$ are 
cartesian closed bicategories. These facts may be of interest to researchers in mathematical
logic and theoretical computer science, since they provide new models of versions of the simply-typed
$\lambda$-calculus, and contribute to the general program of generalizing domain 
theory~\cite{CattaniGL:proomb,HylandM:somrgd} and to the study of models of the
differential $\lambda$-calculus~\cite{EhrhardT:diflc} inspired by the theory of analytic functors~\cite{FioreM:matmcc}.
One of our original motivation for showing that~$\OpdV$ is cartesian closed 
was to show that for any two operads $A$ and $B$, the category of~$(B,A)$-bimodules can be 
regarded as the category of algebras for an operad. This allows, in particular, to apply to categories
of bimodules the known results concerning the existence of Quillen model structures on categories
of operad algebras~\cite{BergerC:axihto,BergerC:rescorh}, although we do not pursue this idea here. As we will explain below, proving 
that~$\CatSymV$ is cartesian closed can  be considered also as a useful step towards show that~$\OpdV$ is cartesian closed. 
It should be noted that, in order for~$\OpdV$ to be 
cartesian closed, it is essential to consider operads and not just single-sorted operads. Indeed, given 
two single-sorted operads, their exponential in $\OpdV$ is, in general, not single-sorted (see below for more information). We would also like to
mention that our results are intended to contribute to an ongoing research programme, which we like to
refer to as `2-algebraic geometry', that is concerned with the development of a variant of commutative
algebra and algebraic geometry where commutative rings are replaced by symmetric monoidal categories~\cite{BaezJ:higdan,ChirvasituA:funpga}.

 Our first main result (Theorem~\ref{thm:smondistiscc}) is that the bicategory $\CatSymV$ is cartesian closed.
For small $\catV$-categories $\catX$ and $\catY$, their product $\catX \sqcap \catY$ and the exponential $\catY^\catX$ 
in $\CatSymV$ are given
by the formulas:
\[
\catX \sqcap \catY \defeq  \catX \sqcup \catY \, , \quad 
\catY^\catX \defeq S(\catX)^\op \otimes \catY \, ,
\] 
where $\catX \sqcup \catY$ denotes the coproduct of $\catX$ and $\catY$ in the 2-category of small $\catV$-categories. Thus,
when $\catV = \Set$, we obtain the cartesian closed structure of~\cite{FioreM:carcbg}.  An interesting aspect of this result is that 
one obtains a cartesian closed bicategory even when $\catV$ is not a cartesian closed category, but has an arbitrary monoidal closed structure. 
Our approach to the definition of the bicategory~$\CatSymV$ and to the proof that it is cartesian closed differs significantly from the one adopted
in~\cite{FioreM:carcbg}  in the non-enriched case. In particular, its construction  and the proof that it has finite products 
follow  immediately from the results  that we obtain in the first sections of the paper. 
There, we develop further the theory of 
distributors (also known as bimodules or profunctors)~\cite{BenabouJ:dis,LawvereFW:metsgl} and introduce and study the notions of a (lax) monoidal distributor and of a symmetric (lax) monoidal distributor, and show how they can be seen as morphisms of appropriate bicategories.  These bicategories and the bicategory~$\CatSymV$ are defined using the notion of a Gabriel factorization of a homomorphism (which we introduce in Chapter~\ref{cha:bac}), rather than via
the theory of Kleisli bicategories and pseudo-distributive laws~\cite{ChengE:psedl,MarmolejoF:dislpm,MarmolejoF:cohpdl}, 
as done in~\cite{FioreM:carcbg}. We prefer this approach since it allows us to avoid the verification of several coherence conditions. 
Furthermore, our proof that the bicategory~$\CatSymV$ is cartesian closed is broken down and organized into several observations on symmetric monoidal distributors that admit relatively short proofs and does not involve lengthy coend calculations like the proof of the corresponding fact in~\cite{FioreM:carcbg}. 

The second main result in this paper (Theorem~\ref{thm:opdcc}) is that the bicategory $\OpdV$ is cartesian closed. Indeed,
we have already described above the universal properties that characterize products, written $A \sqcap B$,  
exponentials, written $B^A$, and the terminal object, written $\top$, in $\OpdV$.
Our proof that $\OpdV$ is cartesian closed relies on two results. The first is that for a tame bicategory~$\bcatE$, if $\bcatE$ is cartesian closed, then $\bimE$ is cartesian closed (Theorem~\ref{thm:ccbimod}). Applying this fact to~$\CatSymV$ (which we 
show to be tame as well), we obtain that $\Bim(\CatSymV)$ is cartesian closed. The second is that the inclusion 
\[
\OpdV  = \Bim(\SymV) \subseteq \Bim(\CatSymV)
\]
induced by the inclusion $\SymV \subseteq \CatSymV$ is an equivalence of bicategories (Theorem~\ref{thm:keybiequiv}). 
In order to prove these two auxiliary facts, we develop some aspects of the formal
theory of monads (in the sense of~\cite{StreetR:fortm}) in tame bicategories.
In particular, we establish a universal property of~$\bimE$, namely that of being the Eilenberg-Moore completion of~$\bcatE$ as a tame bicategory (a notion
that we will define precisely in Section~\ref{sec:montbb}), which is a special case of a result obtained independently by Richard Garner and Michael Shulman
 in the context of the theory of categories enriched in a bicategory~\cite{GarnerR:enrcfc}.

\medskip

\noindent
\textbf{Organization of the paper.}
The paper is organized as follows. Chapter~\ref{cha:bac} provides an overview of the background material used in the paper. We also introduce the notion of a Gabriel factorization of a homomorphism, which we use several times to construct the bicategories of interest.  

The rest of the paper is organized in two parts, each leading up to one of our two main results. 
The first part includes Chapter~\ref{cha:mond} and Chapter~\ref{cha:syms}. Chapter~\ref{cha:mond} develops some 
auxiliary material, needed for Chapter~\ref{cha:syms}. In particular, it introduces the auxiliary notions of a monoidal distributor and of a symmetric monoidal distributor, shows how they can be seen as the morphisms of appropriate bicategories, and establishes some useful facts about these bicategories. Chapter~\ref{cha:syms} introduces the notion of an $S$-distributor and the corresponding bicategory $\SDistV$. We then define $\CatSymV$ as the opposite of $\SDistV$. We then establish our first main result (Theorem~\ref{thm:smondistiscc}), asserting that $\CatSymV$ is cartesian closed. 
 
The second part of the paper comprises Chapter~\ref{cha:bicob} and Chapter~\ref{cha:carcob}. Chapter~\ref{cha:bicob} recalls the notions of a monad, bimodule and bimodule map in a bicategory~$\bcatE$ and the definition of the bicategory~$\bimE$, under the assumption that  $\bcatE$ is tame. We then show that $\CatSymV$ and its subcategory $\SymV$ are 
tame (Corollary~\ref{thm:smattame}), thus allowing us to define the bicategory of operad bimodules using the bimodule construction. Chapter~\ref{cha:carcob} is devoted to the proof of our second main result. First, 
we show that, for a tame bicategory $\bcatE$, if $\bcatE$ is cartesian closed, then so is $\bimE$ (Theorem~\ref{thm:ccbimod}).
Secondly, we investigate some aspects of the theory of monads in a tame bicategory, leading up to the fact that $\bimE$ can be
viewed as the Eilenberg-Moore completion of $\bcatE$ as a tame bicategory (Theorem~\ref{thm:completiontheorem}). A corollary of this fact leads to prove that $\OpdV$ and $\Bim(\CatSymV)$ are equivalent (Theorem~\ref{thm:keybiequiv}). Combining this fact with earlier results, as described above, we obtain that~$\OpdV$ is cartesian closed (Theorem~\ref{thm:opdcc}).

 \smallskip

\noindent
\emph{Note for expert readers.} Since we tried to make the paper as self-contained as possible, some material will be familar to
expert readers. In particular, readers who are already acquainted with~\cite{FioreM:carcbg} and are willing to assume that the results therein
transfer to the enriched setting may skip the first paper of the paper and start reading from Chapter~\ref{cha:bicob}. Similarly, 
readers who are already familiar with the universal property of the bimodule construction from~\cite{GarnerR:enrcfc} may
skip the final sections of the paper (Section~\ref{sec:montrb}, \ref{sec:montbb} and \ref{sec:bicbem}), apart from 
Theorem~\ref{thm:keybiequiv}.

\medskip

\noindent
\textbf{Acknowledgements}
We had the opportunity of working on the material presented in this paper at the Fields Institute, the Centre de Recerca Matem\`atica, the Institute for Advanced Study, 
the Mathematisches Forschungsinstitut Oberwolfach and the Institut Henri Poincar\'e. We are grateful to these institutions for their support and the excellent working condiitons. 

\smallskip

\noindent
 \emph{Acknowledgements for the first-named author.}  (i)  This material is based on research sponsored by the Air Force Research Laboratory, under agreement 
number FA8655-13-1-3038. The U.S.\ Government is authorized to reproduce and distribute reprints for Governmental purposes notwithstanding any copyright notation thereon. The views and conclusions contained herein are those of the authors and should not be interpreted as necessarily representing the official policies or endorsements, either expressed or implied, of the Air Force Research Laboratory or the U.S.\ Government.
(ii) This research was supported by a grant from the John Templeton Foundation. (iii) This research was supported by the Engineering and 
Physical Sciences Research Council,  Grant EP/K023128/1. 

\smallskip

\noindent
 \emph{Acknowledgements for the second-named author.} The support of NSERC is gratefully acknowledged.

\chapter{Background}
\label{cha:bac}

This chapter reviews some notions and results that will be used in the reminder of the paper. In particular, we review the basics of the theory of bicategories (Section~\ref{sec:revbt}), elements of enriched category theory (Section~\ref{sec:vcat}) and the definition of the bicategory of distributors (Section~\ref{sec:dist}). All the material in this chapter is essentially well-known, with the possible 
exception of the definition of the Gabriel factorization of a homomorphism,
which is the bicategorical counterpart of the factorization of a functor as an essentially surjective functor followed by a fully faithful one. 
Since Gabriel factorizations are for the definition of several bicategories in the remainder of the paper, we illustrate in some detail how this construction works in the discussion of the bicategory of distributors.

\section{Review of bicategory theory}
\label{sec:revbt}

For the convenience of the reader, the definitions of bicategory, homomorphism, pseudo-natural transformation and modification
are recalled in Appendix~\ref{sec:combd}.

For a bicategory $\bcatE$, 
we write $\bcatE[\objX, \objY]$ for the hom-category 
between two objects $\objX, \objY \in \bcatE$.
A bicategory~$\bcatE$ is said to be \emph{locally small} when
$\bcatE[\objX, \objY]$ is a small category for every $\objX, \objY \in \bcatE$.
A \emph{morphism},  or a \emph{$1$-cell}  ~$F \co  \objX \to \objY$
is an object of the category $\bcatE[\objX, \objY]$, and a \emph{$2$-cell} $\alpha\co  F\to F'$
is a morphism of the category $\bcatE[\objX, \objY]$. We write $1_\objX \co  \objX \rightarrow \objX$
for  the identity morphism of an object~$\objX \in \bcatE$.  
The composition operation of 2-cells, \ie the the composition operation of the hom-categories of $\bcatE$, 
is usually referred to as the \emph{vertical composition} in~$\bcatE$ and its effect 
on~ $\alpha \co  F \rightarrow F'$, $\beta \co  F' \rightarrow F''$  is written~$\beta \cdot \alpha \co  
F \rightarrow F''$. The identity arrow of an object $F\in \bcatE[\objX,\objY]$ is called an \emph{identity $2$-cell} of~$\bcatE$ 
and written~$1_F \co  F \to F$.  We 
refer to the composition operation
\[
(- ) \circ (-) \co  \bcatE[\objY, \objZ]\times \bcatE[\objX, \objY]\to \bcatE[\objX, \objZ]
\]
as the \emph{horizontal composition} of $\bcatE$.
The horizontal composite of~$F \co  \objX \rightarrow \objY$ and~$G \co   \objY \rightarrow~  \objZ$ 
is denoted by~$G \circ F \co  \objX \rightarrow \objZ$. 
The horizontal composition of $\alpha \co  F \rightarrow F'$ with
$\beta \co  G \rightarrow G'$
is written $\beta \circ \alpha \co  G \circ F \rightarrow~G' \circ F'$.
This 2-cell is the common value of the composites in the following naturality square 
\[
\xymatrix@C=2cm{
G \circ F \ar[r]^{G \circ \alpha} \ar[d]_{\beta \circ F} & G \circ F' \ar[d]^{\beta \circ F'} \\
G' \circ F \ar[r]_{G' \circ \alpha} & G' \circ F' \,. }
\]
We say that a bicategory is \emph{strict} if its composition operation 
is strictly associative and if the units~$1_\objX$  are strict.
A strict bicategory is the same thing as a 2-category, \ie a category enriched over
the category of locally small categories and functors~$\CAT$. Both $\CAT$ and 
the category of small categories~$\Cat$ have also the structure of a 2-category.

\begin{example} A monoidal category $\catC = (\mathbb{C},\otimes, I)$ can be identified with a bicategory,
here denoted by~$\Sigma(\catC)$, which has a single object and $\catC$ as its hom-category. The horizontal composition 
 of $\Sigma(\catC)$
is then given by the tensor product of $\catC$. Every bicategory with one object is of the
form~$\Sigma(\catC)$ for some monoidal category $\mathbb{C}$.
\end{example}

Given two bicategories~$\bcatE$ and~$\bcatF$, their \emph{cartesian product}~$\bcatE\times \bcatF$  is the bicategory 
with objects
\[
\Ob(\bcatE\times \bcatF) \defeq \Ob(\bcatE)\times \Ob(\bcatF)
\]  
and hom-categories
 \[
 (\bcatE\times \bcatF)[(\objX,\objY),(\objX',\objY')] \defeq \bcatE[\objX,\objX']\times  \bcatF[\objY,\objY'] \, , 
 \]
  for  $\objX,\objX'\in \bcatE$ and $\objY,\objY'\in \bcatF$. Composition is defined in the obvious way. 
 For a bicategory $\bcatE$, we write 
 $\bcatE^{\op}$ for the \emph{opposite bicategory} of $\bcatE$, which is obtained by formally reversing the direction of the morphisms of~$\bcatE$, but not that of the 2-cells. 
For a morphism $F \co \objX \to \objY$ in $\bcatE$, we write~$F^\op \co \objY \to \objX$ for the corresponding morphism in~$\bcatE^\op$. 

\subsection*{Equivalences and adjunctions in a bicategory}
A morphism $F \co  \objX \rightarrow \objY $ in  a bicategory~$\bcatE$  is said to be
 an \emph{equivalence}  if there exists a morphism $U \co  \objY  \rightarrow \objX $
together with invertible 2-cells $\alpha \co   G \circ F \to 1_\objX$
and $\beta \co   F \circ U \to 1_\objY$. We  write $\objX \simeq \objY$ to indicate that~$\objX$ and~$\objY$ are equivalent.
 An \emph{adjunction}  $(F, U, \eta, \varepsilon) \co \objX \to \objY$ in $\bcatE$ consists of morphisms $F \co \objX \to \objY$
 and $U \co \objY \to \objX$ and 2-cells $\eta\co  1_\objX\to U\circ F$ and $\varepsilon\co  F\circ U\to 1_\objX$ satisfying
the triangular laws, expressed the commutative diagrams
\begin{equation}
\label{equ:triangleidentities}
\xycenter{
F \  \ar[r]^-{ F \circ \eta} \ar@/_1pc/[dr]_{1_F} &  F\circ U\circ F  \ar[d]^{\varepsilon\circ F}  \\
& F \, ,
 }\quad \quad 
 \xycenter{
  U\circ F \circ U  \ar[d]_{U\circ \varepsilon}  & U \  \ar[l]_-{\eta\circ U} \ar@/^1pc/[dl]^{1_U} &  \\
\, U .&   \ \
 }
\end{equation} 
The morphism $F$ is the~\emph{left adjoint} and
the morphism $U$ is the~\emph{right adjoint};
the 2-cell $\eta$ 
is the~\emph{unit} of the adjunction and the 2-cell $\varepsilon$ is the~\emph{counit}.
Recall that the unit~$\eta$ of an adjunction~$(F, U, \eta,\varepsilon)$ determines the counit~$\varepsilon$
and conversely. More precisely, $\varepsilon\co   F\circ U\to 1_{\objY}$   is the unique 2-cell 
such that~$(U\circ \varepsilon )\cdot (\eta\circ  U)=1_U$,
and $\eta\co  1_{\objX }\to U\circ F$  is the  unique 2-cell
such that~$(\varepsilon \circ F)\cdot (F\circ \eta)=1_F$.
We  often write~$F\dashv U$ to indicate an 
adjunction~$(F, U, \eta,\varepsilon)$.
An adjunction is called a \emph{reflection} (\resp \emph{coreflection}) if its counit  (\resp unit) is invertible. 
If~$(F, U, \eta,\varepsilon) \co \objX \to \objY$ is an adjunction
in~$\bcatE$, then $(U^\op, F^\op, \eta,\varepsilon) \co  \objX \rightarrow  \objY$ is an adjunction
in the opposite bicategory~$\bcatE^\op$.

\subsection*{Homomorphisms} For bicategories $\bcatE$ and $\bcatF$, we write $\Phi \co  \bcatE \to \bcatF$ to indicate that $\Phi$ is a homomorphism
from $\bcatE$ to $\bcatF$. 
The homomorphisms from $\bcatE$ to $\bcatF$ are the objects of a 
bicategory~$\HOM\bigl[ \bcatE, \bcatF\bigr]$ whose morphisms are pseudo-natural transformations
 and  2-cells are modifications. 
A \emph{contravariant homomorphism} $\Phi\co  \bcatE\to \bcatF$ is defined to be a homomorphism $\Phi\co   \bcatE^\op \to \bcatF$.  
The canonical homomorphism 
\[
\bcatE[-,-]\co  \bcatE^\op\times \bcatE \to \CAT 
\]
takes a pair of objects $(\objX,\objY)$ to the category $\bcatE[\objX,\objY]$.
In particular, there is a
a covariant homomorphism $\bcatE[ \objK , - ]\co \bcatE \to \CAT$ and
a  contravariant homomorphism
$\bcatE[ -,\objK  ]\co \bcatE \to \CAT$
for each object~$\objK \in \bcatE$.

\medskip

We  say that a homomorphism $\Phi\co \bcatE\to \bcatF$
is \emph{full and faithful} if  for every $\objX,\objY\in \bcatE$ the  functor 
\[
\Phi_{\objX, \objY} \co  \bcatE[\objX, \objY] \to \bcatF[\Phi \objX, \Phi \objY]
\]
is an equivalence of categories.
We say that $\Phi\co \bcatE\to \bcatF$ is \emph{essentially surjective} 
if for every object~$\objY\in \bcatF$ there exists an object $\objX\in \bcatE$
together with an equivalence $\Phi\objX \simeq \objY$.
We say that $\Phi\co \bcatE\to \bcatF$ is an \emph{equivalence} if it is
full and faithful and essentially surjective.

The coherence theorem for bicategories, asserts that 
every bicategory is equivalent to a 2-category~\cite{MacLaneS:cohbic}. 
Thanks to this result, it is possible to treat the horizontal composition in a bicategory as if it were strictly associative and unital, which we will often do in the following.
We say that a homomorphism $\Phi\co \bcatE\to \bcatF$ is an \emph{inclusion}
if it is injective on objects and full and faithful. In this case, we will often write
$\bcatE \subseteq \bcatF$ and treat the action of $\Phi$ on objects as if it were the identity.

A homomorphism  $\Phi\co \bcatE \to \bcatF$
takes an adjunction $(F, U, \eta,\varepsilon) \co \objX  \rightarrow \objY$ in~$\bcatE$
to an adjunction $(\Phi F, \Phi U, \Phi\eta,\Phi \varepsilon)\co  \Phi \objX  \rightarrow \Phi \objY$
in $ \bcatF$. Dually, a contravariant homomorphism  $\Phi\co \bcatE \to \bcatF$
takes an adjunction $(F, U, \eta,\varepsilon) \co \objX  \rightarrow \objY$ in~$\bcatE$
to an adjunction $(\Phi U, \Phi F, \Phi\eta,\Phi \varepsilon)\co \Phi \objY  \rightarrow \Phi \objX$
in~$\bcatF$.  For example,  if~$\objK $ is an object of~$\bcatE$, then
the homomorphism $\bcatE[ \objK , - ]\co \bcatE \to \CAT$
takes  an adjunction $(F, U)\co \objX  \rightarrow \objY$  in~$\bcatE$
to an adjunction $(\bcatE[\objK, F], \bcatE[\objK, U]) \co \bcatE[ \objK ,  \objX ]  \to \bcatE[\objK , \objY]$
in $\CAT$. Dually, the contravariant homomorphism  $\bcatE[ -,\objK  ]\co \bcatE \to \CAT$
takes  an adjunction $(F, U) \co \objX  \rightarrow \objY$  in~$\bcatE$
to an adjunction $(\bcatE[U, \objK], \bcatE[F, \objK]) \co  \bcatE[ \objY ,  \objK] \to \bcatE[ \objX , \objK ]$.

\subsection*{Prestacks}  By a  \myemph{prestack} on a (locally small) bicategory $\bcatE$ we mean a
contravariant homomorphism~$\Phi \co  \bcatE \to \Cat$.
The bicategory of prestacks
 \[
\mathbf{P}(\bcatE) \defeq   \HOM\bigl[\bcatE^{\op}, \Cat\bigr]\, ,
\]
 is a 2-category, since $\Cat$ is a 2-category.  
The Yoneda homomorphism
\[
\yon_\bcatE \co  \bcatE \rightarrow \mathbf{P}(\bcatE)
\]
takes an object $ \objX  \in \bcatE$
to the prestack~$\yon_\bcatE(\objX) \defeq \bcatE[-, \objX]$.
For $\Phi\in  \mathbf{P}(\bcatE)$ and $\objX  \in \bcatE$, 
there is a functor
\[
\mathbf{P}(\bcatE)[\yon(\objX ), \Phi]\to  \Phi( \objX )
\]
which takes a pseudo-natural transformation $\alpha \co  \yon_\bcatE( \objX )\to  \Phi$
to the object $\alpha_\objX(1_ \objX )\in  \Phi( \objX )$, and the 
bicategorical Yoneda lemma asserts that this functor 
is an equivalence of categories.
It follows that the Yoneda homomorphism 
is full and faithful.  

We  say that a prestack  $\Phi : \bcatE \to \Cat$
on a  bicategory~$\bcatE $ is  \emph{representable} if
there exists an object~$\objX \in \bcatE$ together with a pseudo-natural equivalence $\alpha \co  \yon( \objX )\to  \Phi$.
It follows from Yoneda lemma that  $\alpha$ is determined by
the object~$\alpha_0 \defeq \alpha(1_\objX)\in \Phi(\objX)$.
In this case, we  say that~$\Phi$ is \emph{represented} by the pair~$(\objX, \alpha_0)$. 
Such a pair~$(\objX,\alpha_0)$ is unique up to equivalence.

\subsection*{Gabriel factorization} In analogy with the way a functor can be factored
as a fully faithful functor followed by an essentially surjective one, 
every homomorphism $\Phi\co \bcatE\to \bcatF$ admits a factorization of the form
\begin{equation}
\label{equ:gabrielfactorization}
{\vcenter{\hbox{\xymatrix{
\bcatE\ar[rr]^{\Phi} \ar[dr]_{\Gamma}&& \bcatF\\
&\bcatG \ar[ur]_{\Delta} \, , &
}}}}
\end{equation}
where $\Gamma$ is essentially surjective and $\Delta$ is full and faithful. In fact, we may suppose that  $\Gamma$
is the identity on objects, in which case we have a~\emph{Gabriel factorization} of $\Phi$. In order
to obtain a Gabriel factorization, the bicategory $\bcatG$ is defined as having the same objects as $\bcatE$ and letting, for $\objX, \objY \in\bcatE$, 
\[
\bcatG[\objX,\objY]\defeq\bcatF[\Phi\objX,\Phi\objY] \, .
\]
The composition law of $\bcatG$ is defined via the composition law of $\bcatF$
in the evident way. The homomorphism $\Gamma \co \bcatE \to \bcatG$ is  the identity on objects,
while~$\Gamma_{\objX, \objY} \co  \bcatE[\objX,\objY]\to \bcatG[  \objX,  \objY]$ 
is defined to be~$\Phi_{\objX, \objY} \co  \bcatE[\objX,\objY]\to \bcatF[\Phi\objX,\Phi\objY]$. 
The homomorphism $\Delta \co  \bcatG \to \bcatF$ is defined on objects by 
letting~$\Delta(\objX) \defeq \Phi(\objX)$, for $\objX \in \bcatE$, 
while~$\Delta_{\objX, \objY} \co  \bcatG[\objX,\objY]\to \bcatF[\Phi\objX,  \Phi\objY]$ 
is the identity functor. These definitions are illustrated in Section~\ref{sec:dist}, where we
show how the bicategory of distributors arises from a Gabriel factorization.

\begin{example*} Let us consider the Gabriel factorization of the  Yoneda homomorphism  $\yon \co  \bcatE \rightarrow \mathbf{P}(\bcatE)$,
  \[
\xymatrix{
\bcatE\ar[rr]^{\yon_\bcatE} \ar[dr]_{\Gamma}&& \mathbf{P}(\bcatE)\\
&\bcatG \ar[ur]_{\Delta}. &
}
\]
The bicategory $\bcatG$
is a 2-category, since the bicategory $ \mathbf{P}(\bcatE)$ is a 2-category,
Moreover, the homomorphism $\Gamma$ is an equivalence of bicategories,
since the Yoneda homomorphism is full and faithful.
Hence, the bicategory $ \bcatE$ is equivalent to a 2-category, giving 
a proof of  the coherence theorem for bicategories ~\cite{MacLaneS:cohbic}.
\end{example*}

There is a slight variation of the definitions given above  which arises when we are given
not only the homomorphism $\Phi \co \bcatE \to \bcatF$ but also, for each $\objX, \objY \in \bcatE$, 
a category $\bcatG[\objX, \objY]$ and an equivalence
\begin{equation}
\label{equ:gabrielnonstandard}
   \Delta_{\objX, \objY} \co \bcatG[\objX,\objY]\to \bcatF[\Phi\objX,\Phi\objY] \, .
\end{equation}
 In this case, we obtain again a Gabriel factorization of $\Phi$.
The bicategory~$\bcatG$ has again the same objects as~$\bcatE$ and its hom-categories
are given by the given categories~$\bcatG[\objX, \objY]$. The composition functors of $\bcatG$
are determined (up to unique isomorphism) by requiring that the following diagram commutes up to natural isomorphism:
\[
\xymatrix@C=1.8cm{
\bcatG[\objY, \objZ] \times \bcatG[\objX, \objY] \ar[r]^-{(-) \circ (-)} \ar[d]_-{\Delta_{\objY, \objZ} \times \Delta_{\objX, \objY}} & 
\bcatG[\objX, \objZ] \ar[d]^-{\Delta_{\objX, \objZ}} \\
\bcatF[ \Phi \objX, \Phi \objZ] \times \bcatF[\Phi \objX, \Phi \objY] \ar[r]_-{(-) \circ (-)} & 
\bcatF[\Phi \objX, \Phi \objZ] \, .}
\]
Similarly, the identity morphism $1_\objX \co \objX \to \objX$ on an object $\objX \in \bcatG$, is determined (up to unique isomorphism)
by requiring that there is an isomorphism $\Delta(1_\objX) \iso 1_{\Phi X}$. These associativity and unit isomorphisms can be defined
in a similar way. The definition of the required homomorphism~$\Delta \co \bcatG \to \bcatF$ now follows easily. Its action on objects
is given by mapping~$\objX \in \bcatG$ to~$\Phi \objX \in \bcatF$ and its action on hom-categories is given by the equivalences in~\eqref{equ:gabrielnonstandard},
so that $\Delta$ is full and faithful by construction. The homomorphism $\Delta \co \bcatE \to \bcatG$ can then be defined as the identity on objects,
while its action on hom-categories is essentially determined by requiring that the diagram in~\eqref{equ:gabrielfactorization} commutes up to
pseudo-natural equivalence. We will illustrate this method of constructing bicategories in Section~\ref{sec:dist}
and apply it again in Section~\ref{sec:mondist} and Section~\ref{sec:smondist}.

\subsection*{Adjunctions between bicategories}
If $ \bcatE$  and  $\bcatF$ are bicategories, 
then an \emph{adjunction}  (sometimes also referred to as a \emph{biadjunction}) $\theta\co \Phi\dashv \Psi$ between two homomorphisms $\Phi\co  \bcatE\to \bcatF$
and $\Psi\co  \bcatF\to \bcatE$  is defined to be a pseudo-natural equivalence 
\[
\theta\co  \bcatE[\objX,\Psi\objY] \simeq  \bcatF[\Phi\objX,\objY]  \, .
\]
The homomorphism $\Phi$ is said to be the \emph{left adjoint} and
the homomorphism $\Psi$ to be the \emph{right adjoint}.
A homomorphism $\Phi\co  \bcatE\to \bcatF$ has a right adjoint if and only
if the prestack 
\[
\bcatE[\Phi(-),\objY]\co  \bcatE \to \Cat 
\]
is representable for every object $\objY\in  \bcatF$.
The \emph{counit} of the adjunction is a pseudo-natural transformation $\varepsilon \co  \Phi\circ \Psi \to \Id_\bcatF$
defined by letting $\varepsilon_{\objY} \defeq \theta(1_{\Psi\objY} )$
for~$\objY\in  \bcatF$.
The  \emph{unit} of the adjunction  is a pseudo-natural transformation $\eta\co  \Id_\bcatE \to \Psi\circ \Phi$
defined by letting~$\eta_{\objX} \defeq \theta^{-1}(1_{\Phi\objX} )$
for~$\objX\in  \bcatE$, where $\theta^{-1}$ is a quasi-inverse of $\theta$.
Either of the pseudo-natural transformations~$\eta$ and~$\varepsilon$ determine
the adjunction~$\theta$.

\subsection*{Cartesian, cocartesian and cartesian closed bicategories}
 We recall the notion of cartesian bicategory.
We say that an object~$\top$ in a bicategory~$\bcatE$ is \emph{terminal}
if the category~$\bcatE[\objC,\top ]$ is equivalent to the terminal category
for every object~$\objC\in \bcatE$.
A terminal object~$\top \in \bcatE$
is unique up to equivalence when it exists. 
Given two objects $\objX_1, \objX_2 \in \bcatE$, 
we say that an object~$\objX \in \bcatE$ equipped
with  two morphisms~$\pi_1\co  \objX \to \objX_1$ and~$\pi_2\co  \objX \to \objX_2$
is the \emph{cartesian product} of~$\objX_1$ and~$\objX_2$
if the  functor 
\[
\pi\co \bcatE[\objC,\objX] \to  \bcatE[\objC,\objX_1]\times \bcatE[\objC,\objX_2]  \, , 
\]
defined by letting~$\pi(F) \defeq (\pi_1\circ F,\pi_2\circ F)$
is an equivalence of categories for every object~$\objC \in \bcatE$.
The cartesian product of the objects~$\objX_1$ and~$\objX_2$
is unique up to equivalence when it exists. In this case, 
we will denote it by~$\objX_1\sqcap \objX_2$ and
refer to the morphisms~$\pi_k \co  \objX_1\sqcap \objX_2 \to \objX_k$ ($k = 1, 2$)
as the \emph{projections}.
When every pair of objects in ~$\bcatE$ has a cartesian product,
then the diagonal homomorphism~$\Delta_\bcatE \co \bcatE \to \bcatE\times \bcatE$
has a right adjoint, 
\[
(-) \sqcap (-) \co \bcatE\times \bcatE\to \bcatE \, ,
\] 
which associates
to $(\objX_1,\objX_2)$ the cartesian product $ \objX_1\sqcap \objX_2$.
We say that a bicategory ~$\bcatE$ with a terminal object is \emph{cartesian} 
if every pair of objects in ~$\bcatE$ has a cartesian product. 
Dually, we say that a bicategory~$\bcatE$ is \emph{cocartesian} 
if the opposite bicategory $\bcatE^\op$ is cartesian.
We write $\bot$ for the initial object and $\objX_1\sqcup \objX_2$ for the coproduct of two objects~$\objX_1$ and~$\objX_2$,
and refer to the  morphisms~$\iota_k \co \objX_k \to \objX_1\sqcup \objX_2$ ($k = 1, 2$)
as the \emph{inclusions}.

\medskip

 We recall the notion of cartesian closed bicategory.
 Given objects $\objX \, , \objY$ of a cartesian bicategory $\bcatE$,
we will say that an object $\obj{E}\in \bcatE$
equipped with a morphism $\ev\co  \obj{E} \sqcap \objX \to \objY$ 
is the \emph{exponential} of $\objY$ by $\objX$
if the functor
\[
\xymatrix{
\bcatE[\objK,\obj{E}] \ar[rr]^(0.4){(-)\sqcap \objX }&& \bcatE[\objK \sqcap \objX ,\obj{E} \sqcap \objX]  
\ar[rr]^(0.55){\bcatE[\objK \sqcap \objX , \ev ] }  & & 
\bcatE[\objK \sqcap \objX,  \objY]
}
\]
is an equivalence of categories for every object $\objK \in \bcatE$.
 This condition means that the prestack
\[
\bcatE[(-)\sqcap\objX, \objY]\co  \bcatE^\op \to \Cat
\]
is represented by the pair $(\obj{E},\ev)$.
The  exponential of~$\objY$ by~$\objX$
is unique up to equivalence when it exists and 
we denote it by~$\objY^{\objX}$ or $[\objX, \objY]$ and refer to the morphism
\[
\ev \co  \objY^{\objX} \sqcap \objX \to \objY
\]
as the \emph{evaluation}. 
We say that 
a cartesian  bicategory~$\bcatE$ is \emph{closed} if 
 the exponential $\objY^\objX$ exists for every $\objX,\objY\in \bcatE$.
A cartesian bicategory $\bcatE$ is closed if and only if, for every object $\objX\in \bcatE$, the homomorphism
$(-)\sqcap \objX\co \bcatE\to \bcatE$
has a right adjoint $(-)^\objX\co \bcatE\to \bcatE$.
The resulting homomorphism mapping~$(\objX,\objY)$ to~$\objY^\objX$
 is contravariant in the first variable  and covariant in the second.

\subsection*{Monoidal bicategories}
A cartesian bicategory is an example of a symmetric monoidal bicategory, a notion that we limit ourselves to review in outline.
First, recall that by definition, a monoidal bicategory is a tricategory with one object~\cite{GordonR:coht,GurskiN:cohtdc}.
We will not describe this notion here because of its complexity (see~\cite{ChengE:pertnc,GordonR:coht,GurskiN:cohtdc,StayM:comcb} for details).
It will suffice to say that a monoidal structure on bicategory  $\bcatE$
is a 9-tuple 
\[
(\otimes,  \mathbb{I}, \alpha^1, \alpha^2, \lambda^1, \lambda^2, \rho^1, \rho^2, \mu) \, ,
\]
where the tensor product 
\[
(-) \otimes (-) \co \bimE\times \bimE \to  \bimE
\]
is a homomorphism, $ \alpha^1$, $\lambda^1$ and $\rho^1$ are pseudo natural (adjoint) equivalences
and $ \alpha^2, \lambda^2$, $ \rho^2$ and $ \mu$ are invertible modifications.
More precisely, 
\[
\alpha^1(\objX,\objY,\objZ)\co (\objX\otimes \objY) \otimes \objZ \simeq \objX\otimes (\objY \otimes \objZ)
\]
is the 1-associativity constraint and the 2-associativity constraint $\alpha^2(\objX,\objY,\objZ, \objW)$ 
is  a 2-cell fitting in the pentagon
\[
\begin{xy} 
(0,20)*+{((\objX \otimes \objY) \otimes \objZ) \otimes \objW} ="1"; 
(-40,0)*+{(\objX \otimes (\objY \otimes \objZ) ) \otimes \objW}="2"; 
(-25,-20)*+{\objX \otimes ( ( \objY \otimes \objZ) \otimes \objW) }="3"; 
(25,-20)*+{\objX \otimes (\objY  \otimes (\objZ \otimes \objW)) \, . }="4" ; 
(40,0)*+{ (\objX \otimes \objY) \otimes (\objZ \otimes \objW)}="5";
{\ar_{\alpha^1 \otimes \objW} "1";"2"};
{\ar^{\alpha^1} "1";"5"};
{\ar^{\alpha^1} "5";"4"};
{\ar_{\alpha^1} "2";"3"};
{\ar_{\objX \otimes \alpha^1} "3";"4"};
\end{xy}
\] 
The associativity constraints satisfy coherence conditions that we omit.
We also omit the coherence conditions for the unit object $\mathbb{I}$
and its constraints $(\lambda^1, \rho^1,  \lambda^2, \rho^2, \mu)$.
A \emph{symmetry structure} on a monoidal bicategory as above is a pseudo-natural (adjoint) equivalence
\[
\sigma^1_{\objX,\objY}\co  \objX \otimes \objY \simeq \objY\otimes \objX
\]
together with certain higher dimensional constraints~\cite{GurskiN:inflsc}.

\medskip

For further information on the theory of bicategories, we invite the reader to refer 
to~\cite[Volume~I, Chapter~7]{BorceuxF:hanca} and~\cite{BenabouJ:intb,LackS:a2cc,StreetR:fibb}.

\section{$\catV$-categories and presentable $\catV$-categories}
\label{sec:vcat}

Since we will focus on enriched categories and enriched operads, it is convenient to  recall some aspects of enriched category theory from~\cite{KellyGM:bascec}.  Let $\moncatV = (\catV, \otimes, I, [-,-])$ be a locally presentable symmetric monoidal closed category, which we
shall consider fixed throughout this paper. Examples of such a category include the category of sets, the category of pointed simplicial sets (with the smash product as tensor product), categories of chain complexes of vector spaces over a field, and the category of spectra.

 If $\catX$ is a small $\catV$-category, we write $\catX[x,y]$ or simply $[x,y]$ for
the hom-object between two objects $x,y \in \catX$.
We write~$\CatV$ (\resp $\CATV$) for the 2-category of  small (\resp locally small) $\catV$-categories,
 $\moncatV$-functors and  $\moncatV$-natural transformations.
 The category~$\CatV$ is complete and cocomplete. In particular, its terminal object is the $\catV$-category~$\UnitCat$ defined 
 by
 letting~$\Obj(\UnitCat) \defeq \{ \ast \}$ and~$\UnitCat[\ast, \ast] \defeq \top$, where~$\top$ is the terminal object of~$\catV$. 
The terminal object of  $\CatV$ is the $\catV$-category~$\onecat$ defined by letting~$\Obj(\onecat) \defeq \{ \ast \}$ 
and~$\onecat[\ast,\ast] \defeq \top$, where~$\top$ is the terminal object of~$\catV$.
The category~$\CatV$ has also a symmetric monoidal closed structure.
We write $\catX\tensorvcat \catY$ for the tensor product  two small  $\moncatV$-categories~$\catX$ and~$\catY$.
This is defined by letting~$\Ob(\catX\tensorvcat \catY) \defeq \Ob(\catX)\times \Ob(\catY)$
and 
\[
(\catX\tensorvcat \catY)\bigl[ (x,y),(x',y') \bigr] \defeq \catX[x,x'] \otimes \catY[y,y'] \, .
\]
Sometimes we write~$x \otimes y \in \catX \otimes \catY$ instead of $(x,y) \in \catX \otimes \catY$.
The unit object for this monoidal structure is 
the~$\moncatV$-category $\UnitCat$ defined by letting~$\Ob(  \UnitCat) \defeq \{ \ast \} $ and 
$\UnitCat[\ast,\ast] \defeq I$. The hom-object~$[\catX, \catY]$
is the~$\moncatV$-category of $\moncatV$-functors from $\catX$ to $\catY$ and $\catV$-natural
transformations.  For $\catV$-categories $\catX, \catY, \catZ$, a \emph{$\catV$-functor of two variables} $F\co \catX\times \catY \to \catZ$
is a~$\catV$-functor~$F\co \catX \tensorvcat \catY \to \catZ$.

\medskip

The next definition recalls the notion of a (locally) presentable $\catV$-category, with which we will work throughout the paper.
The reader is invited to refer to~\cite{KellyGM:strdle} for further information about it.

\begin{definition} 
We say that a $\catV$-category $\pcatE$
is (locally)~\emph{presentable} if it is $\catV$-cocomplete
and its underlying ordinary category is (locally) presentable in the usual sense.
\end{definition}

We write~$\PCATV$ for the
2-category of presentable $\catV$-categories, 
 cocontinuous $\catV$-functors and 
$\catV$-natural transformations. 
For example, the $\catV$-category $\pshX \defeq [\catX^\op, \catV]$ of presheaves on
a small $\catV$-category $\catX$ is 
presentable.
In particular,  the terminal $\catV$-category $\onecat \simeq \psh(\zerocat)$ is 
presentable, where $\zerocat$ is the $\catV$-category with no objects.
For a small $\catV$-category $\catX$, we write $\yon_\catX \co \catX\to \pshX$ for the Yoneda $\catV$-functor, which is
defined by mapping $x\in \catX$ to $\yon_\catX(x) \defeq \catX[-,x]$. By the enriched version of the Yoneda lemma, there is
an isomorphism
\[
\pshX[ \yon_\catX(x), A] \iso A(x) \, ,
\]
for every $A \in \pshX$ and $x \in \catX$.
It follows that  $\yon_\catX$
is full and faithful; we will often regard it as an inclusion by writing $x$ instead of $\yon_\catX(x)$.
If $\catX$ is a small $\catV$-category, then the $\catV$-category
$\pshX$ is  cocomplete and freely generated by~$\catX$.
More precisely,  the Yoneda functor $\yon_\catX  \co  \catX\to \pshX$
exhibits $\pshX$ as the free cocompletion of $\catX$. This means that
if $\pcatE$ is a cocomplete $\catV$-category,
and $\CCATV[\pshX,\pcatE]$ denotes the (large, locally small)
$\catV$-category of cocontinuous $\catV$-functors from $\pshX$ to $\pcatE$,
 then the restriction functor
 \begin{equation}
\label{equ:restrictionlocpres}
\xycenter{
\yon_\catX^\ast \co \CCATV[\pshX,\pcatE]\to [\catX, \pcatE] \, ,}
\end{equation}
defined by letting $\yon_\catX^\ast(F) \defeq F\circ \yon_\catX$, is an equivalence of $\catV$-categories.
In particular,  every $\catV$-functor $F \co  \catX \to \pcatE$
admits a cocontinuous extension $F_{c} \co  \pshX\to \pcatE$
which is unique up to a unique $\catV$-natural isomorphism,
\[
\xymatrix{
 \catX \ar@/_1pc/[drr]_{F}  \ar[rr]^-{\yon_\catX} \ar@{}[drr]|{\quad \iso} & &\pshX \ar[d]^{F_{c}} \\
 && \pcatE.}
 \]
The $\catV$-functor $F_c$ is the left Kan extension of $F$ along $\yon_\catX$ and
  its action on $A \in \pshX$ is given by the coend formula
\begin{equation}
 \label{equ:cocextension}
 F_{c}(A) \defeq \int^{x\in \catX} F(x) \otimes A(x)  \, .
\end{equation}
  The $\catV$-functor $F_{c}$ is left adjoint  to the ``singular $\catV$-functor'' $F^s\co  \pcatE\to \pshX$, 
given by letting 
 \[
 F^s(y)(x) \defeq \pcatE[F(x),Y] \, ,
 \] 
 for $Y\in \pcatE$ and $x\in \catX$. We write
\begin{equation}
 \label{equ:homomP}
P\co \Cat_\catV\to\PCATV
\end{equation}
for the homomorphism which takes
a small $\catV$-category $\catX$ to $\pshX$.
 If $u\co \catX \to  \catY$ is a $\catV$-functor between small $\catV$-categories,
we define $P(u)\defeq u_!\co \pshX\to \pshY$, \ie as the cocontinuous 
 extension of~$\yon_\catY\circ u\co \catX\to \pshY$.
 Hence, the diagram
\[
\xymatrix{
\catX \ar[d]_{u} \ar[r]^-{\yon_\catX}   & \pshX \ar[d]^{u_!} \\
\catY \ar[r]_-{\yon_\catY} & \pshY
}
\]
 commutes up to a canonical isomorphism and, for every $A\in \psh(\catX)$ and $y\in \catY$, we have 
\begin{equation}
 \label{equ:lowershrieck}
u_!(A)(y) \defeq \int^{x\in \catX} \catX[y,u(x)] \otimes A(x) \, .
\end{equation}
 The functor $u_{!} \co \pshX \to \pshY$ has a right adjoint $u^\ast \co \pshY \to \pshX$ defined
 by letting
 \begin{equation}
 \label{equ:upperstar}
 u^*(B)(x) \defeq B(u(x)) \, , 
 \end{equation}
 for~$B \in \pshY$ and $x \in \catX$.

\medskip

If $\pcatE$ is a presentable $\catV$-category and $ \mathcal{C}$
is a cocomplete $\catV$-category, then any cocontinuous $\catV$-functor from $\pcatE$ to $\mathcal{C}$
has a right adjoint. Because of this, products and coproducts in $\PCATV$ are intimately related, as we now recall. 
First of all, the cartesian  product $\pcatE=\bigsqcap_{k \in K} \pcatE_k$
of a family of presentable $\catV$-categories $(\pcatE_k \ | \  k \in K)$ is presentable.
Each projection $\pi_k \co \pcatE_k \to \pcatE$ has a left adjoint $\iota_k \co \pcatE \to \pcatE_k$
and the family $(\iota_k \ | \ k \in  K)$ is a coproduct diagram in $\PCATV$.
In particular, the terminal $\catV$-category $1$
is both initial and terminal in the bicategory $\PCATV$.

\begin{lemma}  \label{bulletcomp} The homomorphism  $P\co \CatV \to\PCATV$ preserves 
coproducts.
\end{lemma}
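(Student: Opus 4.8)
The plan is to verify that $P \colon \CatV \to \PCATV$ sends coproducts in $\CatV$ to coproducts in $\PCATV$ by exhibiting an explicit equivalence of hom-categories and checking it is the one induced by the inclusions. Recall that $\CatV$ has all small coproducts, computed objectwise on sets of objects, and that $\PS(\catX) = [\catX^\op, \catV]$ by construction. So for a family $(\catX_k \mid k \in K)$ of small $\catV$-categories with coproduct $\catX = \bigsqcup_{k \in K} \catX_k$, I first observe that $\catX^\op = \bigsqcup_{k} \catX_k^\op$, and that a $\catV$-functor $\catX^\op \to \catV$ is precisely a $K$-indexed family of $\catV$-functors $\catX_k^\op \to \catV$ (since there are no hom-objects between components, a functor out of a coproduct is just a tuple of functors). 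This gives a canonical isomorphism of $\catV$-categories
\[
\PS\Bigl(\bigsqcup_{k \in K} \catX_k\Bigr) \iso \prod_{k \in K} \PS(\catX_k) \, ,
\]
natural in the obvious sense, and the right-hand side is the cartesian product in $\PCATV$ (a product of presentable $\catV$-categories is presentable, as recalled just before the statement).

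**Next I would** identify this isomorphism with the coproduct comparison. By the discussion immediately preceding the lemma, the cartesian product $\prod_k \PS(\catX_k)$ in $\PCATV$ is also the coproduct in $\PCATV$: each projection $\pi_k$ has a left adjoint $\iota_k \colon \PS(\catX_k) \to \prod_j \PS(\catX_j)$, and the family $(\iota_k \mid k \in K)$ is a coproduct cocone. So it suffices to check that, under the isomorphism above, the $\catV$-functor $P(\jmath_k) = (\jmath_k)_! \colon \PS(\catX_k) \to \PS(\bigsqcup_j \catX_j)$ induced by the coproduct inclusion $\jmath_k \colon \catX_k \to \bigsqcup_j \catX_j$ corresponds to the left adjoint $\iota_k$. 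This is a direct computation using the coend formula~\eqref{equ:lowershrieck} for $(\jmath_k)_!$: for $A \in \PS(\catX_k)$ and an object $y$ lying in the component $\catX_\ell$, one has $(\jmath_k)_!(A)(y) = \int^{x \in \catX_k} \bigl(\bigsqcup_j \catX_j\bigr)[y, \jmath_k(x)] \otimes A(x)$, and the hom-object $\bigl(\bigsqcup_j \catX_j\bigr)[y, \jmath_k(x)]$ is $\catX_k[y,x]$ when $\ell = k$ and the initial object of $\catV$ when $\ell \neq k$; by the enriched co-Yoneda lemma this gives $A(y)$ in component $k$ and the initial presheaf elsewhere — which is exactly the effect of $\iota_k$ (extension by the initial object). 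Equivalently, one can argue more slickly: $(\jmath_k)_!$ is, by definition, the cocontinuous extension of $\yon \circ \jmath_k$, and it is a cocontinuous $\catV$-functor with right adjoint $(\jmath_k)^*$ given by restriction along $\jmath_k$, which under the product decomposition is precisely $\pi_k$; since $\iota_k$ is by definition the left adjoint of $\pi_k$ and left adjoints are unique, $(\jmath_k)_! \iso \iota_k$.

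**The main obstacle** — such as it is — is bookkeeping rather than anything deep: one must be careful about the size issues (the coproducts in question are small, so the components form a small family and everything stays within $\CatV$ and $\PCATV$) and about checking that the comparison isomorphism $\PS(\bigsqcup_k \catX_k) \iso \prod_k \PS(\catX_k)$ is not merely an equivalence but compatible with the universal properties in the bicategorical sense, i.e.\ that for every presentable $\catV$-category $\pcatE$ the induced functor
\[
\PCATV\Bigl[\PS\bigl(\textstyle\bigsqcup_k \catX_k\bigr), \pcatE\Bigr] \longrightarrow \prod_{k \in K} \PCATV\bigl[\PS(\catX_k), \pcatE\bigr]
\]
given by precomposition with the $P(\jmath_k)$ is an equivalence. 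But this last point is immediate from the universal property~\eqref{equ:restrictionlocpres} of presheaf categories as free cocompletions: $\PCATV[\PS(\catX), \pcatE] \simeq [\catX, \pcatE]$ naturally, and $[\bigsqcup_k \catX_k, \pcatE] \iso \prod_k [\catX_k, \pcatE]$ since a $\catV$-functor out of a coproduct is a tuple of $\catV$-functors; chasing through the definitions shows this equivalence is precisely precomposition with the $P(\jmath_k)$. This also gives a second, cleaner proof route that avoids the coend computation entirely, and I would present that one: preservation of coproducts by $P$ reduces, via the two universal properties, to the trivial fact that the (strict, $\Set$-level) coproduct of $\catV$-categories has the expected mapping-out property, which it does by construction of $\CatV$.
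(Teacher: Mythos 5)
Your second, ``cleaner'' route --- the one you say you would actually present --- is exactly the paper's proof: it reduces the claim, via the free-cocompletion equivalence in~\eqref{equ:restrictionlocpres}, to the fact that precomposition with the inclusions identifies $[\bigsqcup_{k}\catX_k,\pcatE]$ with $\prod_{k}[\catX_k,\pcatE]$, i.e.\ to the coproduct property of the $\catX_k$ in (locally small) $\catV$-categories. Your first, explicit route (the decomposition $P(\bigsqcup_k \catX_k)\iso \prod_k P(\catX_k)$, the coincidence of products and coproducts in $\PCATV$, and the identification of $(\jmath_k)_!$ with the left adjoint of the $k$-th projection, by the coend computation or by uniqueness of left adjoints) is also correct, but it is extra work the paper's argument avoids.
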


\begin{proof} This follows from the universal property of $\pshX$.  Indeed, let us consider 
a  family of small $\catV$-categories $(\catX_k \ | \ k \in K)$ and 
let $\catX = \bigsqcup_{k \in K} \catX_k$ and let $\iota_k \co \catX_k \to \catX$ be the inclusion for $k \in K$.
We  prove that the family of maps $(\iota_k)_!\co P(\catX_k)\to \pshX$ $(k\in K)$ is a coproduct diagram in~$\PCATV$.
For this, it suffices to show for every presentable $\catV$-category $\pcatE$, the
 family of functors
\[ 
\PCATV[(\iota_k)_!, \pcatE] \co  \PCATV[\pshX, \pcatE] \to  \PCATV[P(\catX_k), \pcatE] \quad \quad (k \in K)
\]
is a product diagram in the 2-category $\CAT$.
 But this family is equivalent to the family of functors
 \[
 [\iota_k, \pcatE] \co [\catX, \pcatE] \to [\catX_k, \pcatE]\quad \quad (k\in K)
 \]
by the equivalence in~\eqref{equ:restrictionlocpres}. This proves the result, since the family $\iota_k \co \catX_k \to \catX$, for $k \in K$, 
is a coproduct diagram  also  in the 2-category of locally small $\catV$-categories.
\end{proof}

\begin{remark} \label{thm:tensorpcat} 
If $\pcatE$ and $\pcatF $ are presentable $\catV$-categories, then so is the 
$\catV$-category $\CCATV[\pcatE,\pcatF]$
of cocontinuous $\catV$-functors from $\pcatE$ to $\pcatF$.
This defines the hom-object of a symmetric monoidal closed structure 
on the 2-category $\mathrm{PCAT}_\catV$.
By definition, the \emph{completed tensor
product} $\pcatE \tensorloc  \pcatF $ of two presentable 
$\catV$-categories $\pcatE$ and $\pcatF $ 
is a presentable $\catV$-category
equipped with a $\catV$-functor in two variables from
$\pcatE \times \pcatF$ to $\pcatE \tensorloc  \pcatF$
that is $\catV$-cocontinuous in each variable and universal with respect to that property.
The unit object for the completed tensor product 
is~$\catV$.
If we consider the 2-categories $\CatV$ and $\PCATV$ as 
equipped with the symmetric monoidal structures, the homomorphism
$P\co \Cat_\catV\to\PCATV$ is symmetric monoidal. Indeed, for $\catX \, , \catY \in \CatV$, we have a
$\catV$-functor of two variables 
\[
\phi_{\catX, \catY} \co \pshX  \times \pshY \to \psh(\catX\otimes \catY)
\]
 defined by letting $\phi_{\catX, \catY}(F, G)(x \tensorvcat y) \defeq F(x)\otimes G(y)$. This 
 $\catV$-functor  exhibits $ \psh(\catX \tensorvcat \catY)$ as the completed tensor product 
 of~$\pshX $ and~$\pshY$ and so we have an equivalence
 \[
 \pshX \tensorloc  \pshY \simeq \psh(\catX \tensorvcat \catY) \, .
 \]  
 \end{remark}
 
 \medskip

We conclude this section with a straightforward observation, which we state explicitly for future reference. Recall
that if $\catX$ is a small $\catV$-category and $\pcatE$ is a locally small $\catV$-category, then the $\catV$-category $[\catX,\pcatE]$ of $\catV$-functors from $\catX$ to $\pcatE$  is locally small.

 \begin{proposition}  \label{thm:lambda} Let $\catX, \catY$ be a small $\catV$-categories and $\pcatE$ be a locally small $\catV$-category.
 The  $\catV$-functors
\[
\lambda^{\catY}\co [\catX  \tensorvcat \catY, \pcatE] \to [\catX, [\catY,\pcatE]] \, , \quad 
\lambda^{\catX}\co [\catX  \tensorvcat \catY, \pcatE] \to [\catY, [\catX,\pcatE]] 
\]
defined by letting 
\[
(\lambda^{\catY}F)(x)(y) \defeq F(x,y) \, , \quad (\lambda^{\catX}F)(y)(x) \defeq F(x,y) \, , 
\]
for  $F \co  \catX  \tensorvcat \catY \to \pcatE$, $x\in  \catX$ and $y\in \catY$,  are 
equivalences of $\catV$-categories. \qed
\end{proposition}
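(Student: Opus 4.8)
The plan is to prove the stronger statement that $\lambda^{\catY}$ is an \emph{isomorphism} of $\catV$-categories, which of course implies it is an equivalence; this is just the familiar currying isomorphism for enriched functor categories, i.e.\ the closed structure of $\CatV$, but with the codomain allowed to be merely locally small, and the proof is the standard one. The statement for $\lambda^{\catX}$ then follows either by rerunning the argument with the two variables interchanged, or by composing $\lambda^{\catX}$ with the symmetry $\catY \tensorvcat \catX \iso \catX \tensorvcat \catY$ of $\CatV$. First I would note that all the $\catV$-categories in the statement are legitimate, since $[\catY,\pcatE]$ is locally small ($\catY$ being small and $\pcatE$ locally small) and hence so is $[\catX,[\catY,\pcatE]]$, and that checking $\lambda^{\catY}$ is a $\catV$-functor is a routine unwinding of definitions, which I would dispatch quickly. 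Then I would establish separately that its object-assignment is a bijection and that it is fully faithful.

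For surjectivity on objects I would exhibit an explicit inverse: given a $\catV$-functor $G \co \catX \to [\catY,\pcatE]$, define $F \co \catX \tensorvcat \catY \to \pcatE$ by $F(x,y) \defeq G(x)(y)$ on objects, with action on hom-objects $\catX[x,x'] \otimes \catY[y,y'] \to \pcatE[F(x,y),F(x',y')]$ obtained by combining the action of $G$ on $\catX[x,x']$, the action of each $G(x)$ on $\catY[y,y']$, and composition in $\pcatE$. Unwinding the $\catV$-functoriality axioms for $F$ one finds they amount precisely to those for $G$ together with those for the individual functors $G(x)$, so that $G \mapsto F$ is a well-defined two-sided inverse to the object-assignment of $\lambda^{\catY}$, with $\lambda^{\catY}F = G$ on the nose.

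For full faithfulness, since $\pcatE$ is locally small all relevant hom-objects are ends in the complete category $\catV$, and I would compare the two sides by Fubini. For $F, F' \co \catX \tensorvcat \catY \to \pcatE$ the standard formula for hom-objects of enriched functor categories gives
\[
[\catX \tensorvcat \catY, \pcatE][F,F'] \iso \int_{(x,y) \in \catX \tensorvcat \catY} \pcatE\bigl[F(x,y),F'(x,y)\bigr] \, ,
\]
while applying it twice on the other side yields
\[
[\catX, [\catY,\pcatE]][\lambda^{\catY}F, \lambda^{\catY}F'] \iso \int_{x \in \catX} \int_{y \in \catY} \pcatE\bigl[F(x,y),F'(x,y)\bigr] \, .
\]
The Fubini theorem for ends identifies these canonically, since $\catX$, $\catY$ and $\catX \tensorvcat \catY$ are small and $\catV$ is complete, and under this identification the effect of $\lambda^{\catY}$ on hom-objects is the identity; hence $\lambda^{\catY}$ is fully faithful, and combined with the previous paragraph it is an isomorphism of $\catV$-categories. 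The one point needing a little care — and the reason the result is stated without a detailed proof — is verifying that the Fubini identification respects composition and units of the two functor $\catV$-categories; but since both expressions are the canonical ones for these hom-objects, this is entirely routine.
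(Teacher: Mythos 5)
Your proof is correct and is exactly the standard argument the paper has in mind: the paper states this proposition without proof (with a \qed) as the well-known currying isomorphism $[\catX \tensorvcat \catY, \pcatE] \iso [\catX, [\catY, \pcatE]]$ from enriched category theory (Kelly), and your bijection-on-objects plus Fubini-for-ends verification is precisely that argument, even yielding the stronger conclusion of an isomorphism of $\catV$-categories. No gaps; the completeness of $\catV$ (it is locally presentable) and the smallness of $\catX$, $\catY$ guarantee the ends you use exist, as you note.
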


\section{Distributors}
\label{sec:dist}

Let us recall  the notion of a distributor (sometimes called bimodule or profunctor in the 
literature)~\cite{BenabouJ:dis,LawvereFW:metsgl} and some basic facts about it. In
particular, we show how the bicategory of distributors fits into a Gabriel factorization.

\begin{definition} \label{thm:dis}
Let $\catX, \catY \in \CatV$. A \emph{distributor}  $F \co  \catX \mat \catY$ 
is a $\catV$-functor $F \co  \catY^\op  \tensorvcat \catX \to \catV$. 
\end{definition}

For a distributor $F \co  \catX \mat \catY$, we write $F[y,x]$ for the result of applying
$F$ to $(y, x) \in \catY^\op \otimes \catX$. Small $\catV$-categories, distributors and $\catV$-transformations form a bicategory, called the bicategory of distributors and
denoted by~$\DistV$, in which the hom-category of
morphisms between two small $\catV$-categories $\catX$ and $\catY$ is defined by letting
\[
 \DistV[\catX, \catY] \defeq [\catY^\op \otimes \catX, \catV ] \, .
 \]
The bicategory $\DistV$ fits into a Gabriel factorization of the form 
\begin{equation}
\label{equ:gabrieldistributors}
\xycenter{
\CatV \ar[rr]^{\psh} \ar[dr]_{(-)_\bullet}&&\PCATV \\
&\DistV \ar[ur]_{(-)^\dag} .&
}
\end{equation}

The Gabriel factorization essentially determines  the composition operation and the unit morphisms 
of the bicategory $\DistV$ and provides a proof that they satisfy the appropriate coherence conditions.
We illustrate this fact since we will use the same method  to define other bicategories 
in Chapter~\ref{cha:mond} and Chapter~\ref{cha:syms}.
First of all, for small $\catV$-categories $\catX, \catY$, we define
\begin{equation}
\label{equ:dagger}
(-)^\dag\co \DistV[\catX,\catY] \to \PCATV[\pshX, \pshY] 
\end{equation}
to be the composite of first  equivalence in Proposition~\ref{thm:lambda} and the quasi-inverse of the equivalence in~\eqref{equ:restrictionlocpres}:
\[
\xymatrix{
\DistV[\catX,\catY] \ar[r]^-{\lambda} &  \CATV[\catX, \pshY] \ar[r]^-{(-)_c} & \PCATV[\pshX,\pshY] \, . }
\]
It is convenient to express the effect of this functor with the following notation:
 \[
 \smallskip
\begin{prooftree}
\[
F\co \catX \mat \catY
\justifies
\lambda F\co \catX\to  \pshY
\]
\justifies
F^\dag \co  \psh(\catX) \to  \psh(\catY) \, . 
\end{prooftree}
\]
Here and in the rest of the paper, a horizontal line means that the data above it allows us to define the data 
data below it.
Explicitly, for  $A\in \pshX$, we have 
\begin{align*} 
F^\dag(A)(y) & =(\lambda F)_c(A)(y) \\
 & = \int^{x\in \catX} (\lambda F)(x)(y) \otimes A(x) \\
 & = \int^{x\in \catX} F[y,x] \otimes A(x)  \, . 
 \end{align*}
The functor in~\eqref{equ:dagger} is an equivalence of categories, since it is the composite of  equivalences. 
Because of this, the composite $G \circ F$ of two distributors $F \co \catX
\mat \catY$ and $G \co \catY \mat \catZ$ is determined up to unique isomorphism by the requirement that there
is an isomorphism
\begin{equation}
\label{equ:isofordagger1}
\varphi_{F. G} \co (G \circ F)^\dag \to G^\dag \circ F^\dag \, . 
\end{equation}
Thus,  $G\circ F\co   \catX\mat \catZ$ can be defined by letting
\begin{equation}
\label{equ:compdist}
(G \circ F)[z,x]  \defeq  \int^{y \in \catY}  G[z,y] \otimes  F[y,x]    \, .
\end{equation}
Similarly, the identity distributor $\id_\catX \co \catX \mat \catX$ is determined up to unique isomorphism 
by the requirement that there is an isomorphism
\begin{equation}
\label{equ:isofordagger2}
\varphi_\catX \co (\id_\catX)^\dagger \to 1_{\pshX} \, .
\end{equation}
Thus, $\id_\catX\co  \catX\mat \catX$ can be defined by letting
 \[
 \id_\catX[x,y] \defeq \catX[x,y]  \, .
 \]
Using the same reasoning, it is possible to show that  horizontal composition of distributors is functorial
and associative up to coherent isomorphism, and that the identity morphisms provide two-sided units for this operation 
up to coherent isomorphism. For example, for distributors $F \co \catX \mat \catY$, $G \co \catY \mat \catZ$,
$H \co \catZ \mat \catW$, the associativity isomorphism 
\[
\alpha_{F, G, H} \co (H \circ G) \circ F \to H \circ (G \circ F)
\]
can be defined as the unique 2-cell such that the following diagram commutes (where we omit subscripts of the 2-cells
to improve readability):
\begin{equation}
\label{equ:cohcomp}
\xycenter{
\big( (H \circ G) \circ F \big)^\dagger \ar[r]^{\alpha^\dagger}  \ar[d]_{\phi} & \big( H \circ (G \circ F) \big)^\dagger \ar[d]^{\varphi} \\
(H \circ G)^\dagger \circ F^\dagger \ar[d]_{\varphi \circ F^\dagger}      &   H^\dagger \circ (G \circ F)^\dagger \ar[d]^{H^\dagger \circ \varphi}  \\
(H^\dagger \circ G^\dagger) \circ F^\dagger \ar@{=}[r] & H^\dagger \circ (G^\dagger \circ F^\dagger) \, .} 
\end{equation}
It follows that we can define a homomorphism $(-)^\dagger \co \DistV \to \PCATV$ by letting 
\[
\catX^\dagger  \defeq P(\catX)
\] 
and taking its action on morphisms and 2-cells be defined by the functor in~\eqref{equ:dagger}.
The required isomorphisms expressing pseudo-functoriality are then given by the 2-cells in~\eqref{equ:isofordagger1}
and~\eqref{equ:isofordagger2}, which satisfy the required coherence conditions by the definition of the associativity
and unit isomorphisms in~$\DistV$, as done above. For example, the diagram in~\eqref{equ:cohcomp} 
states exactly one the coherence conditions. Furthermore, by construction, the homomorphism $(-)^\dagger \co \DistV \to \PCATV$ is
full and faithful, as required from the second part of a Gabriel factorization.

\medskip

We now define the homomorphism $(-)_\bullet \co \Cat_\catV\to \DistV$ which provides the first part of the Gabriel factorization in~\eqref{equ:gabrieldistributors}. Its action on objects is the identity. Furthermore, the requirement that the
diagram in~\eqref{equ:gabrieldistributors} commutes up to pseudo-natural isomorphism forces us to send a 
$\catV$-functor  $u\co \catX \to  \catY$ to a distributor  $u_\bullet\co \catX \mat  \catY$
such that $u_{!} \iso (u_\bullet)^\dag$.  Such a distributor, which is unique up to unique isomorphism, is defined by letting, for $x \in \catX$, $y \in \catY$, 
\[
u_\bullet[y,x] \defeq \catY[y,u(x)] \, . 
\]
Indeed,  for~$A\in \pshX$ and~$y\in \catY$, we have 
\begin{align*} 
(u_\bullet)^\dag (A)(y)
 & =\int^{x\in \catX} \catY[y,u(x)] \otimes A(x)   \\
& = u_!(A)(y) \, . \phantom{\int^{y \in \catY}}  
\end{align*} 
The distributor  $u_\bullet\co \catX \mat  \catY$ has a right adjoint $u^\bullet\co \catY \mat  \catX$, which is 
defined by letting for $x \in \catX$, $y \in \catY$, 
\[
u^\bullet[x,y] \defeq \catY[u(x),y] \, .
\]
Indeed, we have that $u^\ast \iso (u^\bullet)^\dag$, since for $B \in \pshY$ and $x \in \catX$, we have 
\begin{align*} 
 (u^\bullet)^\dag (B)(x)  & =  \int^{y\in  \catY}  \catY[u(x),y] \otimes B(y)  \\ 
 & \iso  B(u(x)) \phantom{\int^{y \in \catY}} \\ 
 &  = u^\ast(B)(x) \, . 
 \end{align*} 
Since there is an adjunction $u_!\dashv u^\ast$, we also have an
adjunction $u_\bullet \dashv u^\bullet$.
The components of its unit $\eta\co  \Id_\catX\to  u^\bullet \circ u_\bullet $ are the maps
$u_{x,x'} \co \catX[x,x'] \to \catY[u(x),u(x')] $ given by $u$.
The components of the counit~$\varepsilon\co  u_\bullet \circ u^\bullet \to  \Id_\catY$ are the canonical maps
\[
\varepsilon_{y, y'} \co \int^{x\in  \catX}  \catY[y,u(x)]  \otimes  \catY[u(x),y']  \to \catY[y,y']  \, .
\]
For $u\co \catX\to \catY \, , v\co \catX \to  \catY$ we have $ (v\circ u)_! \iso v_!\circ u_!$, and for $\catX \in \CatV$,
we have $(1_\catX)_{!} \iso 1_{\pshX}$. Therefore, there are canonical isomorphisms
\[
(v\circ u)_\bullet \iso v_\bullet \circ u_\bullet \,  , \quad  (1_\catX)_\bullet \iso \id_\catX \, , 
\] 
which necessarily satisfy the coherence conditions for a homomorphism  $(-)_\bullet \co \Cat_\catV\to \DistV$.

\medskip

Part (i) of the next lemma will be used to prove Theorem~\ref{freesymmtricdist}, while part (ii) will be used in the proof
of Proposition~\ref{lem:multiplicative2}

\begin{lemma}  \label{bulletlambda} Let $F\co \catX\mat \catY$ be a distributor.
\begin{enumerate}[(i)]
\item For all $\catV$-functors $u:\catX'\to \catX$,  $\lambda(F\circ u_\bullet) \iso \lambda(F) \circ u$.
\item For all $\catV$-functors $u:\catX'\to \catX$ and $v:\catY'\to \catY$,  $(v^\bullet\circ  F\circ u_\bullet )[y',x'] \iso F[v(y'),u(x')]$.
 \end{enumerate}
\end{lemma}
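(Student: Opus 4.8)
The plan is to reduce both parts to the co-Yoneda lemma (density formula), applied inside the explicit coend formula~\eqref{equ:compdist} for composition of distributors, together with the defining formulas $u_\bullet[x,x'] \defeq \catX[x,u(x')]$ and $v^\bullet[y',y] \defeq \catY[v(y'),y]$ from Section~\ref{sec:dist}. Throughout I would use that all the coends in sight exist, since $\catV$ is cocomplete and the categories involved are small.

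For part~(i), I would first compute, for $y \in \catY$ and $x' \in \catX'$,
\[
(F \circ u_\bullet)[y,x'] = \int^{x \in \catX} F[y,x] \otimes u_\bullet[x,x'] = \int^{x \in \catX} F[y,x] \otimes \catX[x,u(x')] \, .
\]
Applying the co-Yoneda lemma to the $\catV$-functor $F[y,-] \co \catX \to \catV$ identifies this coend with $F[y,u(x')]$, naturally in $y$ and $x'$. Since $\lambda$ is characterised (cf. the computation of $F^\dagger$ in Section~\ref{sec:dist}) by $(\lambda G)(x')(y) = G[y,x']$, this reads
\[
\lambda(F \circ u_\bullet)(x')(y) \iso F[y,u(x')] = (\lambda F)(u(x'))(y) = \big((\lambda F) \circ u\big)(x')(y) \, ,
\]
and the naturality of the co-Yoneda isomorphism promotes this pointwise identification to the required isomorphism $\lambda(F \circ u_\bullet) \iso \lambda(F) \circ u$ in $[\catX', \pshY]$.

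For part~(ii), I would iterate the argument. Parenthesising $v^\bullet \circ F \circ u_\bullet$ as $v^\bullet \circ (F \circ u_\bullet)$ and using~\eqref{equ:compdist} once more, for $y' \in \catY'$ and $x' \in \catX'$,
\[
(v^\bullet \circ F \circ u_\bullet)[y',x'] = \int^{y \in \catY} v^\bullet[y',y] \otimes (F \circ u_\bullet)[y,x'] \iso \int^{y \in \catY} \catY[v(y'),y] \otimes F[y,u(x')] \, ,
\]
where the isomorphism uses the computation carried out in part~(i). A second application of co-Yoneda, now to the $\catV$-functor $F[-,u(x')] \co \catY^\op \to \catV$, identifies the right-hand side with $F[v(y'),u(x')]$. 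Equivalently, one can expand everything into the double coend $\int^{y \in \catY} \int^{x \in \catX} \catY[v(y'),y] \otimes F[y,x] \otimes \catX[x,u(x')]$ and apply the density formula in each variable, using Fubini for coends.

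I do not anticipate a genuine obstacle. The only points that need a little care are the bookkeeping of variances — the first argument of a distributor is contravariant, so the two invocations of co-Yoneda are the two mirror forms of the lemma — and checking that the pointwise isomorphisms are $\catV$-natural, which is automatic here since every arrow involved is a canonical structure map of a coend.
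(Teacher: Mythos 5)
Your proof is correct and follows essentially the same route as the paper: unfold the coend formula~\eqref{equ:compdist}, substitute the definitions of $u_\bullet$ and $v^\bullet$, and collapse the coends by the co-Yoneda (density) isomorphism, using $(\lambda G)(x')(y) = G[y,x']$ to restate part~(i). The only difference is cosmetic — you treat part~(ii) either iteratively or via the double coend with Fubini, while the paper goes directly through the double coend — so no further comment is needed.
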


\begin{proof} For (i), let $x' \in \catX$. Then
\begin{align*} 
 \lambda(F\circ u_\bullet)(x')(y) & = (F \circ u_\bullet)[y,x'] \\
  & = \int^{x \in \catX} F[y,x] \otimes u_\bullet[x,x'] \\
  & = \int^{x \in \catX} F[y,x] \otimes \catX'[x, u(x')]  \\
  & \iso F[y, u(x')]  \phantom{\int^{x \in \catX}} \\
  & = (\lambda(F) \circ u) (x') (y) \, . \phantom{\int^{x \in \catX}}  \qedhere
  \end{align*}
For (ii), let $x' \in \catX'$ and $y' \in \catY'$. Then
\begin{align*}
(v^\bullet\circ  F\circ u_\bullet )[y',x'] & \iso \int^{x \in \catX} \int^{y \in \catY} v^\bullet[y',y] \otimes F[y,x] \otimes u_{\bullet}[x,x']  \\
 & = \int^{x \in \catX} \int^{y \in \catY} \catY[v(y'),y] \otimes F(y,x) \otimes \catX[x, u(x')] \\
 & \iso F[v(y'), u(x')] \, . \phantom{\int^{x \in \catX}} 
 \end{align*}
 
  \end{proof}

\begin{proposition}  \label{bulletcomp2} The bicategory $\DistV$ has coproducts and
the homomorphisms 
\[
(-)_\bullet:\CatV \to \DistV \, , \quad (-)^\dag:\DistV\to  \PCATV
\]
preserve coproducts.
\end{proposition}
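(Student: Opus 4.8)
The plan is to identify coproducts in $\DistV$ first, and then read off the two preservation statements. Given a family $(\catX_k \mid k \in K)$ of small $\catV$-categories, I would take its coproduct in $\DistV$ to be the coproduct $\bigsqcup_k \catX_k$ formed in $\CatV$, equipped with the distributors $(\iota_k)_\bullet \co \catX_k \mat \bigsqcup_k \catX_k$ induced by the coprojections $\iota_k \co \catX_k \to \bigsqcup_k \catX_k$. Granting this, $(-)_\bullet$ preserves coproducts immediately, since it carries the coproduct cocone $(\iota_k)$ in $\CatV$ to the cocone $\bigl( (\iota_k)_\bullet \bigr)_k$, which is a coproduct cocone by the claim; and $(-)^\dag$ preserves coproducts because, by the defining isomorphism $(u_\bullet)^\dag \iso u_!$ of $(-)_\bullet$, it carries this cocone to $\bigl( (\iota_k)_! \bigr)_k = \bigl( P(\iota_k) \bigr)_k$, which is a coproduct cocone in $\PCATV$ by Lemma~\ref{bulletcomp}.

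To prove the claim, fix $\catZ \in \DistV$; I must show that the comparison functor
\[
\pi \co \DistV[\, \bigsqcup\nolimits_k \catX_k,\, \catZ \,] \to \prod_k \DistV[\catX_k, \catZ] \,, \qquad \pi(F) \defeq \bigl( F \circ (\iota_k)_\bullet \bigr)_k \,,
\]
is an equivalence of categories. My approach is to transport this question along the homomorphism $(-)^\dag \co \DistV \to \PCATV$, which is full and faithful because it is the second leg of the Gabriel factorization~\eqref{equ:gabrieldistributors}. Full-faithfulness implies that the functors $(-)^\dag \co \DistV[\bigsqcup_k \catX_k, \catZ] \to \PCATV[P(\bigsqcup_k \catX_k), P(\catZ)]$ and $(-)^\dag \co \DistV[\catX_k, \catZ] \to \PCATV[P(\catX_k), P(\catZ)]$ are equivalences. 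Since $P$ preserves coproducts (Lemma~\ref{bulletcomp}), $P(\bigsqcup_k \catX_k)$ is a coproduct of the $P(\catX_k)$ in $\PCATV$ with coprojections $P(\iota_k)$, so precomposition with the $P(\iota_k)$ gives an equivalence $\PCATV[P(\bigsqcup_k \catX_k), P(\catZ)] \simeq \prod_k \PCATV[P(\catX_k), P(\catZ)]$. Finally, since $(-)^\dag$ is a homomorphism and $\bigl( (\iota_k)_\bullet \bigr)^\dag \iso P(\iota_k)$, it intertwines precomposition by $(\iota_k)_\bullet$ with precomposition by $P(\iota_k)$ up to the structural isomorphisms~\eqref{equ:isofordagger1}. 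Assembling these three equivalences and this compatibility into a commuting square forces $\pi$ to be an equivalence as well, which establishes the claim.

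I expect the only point that needs care to be this last compatibility: checking that the square relating $\pi$ to the equivalence built from $(-)^\dag$ and Lemma~\ref{bulletcomp} commutes up to natural isomorphism. This is a diagram chase using only the pseudo-functoriality isomorphism $\varphi_{F,G}$ of $(-)^\dag$ together with $(u_\bullet)^\dag \iso u_!$. If one prefers to argue entirely inside $\DistV$, the claim instead follows from the direct computation
\[
\DistV[\, \bigsqcup\nolimits_k \catX_k,\, \catZ \,] = [\catZ^\op \tensorvcat \bigsqcup\nolimits_k \catX_k,\, \catV] \iso [\, \bigsqcup\nolimits_k (\catZ^\op \tensorvcat \catX_k),\, \catV] \iso \prod_k \DistV[\catX_k, \catZ] \,,
\]
using that $\tensorvcat$ and $(-)^\op$ preserve coproducts in $\CatV$ and that $[-, \catV]$ sends coproducts to products, together with the identification $\bigl( F \circ (\iota_k)_\bullet \bigr)[z,x] \iso F[z, \iota_k(x)]$ proved exactly as in Lemma~\ref{bulletlambda}.
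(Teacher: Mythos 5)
Your proposal is correct and follows essentially the same route as the paper, whose proof is just the one-line observation that the result follows from Lemma~\ref{bulletcomp} together with the Gabriel factorization in~\eqref{equ:gabrieldistributors}; your main argument simply makes explicit the transport of the universal property along the full and faithful homomorphism $(-)^\dag$ that the paper leaves implicit. The alternative hom-category computation you sketch at the end is a fine independent check but is not needed.
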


\begin{proof} This follows from Lemma~\ref{bulletcomp} and the fact that $\DistV$ fits into a Gabriel factorization.
\end{proof}

\begin{remark} Although we will not need it in the following, let us recall that the symmetric monoidal
structure on $\CatV$ extends to a symmetric monoidal structure on $\DistV$, defined in the same
way on objects. The tensor product $F_1 \tensorvcat F_2 \co  \catX_1 \tensorvcat \catX_2 \mat \catY_1   \tensorvcat \catY_2$ of 
two distributors~$F_1\co  \catX_1\mat \catY_1$ and $F_2\co  \catX_2\mat  \catY_2$
is defined by letting 
 \[
 (F_1 \tensorvcat F_2)[(y_1,y_2) , (x_1,x_2)] \defeq  F_1[y_1,x_1] \otimes F_2[ y_2,x_2]  \, ,
 \]
 for $x_1\in \catX_1$, $x_2\in \catX_2$, $y_1\in \catY_1$ and $y_2\in \catY_2$.
 This defines a symmetric monoidal structure on the bicategory $\DistV$.
The homomorphisms involved in the Gabriel factorization in~\eqref{equ:gabrieldistributors}
are symmetric monoidal.  Let us also remark that the symmetric monoidal bicategory $\DistV$
is compact~\cite{StayM:comcb} (also called rigid): the dual of a small $\catV$-category $\catX$
is the opposite $\catV$-category $\catX^\op$. The counit $\varepsilon\co \catX^{\op} \tensorvcat \catX\mat \UnitCat$
is given by the hom-functor $ \UnitCat^{\op} \tensorvcat \catX^\op  \tensorvcat \catX=\catX^\op  \tensorvcat \catX \to \catV$
and similarly for the unit $\eta\co \UnitCat\mat \catX  \tensorvcat \catX^{\op}$. Here, $\UnitCat$ is the $\catV$-category
giving the unit for the tensor product on $\CatV$, as defined in Section~\ref{sec:vcat}.
\end{remark}

\begin{remark}[The bicategory of matrices]  \label{thm:matrices}
For our goals in Chapter~\ref{cha:syms}, 
it will be useful for  to give an explicit description of the sub-bicategory of $\DistV$ spanned by sets, viewed as discrete $\catV$-categories.
In order to do so, we need to recall the definition and some basic properties of the functor mapping an ordinary category to the free $\catV$-category on it.  
If $I$ is the unit object of the monoidal category~$\catV$,
then the functor~$\catV(I,-)\co \catV\to\Set$ has a left adjoint~$(-)\cdot I\co \Set\to \catV$ which associates to a set $S$
the coproduct $S\cdot I=\bigsqcup_S I$ of $S$ copies of $I$.
This left adjoint functor 
is  symmetric (strong) monoidal. Hence, for any pair of sets~$S$ and~$T$, we have an isomorphism
\[
(S\times T)\cdot I \iso (S\cdot I) \tensorvcat (T\cdot I) \, .
\]
A similar situation occurs for the functor $(-)\cdot  \UnitCat\co \Set \to \CatV$
which takes a set $S$ to the $\catV$-category $S\cdot \UnitCat=\bigsqcup_S  \UnitCat$.
The functor $\mathrm{Und} \co \CatV \to \Cat$ mapping a $\catV$-category to its underlying category
has also a left adjoint $(-)\cdot \UnitCat\co \Cat \to \CatV$ which associates
to a category $\catC$ the $\catV $-category $\catC \cdot \UnitCat$
defined by letting $\Obj(\catC \cdot \UnitCat) \defeq
\Obj(\catC)$ and
$(\catC \cdot \UnitCat)[x,y] \defeq \catC[x,y] \cdot I$.
This left adjoint is  symmetric (strong) monoidal. Hence, for every $\catC, \catD \in \Cat$, we have an isomorphism
\[
(\catC \times \catD )\cdot  \UnitCat  \iso ( \catC \cdot  \UnitCat) \tensorvcat (\catD \cdot  \UnitCat) \, .
\]
Recall that, for sets $X$ and $Y$, a \emph{matrix} $F \co X \mat Y$ is a functor $F \co Y \times X \to \catV$,
\ie a family of sets $F(y,x)$, for $(y, x) \in Y \times X$.  Sets, matrices and natural transformations form a bicategory, called the bicategory of matrices and denoted by~$\MatV$, which can be identified with the full
sub-bicategory of $\DistV$ spanned by discrete $\catV$-categories. Indeed, for a set $X$, the discrete $\catV$-category 
with set of objects $X$ is the same thing as the free $\catV$-category on the discrete category with set of objects $X$, 
which is denoted here by~$X \cdot \UnitCat$. Furthermore, for every pair of sets $X$ and $Y$, we have isomorphisms of categories
\begin{align*}
\MatV[X, Y] & =  \CAT[ Y \times X, \catV] \\ 
 & \iso \CATV[  (Y \times X) \cdot \UnitCat, \catV] \\
 & \iso \CATV[ (Y \cdot \UnitCat) \otimes (X \cdot \UnitCat), \catV ] \\ 
 & \iso \DistV[  X \cdot \UnitCat \, ,  Y \cdot \UnitCat] \, . 
\end{align*}
The composition and identity morphisms in $\MatV$ can be then defined so that we have a full and faithful homomorphism from $\MatV$ to $\DistV$. Given two matrices $F \co X \mat Y$ and $G \co Y \mat Z$, their composite 
$G\circ F \co X\mat Z$ is defined so that there is an isomorphism $(G\circ F) \cdot \UnitCat \iso (G \cdot \UnitCat )\circ 
(F \cdot \UnitCat)$. It follows that, for $x \in X$ and $z \in Z$, we can define
 \[
(G\circ F)[z,x]  \defeq  \bigsqcup_{y \in Y} G[z,y] \otimes F[y,x]  \, .
\]
For a set $X$, the identity matrix~$\Id_X : X\mat X$ is defined so that~$\Id_X \cdot \UnitCat \iso \Id_{X \cdot \UnitCat}$. Hence, for $x, y \in X$, we can define
\[
\Id_X[x,y]  \defeq  
\left\{
\begin{array}{ll}
I & \text{if } x = y \, , \\
0 & \text{otherwise} \, ,
\end{array}
\right.
\]
where $I$ and $0$ denote the unit object and the initial object of $\catV$, respectively. These definitions determine
an inclusion $\MatV \subseteq \DistV$.
\end{remark}

We conclude this section by defining the operation of composition of a distributor with a $\catV$-functor, which will be useful in the
discussion of composition of analytic functors 
in Section~\ref{sec:symsaf}. If $\pcatE$ is a presentable $\catV$-category, then we define the~\emph{composite} of a $\catV$-distributor $F\co \catX\mat \catY$
with a $\catV$-functor $T\co  \catY\to \pcatE$
as the~$\catV$-functor $T\circ F\co  \catX\to \pcatE$
defined by mapping $x\in \catX$ to
\begin{equation}
\label{equ:compdistfun}
(T\circ F)(x)  \defeq \int^{y\in \catY} T(y)\otimes F(y, x) \, .
\end{equation}

 \begin{lemma}\label{contraactionlemma} 
Let $\catX, \catY$ be small $\catV$-categories and $\pcatE$ a  presentable $\catV$-category.
 Let $F\co \catX\mat \catY$ be a distributor and  $T\co  \catY\to \pcatE$ be a $\catV$-functor. 
 There is an isomorphism
 \[
(T\circ F)_c \iso T_c\circ F^\dag \, ,
\]
where  $T_c\co \pshY\to \pcatE$ is the cocontinuous extension of $T\co  \catY\to \pcatE$.
\end{lemma}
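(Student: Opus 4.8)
The plan is to verify the claimed isomorphism $(T \circ F)_c \iso T_c \circ F^\dagger$ by a direct coend calculation, using the explicit formulas already established in the excerpt for each of the three functors involved. Both sides are cocontinuous $\catV$-functors $\pshX \to \pcatE$, so by the universal property of $\pshX$ as the free cocompletion of $\catX$ (the equivalence in~\eqref{equ:restrictionlocpres}), it would suffice to produce a $\catV$-natural isomorphism after restricting along $\yon_\catX \co \catX \to \pshX$; however, since we have explicit coend formulas at hand, I would instead just evaluate both sides on an arbitrary presheaf $A \in \pshX$ and compare.

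First I would expand the left-hand side. By the coend formula~\eqref{equ:cocextension} for the cocontinuous extension, applied to the $\catV$-functor $T \circ F \co \catX \to \pcatE$, we have
\[
(T \circ F)_c(A) = \int^{x \in \catX} (T \circ F)(x) \otimes A(x)
= \int^{x \in \catX} \Big( \int^{y \in \catY} T(y) \otimes F(y,x) \Big) \otimes A(x) \, ,
\]
using the definition~\eqref{equ:compdistfun} of $T \circ F$. Next I would expand the right-hand side: by Lemma~\ref{contraactionlemma}'s stated formula for $F^\dagger$ (namely $F^\dagger(A)(y) = \int^{x \in \catX} F[y,x] \otimes A(x)$), and then applying $T_c$ via~\eqref{equ:cocextension},
\[
(T_c \circ F^\dagger)(A) = T_c\big(F^\dagger(A)\big) = \int^{y \in \catY} T(y) \otimes F^\dagger(A)(y)
= \int^{y \in \catY} T(y) \otimes \Big( \int^{x \in \catX} F[y,x] \otimes A(x) \Big) \, .
\]
The two expressions are then identified by the Fubini theorem for coends together with the associativity and symmetry of the tensor product $\otimes$ of $\catV$ (and the fact that $\otimes$ preserves colimits in each variable, so it commutes with the coends), moving the factor $T(y)$ and $A(x)$ past the inner coends. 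This produces a canonical isomorphism, natural in $A$.

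The main obstacle — such as it is — is bookkeeping rather than anything deep: one must be slightly careful that $T(y)$ lives in $\pcatE$ while $F(y,x)$ and $A(x)$ live in $\catV$, so the relevant ``$\otimes$'' on the $\pcatE$-side is the tensoring (copower) of the presentable $\catV$-category $\pcatE$ over $\catV$, and one needs that this tensoring is cocontinuous in each variable (which holds since $\pcatE$ is $\catV$-cocomplete) in order to commute it past the coend over $\catY$. Granting the standard Fubini and interchange properties of coends in a $\catV$-cocomplete setting, the isomorphism is immediate; I would also remark that naturality in $A$ and the fact that the isomorphism is the canonical comparison map follow from the uniqueness clause in the universal property of the free cocompletion, so no separate coherence check is required.
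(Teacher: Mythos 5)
Your argument is correct, but it proceeds differently from the paper's. The paper does not evaluate at a general presheaf at all: it simply observes that $T_c\circ F^\dag$ is cocontinuous, computes its value on representables, $(T_c\circ F^\dag)(\yon_\catX(x))\iso T_c(\lambda F)(x)=\int^{y\in\catY}T(y)\otimes F(y,x)=(T\circ F)(x)$, and then concludes by the uniqueness (up to unique isomorphism) of the cocontinuous extension of a $\catV$-functor along $\yon_\catX$ -- exactly the alternative you mention in your first paragraph and then set aside. Your route instead expands both sides at an arbitrary $A\in\pshX$ and identifies them via Fubini for coends together with cocontinuity of the copower in each variable; this is fine (and you are right to flag that the ``$\otimes$'' appearing with $T(y)$ is the tensoring of $\pcatE$ over $\catV$, whose compatibility with the $\catV$-tensor and with coends is what makes the interchange legitimate), but it costs you this extra bookkeeping and a separate remark about naturality in $A$, which the paper's representable-plus-universal-property argument gets for free. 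A small caveat on your closing remark: invoking the uniqueness clause of the free cocompletion to secure naturality presupposes precisely the comparison on representables, i.e.\ the paper's argument; it is cleaner either to note that every step of your coend manipulation is canonical and hence $\catV$-natural in $A$, or simply to run the shorter representables argument from the start. Also, the formula $F^\dag(A)(y)=\int^{x\in\catX}F[y,x]\otimes A(x)$ that you cite comes from the discussion of the Gabriel factorization for $\DistV$ in the background chapter, not from the lemma itself.
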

 
  \begin{proof} 
    The functor $T_c\circ F^\dag\co \pshX \to \pcatE$ is cocontinuous and
 for every $x\in \catX$ we have
 \begin{align*} 
 (T_c\circ F^\dag) (\yon_\catX(x)) & \iso T_c(\lambda F)(x) \\
  &  =\int^{y\in \catY} T(y)\otimes \lambda(F)(x)(y) \\
  & =\int^{y\in \catY} T(y)\otimes F(y, x)  \\
  & =(T\circ F)(x) \, .
  \end{align*}
 Thus, $(T\circ F)_c \iso T_c\circ F^\dag$ by the uniqueness up to unique isomorphism of the concontinous extension of a functor.
  \end{proof}

 \begin{proposition}\label{contraactionprop} Let $\catX, \catY, \catZ \in \CatV$ and $\pcatE \in \PCATV$.
\begin{enumerate}[(i)]   
\item For all distributors $F\co \catX\mat \catY$, $G\co \catY\mat \catZ$ and $\catV$-functors $T\co  \catZ\to \pcatE$, $(T\circ G)\circ F \iso T\circ (G\circ F)$.
\item For all $T\co  \catZ\to \pcatE$, $T\circ \id_\catZ \iso T$.
\end{enumerate}
\end{proposition}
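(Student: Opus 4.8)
The plan is to deduce both statements from Lemma~\ref{contraactionlemma}, which translates claims about the $\catV$-functors $T \circ F \co \catX \to \pcatE$ into claims about their cocontinuous extensions, combined with the observation (recorded after~\eqref{equ:restrictionlocpres}) that a $\catV$-functor $S \co \catX \to \pcatE$ is recovered from its cocontinuous extension $S_c \co \pshX \to \pcatE$ by restriction along the Yoneda embedding, i.e.\ $S \iso S_c \circ \yon_\catX$. Hence in each case it suffices to exhibit an isomorphism after applying $(-)_c$.

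For part~(i), two applications of Lemma~\ref{contraactionlemma} and the coherence isomorphism~\eqref{equ:isofordagger1} of $\DistV$ give
\[
\big( (T \circ G) \circ F \big)_c \iso (T \circ G)_c \circ F^\dag \iso (T_c \circ G^\dag) \circ F^\dag = T_c \circ (G^\dag \circ F^\dag) \, ,
\]
and, on the other side,
\[
\big( T \circ (G \circ F) \big)_c \iso T_c \circ (G \circ F)^\dag \iso T_c \circ (G^\dag \circ F^\dag) \, .
\]
Composing these isomorphisms yields $\big( (T \circ G) \circ F \big)_c \iso \big( T \circ (G \circ F) \big)_c$, and restricting along $\yon_\catX$ gives the desired isomorphism $(T \circ G) \circ F \iso T \circ (G \circ F)$. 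For part~(ii), Lemma~\ref{contraactionlemma} together with the unit coherence isomorphism~\eqref{equ:isofordagger2} gives $(T \circ \id_\catZ)_c \iso T_c \circ (\id_\catZ)^\dag \iso T_c \circ 1_{\pshZ} = T_c$; restricting along $\yon_\catZ$ yields $T \circ \id_\catZ \iso T$.

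Alternatively, one can argue directly by coend calculus: for~(i), expanding both sides via~\eqref{equ:compdistfun} and~\eqref{equ:compdist}, pulling the tensor factors through the coends (legitimate since $\otimes$ preserves colimits in each variable, $\catV$ being monoidal closed), and invoking the Fubini theorem for coends; for~(ii), expanding $(T \circ \id_\catZ)(z) = \int^{w \in \catZ} T(w) \otimes \catZ[w,z]$ and applying the co-Yoneda lemma. Either way, there is no genuine obstacle: the only point requiring mild attention is that the resulting isomorphisms are $\catV$-natural in $x$ (resp.\ in $z$), which is automatic because each step is an instance of a $\catV$-natural isomorphism, all of the real bookkeeping having already been carried out in Lemma~\ref{contraactionlemma} and in the construction of $\DistV$.
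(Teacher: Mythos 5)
Your proof is correct and follows essentially the same route as the paper: both reduce the statement, via Lemma~\ref{contraactionlemma} and the isomorphisms in~\eqref{equ:isofordagger1} and~\eqref{equ:isofordagger2}, to a comparison of cocontinuous extensions, and then conclude by uniqueness of the extension (equivalently, restriction along the Yoneda embedding). Your explicit treatment of both sides in part~(i) and of the restriction step is if anything slightly more careful than the paper's, and the alternative coend-calculus argument you sketch is a fine, though unnecessary, substitute.
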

 
  \begin{proof} For part~(i), it suffices to show that we have $((T\circ G)\circ F)_c\iso T\circ (G\circ F)_c$.
  By Lemma~\ref{contraactionlemma} and the isomorphism in~\eqref{equ:isofordagger1} we have 
  \begin{align*} 
  ((T\circ G)\circ F)_c & \iso (T\circ G)_c\circ F^\dag \\
   & \iso (T_c\circ G^\dag)\circ F^\dag \\
   & = T_c\circ (G^\dag\circ F^\dag)  \\
   & \iso  T_c \circ  (G \circ F)^\dag \\
   & =  T\circ (G\circ F) \, .
  \end{align*}
 Part~(ii) follows by a similar reasoning.
 \end{proof}

 \medskip
 
See~\cite[Volume~I, Chapter~7]{BorceuxF:hanca} and~\cite{BenabouJ:dis,BenabouJ:distw,LawvereFW:metsgl}
for further information and~\cite{CattaniGL:proomb} for applications of distributors in theoretical computer science.

 \chapter{Monoidal distributors}
 \label{cha:mond}
 
 This chapter introduces several auxiliary bicategories of distributors, with the ultimate goal
 of defining  the bicategory of symmetric monoidal distributors, denoted by~$\SMDistV$. 
This bicategory will be used in 
 Chapter~\ref{cha:syms} to define bicategories of symmetic sequences.   
 The chapter is organized in two parts: the first consists of
Section~\ref{sec:moncat} and
 Section~\ref{sec:mondist}, and the second of Section~\ref{sec:smoncat} and Section~\ref{sec:smondist}.
 The two parts have a parallel development: we begin by defining a suitable homomorphisms and then
 define bicategories of interest by 
 considering their Gabriel factorization.  In particular, Section~\ref{sec:moncat} and
 Section~\ref{sec:mondist} lead up to the definition of the bicategories of lax monoidal distributors 
 (Theorem~\ref{thm:bicatlmdistv}) and of monoidal distributors (Theorem~\ref{thm:bicatlmdistv}),
 while Sections~\ref{sec:smoncat} and Section~\ref{sec:smondist} lead to the definition of the bicategories of symmetric lax monoidal distributors (Theorem~\ref{thm:bicatlsmdistv}) and of symmetric monoidal distributors (Theorem~\ref{thm:bicatsmdistv}).

\section{Monoidal $\catV$-categories and $\catV$-rigs}
\label{sec:moncat}

 We suppose that the reader familiar with
the notions of monoidal $\catV$-category, lax monoidal $\catV$-functor, 
and monoidal $\catV$-natural transformation. We limit ourselves to recalling
 that, for monoidal $\catV$-categories $(\catM,\otimes, e)$ and $(\catN, \otimes, e)$, 
 a \emph{lax monoidal $\catV$-functor} $F\co \catM\to \catN$ is equipped
 with multiplication and a unit
 \[
 \mu(x,y)\co F(x)\otimes F(y)\to F(x\otimes y) \, , \quad \eta\co e\to F(e) \, .
\]
We say that $F$ is a \emph{monoidal $\catV$-functor}
if~$\mu$ and~$\eta$ are invertible. 
Recall also that a $\catV$-natural transformation between
lax monoidal $\catV$-functors is \emph{monoidal} if it respects the multiplication and unit.
We write $\LMCatV$ (\resp $\MCatV$) for the 2-category of small monoidal $\catV$-categories, 
lax monoidal (\resp  monoidal) $\catV$-functors and monoidal $\catV$-natural transformations. 
If $\catM$ and $\catN$ are monoidal $\catV$-categories, 
then so is the $\catV$-category $\catM \tensorvcat \catN$.
This defines a symmetric monoidal structure on the categories  $\LMCatV$
and $\MCatV$.
The unit object is the $\catV$-category $\UnitCat$ that is the unit for the tensor product on
$\CatV$, defined in Section~\ref{sec:vcat}.
It is easy to verify that $\UnitCat$ is initial in the bicategory
$\MCatV$, in the sense that for every $\catM\in \MCatV$ we have an equivalence of categories
$\MCatV[\UnitCat, \catM] \simeq 1$, where $1$ is the terminal category. 

\medskip

Definition~\ref{thm:vrig} below introduces the notion of a $\catV$-rig. The comparison between this notion
and  notions already existing in
in the literature is simpler to describe in the symmetric case, so we postpone it until Definition~\ref{thm:symvrig}, where we introduce the notion of a symmetric $\catV$-rig.

\begin{definition} \label{thm:vrig}
A $\catV$-\emph{rig} is a monoidal closed presentable $\catV$-category.
\end{definition}

A $\catV$-rig can be defined equivalently as a monoid (in an
appropriately weak sense) in the monoidal bicategory $(\PCATV, \tensorloc,\catV)$. If $\rigR$ and $ \rigS$ are $\catV$-rigs,
we say that a cocontinuous $\catV$-functor $F\co \rigR\to \rigS$ 
is a \emph{lax homomorphism} (\resp homomorphism) of $\catV$-rigs if it is a lax monoidal (\resp monoidal) functor.
We write $\LRigV$ (\resp $\RigV$) for the 2-category 
of $\catV$-rigs, lax homomorphisms (\resp homomorphism)
and monoidal natural transformations.

We need to recall some basic facts about Day's convolution tensor product~\cite{DayBJ:conbc,DayBJ:clocf,ImGB:unipcm}. 
For a small monoidal $\catV$-category $\catM=(\catM,\oplus, 0)$ and  a $\catV$-rig $\rigR=(\rigR, \diamond, e)$, 
the $\catV$-category $[\catM,\rigR]$ can be equipped with a monoidal structure, called the \emph{convolution tensor product},
making it into a~$\catV$-rig. By definition,  the convolution product $A_1 \ast A_2$ of two $\catV$-functors $A_1,A_2\co \catM \to \rigR$
is defined by letting, for $x\in \catM$, 
\begin{equation}
\label{equ:convolution}
(A_1\conv A_2)(x) \defeq 
\int^{x_1\in \catM} \int^{x_2\in \catM} A_1(x_1)\diamond  A_2(x_2) \otimes \catM[x_1\oplus x_2,x] \, .
\end{equation}
The unit object for the convolution product is the functor $E \defeq \catM(0,-)\otimes e$. 
An important case  of the convolution tensor product  is given by considering $\catV$-rigs
of the form $\pshM = [\catM^\op,\catV]$, where  $\catM=(\catM,\oplus, 0)$ is a small monoidal 
$\catV$-category. In this case, for~$A_1 ,A_2 \in \pshM$, $x \in \catM$, we have
\[
(A_1 \conv A_2)(x)=\int^{x_1\in \catM} \int^{x_2\in \catM} A_1(x_1)\otimes  A_2(x_2) \otimes  \catM[x,x_1\oplus x_2] \, .
\]
The function mapping a small monoidal $\catV$-category $\catM$ to the $\catV$-rig $P(\catM)$ extends to a
homomorphism
\begin{equation}
\label{equ:plmcat}
P\co \LMCatV\to \LRigV \, . 
\end{equation}
Indeed, for every small monoidal $\catV$-category $\catM$ the Yoneda embedding $\yon_\catM \co  \catM\to \pshM$ becomes a monoidal functor and it exhibits $\pshM$ as the free $\catV$-rig on $\catM$. More precisely, the restriction functor
\begin{equation}
\label{equ:yonmoncat}
\yon_\catM^\ast \co \LRigV[\pshM,\rigR] \to \LMCATV[\catM, \rigR] 
\end{equation}
along  $\yon_\catM\co \catM\to \pshM$ is an equivalence of categories
for any $\catV$-rig $\rigR$. The inverse equivalence takes a lax monoidal $\catV$-functor
$F \co  \catM \to \rigR$ to its cocontinuous extension $F_{c} \co  \pshM\to \rigR$, defined as in~\eqref{equ:lowershrieck},
which can be equipped with a lax monoidal structure. Thus, the homomorphism in~\eqref{equ:plmcat} 
takes a lax monoidal $\catV$-functor $u\co \catM\to \catN$ to the lax homomorphism of rigs 
$P(u)\defeq u_!\co \pshM\to \pshN$. All of the above restricts in an evident way to the 2-category $\MCatV$
so as to give also a homomorphism
\begin{equation}
\label{equ:pmcat}
P\co \MCatV\to\RigV \, .
\end{equation}

\begin{remark} \label{thm:tensorvrig} If $\rigR$ and $\rigS$ are $\catV$-rigs, then so is the presentable~$\catV$-category $\rigR \tensorloc \rigS$ discussed in~Remark~\ref{thm:tensorpcat}.
This defines the tensor product of a symmetric monoidal closed structure on the 2-categories $\LRigV$ and  $\RigV$, with unit the category~$\catV$.
Furthermore, $\catV$~is initial in the 2-category
$\RigV$, in the sense that for every  $\rigR \in \RigV$
we have an equivalence $\RigV[\catV, \rigR] \simeq \onecat$.
 If we consider the 2-categories $\LMCatV$ and $\LRigV$ (\resp $\MCatV$ and $\RigV$) as equipped with their symmetric monoidal structures,
the homomorphism  $P\co \LMCatV\to \LRigV$ (\resp $P\co \MCatV\to \RigV$) is symmetric monoidal. Indeed, if  $\catM$ and $\catN$ are 
small monoidal~$\catV$-categories, then the equivalence of presentable categories
\[
\phi_{\catM, \catN} \co \pshM \tensorloc \pshN \to \psh(\catM\tensorvcat \catN)
\]
of Remark~\ref{thm:tensorpcat} is an equivalence of $\catV$-rigs. 
\end{remark}

The homomorphisms in~\eqref{equ:plmcat} and~\eqref{equ:pmcat}  will be used in Section~\ref{sec:mondist} to
define the bicategories of lax monoidal distributors and of monoidal distributors, respectively, via a Gabriel
factorization. In order to do this, we establish some auxiliary results.

\begin{lemma}\label{monoidalequiv} 
Let $\catM \, , \catN$ be small monoidal $\catV$-categories, and $\rigR$ be a $\catV$-rig.
The  equivalences
\[
\lambda^{\catM}\co [\catM  \tensorvcat \catN, \rigR] \to [\catN, [\catM,\rigR]]  \, , \quad
\lambda^{\catN}\co [\catM  \tensorvcat \catN, \rigR] \to [\catM, [\catN,\rigR]] 
\]
are monoidal.
\end{lemma}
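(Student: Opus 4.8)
By Proposition~\ref{thm:lambda} the functors $\lambda^{\catM}$ and $\lambda^{\catN}$ are already equivalences of $\catV$-categories, so the plan is to equip each of them with invertible multiplication and unit constraints for the convolution $\catV$-rig structures on its source and target, and then to verify the coherence axioms of a monoidal $\catV$-functor. Since the two cases are interchanged by the monoidal symmetry $\catM \tensorvcat \catN \simeq \catN \tensorvcat \catM$, I would treat only $\lambda^{\catN} \co [\catM \tensorvcat \catN, \rigR] \to [\catM, [\catN, \rigR]]$; here the codomain is to be given the $\catV$-rig structure obtained by applying the convolution construction~\eqref{equ:convolution} first to the pair $(\catN, \rigR)$ and then to the pair $(\catM, [\catN, \rigR])$.

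\textbf{Constructing the constraints.}
First I would unwind~\eqref{equ:convolution} on both sides. On the left, for $F_1, F_2 \co \catM \tensorvcat \catN \to \rigR$ the value $(F_1 \conv F_2)(m,n)$ is a double coend over $\catM \tensorvcat \catN$ of the terms $\bigl(F_1(m_1, n_1) \diamond F_2(m_2, n_2)\bigr) \otimes (\catM\tensorvcat\catN)[(m_1,n_1)\oplus(m_2,n_2),(m,n)]$, and the last object splits canonically as $\catM[m_1\oplus m_2, m] \otimes \catN[n_1 \oplus n_2, n]$. On the right, unwinding the iterated convolution, evaluating successively at $m$ and $n$, pulling the copowers through the inner coend, and substituting $(\lambda^{\catN}F_i)(m)(n) = F_i(m,n)$, I would arrive at a fourfold coend; the Fubini (interchange) theorem for $\catV$-enriched coends, which identifies a coend over $\catM \tensorvcat \catN$ with iterated coends over $\catM$ and $\catN$, together with the symmetry of $\otimes$ in $\catV$, then yields a natural isomorphism $\lambda^{\catN}(F_1 \conv F_2) \iso \lambda^{\catN}(F_1) \conv \lambda^{\catN}(F_2)$. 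For the unit, I would observe that the unit object of the left-hand $\catV$-rig sends $(m,n)$ to $(\catM\tensorvcat\catN)[(0,0),(m,n)]\otimes e \iso \catM[0,m] \otimes \catN[0,n] \otimes e$, and that $\lambda^{\catN}$ carries this to the functor $m \mapsto \catM[0,m] \otimes (\catN[0,-]\otimes e)$, which is precisely the unit object of the right-hand $\catV$-rig; this is the unit constraint. The case of $\lambda^{\catM}$ is identical after swapping $\catM$ and $\catN$.

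\textbf{Coherence, and the hard part.}
There remain the three coherence diagrams for a monoidal $\catV$-functor (compatibility of the multiplication constraint with the two associativity isomorphisms, and of the unit constraint with the left and right unit isomorphisms). Rather than expand these directly, I would reduce them: the convolution product is cocontinuous in each variable on both sides, and the constraints built above are natural and assembled only from the Fubini and co-Yoneda isomorphisms for $\catV$-enriched coends and from the constraints of $\rigR$, $\catV$, $\catM$ and $\catN$; so it would suffice to check each diagram on the generating class of functors $x \mapsto r \otimes (\catM\tensorvcat\catN)[a,x]$ (with $r \in \rigR$ and $a \in \catM \tensorvcat \catN$), every object of $[\catM\tensorvcat\catN,\rigR]$ being a conical colimit of such functors by the density formula. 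On these, a co-Yoneda reduction computes the convolution as $x \mapsto (r_1 \diamond r_2)\otimes(\catM\tensorvcat\catN)[a_1\oplus a_2, x]$, and each coherence condition collapses to the corresponding one for $\oplus$ in $\catM$ and in $\catN$ and for $\diamond$ in $\rigR$. The hard part will be the bookkeeping in this last verification: there is essentially no conceptual content — no non-canonical choices enter — but keeping the several coends, the copowers and the symmetry isomorphisms straight is laborious, so I would ultimately present the coherence as a formal consequence of the coherence of the Day convolution construction~\cite{DayBJ:conbc} and only record the key reductions above.
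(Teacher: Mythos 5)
Your proposal is correct and its core is essentially the paper's own proof: the multiplication constraint is obtained by unwinding the Day convolution on both sides, splitting the hom-object of $\catM\tensorvcat\catN$, and applying Fubini for enriched coends to identify $\lambda(A_1)\conv\lambda(A_2)$ with $\lambda(A_1\conv A_2)$. The only difference is that you additionally spell out the unit constraint and sketch a density/co-Yoneda reduction for the coherence axioms, points the paper leaves implicit as routine consequences of Day's convolution construction.
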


\begin{proof} Let $\lambda = \lambda^\catM$. For $\catV$-functors $A_1 \, , A_2 \co \catM  \tensorvcat \catN\to  \rigR$, we  construct
a natural isomorphism 
\[
\lambda(A_1)\conv \lambda(A_2)\iso  \lambda(A_1\conv A_2) \, .
\]
By definition, for $y\in \catN$, we have
\[
\bigl( \lambda(A_1)\conv \lambda^\catM(A_2)\bigr)(y) 
=
\int^{y_1\in \catN}\int^{y_2\in \catN}  \lambda(A_1)(y_1) \conv  \lambda(A_2)(y_2) \otimes  \catN[y_1\oplus y_2,y]  \, .
\]
Hence,  for $x\in \catM$ and $y\in \catN$, we have
 \begin{equation}
 \label{equ:convolutionlemma}
 \bigl( \lambda(A_1)\conv \lambda^\catM(A_2)\bigr)(y)(x)=  \int^{y_1\in \catN}\int^{y_2\in \catN} \bigl(\lambda(A_1)(y_1) \conv  \lambda(A_2)(y_2)\bigr)(x) \otimes  \catN[y_1\oplus y_2,y] \, .
 \end{equation}
But, for $x\in \catM$ and $y_1,y_2\in \catN$, we have 
\begin{align}
 \bigl(\lambda(A_1)(y_1) \conv  \lambda(A_2)(y_2)\bigr)(x) 
&  = \int^{x_1\in \catM}\int^{x_2\in \catM}  \lambda(A_1)(y_1)(x_1) \diamond  \lambda(A_2)(y_2)(x_2) \otimes  \catM[ x_1\oplus x_2,x] \notag \\
&  \iso  \int^{x_1\in \catM}\int^{x_2\in \catM} A_1(x_1, y_1) \diamond  A_2(x_2, y_2) \otimes  \catM[ x_1\oplus x_2,x]  \label{equ:substitutlhs} \, .
\end{align}
By substituting the right-hand side of~\eqref{equ:substitutlhs} in the right-hand side of~~\eqref{equ:convolutionlemma}, it follows  that 
\begin{align*} 
& \bigl( \lambda(A_1)\conv \lambda^\catM(A_2)\bigr)(y)(x) \\
& \qquad \iso \int^{y_1\in \catN}\int^{y_2\in \catN}
\int^{x_1\in \catM}\int^{x_2\in \catM} A_1(x_1, y_1) \diamond  A_2(x_2, y_2)\otimes  \catM[ x_1\oplus x_2,x]  \otimes  \catN[ y_1\oplus y_2,y ]   \\
& \qquad \iso \int^{(x_1,y_1)\in \catM\otimes \catN}\int^{(x_2,y_2)\in \catM\otimes  \catN}
A_1(x_1, y_1) \diamond  A_2(x_2, y_2) \otimes (\catM\otimes \catN)[ (x_1,y_1)\oplus (x_2, y_2), (x,y)]  \\
& \qquad = (A_1 \conv A_2)(x,y)  \phantom{\int^{x_1}} \\
 & \qquad  = \lambda^\catM (A_1 \conv A_2)(y)(x) \, , 
 \end{align*}
as required.
\end{proof}

Let $\catM=(\catM,\oplus, 0)$ be a small monoidal $\catV$-category and $\rigR=(\rigR, \diamond, e)$ be a $\catV$-rig.
By the definition of the convolution product  of $A_1,A_2\in [\catM,\rigR]$, as given in~\eqref{equ:convolution}, there is a canonical map 
\begin{equation}
\label{equ:starp19}
\mathit{can} \co A_1(x_1)\diamond  A_2(x_2) \to (A_1 \conv A_2)(x_1\oplus x_2) \, .
\end{equation}
If $A=(A,\mu, \eta)$ is a monoid object in $[\catM,\rigR]$, then the composite
\[
\xymatrix{A(x_1)\diamond A(x_2)\ar[r]^(0.45){can} & (A\conv A)(x_1\oplus x_2) \ar[rr]^(0.55){\mu(x_1\oplus x_2)} && A(x_1\oplus x_2)
}
\]
is a lax monoidal structure on $A$ with components $\mu(x_1,x_2)\co A(x_1)\diamond A(x_2)\to A(x_1\oplus x_2)$
on~$A$. This defines a functor  $\rho\co  \Mon[\catM,\rigR]\to \LMCATV[\catM, \rigR]$, where $\Mon[\catM,\rigR]$
denotes  the category of monoids in~$\CATV[\catM,\rigR]$ and $\LMCATV[\catM, \rigR]$ denotes the
category of lax monoidal $\catV$-functors from~$\catM$ to~$\rigR$. The next lemma is essentially as in~\cite[Example~3.2.2]{DayBJ:conbc} 
and~\cite[Proposition~22.1]{MandellM:modcds}.

\begin{lemma}\label{monoidobjectlaxmonoidal} The functor $\rho\co  \Mon[\catM,\rigR]\to \LMCATV[\catM, \rigR]$ 
 is an equivalence of categories.
\end{lemma}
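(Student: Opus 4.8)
The plan is to exhibit an explicit quasi-inverse $\sigma\co\LMCATV[\catM,\rigR]\to\Mon[\catM,\rigR]$ to the functor $\rho$, following Day. Given a lax monoidal $\catV$-functor $(F,\mu,\eta)\co\catM\to\rigR$, I would make the underlying $\catV$-functor $F$ into a monoid in $(\CATV[\catM,\rigR],\conv,E)$ as follows. Unwinding~\eqref{equ:convolution}, we have $(F\conv F)(x)=\int^{x_1\in\catM}\int^{x_2\in\catM}F(x_1)\diamond F(x_2)\otimes\catM[x_1\oplus x_2,x]$, so by the universal property of the coend a $\catV$-natural transformation $m\co F\conv F\to F$ amounts to a family of maps $F(x_1)\diamond F(x_2)\otimes\catM[x_1\oplus x_2,x]\to F(x)$ that is $\catV$-dinatural in $x_1$ and $x_2$. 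I take the composite of $\mu(x_1,x_2)\otimes\catM[x_1\oplus x_2,x]$ with the map $F(x_1\oplus x_2)\otimes\catM[x_1\oplus x_2,x]\to F(x)$ adjoint to the $\catV$-functoriality map $\catM[x_1\oplus x_2,x]\to\rigR[F(x_1\oplus x_2),F(x)]$; dinaturality follows from the $\catV$-naturality of $\mu$ together with the functoriality of $F$. The monoid unit $E\to F$ is defined analogously from $\eta\co e\to F(0)$ and the action of $\catM(0,-)$ on $F$, namely $\catM(0,x)\otimes e\to\catM(0,x)\otimes F(0)\to F(x)$.

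The substantive part of the argument is to check that $(F,m,\eta)$ satisfies the associativity and unit axioms of a monoid. Using the Fubini theorem for coends, the two composites in the associativity square, precomposed with the canonical coprojections, are both governed by a single $\catV$-dinatural family $F(x_1)\diamond F(x_2)\diamond F(x_3)\otimes\catM[x_1\oplus x_2\oplus x_3,x]\to F(x)$, and their equality unwinds exactly to the associativity coherence relating $\mu$ with the associativity constraints of $\catM$ and $\rigR$; similarly the left and right unit axioms for $m$ become the unit coherence of $\mu$ against $\eta$. This is the one place where a genuine computation is needed, and I expect it to be the main obstacle; however it proceeds essentially as in~\cite[Example~3.2.2]{DayBJ:conbc} and~\cite[Proposition~22.1]{MandellM:modcds}, so I would state it compactly rather than expand every diagram.

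It then remains to see that $\sigma$ and $\rho$ are mutually inverse and natural in morphisms. In the composite $\rho\sigma$, the lax monoidal structure attached to $\sigma(F)=(F,m,\eta)$ has component $m_{x_1\oplus x_2}\circ\mathit{can}$ at $(x_1,x_2)$; since $\mathit{can}$ in~\eqref{equ:starp19} is the coprojection at $\id_{x_1\oplus x_2}$, at which the action map is the identity, this recovers $\mu(x_1,x_2)$, whence $\rho\sigma=\id$. For $\sigma\rho$, I would show that the multiplication produced from $\rho(A)=(F,\mu,\eta)$, where $\mu(x_1,x_2)=m_{x_1\oplus x_2}\circ\mathit{can}$, agrees with the original $m$ by comparing precompositions with all coprojections: the coprojection at $g\co x_1\oplus x_2\to x$ factors as $(F\conv F)(g)\circ\mathit{can}$, so $\catV$-naturality of $m$ reduces both sides to $F(g)\circ\mu(x_1,x_2)$; the unit is handled the same way, giving $\sigma\rho=\id$. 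Finally, a map of monoids is carried by $\rho$ to a monoidal $\catV$-natural transformation because $\mu(-,-)=m\circ\mathit{can}$ and $\mathit{can}$ is natural, and conversely a monoidal $\catV$-natural transformation commutes with the multiplications produced by $\sigma$ by the same universal-property argument. Hence $\rho$ is an equivalence (indeed an isomorphism) of categories.
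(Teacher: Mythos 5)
Your proposal is correct and follows essentially the same route as the paper: the quasi-inverse is constructed exactly as in the paper's proof, by using $\mu(x_1,x_2)$ together with the action of $\catM[x_1\oplus x_2,x]$ on $F$ to induce, via the universal property of the coend, a multiplication $F\conv F\to F$ (and similarly a unit from $\eta$), with the remaining verifications left as routine. The paper compresses these checks into ``it is easy to verify that this is an inverse to $\rho$,'' which you simply spell out in more detail.
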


\begin{proof} Let us describe an inverse to~$\rho$.
Let $\mu(x_1,x_2)\co A(x_1)\diamond A(x_2)\to A(x_1\oplus x_2)$
be a lax monoidal structure on a functor $A\co \catM \to \rigR$. 
We have 
\[
(A\conv A)(x)=\int^{x_1\in \catM}\int^{x_2\in \catM} \catM(x_1\oplus x_2,x)\otimes A(x_1)\diamond A(x_2)
\]
and the natural transformation  $\mu(x_1,x_2)\co A(x_1)\diamond A(x_2)\to A(x_1\oplus x_2)$
induces a map 
\[
(A\conv A)(x) \to \int^{x_1\in \catM}\int^{x_2\in \catM} \catM(x_1\oplus x_2,x)\otimes A(x_1\oplus x_2) \to A(x)
\]
which defines the multiplication $\mu\co A\conv A\to A$
of a monoid object $(A,\mu, \eta)$ in $[\catM,\rigR]$.
It is easy to verify that this is an inverse to~$\rho$.
\end{proof}

We can now extend Proposition~\ref{thm:lambda} to categories of monoidal functors.

\begin{proposition}\label{lambdalaxmonoida} The equivalences of categories
\[
\lambda^{\catM} \co  [\catM  \tensorvcat \catN, \rigR] \to [\catN,  [\catM,\rigR]] \, , \quad 
\lambda^{\catN}\co [\catM  \tensorvcat \catN, \rigR] \to \CATV[\catM, [\catN,\rigR]] 
\]
restrict to equivalences of categories
\begin{gather*}
\lambda^\catM \co \LMCATV[\catM  \tensorvcat \catN, \rigR] \to \LMCATV[\catN,  [\catM,\rigR]] \, , \\
\lambda^\catN \co \LMCATV[\catM  \tensorvcat \catN, \rigR] \to \LMCATV[\catM,  [\catN,\rigR]] \, . 
\end{gather*}
\end{proposition}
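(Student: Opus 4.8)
The plan is to combine the two equivalences that we already have in hand. By Proposition~\ref{thm:lambda}, the $\catV$-functor $\lambda^\catM \co [\catM \tensorvcat \catN, \rigR] \to [\catN, [\catM, \rigR]]$ is an equivalence of underlying $\catV$-categories; by Lemma~\ref{monoidalequiv}, it is moreover a monoidal functor with respect to the convolution tensor products on source and target. The target $\rigR$ is a $\catV$-rig, so $[\catM, \rigR]$ carries its own convolution structure and $[\catN, [\catM, \rigR]]$ is then a $\catV$-rig as well; likewise $[\catM \tensorvcat \catN, \rigR]$ is a $\catV$-rig. Now a monoidal equivalence between monoidal categories restricts to an equivalence between their categories of monoid objects, simply by transporting the multiplication and unit along the monoidal structure maps (and the inverse equivalence is a monoidal functor as well, since a quasi-inverse of a monoidal equivalence inherits a canonical monoidal structure). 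So I would first record that observation, then apply it to $\lambda^\catM$.

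Next I would identify monoid objects on each side with lax monoidal $\catV$-functors, using Lemma~\ref{monoidobjectlaxmonoidal}. On the source side, this lemma gives an equivalence $\Mon[\catM \tensorvcat \catN, \rigR] \simeq \LMCATV[\catM \tensorvcat \catN, \rigR]$. On the target side, I would apply the same lemma \emph{twice}, or rather once with the outer $\catV$-category being $\catN$ and the coefficient $\catV$-rig being $[\catM, \rigR]$, obtaining $\Mon[\catN, [\catM, \rigR]] \simeq \LMCATV[\catN, [\catM, \rigR]]$. Chaining these identifications with the monoid-object equivalence induced by $\lambda^\catM$ produces the desired equivalence $\LMCATV[\catM \tensorvcat \catN, \rigR] \simeq \LMCATV[\catN, [\catM, \rigR]]$, and unwinding the definitions shows that under these identifications the induced functor is exactly the restriction of $\lambda^\catM$ (it sends $F$ to $(\lambda^\catM F)(x)(y) = F(x,y)$ at the level of objects and underlying $\catV$-functors, and carries the lax structure maps across accordingly). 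The case of $\lambda^\catN$ is entirely symmetric, swapping the roles of $\catM$ and $\catN$; alternatively one invokes the symmetry $\catM \tensorvcat \catN \iso \catN \tensorvcat \catM$.

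The one point that needs a little care — and the step I expect to be the main obstacle — is checking that the identification of $\lambda^\catM$ with the induced map on monoid objects is genuinely compatible, i.e.\ that the lax monoidal structure produced via $\rho$ on $F \co \catM \tensorvcat \catN \to \rigR$ is sent by the restriction of $\lambda^\catM$ to the lax monoidal structure produced via $\rho$ on $\lambda^\catM F \co \catN \to [\catM, \rigR]$. This amounts to tracing through the definition of the canonical map in~\eqref{equ:starp19} and the convolution unit on both sides and observing that the isomorphism $\lambda(A_1) \conv \lambda(A_2) \iso \lambda(A_1 \conv A_2)$ constructed in the proof of Lemma~\ref{monoidalequiv} is compatible with the respective $\mathit{can}$ maps; since that isomorphism was built directly out of the coend defining convolution, this compatibility is essentially formal, but it is the part one should not skip. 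Everything else is a matter of assembling equivalences that have already been established.
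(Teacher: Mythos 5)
Your proposal is correct and follows essentially the same route as the paper's own proof: use Lemma~\ref{monoidalequiv} to see that $\lambda^{\catM}$ is a monoidal equivalence, pass to categories of monoid objects, and identify monoids with lax monoidal $\catV$-functors on both sides via Lemma~\ref{monoidobjectlaxmonoidal}, with the $\lambda^{\catN}$ case handled symmetrically. Your extra care about checking that the induced equivalence on monoids really is the restriction of $\lambda^{\catM}$ (compatibility with the canonical maps of~\eqref{equ:starp19}) is a point the paper leaves implicit, but it does not change the argument.
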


\begin{proof} We consider only $\lambda^\catM$, since the argument for $\lambda^\catN$ is  analogous. First of all, observe 
that~$\lambda^{\catM}$ is monoidal by Lemma \ref{monoidalequiv}.
It thus induces an equivalence between the category of monoids in 
the monoidal category $[\catM  \tensorvcat \catN, \rigR]$
and the category of monoids in 
the monoidal category $[\catN,  [\catM,\rigR]]$.
The claim then follows by Lemma \ref{monoidobjectlaxmonoidal}.
\end{proof}

\begin{corollary} \label{thm:bijectionmonoidal} Let $\catM \, , \catN$ be small monoidal $\catV$-categories.
For every distributor $F \co \catM \mat \catN$, there is a bijective correspondence between the 
lax monoidal structures on the  $\catV$-functors
\[
F \co  \catN^\op  \tensorvcat \catM \to \catV \, , \quad
\lambda F\co \catM\to \pshN \, , \quad
F^\dag \co \pshM\to \pshN \, .
\]
\end{corollary}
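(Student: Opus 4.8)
The plan is to prove the statement by chaining two bijections: one relating lax monoidal structures on $F$ to lax monoidal structures on $\lambda F$, and one relating lax monoidal structures on $\lambda F$ to lax monoidal structures on $F^\dag$. Throughout, note that $\catN^\op$ is a small monoidal $\catV$-category (with the tensor product of $\catN$), so $\catN^\op \tensorvcat \catM$ is a small monoidal $\catV$-category, and $\catV$ is a $\catV$-rig; hence a lax monoidal structure on $F$ means an object of $\LMCATV[\catN^\op \tensorvcat \catM, \catV]$ lying over $F$, a lax monoidal structure on $\lambda F \co \catM \to \pshN$ means one over $\lambda F$ for the convolution $\catV$-rig structure on $\pshN$, and a lax monoidal structure on $F^\dag \co \pshM \to \pshN$ means one over $F^\dag$ for the convolution $\catV$-rig structures on $\pshM$ and $\pshN$.

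For the first bijection I would invoke Proposition~\ref{lambdalaxmonoida} with its two small monoidal $\catV$-categories taken to be $\catN^\op$ and $\catM$ and its $\catV$-rig taken to be $\catV$: it gives that the equivalence $\lambda^{\catN^\op} \co [\catN^\op \tensorvcat \catM, \catV] \to [\catM, [\catN^\op, \catV]] = [\catM, \pshN]$ restricts to an equivalence $\LMCATV[\catN^\op \tensorvcat \catM, \catV] \to \LMCATV[\catM, \pshN]$. On underlying ordinary categories $\lambda^{\catN^\op}$ is an isomorphism, with strict inverse the uncurrying functor, and by its defining formula it carries $F$ to $\lambda F$; hence its restriction induces a bijection between the lax monoidal structures on $F$ and those on $\lambda F$.

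For the second bijection I would use the universal property of $\pshM$ as the free $\catV$-rig on the monoidal $\catV$-category $\catM$, recorded in~\eqref{equ:yonmoncat}: the restriction functor $\yon_\catM^\ast \co \LRigV[\pshM, \pshN] \to \LMCATV[\catM, \pshN]$ along the monoidal Yoneda embedding is an equivalence, whose quasi-inverse sends a lax monoidal $\catV$-functor $H \co \catM \to \pshN$ to its cocontinuous extension $H_c$ equipped with the induced lax monoidal structure. Since $F^\dag = (\lambda F)_c$ is cocontinuous, a lax monoidal structure on $F^\dag$ is precisely an object of $\LRigV[\pshM, \pshN]$ lying over $F^\dag$. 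Restricting such a structure along $\yon_\catM$, and using the canonical isomorphisms $F^\dag \circ \yon_\catM \iso \lambda F$ (from the coend formula defining $F^\dag$ together with the co-Yoneda lemma) and $\yon_\catM(x) \conv \yon_\catM(x') \iso \yon_\catM(x \oplus x')$ (the Yoneda embedding being monoidal), produces a lax monoidal structure on $\lambda F$; conversely, since $\conv$ is cocontinuous in each variable, a lax monoidal structure on $\lambda F$ extends uniquely along $\yon_\catM$ to one on $(\lambda F)_c = F^\dag$. These two operations are mutually inverse, giving the desired bijection; composing it with the bijection of the previous paragraph completes the proof.

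The point deserving the most care is in this second bijection: I must verify that restricting a lax monoidal structure on $F^\dag$ to the representables and then re-extending recovers the original structure, and symmetrically. This is exactly the assertion that the restriction--cocontinuous-extension equivalence of~\eqref{equ:yonmoncat} composes to the identity up to the canonical isomorphisms, read off at the functors $\lambda F$ and $F^\dag = (\lambda F)_c$; checking it is what turns the two correspondences into genuine bijections of sets of structures rather than mere equivalences of the categories of such structures.
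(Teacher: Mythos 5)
Your proof is correct and follows essentially the same route as the paper: the paper's proof is exactly the combination of Proposition~\ref{lambdalaxmonoida} (applied to $\catN^\op$, $\catM$ and the $\catV$-rig $\catV$) with the free-$\catV$-rig equivalence~\eqref{equ:yonmoncat}, which are the two bijections you chain together. Your extra care in the final paragraph about restriction and cocontinuous extension being mutually inverse on structures over a fixed functor is a reasonable elaboration of what the paper leaves implicit.
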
 

\begin{proof} By Proposition~\ref{lambdalaxmonoida} and the equivalence in~\eqref{equ:yonmoncat}.
\end{proof} 

\section{Monoidal distributors} 
\label{sec:mondist}

\begin{definition}\label{deflaxmonodist} Let $\catM$, $\catN$ be small monoidal $\catV$-categories. 
A \emph{lax monoidal distributor} $F \co  \catM \mat~\catN$ is a lax monoidal functor 
$F \co  \catN^\op  \tensorvcat \catM \to \catV$.
\end{definition}

The next theorem introduces the bicategory of lax monoidal distributors.

\begin{theorem} \label{thm:bicatlmdistv}
Small monoidal $\catV$-categories, lax monoidal distributors and monoidal $\catV$-trans\-for\-ma\-tions form a bicategory,
called the bicategory of lax monoidal distributors and denoted by~$\LMDistV$, 
which fits into a Gabriel factorization diagram 
\[
\xymatrix@R=1.3cm{
\LMCatV \ar[rr]^{\psh} \ar[dr]_{(-)_\bullet} & &\LRigV \\
&\LMDistV \ar[ur]_{(-)^\dag} \, . &
}
\]
\end{theorem}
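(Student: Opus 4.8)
The plan is to obtain this theorem in exact parallel with the construction of $\DistV$ from the Gabriel factorization in~\eqref{equ:gabrieldistributors}, now using the ``nonstandard'' variant of the Gabriel factorization described in Chapter~\ref{cha:bac} (the one built from given hom-categories $\bcatG[\objX,\objY]$ together with equivalences $\Delta_{\objX,\objY}$ to $\bcatF[\Phi\objX,\Phi\objY]$). Here the homomorphism to be factored is $P \co \LMCatV \to \LRigV$ from~\eqref{equ:plmcat}; the intended middle bicategory is $\LMDistV$, with $0$-cells small monoidal $\catV$-categories and hom-category $\LMDistV[\catM,\catN]$ the category of lax monoidal distributors $\catM \mat \catN$, i.e.\ lax monoidal $\catV$-functors $\catN^\op \otimes \catM \to \catV$, with monoidal $\catV$-transformations as $2$-cells. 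The key input is Corollary~\ref{thm:bijectionmonoidal}: for a distributor $F \co \catM \mat \catN$, lax monoidal structures on $F$, on $\lambda F \co \catM \to \pshN$, and on $F^\dag \co \pshM \to \pshN$ correspond bijectively. Upgrading this to an equivalence of categories, one gets
\[
(-)^\dag \co \LMDistV[\catM,\catN] \xrightarrow{\ \simeq\ } \LRigV[\pshM,\pshN] \, ,
\]
exactly as the functor in~\eqref{equ:dagger} but now between categories of monoidal objects: it is the composite of the restriction $\lambda^\catM$ of Proposition~\ref{lambdalaxmonoida} with the inverse of the equivalence~\eqref{equ:yonmoncat}. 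This plays the role of the equivalence $\Delta_{\catM,\catN}$.

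From here I would invoke the machinery of the nonstandard Gabriel factorization verbatim. Because $(-)^\dag$ is an equivalence onto $\LRigV[\pshM,\pshN]$, the composition and identity $1$-cells of $\LMDistV$ are forced up to unique isomorphism by requiring commutativity (up to natural isomorphism) of the square relating $(-)^\dag$ to composition in $\LRigV$; concretely this means $G \circ F$ is the lax monoidal distributor whose underlying distributor is given by~\eqref{equ:compdist} and whose lax monoidal structure is transported from that of $G^\dag \circ F^\dag$, and $\id_\catM$ is the unit distributor $\id_\catM[x,y] = \catM[x,y]$ with its evident lax monoidal structure. The associativity and unit isomorphisms of $\LMDistV$ are likewise pinned down, as in diagram~\eqref{equ:cohcomp}, and they satisfy the bicategory coherence conditions because the corresponding diagrams in $\LRigV$ do (and $\LRigV$ is a genuine $2$-category). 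This simultaneously produces the full-and-faithful homomorphism $(-)^\dag \co \LMDistV \to \LRigV$ — the right-hand leg of the factorization — by construction.

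For the left-hand leg $(-)_\bullet \co \LMCatV \to \LMDistV$: it is the identity on objects, and it sends a lax monoidal $\catV$-functor $u \co \catM \to \catN$ to the distributor $u_\bullet[y,x] \defeq \catN[y,u(x)]$ of Section~\ref{sec:dist}, now equipped with a lax monoidal structure. To see that $u_\bullet$ carries a canonical such structure, note $(u_\bullet)^\dag \iso u_! = P(u) \co \pshM \to \pshN$, which is a lax homomorphism of $\catV$-rigs by~\eqref{equ:plmcat}; pulling this lax monoidal structure back along the equivalence $(-)^\dag$ (using Corollary~\ref{thm:bijectionmonoidal}) equips $u_\bullet$ with its lax monoidal structure, and makes $P = (-)^\dag \circ (-)_\bullet$ commute up to pseudo-natural isomorphism, so $(-)_\bullet$ is essentially surjective. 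Functoriality constraints for $(-)_\bullet$ follow from those of $(-)_\bullet \co \CatV \to \DistV$ together with the compatibility with the lax monoidal structures, which again one may check after applying the faithful $(-)^\dag$, reducing them to the already-known constraints for $P \co \LMCatV \to \LRigV$.

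The main obstacle — and the only place requiring genuine work rather than bookkeeping — is the first step: promoting the \emph{bijective correspondence} of Corollary~\ref{thm:bijectionmonoidal} to a \emph{functorial equivalence of categories} of lax monoidal objects, i.e.\ checking that the transported lax monoidal structure on $G \circ F$ really does make $\lambda^\catM$ (and hence $(-)^\dag$) monoidal on the nose and compatible with monoidal $2$-cells. This is essentially contained in Lemma~\ref{monoidalequiv} (that $\lambda^\catM$ is a monoidal equivalence) together with Lemma~\ref{monoidobjectlaxmonoidal} (monoid objects for the convolution product $=$ lax monoidal functors) and Proposition~\ref{lambdalaxmonoida}; so the proof amounts to assembling these three results to produce the equivalence $(-)^\dag \co \LMDistV[\catM,\catN] \simeq \LRigV[\pshM,\pshN]$ and then quoting the nonstandard Gabriel factorization construction of Chapter~\ref{cha:bac} for everything else. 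I would not spell out the coherence verifications, since they are identical in form to those already carried out for $\DistV$ in Section~\ref{sec:dist}.
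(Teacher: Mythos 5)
Your proposal is correct and follows essentially the same route as the paper: define $\LMDistV[\catM,\catN] \defeq \LMCATV[\catN^\op \otimes \catM,\catV]$, obtain the equivalence $(-)^\dag$ as the composite of $\lambda$ (Proposition~\ref{lambdalaxmonoida}) with the cocontinuous-extension quasi-inverse of~\eqref{equ:yonmoncat}, let the variant Gabriel factorization machinery of Chapter~\ref{cha:bac} supply composition, units and coherence, and equip $u_\bullet$ with its lax monoidal structure via $(u_\bullet)^\dag \iso u_!$ and Corollary~\ref{thm:bijectionmonoidal}. The only difference is one of emphasis: the "obstacle" you flag is already resolved by Proposition~\ref{lambdalaxmonoida}, which gives the equivalence at the level of categories (not merely a bijection on objects), exactly as the paper uses it.
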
 

\begin{proof} For small monoidal $\catV$-categories $\catM$ and $\catN$, we define the hom-category of lax
monoidal distributors from~$\catM$ 
to~$\catN$ by letting
\[
 \LMDistV[\catM, \catN] \defeq \LMCATV[\catN^\op \otimes \catM, \catV]  \, .
 \]
Then, we define the functor
 \[
(-)^\dag\co \LMDistV[\catM,\catN] \to \LRigV[\pshM, \pshN] 
\]
as the composite
\[
\xymatrix@C=1.2cm{\LMDistV[\catM,\catN] \ar[r]^-{\lambda} &  \LMCATV[\catM, \pshN] \ar[r]^-{(-)_c} & \LRigV[\pshM,\pshN] \, .}
\]
The functor $\lambda$ is an equivalence by Proposition~\ref{lambdalaxmonoida}. Since~$(-)_c$ is also an equivalence (being a quasi-inverse to composition with $\yon_\catM$), it follows that~$(-)^\dag$ is an equivalence, as required. The rest of the data necessary to have a bicategory is determined by the requirement to have a Gabriel factorization.  In particular, the second part of the Gabriel factorization is then defined
by mapping a small monoidal $\catV$-category to $\catM^\dag \defeq \pshM$, viewed as a $\catV$-rig with the convolution monoidal structure.
For the first part of the factorization, we need to show that if  $u\co \catM\to \catN$ is a lax monoidal $\catV$-functor, then the distributor $u_\bullet\co \catM \mat  \catN$ is lax monoidal. But the functor~$u_!\co \pshM\to \pshN$ is lax monoidal, since the functor $u\co \catM\to \catN$ is  lax monoidal.
Corollary~\ref{thm:bijectionmonoidal} implies that the distributor $u_\bullet$ is lax monoidal, since we have $(u_\bullet)^\dag \iso u_!$.
\end{proof}

The composition operation 
of $\LMDistV$ is obtained as a restriction of the composition operation of $\DistV$.  Indeed, for lax monoidal distributors $F\co \catM \mat \catN$  and $G\co \catN \mat \catP$, the composite distributor $G\circ F\co \catM \mat \catP$ is lax monoidal, since there is an isomorphism 
\[
(G\circ F)^\dag \iso G^\dag \circ F^\dag
\] 
and lax monoidal  $\catV$-functors are closed under composition. Similarly, the identity morphism on a small monoidal $\catV$-category is the 
identity distributor  $\id_\catM\co  \catM\mat\catM$, which is lax monoidal since the hom-functor of $\catM$ is a lax monoidal $\catV$-functor.
All of the above admits a restriction to the case of monoidal, rather than lax monoidal, $\catV$-functors.
In order to make the theory work out smoothly, however, it is appropriate to define the notion of a monoidal distributor
as follows.

\begin{definition}\label{defmonodist} Let $ \catM \, , \catN$ be small monoidal $\catV$-categories.
A \emph{monoidal distributor} $F \co  \catM \mat \catN$ is a lax monoidal distributor such that 
the lax monoidal functor $\lambda F\co \catN \to \pshM$ is monoidal.
\end{definition}

Let us point out that requiring a lax monoidal distributor $F \co  \catM \mat \catN$ to be monoidal is not equivalent to requiring the lax monoidal $\catV$-functor $F \co 
\catN^\op \tensorvcat \catM \to \catV$ to be monoidal. For example, consider the identity distributor $\id_\catM\co  \catM\mat\catM$, which is given by the hom-functor 
$\catM(-,-) \co \catM^\op \otimes \catM \to \catV$. This $\catV$-functor  is lax monoidal, but not monoidal. However, 
$\Id_\catM \co  \catM\mat\catM$ is a monoidal distributor since~$\lambda(\id_\catM) \co \catM \to \pshM$ is the Yoneda embedding~$\yon_\catM \co \catM \to \pshM$, which is 
 monoidal.
Note that, by Corollary~\ref{thm:bijectionmonoidal}, a lax monoidal distributor  $F \co  \catM \mat \catN$ is monoidal if and only if the lax monoidal 
functor $F^\dag \co \pshN \to \pshM$ is monoidal.

\medskip

The next theorem defines the bicategory of monoidal distributors.

\begin{theorem} \label{thm:bicatmdistv}
Small monoidal $\catV$-categories, monoidal distributors and monoidal $\catV$-trans\-for\-ma\-tions form a 
bicategory, called the bicategory of monoidal distributors and denoted by~$\MDistV$, which fits in a Gabriel factorization diagram
\[
\xymatrix@R=1.3cm{
\MCatV \ar[rr]^{\psh} \ar[dr]_{(-)_\bullet} & &\RigV \\
&\MDistV \ar[ur]_{(-)^\dag} \, . &
}
\]
\end{theorem}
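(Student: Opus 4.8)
The plan is to follow the proof of Theorem~\ref{thm:bicatlmdistv} almost verbatim, using the homomorphism $\psh \co \MCatV \to \RigV$ of~\eqref{equ:pmcat} in place of $\psh \co \LMCatV \to \LRigV$, and invoking the variant of the Gabriel factorization described around~\eqref{equ:gabrielnonstandard}. Concretely, for small monoidal $\catV$-categories $\catM$ and $\catN$ I would take $\MDistV[\catM, \catN]$ to be the full subcategory of $\LMDistV[\catM, \catN]$ spanned by the monoidal distributors (the 2-cells being again the monoidal $\catV$-transformations), and use as comparison functor $\Delta_{\catM, \catN}$ the restriction of the equivalence $(-)^\dag \co \LMDistV[\catM, \catN] \to \LRigV[\pshM, \pshN]$ from Theorem~\ref{thm:bicatlmdistv}.

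The one point that needs checking is that this restriction lands in, and is an equivalence onto, the hom-category $\RigV[\pshM, \pshN]$ of homomorphisms of $\catV$-rigs. By Theorem~\ref{thm:bicatlmdistv} the functor $(-)^\dag$ is an equivalence from $\LMDistV[\catM, \catN]$ onto $\LRigV[\pshM, \pshN]$, and, as observed following Definition~\ref{defmonodist}, a lax monoidal distributor $F \co \catM \mat \catN$ is monoidal precisely when the lax homomorphism $F^\dag \co \pshM \to \pshN$ is a genuine homomorphism of $\catV$-rigs. Hence $(-)^\dag$ carries the full subcategory $\MDistV[\catM, \catN] \subseteq \LMDistV[\catM, \catN]$ onto the full subcategory $\RigV[\pshM, \pshN] \subseteq \LRigV[\pshM, \pshN]$, and its restriction is still an equivalence. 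Feeding the homomorphism $\psh \co \MCatV \to \RigV$ together with these equivalences $\Delta_{\catM, \catN} = (-)^\dag$ into the construction of~\eqref{equ:gabrielnonstandard} produces the bicategory $\MDistV$ and a full and faithful homomorphism $(-)^\dag \co \MDistV \to \RigV$, which is the right-hand leg of the factorization.

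Finally, one verifies that the composition law so obtained is the one inherited from $\DistV$, \ie that monoidal distributors are closed under composition: for monoidal distributors $F \co \catM \mat \catN$ and $G \co \catN \mat \catP$ we have $(G \circ F)^\dag \iso G^\dag \circ F^\dag$, a composite of homomorphisms of $\catV$-rigs and hence a homomorphism, so $G \circ F$ is monoidal; and $\id_\catM$ is monoidal, as already noted, because $\lambda(\id_\catM)$ is the monoidal functor $\yon_\catM \co \catM \to \pshM$. For the left-hand leg, if $u \co \catM \to \catN$ is a monoidal $\catV$-functor then $u_! = (u_\bullet)^\dag$ is a homomorphism of $\catV$-rigs, so $u_\bullet$ is a monoidal distributor, and the coherence isomorphisms making $(-)_\bullet \co \MCatV \to \MDistV$ a homomorphism are restricted from those in $\DistV$. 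Since $\MDistV$ carries no structure beyond what is already present in $\DistV$ and $\LMDistV$, the only substantive step is the transport of the property \emph{monoidal} across the equivalence $(-)^\dag$, which is exactly the content of the observation following Definition~\ref{defmonodist}; that is the step I would present most carefully.
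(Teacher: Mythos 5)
Your proposal is correct and follows essentially the same route as the paper: define $\MDistV[\catM,\catN]$ as the full subcategory of $\LMDistV[\catM,\catN]$ spanned by monoidal distributors, observe (via Corollary~\ref{thm:bijectionmonoidal} and the remark after Definition~\ref{defmonodist}) that the equivalence from Theorem~\ref{thm:bicatlmdistv} restricts to an equivalence onto the corresponding hom-category of $\catV$-rig homomorphisms, and let the Gabriel factorization construction supply the remaining structure. The paper phrases the restriction at the level of $\lambda$ rather than $(-)^\dag$, but these are interchangeable by the cited corollary, so the arguments coincide.
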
 

\begin{proof} For small monoidal $\catV$-categories $\catM$ and $\catN$, we define the category $\MDistV[\catM, \catN]$  
as the full sub-category of $\LMDistV[ \catM, \catN]$ spanned by monoidal distributors. 
The rest of the proof follows the pattern of the one of Theorem~\ref{thm:bicatlmdistv}. In particular, 
the functor $\lambda$ used in the proof of Theorem~\ref{thm:bicatlmdistv} restricts to an equivalence
\[
\lambda \co \MDistV[\catM, \catN] \to \RigV[ \catM, \pshN]
\]
by the very definition of the notion of a monoidal distributor.
\end{proof}

\begin{remark} 
The tensor product $F_1 \tensorvcat F_2 \co  \catM_1 \tensorvcat \catM_2 \mat \catN_1   \tensorvcat \catN_2$ of lax
monoidal (\resp  monoidal) distributors $F_1\co  \catM_1\mat \catN_1$ and $F_2\co  \catM_2\mat  \catN_2$
is lax monoidal (\resp monoidal).
The operation defines a
symmetric monoidal structure on the bicategories $\LMDistV$ and $\MDistV$.
Moreover, the homomorphisms in the Gabriel 
factorizations of Theorem~\ref{thm:bicatlmdistv} and Theorem~\ref{thm:bicatmdistv} are symmetric monoidal.
Let us also point out that the symmetric monoidal bicategory $\LMDistV$
is compact: the dual of a monoidal $\catV$-category $\catM$
is the opposite $\catV$-category $\catM^\op$. The counit $\varepsilon\co \catM^{\op} \tensorvcat \catM\mat \UnitCat$
is given by the hom-functor $ \UnitCat^{\op} \tensorvcat \catM^\op  \tensorvcat \catM=\catM^\op  \tensorvcat \catM \to \catV$
and similarly for the unit $\eta\co \UnitCat\mat \catM  \tensorvcat \catM^{\op}$.
In contrast,  the symmetric monoidal bicategory $\MDistV$ is \emph{not} compact.
\end{remark}

\medskip

We conclude this section by restricting to the  monoidal case the operation of composition of a 
distributor with a functor, defined in~\eqref{equ:compdistfun}.

\begin{proposition} \label{actionmono} Let $\rigR$ be a $\catV$-rig. Then the  composite  
of a monoidal distributor $F\co \catM\mat \catN$
with a monoidal functor $T\co  \catN \to \rigR$ is a monoidal functor $T\circ F\co \catM\to \rigR$.
\end{proposition}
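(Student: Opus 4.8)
The plan is to reduce the statement to Lemma~\ref{contraactionlemma}, together with the already-established fact that the relevant cocontinuous extensions are homomorphisms of $\catV$-rigs. First I would record that, since $\rigR$ is a $\catV$-rig and hence $\catV$-cocomplete, the cocontinuous extension $T_c \co \pshN \to \rigR$ of $T \co \catN \to \rigR$ exists, and that, because $T$ is a monoidal $\catV$-functor, $T_c$ is itself a homomorphism of $\catV$-rigs; this is exactly the content of the monoidal counterpart of the equivalence in~\eqref{equ:yonmoncat} (namely $\RigV[\pshN,\rigR] \simeq \MCatV[\catN,\rigR]$), whose inverse sends $T$ to $T_c$. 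Next, since $F \co \catM \mat \catN$ is a \emph{monoidal} distributor, the observation following Definition~\ref{defmonodist} (itself a consequence of Corollary~\ref{thm:bijectionmonoidal}) gives that $F^\dag \co \pshM \to \pshN$ is a monoidal $\catV$-functor, i.e.\ a homomorphism of $\catV$-rigs.

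Then I would form the composite $T_c \circ F^\dag \co \pshM \to \rigR$, which is a homomorphism of $\catV$-rigs since these are closed under composition (the multiplication and unit constraints compose in the evident way). By Lemma~\ref{contraactionlemma} there is an isomorphism $(T\circ F)_c \iso T_c \circ F^\dag$; transporting the monoidal structure along this isomorphism shows that $(T\circ F)_c \co \pshM \to \rigR$ is a homomorphism of $\catV$-rigs. Finally, restricting along the monoidal Yoneda embedding $\yon_\catM \co \catM \to \pshM$ and using the canonical isomorphism $(T\circ F)_c \circ \yon_\catM \iso T\circ F$, I conclude that $T\circ F \co \catM \to \rigR$ is a monoidal $\catV$-functor, as required. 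Equivalently, one may invoke directly the monoidal form of~\eqref{equ:yonmoncat}: a $\catV$-functor $\catM \to \rigR$ is monoidal precisely when its cocontinuous extension is.

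I do not expect a genuine obstacle here; the only points requiring a little care are bookkeeping ones — namely that the isomorphisms supplied by Lemma~\ref{contraactionlemma} and Corollary~\ref{thm:bijectionmonoidal} are compatible with the convolution and pointwise monoidal structures, so that the monoidal structure really does transport along them, and that $\rigR$ being merely monoidal closed presentable (not necessarily symmetric) already provides everything the argument uses. A more computational alternative — directly constructing the constraint $(T\circ F)(x_1)\diamond(T\circ F)(x_2) \to (T\circ F)(x_1\oplus x_2)$ from the Fubini theorem for coends, the monoidal structure of $T$, and the monoidality of $\lambda F$ encoding the convolution structure — is available but considerably messier, so I would avoid it.
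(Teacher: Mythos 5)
Your argument is correct and is essentially the paper's own proof: both reduce to showing $(T\circ F)_c$ is monoidal via the equivalence $\RigV[\pshM,\rigR]\simeq\MCatV[\catM,\rigR]$, observe that $F^\dag$ and $T_c$ are monoidal (by the monoidality of $F$, via Corollary~\ref{thm:bijectionmonoidal}, and of $T$, respectively), and conclude using the isomorphism $(T\circ F)_c\iso T_c\circ F^\dag$ of Lemma~\ref{contraactionlemma}. Your extra remarks about restricting along the Yoneda embedding just make explicit the "it suffices" step the paper leaves implicit.
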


\begin{proof} It suffices to show that  $(T\circ F)_c\co \pshM\to \rigR$
is monoidal. The functor $F^\dag\co \pshM \to \pshN$
is monoidal, since the distributor $F\co \catM\mat \catN$ is monoidal, and
the functor $T_c\co \pshN\to \rigR$
is monoidal, since  $T$ is monoidal.
Hence, the functor $T_c\circ F^\dag\co \pshM\to \rigR$
is monoidal. This proves the result, since we have
$(T\circ F)_c \iso T_c\circ F^\dag$ by Lemma~\ref{contraactionlemma}.
\end{proof}

\section{Symmetric monoidal $\catV$-categories and symmetric $\catV$-rigs}
\label{sec:smoncat}

The aim of this section is to develop the counterpart for symmetric monoidal $\catV$-categories of the material in Section~\ref{sec:moncat}. Let us recall
that, for symmetric monoidal $\catV$-categories $\catA$ and $\catB$, a lax monoidal $\catV$-functor $F\co \catA\to \catB$ 
is said to be \emph{symmetric} if, for any pair of objects $x\in \catA$ and~$y\in \catB$,  the following square commutes
\[
\xymatrix{
F(x)\otimes F(y) \ar[d]_{\sigma} \ar[rr]^{\mu(x,y)}&& F(x\otimes y)\ar[d]^{F(\sigma)}\\
F(y)\otimes F(x) \ar[rr]_{\mu(y,x)} &&F(y\otimes x) \, , }
\]
where we use $\sigma$ to denote the symmetry isomorphism of both $\catA$ and $\catB$.
We write $\LSMCatV$ (\resp $\SMCatV$) for the 2-category of small symmetric monoidal $\catV$-categories, 
symmetric lax monoidal (\resp  symmetric monoidal) $\catV$-functors and monoidal $\catV$-natural transformations. 
If $\catA$ and $\catB$ are symmetric monoidal $\catV$-categories, 
then so is the $\catV$-category $\catA\tensorvcat \catB$.
This operation defines a symmetric monoidal structure on the categories  $\LSMCatV$
and $\SMCatV$. The unit object  is the $\catV$-category $\UnitCat$ giving the unit for the tensor product of $\CatV$, as defined in Section~\ref{sec:vcat}.
It is easy to verify that $\UnitCat$ is initial in the 
2-ca\-te\-go\-ry~$\SMCatV$. If $\catA= (\catA, \oplus, 0, \sigma)$ 
is a symmetric monoidal category, then the interchange law
\[
(x_1\oplus x_2)\oplus (y_1\oplus y_2) \iso (x_1\oplus y_1) \oplus (x_2\oplus y_2)
\]
is a natural (symmetric) monoidal structure on the
 tensor product functor of $\catA$.

 \begin{lemma} \label{thm:smoncathascoprod}
  The $2$-category $\SMCatV$ has finite coproducts. In particular,
  the coproduct of two small symmetric monoidal $\catV$-categories $\catA_1$ and $\catA_2$
  is their tensor product $\catA_1 \tensorvcat \catA_2$.
  \end{lemma}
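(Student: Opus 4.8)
The plan is to deal with the empty coproduct and with binary coproducts separately, the remaining finite coproducts then being assembled from these in the usual way. We have already observed that the unit $\catV$-category $\UnitCat$ is initial in $\SMCatV$, so it remains to treat binary coproducts, and I claim that $\catA_1 \tensorvcat \catA_2$, equipped with its pointwise symmetric monoidal structure, is the coproduct of $\catA_1$ and $\catA_2$ in $\SMCatV$.

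Writing $0_1$ and $0_2$ for the monoidal units of $\catA_1$ and $\catA_2$, I would take as inclusions the $\catV$-functors $\iota_1 \co \catA_1 \to \catA_1 \tensorvcat \catA_2$ and $\iota_2 \co \catA_2 \to \catA_1 \tensorvcat \catA_2$ given on objects by $\iota_1(x) \defeq (x, 0_2)$ and $\iota_2(y) \defeq (0_1, y)$, and on hom-objects by inserting the identity of $0_2$, resp.\ of $0_1$; the unit isomorphisms of $\catA_2$ and $\catA_1$ equip $\iota_1$ and $\iota_2$ with canonical symmetric strong monoidal structures. To establish the universal property I must show that, for every small symmetric monoidal $\catV$-category $\catB$, precomposition with $\iota_1$ and $\iota_2$ yields an equivalence of categories
\[
\SMCATV[\catA_1 \tensorvcat \catA_2, \catB] \longrightarrow \SMCATV[\catA_1, \catB] \times \SMCATV[\catA_2, \catB] \, .
\]

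For essential surjectivity, given symmetric monoidal $\catV$-functors $F_1 \co \catA_1 \to \catB$ and $F_2 \co \catA_2 \to \catB$, I would form the composite
\[
F \defeq \bigl( \catA_1 \tensorvcat \catA_2 \xrightarrow{\ F_1 \tensorvcat F_2\ } \catB \tensorvcat \catB \xrightarrow{\ \oplus_\catB\ } \catB \bigr) \, ,
\]
where $\oplus_\catB$ denotes the tensor product functor of $\catB$; this $F$ is symmetric monoidal, since $F_1 \tensorvcat F_2$ is (being the tensor of two symmetric monoidal $\catV$-functors, for the pointwise structures) and $\oplus_\catB$ is, by the interchange law recalled just before the statement. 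On objects $F(x, y) = F_1(x) \oplus F_2(y)$, and since $F_1$ and $F_2$ are strong monoidal one obtains natural monoidal isomorphisms $F \circ \iota_1 \iso F_1$ and $F \circ \iota_2 \iso F_2$. For fullness and faithfulness of the comparison functor, the key observation is that in $\catA_1 \tensorvcat \catA_2$ every object is canonically isomorphic to $\iota_1(x) \oplus \iota_2(y)$, because $(x, 0_2) \oplus (0_1, y) \iso (x, y)$ by the unit isomorphisms of $\catA_1$ and $\catA_2$. Consequently any symmetric monoidal $\catV$-functor $G \co \catA_1 \tensorvcat \catA_2 \to \catB$ is reconstructed, up to canonical monoidal isomorphism, from the pair $(G\,\iota_1, G\,\iota_2)$ via $G(x, y) \iso G\iota_1(x) \oplus G\iota_2(y)$; a monoidal $\catV$-natural transformation $\alpha \co G \to G'$ is reconstructed from the pair $(\alpha\iota_1, \alpha\iota_2)$ of its restrictions by the same formula; and conversely any pair of monoidal $\catV$-natural transformations $\beta_1 \co G\iota_1 \to G'\iota_1$, $\beta_2 \co G\iota_2 \to G'\iota_2$ assembles into such an $\alpha$. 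This exhibits the comparison functor as an equivalence.

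The step I expect to be the main obstacle is purely bookkeeping: one must check that the symmetric monoidal structures on $\iota_1$, $\iota_2$ and $F$ (the latter as a composite of symmetric monoidal $\catV$-functors) satisfy the coherence axioms, that the isomorphisms $F \circ \iota_k \iso F_k$ are monoidal, and that the $2$-cells assembled above are genuinely monoidal and $\catV$-natural. Each of these reduces to coherence in $\catB$ together with the functor axioms for $F_1$ and $F_2$, so none is conceptually difficult, but carrying them out in full is the bulk of the work. (Alternatively, one may regard $\SMCatV$ as the $2$-category of symmetric pseudomonoids in the symmetric monoidal $2$-category $\CatV$ and invoke the general principle — the $2$-categorical analogue of the fact that commutative monoids in a symmetric monoidal category have the tensor product as coproduct — that the ambient tensor product furnishes finite coproducts of symmetric pseudomonoids.)
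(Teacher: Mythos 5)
Your proposal follows essentially the same route as the paper: the same inclusions $\iota_1(x)=(x,0)$, $\iota_2(y)=(0,y)$, and the same candidate pseudo-inverse $(F_1,F_2)\mapsto \oplus_\catB\circ(F_1\tensorvcat F_2)$, whose symmetric monoidal structure comes from the interchange law, with the remaining coherence checks (which the paper leaves to the reader) sketched correctly. The only addition is your explicit treatment of the initial object $\UnitCat$, which the paper handles by an earlier remark in the same section.
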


 \begin{proof} Let $\catA_1 = (\catA_1, \oplus, 0) \,  , \catA_2 = (\catA_2, \oplus, 0)$ 
 be two symmetric monoidal categories. We define the functors $\iota_1\co \catA_1 \to \catA_1  \tensorvcat \catA_2$
 and $\iota_2\co  \catA_2 \to \catA_1  \tensorvcat \catA_2$ by letting $\iota_1(x) \defeq (x,0)$ and $\iota_2(x) \defeq (0,x)$.
 We now have to show  that the functor
 \[
 \pi\co \SMCatV[\catA_1  \tensorvcat \catA_2, \catB ]\to  \SMCatV[\catA_1, \catB]\times \SMCatV[\catA_2, \catB] \, , 
 \]
defined by letting $\pi(F) \defeq (F\circ \iota_1,F\circ \iota_2)$, is an equivalence of categories for any 
symmetric monoidal $\catV$-category~$\catB$.
The tensor product functor $\mu\co \catB\tensorvcat \catB\to \catB$
is symmetric monoidal, since the monoidal category  $\catB$ is symmetric.
Thus, if $F_1\co  \catA_1 \to  \catB$ and $F_2\co  \catA_2 \to  \catB$ are symmetric monoidal functors, then 
the functor $\mu\circ (F_1\otimes F_2)\co \catA_1  \tensorvcat \catA_2 \to\catB$
is symmetric monoidal. Hence, we obtain a functor 
 \[
 \rho\co  \SMCatV[\catA, \catB]\times \SMCatV[\catA_2, \catB]\to \SMCatV[\catA_1  \tensorvcat \catA_2, \catB ]
 \]
defined by letting $\rho(F_1,F_2) \defeq \mu\circ (F_1\otimes F_2)$.
The verification that the functors $\pi$ and $\rho$
are mutually pseudo-inverse is left to the reader.
 \end{proof}

The notion of a symmetric $\catV$-rig, introduced in Definition~\ref{thm:symvrig} below, generalizes to the
enriched case the notion of a 2-ring introduced in~\cite{ChirvasituA:funpga} (where it is defined as a 
symmetric closed monoidal presentable category), which is a special case of the
notion of a 2-rig introduced in~\cite{BaezJ:higdan} (where it is defined as a symmetric monoidal cocomplete category in which the tensor product preserves colimits in both variables).

\begin{definition} \label{thm:symvrig}
A \emph{symmetric $\catV$-rig} is a symmetric monoidal closed presentable $\catV$-category.
\end{definition}

If $\rigR$ and $ \rigS$ are symmetric $\catV$-rigs,
we say that a lax homomorphism (\resp homomorphism) of $\catV$-rigs $F\co \rigR\to \rigS$ 
is a \emph{symmetric} if it is symmetric as a  lax monoidal (\resp monoidal) $\catV$-functor.
We write  $\LSRigV$ (\resp $\SRigV$) for the 2-category of symmetric $\catV$-rigs, 
symmetric lax homomorphisms (\resp symmetric  homomorphisms) and monoidal $\catV$-natural transformations. 

\medskip

The convolution tensor product extends in a natural way to the symmetric case~\cite{DayBJ:clocf,ImGB:unipcm}. Indeed, 
if~$\catA=(\catA,\oplus,0,\sigma_\catA)$ is a small symmetric monoidal $\catV$-category
and $\rigR=(\rigR, \diamond, e,\sigma_{\rigR})$ is symmetric $\catV$-rig
then the $\catV$-rig $[\catA,\rigR]$ is symmetric. By definition, for $F,G\co \catA\to \rigR$, 
 the value at $x\in \catA$ of the symmetry isomorphism $\sigma\co F\conv G\to G\conv F$
is the coend over $x_1,x_2\in \catA$ of the maps
\[
\sigma_{\rigR} \otimes \catA[\sigma_\catA,x] \co F(x_1)\diamond G(x_2)\otimes \catA[ x_1\oplus x_2,x] \to 
G(x_2)\diamond F(x_1)\otimes \catA[ x_2\oplus x_1,x]  \, .
\]

If $\catA$ is a small symmetric monoidal $\catV$-category, then
 $\pshA=[\catA^\op,\catV]$ is a symmetric $\catV$-rig.
The Yoneda functor $\yon_\catA\co  \catA\to \pshA$
is symmetric monoidal and exhibits $\pshA$ as the free symmetric $\catV$-rig on $\catA$.
More precisely, this means that the restriction functor  
\begin{equation}
\label{equ:yonsmoncat}
\yon_\catA^\ast \co \LSRigV[\pshA,\rigR] \to \LSMCATV[\catA, \rigR] 
\end{equation}
along the  Yoneda functor $\yon_\catA\co  \catA\to \pshA$
is an equivalence of categories for any symmetric $\catV$-rig~$\rigR$.
The inverse equivalence takes a  symmetric lax monoidal $\catV$-functor
$F \co  \catA \to \rigR$ to the functor $F_{c} \co  \pshA\to \rigR$, which
can be equipped with the structure of a lax homomorphism of $\catV$-rigs.
We write
\[
P\co \LSCatV\to\LSRigV
\]
for the homomorphism of bicategories which takes  a symmetric  lax  monoidal functor
$u\co \catA\to \catB$ to the lax symmetric homomorphism of symmetric $\catV$-rigs $P(u)\defeq u_!\co \pshA\to \pshB$.
All of the above restricts to symmetric monoidal $\catV$-functors and symmetric
homomorphisms of $\catV$-rigs and so we obtain a homomorphism
\[
P\co \SMCatV\to\SRigV \, .
\]

\begin{remark} If $\rigR$ and $\rigS$ are symmetric $\catV$-rigs,
then so is their completed tensor product $\rigR \tensorloc \rigS$ (cf.\ Remark~\ref{thm:tensorpcat})
This defines  a symmetric monoidal structure on the bicategories $\LSRigV$ and  $\SRigV$.
The unit object is the category $\catV$.
If $\catA$ and $\catB$ are symmetric monoidal $\catV$-categories,
then the canonical functor 
 \[
 \pshA  \tensorloc  \pshB \to \psh(\catA\otimes \catB)
 \] 
 is an equivalence of symmetric $\catV$-rigs.
This witnesses the fact that the homomorphisms of bicategories $P\co \SMCatV\to \SRigV$
and $P\co \LSMCatV\to \LSRigV$ are symmetric monoidal.
\end{remark}

We now proceed to extend Proposition~\ref{lambdalaxmonoida} to functor categories of symmetric lax monoidal $\catV$-functors.
The first step is the following lemma, which is a counterpart of Lemma~\ref{monoidalequiv} for symmetric monoidal $\catV$-categories.

\begin{lemma}\label{symmonoidalequiv} 
Let $\catA\, , \catB$ be small symmetric monoidal $\catV$-categories and ~$\rigR$ be a symmetric $\catV$-rig.
Then, the monoidal equivalences
\[
\lambda^{\catA}\co [\catA  \tensorvcat \catB, \rigR] \to [\catB, [\catA,\rigR]]  \, , \quad
\lambda^{\catB}\co [\catA  \tensorvcat \catB, \rigR] \to [\catA, [\catB,\rigR]] 
\]
are symmetric. \end{lemma}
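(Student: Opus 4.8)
\emph{Proof strategy.} Recall that a monoidal functor $(F,\mu,\eta)$ between symmetric monoidal $\catV$-categories is symmetric precisely when, for all objects $X,Y$, the square
\[
\xymatrix@C=2cm{
F(X)\otimes F(Y) \ar[r]^-{\mu(X,Y)} \ar[d]_{\sigma} & F(X\otimes Y) \ar[d]^{F(\sigma)} \\
F(Y)\otimes F(X) \ar[r]_-{\mu(Y,X)} & F(Y\otimes X)
}
\]
commutes; being symmetric is thus a \emph{property} of a monoidal functor. By Lemma~\ref{monoidalequiv} we already know $\lambda^\catA$ and $\lambda^\catB$ are monoidal, with structure isomorphism the one produced in the proof of that lemma, so it remains only to verify the compatibility square above with respect to the symmetry constraints on both sides. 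We treat $\lambda = \lambda^\catA \co [\catA \tensorvcat \catB, \rigR] \to [\catB, [\catA,\rigR]]$; the case of $\lambda^\catB$ is entirely analogous.

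First I would spell out the two symmetry constraints in play. On the left, $[\catA \tensorvcat \catB, \rigR]$ carries the convolution monoidal structure built from the monoidal structure $\oplus$ on $\catA \tensorvcat \catB$, whose symmetry $\sigma_{\catA \tensorvcat \catB}$ is given componentwise by $\sigma_\catA \oplus \sigma_\catB$ under the identification $(\catA \tensorvcat \catB)[(x_1,y_1) \oplus (x_2,y_2), (x,y)] \iso \catA[x_1 \oplus x_2, x] \otimes \catB[y_1 \oplus y_2, y]$, together with the symmetry $\sigma_\rigR$ of $\rigR$; so $\sigma \co A_1 \conv A_2 \to A_2 \conv A_1$ is the coend over $(x_1,y_1),(x_2,y_2)$ of the maps $\sigma_\rigR \otimes (\catA \tensorvcat \catB)[\sigma_{\catA \tensorvcat \catB}, (x,y)]$. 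On the right, $[\catB, [\catA,\rigR]]$ carries the convolution structure built from $\oplus$ on $\catB$ (with symmetry $\sigma_\catB$) and from the symmetric $\catV$-rig $[\catA,\rigR]$, whose own symmetry is in turn the convolution symmetry built from $\oplus$ on $\catA$ (with $\sigma_\catA$) and from $\sigma_\rigR$. Hence the symmetry of the iterated convolution $[\catB,[\catA,\rigR]]$ is assembled from exactly the three constituents $\sigma_\catA$, $\sigma_\catB$, $\sigma_\rigR$, with the $\catA$-part acting "inside" and the $\catB$-part "outside".

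Then I would chase both composites of the square. Evaluating at $y \in \catB$ and then at $x \in \catA$ and running through the chain of Fubini isomorphisms used in the proof of Lemma~\ref{monoidalequiv}, the top-then-right composite applies $\lambda(\sigma)$, which is by definition the coend over $(x_1,y_1),(x_2,y_2) \in \catA \tensorvcat \catB$ of
\[
\sigma_\rigR \otimes (\catA \tensorvcat \catB)[\sigma_{\catA \tensorvcat \catB},(x,y)] \co A_1(x_1,y_1) \diamond A_2(x_2,y_2) \otimes (\catA \tensorvcat \catB)[(x_1,y_1)\oplus(x_2,y_2),(x,y)] \longrightarrow A_2(x_2,y_2) \diamond A_1(x_1,y_1) \otimes (\catA \tensorvcat \catB)[(x_2,y_2)\oplus(x_1,y_1),(x,y)] ,
\]
while the left-then-bottom composite first applies the convolution symmetry of $[\catB,[\catA,\rigR]]$, contributing $\sigma_\catB$ on the $\catB$-coordinates of the outer hom-objects and, on the values, the symmetry of $[\catA,\rigR]$, which contributes $\sigma_\catA$ on the $\catA$-coordinates together with $\sigma_\rigR$ on $\diamond$. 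Since $\sigma_{\catA \tensorvcat \catB} = \sigma_\catA \oplus \sigma_\catB$ under the factorization of the hom-object displayed above, the two coend maps coincide, and the required compatibility with the Fubini isomorphisms is routine. I expect the main obstacle to be exactly this bookkeeping: tracking, through the iterated-convolution description of $[\catB,[\catA,\rigR]]$ and the Fubini rearrangements of Lemma~\ref{monoidalequiv}, how the symmetry of $[\catB,[\catA,\rigR]]$ decomposes into its $\sigma_\catA$, $\sigma_\catB$, $\sigma_\rigR$ pieces and matching them against $\sigma_{\catA \tensorvcat \catB}$ and $\sigma_\rigR$; once this decomposition is written out explicitly, the verification is purely formal.
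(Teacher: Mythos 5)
Your proposal is correct and takes essentially the same route as the paper, whose proof is recorded only as ``similar to the proof of Lemma~\ref{monoidalequiv}'', i.e.\ precisely the convolution/coend verification you describe, using the explicit description of the convolution symmetry as the coend of the maps $\sigma_{\rigR}\otimes \catA[\sigma_\catA,x]$. Your reduction---symmetry is a property of the monoidal structure already constructed in Lemma~\ref{monoidalequiv}, so one only checks the compatibility square by matching the symmetry of $[\catA\tensorvcat\catB,\rigR]$ (built from $\sigma_\catA\oplus\sigma_\catB$ and $\sigma_\rigR$) against the iterated convolution symmetry of $[\catB,[\catA,\rigR]]$ through the Fubini isomorphisms---is exactly the intended argument.
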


\begin{proof}  Similar to the proof of Lemma~\ref{monoidalequiv}.
\end{proof}

 Let $\catA=(\catA,\oplus,0,\sigma_\catA)$ be a small symmetric monoidal $\catV$-category and $\rigR=(\rigR, \diamond, e,\sigma_{\rigR})$ be a symmetric $\catV$-rig. As we have just seen, the $\catV$-rig $[\catA,\rigR]$ is symmetric. Now, for $F,G\co  \catA\to \rigR$, if
\[
\sigma\co F\conv G\to G\conv F
\] 
is the symmetry isomorphism, then the following diagram commutes for  every $x_1,x_2\in \catA$, 
 \begin{equation}
\label{equ:symmetryconvolsquare}
\xycenter{
F(x_1)\diamond G(x_2)\ar[d]_{\sigma_{\rigR}} \ar[rr]^{can} && (F\conv G)(x_1\oplus x_2) \ar[d]^{\sigma(\sigma_\catA)} \\
G(x_2)\diamond F(x_1)\ar[rr]_{can} && (G\conv F)(x_2\oplus x_1) \, ,
}
\end{equation}
where the maps labelled $\mathit{can}$ are as in~\eqref{equ:starp19}. For a small symmetric monoidal $\catV$-category $\catA$ and a symmetric $\catV$-rig $\rigR$, we write $\CMon[\catA,\rigR]$ 
for the category of commutative monoid objects in the symmetric $\catV$-rig $[\catA,\rigR]$, which is a full subcategory of the category 
$\Mon[\catA,\rigR]$ of monoid objects in $[\catA,\rigR]$. The next lemma is a counterpart of Lemma~\ref{monoidobjectlaxmonoidal} for symmetric monoidal $\catV$-categories.

\begin{lemma}\label{commonoidobjectlaxsymmonoidal} Let $\catA$ be a small symmetric monoidal $\catV$-category
and $\rigR$ be a symmetric $\catV$-rig. Then, the equivalence of categories 
\[
\rho\co \Mon[\catA,\rigR]\to \LMCATV[\catA, \rigR]
\]
restricts to an equivalence of categories 
\[
\CMon[\catA,\rigR]\to \LSMCATV[\catA, \rigR] \, .
\]
\end{lemma}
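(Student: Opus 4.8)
The plan is to show directly that the equivalence $\rho$ of Lemma~\ref{monoidobjectlaxmonoidal} identifies commutative monoid objects in $[\catA,\rigR]$ with symmetric lax monoidal $\catV$-functors $\catA\to\rigR$. Since $\CMon[\catA,\rigR]$ is, by definition, a full subcategory of $\Mon[\catA,\rigR]$ and $\LSMCATV[\catA,\rigR]$ is a full subcategory of $\LMCATV[\catA,\rigR]$, it suffices to prove the biconditional: a monoid object $A$ in $[\catA,\rigR]$ is commutative if and only if the lax monoidal $\catV$-functor $\rho(A)\co\catA\to\rigR$ is symmetric. Indeed, once this is known, the restriction of $\rho$ to $\CMon[\catA,\rigR]$ lands in $\LSMCATV[\catA,\rigR]$, is full and faithful (being the restriction of an equivalence to full subcategories), and is essentially surjective: if $G$ is symmetric lax monoidal, then by essential surjectivity of $\rho$ there is a monoid $A$ with $\rho(A)\iso G$ in $\LMCATV[\catA,\rigR]$, whence $\rho(A)$ is symmetric (symmetry being invariant under isomorphism of lax monoidal functors), hence $A$ is commutative, and the same isomorphism exhibits $G$ in the essential image of $\rho$ restricted to $\CMon[\catA,\rigR]$.

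To establish the biconditional, recall from the discussion preceding Lemma~\ref{monoidobjectlaxmonoidal} that the lax monoidal structure of $\rho(A)$ has components $\mu(x_1,x_2) = \mu_{x_1\oplus x_2}\circ\mathit{can}$, where $\mathit{can}\co A(x_1)\diamond A(x_2)\to(A\conv A)(x_1\oplus x_2)$ is the canonical map of~\eqref{equ:starp19} and $\mu\co A\conv A\to A$ is the monoid multiplication. Thus $\rho(A)$ is symmetric precisely when $A(\sigma_\catA)\circ\mu(x_1,x_2) = \mu(x_2,x_1)\circ\sigma_\rigR$ holds for all $x_1,x_2\in\catA$, whereas $A$ is commutative precisely when $\mu\circ\sigma = \mu$, where $\sigma\co A\conv A\to A\conv A$ is the symmetry of the convolution product. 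The link between these two formulations is the commutative square~\eqref{equ:symmetryconvolsquare}, which relates $\mathit{can}$, $\sigma_\rigR$ and (the $\sigma_\catA$-twisted form of) $\sigma$; together with the naturality of $\mu$ it lets one translate either condition into the other.

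For the ``if'' direction, assume $\mu\circ\sigma = \mu$; substituting $\mu(x_1,x_2) = \mu_{x_1\oplus x_2}\circ\mathit{can}$ into both sides of the symmetry axiom, moving $A(\sigma_\catA)$ past $\mu$ by naturality, rewriting $\mathit{can}\circ\sigma_\rigR$ via~\eqref{equ:symmetryconvolsquare}, and finally cancelling the symmetry of $A\conv A$ using commutativity, one obtains the desired identity by a short diagram chase. For the ``only if'' direction, assume $\rho(A)$ symmetric; to prove $\mu_x\circ\sigma_x = \mu_x$ for all $x$, note that both $\mu$ and $\sigma$ are natural in $x$, so by the universal property of the coend defining $(A\conv A)(x)$ it is enough to check the equality at $x = x_1\oplus x_2$ after precomposition with $\mathit{can}$, for all $x_1,x_2$; there, combining~\eqref{equ:symmetryconvolsquare}, the naturality of $\mu$, and the symmetry axiom for $\rho(A)$ yields $\mu_{x_1\oplus x_2}\circ\sigma_{x_1\oplus x_2}\circ\mathit{can} = \mu(x_1,x_2) = \mu_{x_1\oplus x_2}\circ\mathit{can}$, as required. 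I expect the only real obstacle to be bookkeeping: one must track three distinct symmetry constraints --- $\sigma_\catA$ on $\catA$, $\sigma_\rigR$ on $\rigR$, and the induced symmetry $\sigma$ of the convolution product on $[\catA,\rigR]$ --- along with the $\sigma_\catA$-twist appearing on the right-hand edge of~\eqref{equ:symmetryconvolsquare}; the reduction to generators via the coend universal property in the ``only if'' direction is the one step where genuine (though mild) care is needed.
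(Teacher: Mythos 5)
Your proof is correct, and its core coincides with the paper's: the direction ``$A$ commutative $\Rightarrow$ $\rho(A)$ symmetric'' is established exactly as you describe, by pasting the square~\eqref{equ:symmetryconvolsquare} against the naturality square for $\mu$ and the square expressing commutativity of the multiplication. Where you differ is in completeness rather than in method: the paper's written proof stops after this one implication (i.e.\ it only checks that the restriction of $\rho$ lands in $\LSMCATV[\catA,\rigR]$), leaving the converse --- and hence essential surjectivity of the restricted functor onto the symmetric lax monoidal functors --- to the reader, presumably because it is the same diagram read in reverse together with the inverse construction from Lemma~\ref{monoidobjectlaxmonoidal}. You make this explicit, reducing everything to the biconditional ``$A$ commutative $\iff$ $\rho(A)$ symmetric'' and proving the ``only if'' half by checking the identity $\mu\cdot\sigma=\mu$ after precomposition with the canonical maps $\mathit{can}$, which is legitimate since these are jointly epimorphic for $\catV$-natural transformations out of the coend defining $A\conv A$; your observation that symmetry of a lax monoidal functor is invariant under monoidal natural isomorphism is also needed and correct. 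So your argument buys a fully self-contained proof of the stated equivalence, at the cost of the extra (mild) coend bookkeeping, while the paper's buys brevity by treating the converse as routine.
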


\begin{proof}
Let us show that the functor $\rho$ takes a commutative monoid object 
$F=(F,\mu, \eta)$ in~$[\catA,\rigR]$ to a symmetric lax monoidal functor.
By definition, the lax monoidal structure 
\[
\mu(x_1,x_2)\co F(x_1)\diamond F(x_2) \to  F(x_1\oplus x_2)
\]
on the functor $F\co \catA\to \catV$ is obtained by composing the maps
\begin{equation}
\label{equ:composingadd}
\xymatrix{
F(x_1)\diamond F(x_2)\ar[r]^-{can} & (F\conv F)(x_1\oplus x_2) \ar[rr]^{\mu(x_1\oplus x_2)} && F(x_1\oplus x_2).
}
\end{equation}
Let us now consider the following diagram:
\[
\xymatrix{
(F\conv F)(x_1\oplus x_2)\ar[d]_{\sigma(x_1\oplus x_2)} \ar[rr]^{\mu(x_1\oplus x_2)} && F(x_1\oplus x_2) \ar@{=}[d] \\
(F\conv F)(x_1\oplus x_2)\ar[d]_{(F\conv F)(\sigma_\catA)} \ar[rr]^{\mu(x_1\oplus x_2)} && F(x_1\oplus x_2)\ar[d]^{F(\sigma_\catA)} \\
(F\conv F)(x_2\oplus x_1)\ar[rr]_{\mu(x_2\oplus x_1)} && F(x_2\oplus x_1) \, .
}
\] 
Its top square commutes since the product $\mu\co F\conv F\to F$ is commutative, while the bottom square commutes by naturality.
It follows by composing that following square commutes,
 \begin{equation}
\label{equ:symmetryconvols2}
\xycenter{
(F\conv F)(x_1\oplus x_2)\ar[d]_{\sigma(\sigma_\catA)} \ar[rr]^{\mu(x_1\oplus x_2)} &&  F(x_1\oplus x_2) \ar[d]^{F(\sigma_\catA) } \\
(F\conv F)(x_2\oplus x_1)\ar[rr]_{\mu(x_2\oplus x_1)} && F(x_2\oplus x_1) \, .
}
\end{equation}
But the following square commutes, being an instance of the diagram in~\eqref{equ:symmetryconvolsquare}
 \begin{equation}
\label{equ:symmetryconvolsquare3}
\xycenter{
F(x_1)\diamond F(x_2)\ar[d]_{\sigma_{\rigR}} \ar[rr]^{can} && (F\conv F)(x_1\oplus x_2) \ar[d]^{\sigma(\sigma_\catA)} \\
F(x_2)\diamond F(x_1)\ar[rr]_{can} && (F\conv F)(x_2\oplus x_1).
}\end{equation}
If we compose horizontally the squares  in~\eqref{equ:symmetryconvols2}
and~\eqref{equ:symmetryconvolsquare3}, we obtain the following commutative square
\[
\xymatrix{
F(x_1)\diamond F(x_2)\ar[d]_{\sigma_\rigR} \ar[rr]^{\mu(x_1,x_2)} && F(x_1\oplus x_2) \ar[d]^{F(\sigma_\catA) } \\
F(x_2)\diamond F(x_1)\ar[rr]_{\mu(x_2,x_1)}  && F(x_2\oplus x_1) \, . }
\]
This shows that the lax monoidal structure in~\eqref{equ:composingadd}
is symmetric.
\end{proof}

It is now possible to extend Proposition~\ref{lambdalaxmonoida} to the symmetric case.

\begin{proposition}\label{lambdalaxmonoidasymm} The equivalences of categories
\[
\lambda^{\catA}\co [\catA  \tensorvcat \catB, \rigR] \to [\catB,  [\catA,\rigR]] \, , \quad
\lambda^{\catB}\co [\catA  \tensorvcat \catB, \rigR] \to [\catA,  [\catB,\rigR]] 
\]
restrict to equivalences of categories
\begin{align*}
\LSMCATV[\catA  \tensorvcat \catB, \rigR] & \simeq \LSMCATV[\catB,  [\catA,\rigR]] \, , \\
\LSMCATV[\catA  \tensorvcat \catB, \rigR] & \simeq \LSMCATV[\catA,  [\catB,\rigR]] \, .
\end{align*} 
\end{proposition}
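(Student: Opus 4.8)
The plan is to follow exactly the template set by the proof of Proposition~\ref{lambdalaxmonoida}, simply replacing the inputs from the non-symmetric theory by their symmetric counterparts established in this section. We treat only $\lambda^\catA$, since $\lambda^\catB$ is handled in the same way (by the symmetry of the tensor product $\catA \tensorvcat \catB$, or directly by the analogous argument).

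First I would recall that by Lemma~\ref{symmonoidalequiv} the monoidal equivalence
\[
\lambda^\catA \co [\catA \tensorvcat \catB, \rigR] \to [\catB, [\catA, \rigR]]
\]
is in fact a \emph{symmetric} monoidal equivalence between symmetric monoidal $\catV$-categories. A symmetric monoidal equivalence between symmetric monoidal categories induces an equivalence between their categories of commutative monoid objects: this is a general fact, since a symmetric monoidal functor carries commutative monoids to commutative monoids, a symmetric monoidal natural isomorphism carries the data along, and the quasi-inverse of a symmetric monoidal equivalence is again symmetric monoidal. Hence $\lambda^\catA$ restricts to an equivalence
\[
\CMon[\catA \tensorvcat \catB, \rigR] \simeq \CMon[\catB, [\catA, \rigR]] \, .
\]
Here I should note that $[\catA, \rigR]$ is a symmetric $\catV$-rig (as recalled earlier in the section, with the convolution symmetry), so the right-hand side makes sense; and $\catA \tensorvcat \catB$ is a small symmetric monoidal $\catV$-category, so the left-hand side does too.

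Next I would invoke Lemma~\ref{commonoidobjectlaxsymmonoidal} twice: once for the symmetric monoidal $\catV$-category $\catA \tensorvcat \catB$ and the symmetric $\catV$-rig $\rigR$, giving an equivalence $\CMon[\catA \tensorvcat \catB, \rigR] \simeq \LSMCATV[\catA \tensorvcat \catB, \rigR]$; and once for the symmetric monoidal $\catV$-category $\catB$ and the symmetric $\catV$-rig $[\catA, \rigR]$, giving an equivalence $\CMon[\catB, [\catA, \rigR]] \simeq \LSMCATV[\catB, [\catA, \rigR]]$. Composing these three equivalences with the restricted $\lambda^\catA$ from the previous step yields the claimed equivalence $\LSMCATV[\catA \tensorvcat \catB, \rigR] \simeq \LSMCATV[\catB, [\catA, \rigR]]$. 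Finally one checks that this composite equivalence is indeed (isomorphic to) the restriction of $\lambda^\catA$, which is immediate from the description of $\rho$ in Lemma~\ref{monoidobjectlaxmonoidal}: the underlying functor of $\rho$ is the identity on underlying $\catV$-functors, so passing through $\rho$ and $\rho^{-1}$ does not change the underlying assignment $F \mapsto \lambda^\catA F$.

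The only point requiring any care — and hence the "main obstacle", though it is a mild one — is the first step: verifying that a symmetric monoidal equivalence of symmetric monoidal categories restricts to an equivalence on commutative monoids. This is standard but deserves a sentence of justification, since unlike Proposition~\ref{lambdalaxmonoida}, which used only that $\lambda^\catM$ is monoidal (Lemma~\ref{monoidalequiv}) together with Lemma~\ref{monoidobjectlaxmonoidal}, here we genuinely need the \emph{symmetric} refinement to guarantee that commutativity of the multiplication is preserved in both directions. Once that is in hand, the argument is a routine chaining of equivalences exactly parallel to the proof of Proposition~\ref{lambdalaxmonoida}, so I would keep the written proof short: cite Lemma~\ref{symmonoidalequiv}, note the passage to commutative monoids, and conclude via Lemma~\ref{commonoidobjectlaxsymmonoidal}.
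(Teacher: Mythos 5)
Your proposal is correct and follows essentially the same route as the paper, whose proof simply cites Lemma~\ref{symmonoidalequiv} and Lemma~\ref{commonoidobjectlaxsymmonoidal}: the symmetric monoidal equivalence $\lambda^{\catA}$ induces an equivalence on commutative monoid objects, which Lemma~\ref{commonoidobjectlaxsymmonoidal} identifies with symmetric lax monoidal $\catV$-functors on both sides. Your additional remarks (that a symmetric monoidal equivalence preserves commutative monoids in both directions, and that the composite agrees with the restriction of $\lambda^{\catA}$) are exactly the details the paper leaves implicit, so there is no gap.
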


\begin{proof} This follows from Lemma~\ref{symmonoidalequiv}
and Lemma~\ref{commonoidobjectlaxsymmonoidal}.
\end{proof}

The next corollary is the counterpart  of Corollary~\ref{thm:bijectionmonoidal} in the symmetric monoidal case.

\begin{corollary} \label{thm:symmlaxmonoidalbijection} Let $\catA \, , \catB$ be small symmetric $\catV$-categories.
For every distributor $F \co \catA \mat \catB$,  the  symmetric lax monoidal structures on the $\catV$-functors
\[
F \co \catB^\op \otimes \catA \to~\catV \, ,  \quad \lambda F\co \catA\to \pshB \, ,  \quad  F^\dag \co \pshA\to \pshB 
\]
are in bijective correspondence.
\end{corollary}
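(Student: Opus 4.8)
The statement is the symmetric-monoidal analogue of Corollary~\ref{thm:bijectionmonoidal}, and the proof should follow exactly the same pattern, now drawing on the symmetric versions of the auxiliary results established in this section. So the plan is to exhibit the three sets of structures as all in bijective correspondence by chaining together: (i) the bijection between symmetric lax monoidal structures on $F \co \catB^\op \otimes \catA \to \catV$ and on $\lambda F \co \catA \to \pshB$, which comes from the restriction equivalence $\lambda^\catA \co \LSMCATV[\catB^\op \otimes \catA, \catV] \to \LSMCATV[\catA, [\catB^\op, \catV]] = \LSMCATV[\catA, \pshB]$ of Proposition~\ref{lambdalaxmonoidasymm}; and (ii) the bijection between symmetric lax monoidal structures on $\lambda F \co \catA \to \pshB$ and on its cocontinuous extension $F^\dag = (\lambda F)_c \co \pshA \to \pshB$, which comes from the equivalence $\yon_\catA^\ast \co \LSRigV[\pshA, \pshB] \to \LSMCATV[\catA, \pshB]$ in~\eqref{equ:yonsmoncat} (with $\rigR \defeq \pshB$, which is a symmetric $\catV$-rig since $\catB$ is a small symmetric monoidal $\catV$-category).

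Concretely, first I would note that a symmetric lax monoidal structure on $F$ is the same thing as a lift of $F$ to an object of $\LSMCATV[\catB^\op \otimes \catA, \catV]$, and similarly for $\lambda F$ and for $F^\dag$; the claim is then that these three lifting problems have the same (discrete, up to the relevant equivalences) set of solutions. For (i), apply the first equivalence of Proposition~\ref{lambdalaxmonoidasymm} with the symmetric monoidal $\catV$-categories $\catA$, $\catB^\op$ and the symmetric $\catV$-rig $\rigR \defeq \catV$: this gives an equivalence $\LSMCATV[\catB^\op \otimes \catA, \catV] \simeq \LSMCATV[\catA, [\catB^\op, \catV]]$, and since $[\catB^\op,\catV] = \pshB$ carries precisely the convolution-induced symmetric $\catV$-rig structure, this identifies symmetric lax monoidal $F$ with symmetric lax monoidal $\lambda F \co \catA \to \pshB$. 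For (ii), apply the equivalence~\eqref{equ:yonsmoncat} with $\rigR \defeq \pshB$: this identifies symmetric lax homomorphisms of $\catV$-rigs $\pshA \to \pshB$ with symmetric lax monoidal $\catV$-functors $\catA \to \pshB$, and under this equivalence the symmetric lax homomorphism corresponding to $\lambda F$ is exactly its cocontinuous extension $(\lambda F)_c = F^\dag$. Composing (i) and (ii) gives the desired chain of bijections.

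I do not expect a serious obstacle here — the statement is genuinely a corollary, and the only thing to check is that the bookkeeping is consistent: namely that the symmetric $\catV$-rig structure on $\pshB$ used in step (i) (convolution, coming from the symmetric monoidal structure on $\catB$) is the same one that makes $\yon_\catB$ exhibit $\pshB$ as the free symmetric $\catV$-rig on $\catB$ in step (ii). This is immediate from the paragraph preceding~\eqref{equ:yonsmoncat}. The cleanest way to write the proof is therefore simply: \emph{``By Proposition~\ref{lambdalaxmonoidasymm} and the equivalence in~\eqref{equ:yonsmoncat}.''} — mirroring the two-line proof of Corollary~\ref{thm:bijectionmonoidal}. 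If a little more detail is wanted, one can spell out that under $\lambda^\catA$ a symmetric lax monoidal structure on $F$ corresponds to one on $\lambda F$, and that under $\yon_\catA^\ast$ a symmetric lax monoidal structure on $\lambda F$ corresponds to one on $F^\dag = (\lambda F)_c$, and these correspondences are bijective because the functors in question are equivalences.
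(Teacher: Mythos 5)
Your proposal is correct and matches the paper's own argument exactly: the paper also proves this corollary by combining Proposition~\ref{lambdalaxmonoidasymm} with the equivalence in~\eqref{equ:yonsmoncat}, just as in the non-symmetric case of Corollary~\ref{thm:bijectionmonoidal}. The extra bookkeeping you spell out (that the convolution rig structure on $\pshB$ is the one exhibiting it as the free symmetric $\catV$-rig on $\catB$) is accurate and consistent with the paper's setup.
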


\begin{proof} The claim follows by Proposition~\ref{lambdalaxmonoidasymm} and the equivalence in~\eqref{equ:yonsmoncat}.
\end{proof}

\section{Symmetric monoidal distributors} 
\label{sec:smondist}

\begin{definition}\label{defsymlaxmonodist} Let $ \catA$, $\catB$ be small symmetric monoidal $\catV$-categories. 
A \emph{symmetric lax monoidal distributor} $F \co  \catA \mat \catB$ is a symmetric lax monoidal functor 
$F \co  \catB^\op  \tensorvcat \catA \to \catV$.
\end{definition}

The next theorem introduces the bicategory of symmetric lax monoidal distributors.

\begin{theorem} \label{thm:bicatlsmdistv}
Small symmetric monoidal $\catV$-categories, symmetric lax monoidal distributors and monoidal $\catV$-natural transformations
form a bicategory, called the bicategory of symmetric lax monoidal distributors and denoted by~$\LSMDistV$, which fits in a Gabriel factorization
\[
\xymatrix@R=1.3cm{
\LSMCatV \ar[rr]^{\psh} \ar[dr]_{(-)_\bullet} & &\LSRigV \\
&\LSMDistV \ar[ur]_{(-)^\dag} \, . &
}
\]
\end{theorem}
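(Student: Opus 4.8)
The plan is to follow exactly the pattern established in the proof of Theorem~\ref{thm:bicatlmdistv}, replacing "lax monoidal" by "symmetric lax monoidal" throughout and invoking the symmetric counterparts of the auxiliary results. First I would fix small symmetric monoidal $\catV$-categories $\catA$ and $\catB$ and define the hom-category of the putative bicategory by
\[
\LSMDistV[\catA, \catB] \defeq \LSMCATV[\catB^\op \otimes \catA, \catV] \, ,
\]
which makes sense since $\catB^\op \otimes \catA$ is a small symmetric monoidal $\catV$-category and $\catV$ is a symmetric $\catV$-rig. Next I would introduce the functor
\[
(-)^\dag \co \LSMDistV[\catA, \catB] \to \LSRigV[\pshA, \pshB]
\]
as the composite of the equivalence $\lambda \co \LSMCATV[\catB^\op \otimes \catA, \catV] \to \LSMCATV[\catA, \pshB]$ supplied by Proposition~\ref{lambdalaxmonoidasymm} with the equivalence $(-)_c \co \LSMCATV[\catA, \pshB] \to \LSRigV[\pshA, \pshB]$ that is the quasi-inverse of restriction along $\yon_\catA$, as in~\eqref{equ:yonsmoncat}. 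Being a composite of equivalences, $(-)^\dag$ is an equivalence of categories.

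Having this equivalence, the remaining structure of the bicategory $\LSMDistV$ — composition, identities, associativity and unit isomorphisms, and the coherence conditions they must satisfy — is then forced (up to unique isomorphism) by the requirement that $(-)^\dag$ be the full-and-faithful leg of a Gabriel factorization, exactly as was spelled out in Section~\ref{sec:dist} for $\DistV$ and reused for $\LMDistV$. Concretely, the composite of symmetric lax monoidal distributors $F \co \catA \mat \catB$ and $G \co \catB \mat \catP$ is the underlying composite distributor $G \circ F$ of $\DistV$, which is again symmetric lax monoidal because symmetric lax monoidal $\catV$-functors are closed under composition and because $(G \circ F)^\dag \iso G^\dag \circ F^\dag$; similarly the identity distributor $\id_\catA$ is symmetric lax monoidal since the hom-functor of a symmetric monoidal $\catV$-category is a symmetric lax monoidal $\catV$-functor. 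For the second part of the Gabriel factorization, I would set $\catA^\dag \defeq \pshA$, viewed as a symmetric $\catV$-rig under the (symmetric) convolution structure, with $\Delta = (-)^\dag$ full and faithful by construction.

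For the first part of the factorization I would show that, for a symmetric lax monoidal $\catV$-functor $u \co \catA \to \catB$, the distributor $u_\bullet \co \catA \mat \catB$ is symmetric lax monoidal. This follows because $u_! \co \pshA \to \pshB$ is a symmetric lax homomorphism of symmetric $\catV$-rigs (since $u$ is symmetric lax monoidal), together with the fact that $(u_\bullet)^\dag \iso u_!$ and Corollary~\ref{thm:symmlaxmonoidalbijection}, which transports the symmetric lax monoidal structure on $F^\dag$ back to one on the distributor $F$. The resulting homomorphism $(-)_\bullet \co \LSMCatV \to \LSMDistV$ is the identity on objects, and the square in the Gabriel factorization commutes up to pseudo-natural isomorphism because the corresponding square for $\DistV$ in~\eqref{equ:gabrieldistributors} does. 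The main obstacle — though it is a routine rather than a conceptual one — is verifying that $\lambda$ restricts to an equivalence on the symmetric lax monoidal functor categories, i.e.\ Proposition~\ref{lambdalaxmonoidasymm}; but that has already been established, so here the work amounts to bookkeeping: checking that "symmetric" is preserved at each stage and that no new coherence conditions arise beyond those automatically guaranteed by the Gabriel factorization machinery.
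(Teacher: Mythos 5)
Your proposal is correct and follows essentially the same route as the paper: define the hom-categories as $\LSMCATV[\catB^\op \otimes \catA, \catV]$, obtain the equivalence $(-)^\dag$ as the composite of $\lambda$ (Proposition~\ref{lambdalaxmonoidasymm}) with the cocontinuous-extension equivalence from~\eqref{equ:yonsmoncat}, let the Gabriel factorization machinery determine the remaining structure, and verify via Corollary~\ref{thm:symmlaxmonoidalbijection} and $(u_\bullet)^\dag \iso u_!$ that $u_\bullet$ is symmetric lax monoidal. The closure of composition and the symmetry of the identity distributor are handled exactly as in the paper's remarks following the theorem.
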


\begin{proof} For small symmetric monoidal $\catV$-categories $\catA$ and $\catB$ we define the
hom-category of symmetric lax monoidal distributors from $\catA$ to $\catB$ by letting 
\[
 \LSMDistV[\catA, \catB] \defeq \LSMCATV[\catB^\op \otimes \catA, \catV]  \, .
 \]
With this definition, we have an equivalence of categories
\[
(-)^\dag\co \LSMDistV[\catA,\catB] \to \LSRigV[\pshA, \pshB] \, ,
\]
which is defined as the composite of the following two equivalences:
\[
\xymatrix@C=1.2cm{ \LSMDistV[\catA,\catB] \ar[r]^-{\lambda}  &  \LSMCATV[\catA, \pshB] \ar[r]^-{(-)_c}  & \LSRigV[\pshA,\pshB] \, .}
\]
The rest of the data is determined by the requirement of having a Gabriel factorization. In particular, the action of~$(-)^\dag\co \LSMDistV \to \LSRigV$ 
on objects by letting $\catA^\dag \defeq \pshA$. If~$u\co \catA\mat \catB$ is a  symmetric lax monoidal $\catV$-functor between 
symmetric monoidal $\catV$-categories, then the distributor $u_\bullet\co \catA \mat  \catB$
is symmetric lax monoidal. By Corollary~\ref{thm:symmlaxmonoidalbijection}
the lax monoidal functor $u_!\co \pshA\to \pshB$ is symmetric, since the lax monoidal functor $u\co \catA\to \catB$ is 
 symmetric. Hence the lax  monoidal distributor $u_\bullet$ is symmetric, since we have $(u_\bullet)^\dag \iso u_!$.
\end{proof}

Note that the composition law and the identity morphisms are defined as in $\DistV$. 
Indeed, for symmetric lax monoidal distributors $F\co \catA \mat \catB$  and $G\co \catB \mat \catC$,
the distributor  $G\circ F\co \catA \mat \catC$ is  symmetric lax monoidal, since $(G\circ F)^\dag \iso G^\dag \circ F^\dag$
 and  symmetric lax monoidal  functors are closed under composition. Furthermore,  the identity distributor $\id_\catA\co  \catA\mat\catA$ is symmetric 
 lax monoidal, since the hom-functor from $\catA^{\op}\otimes \catA$ to $\catV$ is  symmetric lax monoidal.

\begin{definition}\label{defsymmonodist} If $ \catA$ and $\catB$ are small symmetric monoidal $\catV$-categories, 
we say that a monoidal distributor $F \co  \catA \mat \catB$ is \emph{symmetric} if the
monoidal functor $\lambda F\co \catA \to \pshB$ is symmetric.
\end{definition}

The next theorem introduces the bicategory of symmetric monoidal distributors.

\begin{theorem} \label{thm:bicatsmdistv}
Small symmetric monoidal $\catV$-categories, symmetric monoidal distributors and monoidal $\catV$-trans\-for\-ma\-tions form 
a bicategory $\SMDistV$ which fits in a Gabriel factorization
\[
\xymatrix@R=1.3cm{
\SMCatV \ar[rr]^{\psh} \ar[dr]_{(-)_\bullet} & &\SRigV \\
&\SMDistV \ar[ur]_{(-)^\dag} \, . &
}
\]
\end{theorem}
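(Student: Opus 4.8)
The plan is to imitate closely the proofs of Theorem~\ref{thm:bicatmdistv} and Theorem~\ref{thm:bicatlsmdistv}, since $\SMDistV$ is to $\LSMDistV$ exactly what $\MDistV$ is to $\LMDistV$. First, for small symmetric monoidal $\catV$-categories $\catA$ and $\catB$, I would define the hom-category $\SMDistV[\catA, \catB]$ to be the full subcategory of $\LSMDistV[\catA, \catB]$ spanned by the symmetric monoidal distributors. The key observation is then that the equivalence $\lambda$ of Proposition~\ref{lambdalaxmonoidasymm} restricts, by the very Definition~\ref{defsymmonodist} of a symmetric monoidal distributor, to an equivalence $\lambda \co \SMDistV[\catA, \catB] \to \SMCATV[\catA, \pshB]$; moreover, restricting the equivalence in~\eqref{equ:yonsmoncat} to symmetric monoidal functors and symmetric homomorphisms of $\catV$-rigs (as discussed in Section~\ref{sec:smoncat}) gives an equivalence $(-)_c \co \SMCATV[\catA, \pshB] \to \SRigV[\pshA, \pshB]$. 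Composing these, the functor
\[
(-)^\dag \co \SMDistV[\catA, \catB] \to \SRigV[\pshA, \pshB]
\]
is an equivalence of categories.

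Next I would invoke the ``non-standard'' Gabriel factorization machinery recalled in Chapter~\ref{cha:bac}: once the equivalences $(-)^\dag$ are available, the composition law, identity morphisms, and associativity and unit constraints of $\SMDistV$ are determined up to coherent isomorphism, and the second leg $(-)^\dag \co \SMDistV \to \SRigV$ --- defined on objects by $\catA^\dag \defeq \pshA$, regarded as a symmetric $\catV$-rig via the convolution structure of Section~\ref{sec:smoncat} --- is full and faithful by construction. Concretely this amounts to checking that symmetric monoidal distributors are closed under the composition of $\DistV$, which is immediate from the isomorphism $(G \circ F)^\dag \iso G^\dag \circ F^\dag$ and the fact that symmetric homomorphisms of $\catV$-rigs compose, and that each identity distributor $\id_\catA \co \catA \mat \catA$ is symmetric monoidal, since under $\lambda$ it is the symmetric monoidal Yoneda embedding $\yon_\catA \co \catA \to \pshA$; thus $\SMDistV$ is a locally full sub-bicategory of $\MDistV$ and the coherence isomorphisms are inherited.

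For the first leg $(-)_\bullet \co \SMCatV \to \SMDistV$ I would take the identity on objects and send a symmetric monoidal $\catV$-functor $u \co \catA \to \catB$ to the distributor $u_\bullet \co \catA \mat \catB$; this lands in $\SMDistV$ because $u_! \co \pshA \to \pshB$ is a symmetric monoidal homomorphism of $\catV$-rigs and $(u_\bullet)^\dag \iso u_!$, so Corollary~\ref{thm:symmlaxmonoidalbijection}, together with the characterization of (symmetric) monoidal distributors in terms of $(-)^\dag$ noted after Definition~\ref{defmonodist}, shows $u_\bullet$ is symmetric monoidal. Essential surjectivity of $(-)_\bullet$ is automatic and commutativity of the triangle up to pseudo-natural equivalence holds by construction, giving the Gabriel factorization. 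I expect no substantial obstacle: the whole argument transports the monoidal case through the symmetric analogues built in Section~\ref{sec:smoncat} (principally Proposition~\ref{lambdalaxmonoidasymm} and Corollary~\ref{thm:symmlaxmonoidalbijection}), and the one mildly delicate point --- that the convolution structure on $\pshA$ is genuinely closed, i.e.\ a symmetric $\catV$-rig --- has already been settled there.
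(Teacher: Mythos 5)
Your proposal is correct and follows essentially the same route as the paper: define $\SMDistV[\catA,\catB]$ as the full subcategory of $\LSMDistV[\catA,\catB]$ on symmetric monoidal distributors, note that $\lambda$ (Proposition~\ref{lambdalaxmonoidasymm}) restricts to an equivalence onto symmetric monoidal functors into $\pshB$ by the very definition of a symmetric monoidal distributor, compose with the restricted cocontinuous-extension equivalence, and let the Gabriel factorization machinery supply the rest. Your extra checks (closure under composition and identities, and that $u_\bullet$ is symmetric monoidal via $(u_\bullet)^\dag \iso u_!$ and Corollary~\ref{thm:symmlaxmonoidalbijection}) are exactly the routine verifications the paper subsumes under ``the usual pattern.''
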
 

\begin{proof} For  small symmetric monoidal $\catV$-categories $\catA$ and $\catB$, we define 
$\SMDistV[\catA, \catB]$ as the full sub-category of $\LSMDistV[\catA,\catB]$
spanned by symmetric monoidal distributors. In this way, the functor $\lambda$ in the proof
of Theorem~\ref{thm:bicatlsmdistv} restricts to an equivalence 
\[
\lambda \co \SMDistV[\catA,\catB] \to  \SRigV[\catA, \pshB ] \, .
\] 
The rest of the proof follows the usual pattern.
\end{proof}

Once again, the composition law and  the identity morphisms of $\SMDistV$ are defined as in $\DistV$.

\begin{remark}
The tensor  product $F_1 \tensorvcat F_2 \co  \catA_1 \tensorvcat \catA_2 \mat \catB_1   \tensorvcat \catB_2$ of 
symmetric lax monoidal (\resp  symmetric monoidal) distributors $F_1\co  \catA_1\mat \catB_1$ and $F_2\co  \catA_2\mat  \catB_2$
is symmetric lax monoidal (\resp  symmetric monoidal).
The operation defines a
symmetric monoidal structure on the bicategories $\LSMDistV$ and  $\SMDistV$.
Moreover, the homomorphisms involved in the Gabriel factorizations
of Theorem~\ref{thm:bicatlsmdistv} and Theorem~\ref{thm:bicatsmdistv} are symmetric monoidal.
  The symmetric monoidal bicategory $\LSMDistV$
is compact: the dual of a symmetric monoidal $\catV$-category $\catA$
is the opposite $\catV$-category $\catA^\op$. The counit $\varepsilon\co \catA^{\op} \tensorvcat \catA\mat \UnitCat$
is given by the hom-functor $ \UnitCat^{\op} \tensorvcat \catA^\op  \tensorvcat \catA=\catA^\op  \tensorvcat \catA \to \catV$
and similarly for the unit $\eta\co \UnitCat\mat \catA  \tensorvcat \catA^{\op}$.
In contrast,  the symmetric monoidal bicategory $\SMDistV$ is \emph{not} compact.
\end{remark}

\medskip

We conclude the section by extending Proposition~\ref{actionmono} to the symmetric case.

\begin{proposition} \label{actionsymmetricmono} Let $\rigR$ be a symmetric $\catV$-rig. Then the  composite  
of a symmetric monoidal distributor $F\co \catA\mat \catB$
with a symmetric monoidal functor $T\co  \catB \to \rigR$ is a symmetric monoidal functor $T\circ F\co \catA\to \rigR$.
\end{proposition}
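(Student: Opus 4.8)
The plan is to mimic the proof of Proposition~\ref{actionmono} step by step, inserting the symmetric analogue of each ingredient. First I would reduce the claim to a statement about cocontinuous extensions: it suffices to show that $(T\circ F)_c \co \pshA \to \rigR$ is a symmetric monoidal functor. Indeed, $T\circ F$ is recovered up to isomorphism as the restriction $(T\circ F)_c \circ \yon_\catA$ of $(T\circ F)_c$ along the Yoneda functor $\yon_\catA \co \catA \to \pshA$ (this identity being built into Lemma~\ref{contraactionlemma}), and $\yon_\catA$ is symmetric monoidal, so the restriction of a symmetric monoidal functor along it is again symmetric monoidal. By Lemma~\ref{contraactionlemma} we have $(T\circ F)_c \iso T_c \circ F^\dag$, so the task becomes to check that $T_c \circ F^\dag \co \pshA \to \rigR$ is symmetric monoidal, and since symmetric monoidal $\catV$-functors are closed under composition it is enough to treat each factor separately.

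For $F^\dag \co \pshA \to \pshB$: since $F \co \catA \mat \catB$ is a symmetric monoidal distributor, the functor $\lambda F \co \catA \to \pshB$ is symmetric monoidal by definition, and hence Corollary~\ref{thm:symmlaxmonoidalbijection} transports this to a symmetric monoidal structure on $F^\dag$ (this is the symmetric counterpart of the observation recorded just after Definition~\ref{defmonodist}, obtained by noting that the bijection of symmetric lax monoidal structures in that corollary restricts to the strong ones). For $T_c \co \pshB \to \rigR$: since $T \co \catB \to \rigR$ is symmetric monoidal, its cocontinuous extension $T_c$ is the image of $T$ under the inverse of the restriction equivalence~\eqref{equ:yonsmoncat}, in its strong (rather than lax) version; thus $T_c$ is a symmetric homomorphism of symmetric $\catV$-rigs, in particular a symmetric monoidal $\catV$-functor.

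Combining these, $T_c \circ F^\dag$ is symmetric monoidal, hence so is $(T\circ F)_c$, and therefore so is $T\circ F$, which is what we want.

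I do not expect a genuine obstacle here: the argument is a routine symmetric enhancement of the proof of Proposition~\ref{actionmono}, and every step already has its symmetric analogue available in the preceding sections. The one point deserving a moment's care is the implication ``$F$ a symmetric monoidal distributor $\Rightarrow$ $F^\dag$ a symmetric monoidal functor'', which is supplied by Corollary~\ref{thm:symmlaxmonoidalbijection} together with the remark that the bijection of symmetric \emph{lax} monoidal structures it provides respects invertibility of the structure maps, and hence restricts to a bijection of symmetric \emph{monoidal} structures.
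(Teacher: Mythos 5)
Your proposal is correct and follows exactly the route the paper intends: the paper's proof of this proposition simply says ``similar to the proof of Proposition~\ref{actionmono}'', and your argument is precisely that proof with each ingredient replaced by its symmetric counterpart (Lemma~\ref{contraactionlemma}, Corollary~\ref{thm:symmlaxmonoidalbijection}, and the strong version of the equivalence~\eqref{equ:yonsmoncat}). Your added remarks on why the reduction to $(T\circ F)_c$ suffices and on the bijection of Corollary~\ref{thm:symmlaxmonoidalbijection} respecting invertibility are accurate elaborations of points the paper leaves implicit.
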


\begin{proof} Similar to the proof of Proposition~\ref{actionmono}.
\end{proof}

\chapter{Symmetric sequences}
\label{cha:syms}

This chapter introduces the bicategory of categorical symmetric sequences, which is denoted by~$\CatSymV$, and presents our
first main result, asserting that $\CatSymV$ is cartesian closed (Theorem~\ref{thm:smondistiscc}). This generalizes the main result 
in~\cite{FioreM:carcbg} to the enriched setting. In order to define the bicategory~$\CatSymV$, we construct an auxiliary 
bicategory, called the bicategory of $S$-distributors and denoted by~$\SDistV$. The definition of this bicategory is based on
that of the bicategory of symmetric monoidal distributors (presented in Section~\ref{sec:smondist}), some basic facts about
the construction of free symmetric monoidal $\catV$-categories, and  the notion of a Gabriel factorization. 
The special case of this bicategory obtained by considering $\catV = \Set$ was introduced in~\cite{FioreM:carcbg} using  the theory of Kleisli bicategories and pseudo-distributive laws. Although that approach is likely to carry over to the enriched case and lead to an equivalent definition to the one given here, we prefer to use the idea of a Gabriel factorization, since it allows us to avoid almost entirely the discussion of coherence conditions. Furthermore, when $\catV = \Set$, the monoidal categories
$\SymV[X,X]$ (with tensor product given by composition in $\SymV$) are exactly those defined in~\cite{BaezJ:higdan} in
order to characterize operads as monoids. 

We begin in Section~\ref{sec:fresmc} by reviewing some material on free symmetric monoidal categories. Section~\ref{sec:sdist}
introduces the bicategory of $S$-distributors and establishes some basic facts about it. In particular, we prove that it has coproducts.
In Section~\ref{sec:symsaf} we then define the bicategory of categorical symmetric sequences by duality, letting
\[
\CatSymV \defeq (\SDistV)^\op \, .
\]
We also define the
analytic functor associated to a categorical symmetric sequence and present the sub-category $\SymV$ of~$\CatSymV$ spanned by sets.
The morphisms in that bicategory are called symmetric sequences and they can be thought of as the many-sorted generalization
of the single-sorted symmetric sequences used in connection with single-sorted operads. Indeed, we will use $\SymV$ in 
Chapter~\ref{cha:bicob} to define the bicategory of operad bimodules.  The proof that $\CatSymV$ is cartesian closed is given in Section~\ref{sec:carcsc}, which concludes the chapter.

\section{Free symmetric monoidal $\catV$-categories}
\label{sec:fresmc}

The forgetful 2-functor $U \co  \SMCatV\to \CatV$ has a left adjoint $S \co \CatV \to \SMCatV$
which associates to a small $\catV$-category $\catX$
the symmetric monoidal $\catV$-category $S(\catX)$ freely generated by~$\catX$~\cite{BlackwellR:twodmt}.
This $\catV$-category is defined by letting
 \begin{equation}
\label{equ:sigmastarset}
S(\catX) \defeq \bigsqcup_{n \in \Nat} S^n (\catX) \, ,
\end{equation}
 where $S^n(\catX)$ is the \emph{symmetric $n$-power} of $\catX$.
 More explicitly, observe that the $n$-th symmetric group~$\Sigma_n$ acts naturally on the $\catV$-category $\catX^n$
with the right action defined by letting
\[
\vec{x} \cdot \sigma \defeq (x_{\sigma 1}, \ldots, x_{\sigma n}) \, ,
\]
for~$\vec{x}=(x_{1}, \ldots, x_{ n})\in \catX^n$ and $\sigma \in \Sigma_n$.
If we apply the Grothendieck construction
to this right action, we obtain the symmetric $n$-power of $\catX$, 
 \[
S^n(\catX)  \defeq \Sigma_n\int \catX^n \, .
 \]
Explicitly, an object of $S^n(\catX)$
 is a sequence $\vec{x}=(x_{1}, \ldots, x_{n})$
of objects of  $\catX$, and the hom-object between $\vec{x}, \vec{y} \in S^n(\catX)$ is defined by letting
 \[
S^n(\catX)\big[\vec{x}, \vec{y}\big] \defeq 
\bigsqcup_{\sigma \in \Sigma_n}  \catX[ x_1, y_{\sigma(1)}]  \otimes \cdots \otimes \catX[ x_n, y_{\sigma(n)}]  \, ,
\]
where the coproduct on the right-hand side is taken in $\catV$. 
 The tensor product of~$\vec{x}\in S^m(\catX) $ and~$\vec{y}\in S^n(\catX) $ is the concatenation
\[
\vec{x} \oplus \vec{y}\defeq (x_1, \ldots, x_m, y_1,\ldots, y_n) \, .
\]
The symmetry $\sigma_{\vec{x}, \vec{y}} \co  \vec{x} \oplus \vec{y} \to \vec{y} \oplus \vec{x}$
is the shuffle permutation swapping the first $m$-elements of the first 
sequence with the last $m$-elements of the second. The unit is the empty sequence $e$.
The inclusion $\catV$-functor
\[
\iota_\catX \co  \catX \rightarrow S(\catX) \, ,
\] 
which 
takes $x \in \catX$ to the one-element sequence~$(x) \in S^1(\catX)$, 
exhibits $S(\catX)$ as the free symmetric monoidal $\catV$-category on $\catX$.
More precisely, for every symmetric monoidal $\catV$-category $\catA=(\catA,\oplus,0,\sigma)$
the restriction functor
\begin{equation*}
\iota_\catX^\ast \co   \SMCatV[S(\catX),\catA] \to \CatV[ \catX, \catA] \, , 
\end{equation*} 
defined by letting $\iota_\catX^\ast(v) \defeq v\circ \iota_\catX$, 
is an equivalence of categories.
It follows  that every $\catV$-functor $T \co  \catX \to \catA$
admits a symmetric monoidal extension $T^e\co  S(\catX ) \to \catA$ fitting in the 
diagram,
\[
\xymatrix{
 \catX \ar@/_1pc/[drr]_{T}  \ar[rr]^-{\iota_\catX} &&S(\catX ) \ar[d]^{T^e} \\
 && \catA, }
 \]
and that $T^e$ is unique up to unique isomorphism of symmetric monoidal $\catV$-functors.
Explicitly,  for  $\vec{x}=(x_{1}, \ldots, x_{n})\in S(\catX)$, we have
 \[
 T^e(\vec{x}) \defeq T(x_1) \oplus \ldots \oplus T(x_n) \, .
  \]

\medskip

Our next theorem shows how the adjunction between $\CatV$ and $\SMCatV$ extends to an
adjunction between~$\DistV$ and~$\SMDistV$.

\begin{theorem}  \label{freesymmtricdist} The forgetful homomorphism $U \co  \SMDistV\to \DistV$
has a left adjoint 
\[
S\co \DistV\to \SMDistV \, .
\] 
\end{theorem}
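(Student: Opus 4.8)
The plan is to build the left adjoint $S \colon \DistV \to \SMDistV$ by showing that for each small $\catV$-category $\catX$, the object $S(\catX) \in \SMDistV$ together with a suitable unit distributor has the required universal property, and then to invoke the characterization of adjunctions between bicategories recalled earlier: a homomorphism $U$ has a left adjoint if and only if the prestack $\SMDistV[\catX, U(-)]$ (equivalently, $\DistV[\catX, -] \circ U$ read appropriately) is representable for each $\catX \in \DistV$. Concretely, I would exhibit, for every small symmetric monoidal $\catV$-category $\catA$, a pseudo-natural equivalence
\[
\SMDistV[S(\catX), \catA] \;\simeq\; \DistV[\catX, U\catA] \, ,
\]
and check it is pseudo-natural in both variables.

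The key computation is to unfold both sides and match them using the universal property of $S$ from Section~\ref{sec:fresmc} together with the equivalence $(-)^\dag$ from the Gabriel factorization. On the left, $\SMDistV[S(\catX), \catA]$ is by definition the category of symmetric monoidal distributors $S(\catX) \mat \catA$, which by Definition~\ref{defsymmonodist} and Corollary~\ref{thm:symmlaxmonoidalbijection} is equivalent to the category $\SRigV[S(\catX), \pshA]$ of symmetric homomorphisms of $\catV$-rigs, i.e.\ to $\CATV$-functors $\lambda F \colon S(\catX) \to \pshA$ that are symmetric monoidal. By the universal property of the free symmetric monoidal $\catV$-category $S(\catX)$ (the equivalence $\iota_\catX^\ast \colon \SMCatV[S(\catX), \pshA] \simeq \CatV[\catX, \pshA]$, extended to the $\catV$-enriched hom-categories), this is equivalent to $\CatV[\catX, \pshA]$, which by the defining equivalence of $\DistV$ (via $\lambda$ and $(-)^\dag$) is exactly $\DistV[\catX, \catA]$. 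Stitching these equivalences together gives the desired $\theta$, and one reads off that the unit distributor $\eta_\catX \colon \catX \mat S(\catX)$ is $(\iota_\catX)_\bullet$, the distributor associated to the canonical inclusion $\iota_\catX \colon \catX \to S(\catX)$; by Lemma~\ref{bulletlambda}(i) this corresponds under $\lambda$ to precomposition with $\iota_\catX$, confirming the match. Explicitly, one expects the formula
\[
S(F)[\vec y, \vec x] \;\defeq\; \bigsqcup_{\text{(suitable pairings)}} \; \bigotimes F[y_j, x_i]
\]
obtained by transporting the convolution structure along these equivalences, but I would derive it from the universal property rather than posit it.

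I would then verify pseudo-naturality of $\theta$ in $\catX \in \DistV$: given a distributor $G \colon \catX' \mat \catX$, one must check that the square relating $\SMDistV[S(\catX), \catA]$, $\SMDistV[S(\catX'), \catA]$, $\DistV[\catX, U\catA]$ and $\DistV[\catX', U\catA]$ commutes up to coherent isomorphism. This reduces, via the chain of equivalences above and the fact that each individual equivalence ($(-)^\dag$, $\iota_\catX^\ast$, Proposition~\ref{lambdalaxmonoidasymm}) is itself pseudo-natural, to a compatibility that is essentially Lemma~\ref{bulletlambda} together with the functoriality of the free-symmetric-monoidal construction $S \colon \CatV \to \SMCatV$. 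Pseudo-naturality in $\catA$ is similar and uses that $U \colon \SMDistV \to \DistV$ is a homomorphism. Finally, by the general theory of biadjunctions recalled in the excerpt, this pseudo-natural equivalence $\theta$ determines the homomorphism $S \colon \DistV \to \SMDistV$ on morphisms and 2-cells (via the unit $\eta_\catX = (\iota_\catX)_\bullet$) and makes $S \dashv U$.

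The main obstacle is bookkeeping rather than conceptual: assembling the several component equivalences into a single pseudo-natural equivalence and checking the coherence isomorphisms are compatible. The cleanest route is to factor everything through the Gabriel factorization, so that composition and identities in $\SMDistV$ are computed via $(-)^\dag$ in $\SRigV$, reducing the bicategorical coherence to the already-established coherence of $\SRigV$ and of the free-$\catV$-rig and free-symmetric-monoidal-$\catV$-category functors; with that in hand the proof is a diagram chase. I would also take care to record the explicit coend/coproduct formula for $S(F)$ on 1-cells, since it is needed later (e.g.\ in the identification of $\SymV$ with $S$-matrices), but I would present it as a consequence of the universal property, deferring its verification against Day convolution to a short calculation.
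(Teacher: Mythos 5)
Your proposal is correct and follows essentially the same route as the paper: the paper also proves the theorem objectwise by showing that restriction along $\iota_\bullet$ gives an equivalence $\SMDistV[S(\catX),\catA]\to\DistV[\catX,\catA]$, obtained from the square whose vertical sides are the $\lambda$-equivalences, whose bottom is restriction along $\iota_\catX$ (an equivalence by the universal property of the free symmetric monoidal $\catV$-category), and whose commutativity is exactly Lemma~\ref{bulletlambda}(i), with unit $(\iota_\catX)_\bullet$. The extra pseudo-naturality bookkeeping you describe is compatible with, and only slightly more explicit than, what the paper leaves to the general representability criterion for biadjunctions.
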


\begin{proof} Let $\iota = \iota_\catX \co \catX \to S(\catX)$, so that we have a distributor $\iota_\bullet \co \catX \mat S(\catX)$. 
We need to show that the restriction functor
\begin{equation*}
\iota_\bullet^\ast \co   \SMDistV[S(\catX),\catA] \to \DistV[ \catX, \catA]
\end{equation*} 
defined by letting $\iota_\bullet^\ast (F) \defeq F\circ \iota_\bullet$ 
is an equivalence of categories for any
symmetric monoidal $\catV$-category $\catA$.
The following diagram commutes up to isomorphism by 
part~(i) of~Lemma~\ref{bulletlambda}
\[
\xymatrix{
\SMDistV[S(\catX), \catA] 
  \ar[rr]^{(-)\circ \iota_\bullet} \ar[d]_\lambda && \DistV[\catX,\catA] \ar[d]^\lambda \\
\SMCatV[S(\catX), \psh(\catA) \big] \ar[rr]_{(-)\circ \iota} &&  \CATV \big[ \catX, \psh(\catA)\big] \,  . }
\]
The bottom side of this diagram is an equivalence of categories.
Hence also the top side, since the vertical sides are equivalences.
\end{proof}

We give an explicit formula for the symmetric monoidal extension $F^e\co  S(\catX ) \mat \catA$
of a distributor $F\co  \catX \mat \catA$, which fits in the diagram of $S$-distributors
\[
\xymatrix{
 \catX \ar@/_1pc/[drr]_{F}  \ar[rr]^-{(\iota_\catX)_\bullet} &&S(\catX ) \ar[d]^{F^e}, \\
 && \catA \, }
 \]
and is the unique such distributor up to a unique isomorphism of symmetric monoidal $\catV$-distributors. 
By construction, the functor $\lambda (F^e)\co S(\catX)\to \pshA$
 is the symmetric monoidal extension of the $\catV$-functor $\lambda F\co  \catX\to \pshA$.
 Thus, $\lambda (F^e)=(\lambda F)^e$
 and it follows that 
 \begin{equation}
\label{equ:fprime}
F^e[ y; \vec{x}]  \defeq 
\int^{y_1\in  \catA} \ldots \int^{y_n \in  \catA} 
 F[ y_1,x_1]  \otimes \cdots\otimes  F[ y_n, x_n]   \otimes \catA[ y, y_1 \oplus \cdots \oplus y_n ] \, ,
\end{equation} 
 for $\vec{x} = (x_1, \ldots, x_n)\in S^n(\catX)$
 and $y\in \catA$. A special case of these definitions that will be of importance for our development arises
 by considering $\catA = S(\catY)$, where $\catY$ is a small $\catV$-category. In this case,
the symmetric monoidal extension $F^e\co  S(\catX) \mat S(\catY)$ of  a distributor $F\co \catX\mat S(\catY)$
is defined by letting
\begin{equation}
\label{equ:fprime2}
F^e[ \vec{y}; \vec{x}]  \defeq 
\int^{\vec{y}_1\in  S(\catY)} \cdots \int^{\vec{y}_n \in S( \catY)} 
 F[ \vec{y}_1;x_1]  \otimes \cdots\otimes  F[ \vec{y}_n; x_n]  \otimes S(\catY)[ \vec{y}, \vec{y}_1 \oplus \cdots \oplus \vec{y}_n]  \, ,
\end{equation} 
for $\vec{x} = (x_1, \ldots, x_n)\in S^n(\catX)$  and $\vec{y} \in S(\catY)$.

\medskip

It will be useful to describe the action of the homomorphism $S \co \DistV \to \SMDistV$ on morphisms. 
For a distributor $F\co \catX\mat \catY$, the symmetric monoidal distributor
 $S(F) \co S(\catX) \mat S(\catY)$ is defined by letting
 $S(F) \defeq (\iota_\bullet \circ F)^e$
 and therefore makes  following diagram commute up to a canonical isomorphism
\[
\xymatrix@C=1.5cm{
\catX \ar[d]_{F} \ar[r]^-{\iota_\bullet}   & S(\catX) \ar[d]^{S(F)} \\
\catY \ar[r]_-{\iota_\bullet} & S(\catY).
}
\]
Explicitly, for $\vec{x}=(x_1,\ldots,x_n)$ and $\vec{y}=(y_1,\ldots, y_m)$, we have 
\[
S(F)[ \vec{y},\vec{x}]  \defeq 
\left\{
\begin{array}{ll}
\bigsqcup_{\sigma \in \Sigma_n}  F[ y_1, x_{\sigma(1)}]   \otimes \cdots \otimes F[ y_n, x_{\sigma(n)}]  \, ,   & \text{if } m = n \, , \\[1ex]
0 & \text{otherwise} \, .
\end{array}
\right.
\]
We conclude this section by stating an important property of the interaction between tensor products and coproducts of $\catV$-categories.
For $\catX, \catY \in \CatV$, define the $\catV$-functor 
\begin{equation}
\label{equ:cfunctordef}
c_{\catX, \catY} \co S(\catX) \tensorvcat S(\catY) \to S( \catX\sqcup \catY) \, , 
\end{equation}
by letting $c_{\catX, \catY}(\vec{x} \tensorvcat \vec{y}) \defeq \vec{x}\oplus \vec{y}$. The next proposition will be
used in the proofs of Proposition~\ref{lem:multiplicative2} and Theorem~\ref{thm:smondistiscc}.

\begin{proposition} \label{thefunctorc} For every $\catX, \catY \in \CatV$, 
the  $\catV$-functor $c_{\catX, \catY} \co S(\catX) \tensorvcat S(\catY) \to S( \catX\sqcup \catY)$
is a symmetric monoidal equivalence.
\end{proposition}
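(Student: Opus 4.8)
The plan is to deduce this from two facts already available: that the free symmetric monoidal $\catV$-category $2$-functor $S \co \CatV \to \SMCatV$, being a left adjoint to the forgetful $2$-functor, preserves coproducts, and that coproducts in $\SMCatV$ are computed by the tensor product (Lemma~\ref{thm:smoncathascoprod}).

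First I would write $k_1 \co \catX \to \catX \sqcup \catY$ and $k_2 \co \catY \to \catX \sqcup \catY$ for the coproduct inclusions in $\CatV$. Since $S$ is a left adjoint, the pair $\big( S(k_1), S(k_2) \big)$ exhibits $S(\catX \sqcup \catY)$ as the coproduct of $S(\catX)$ and $S(\catY)$ in $\SMCatV$. By Lemma~\ref{thm:smoncathascoprod}, this coproduct is also given by $S(\catX) \tensorvcat S(\catY)$, with inclusions $\iota_1(\vec{x}) \defeq \vec{x} \tensorvcat e$ and $\iota_2(\vec{y}) \defeq e \tensorvcat \vec{y}$, where $e$ denotes the empty sequence. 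Two coproduct cocones on the same pair of objects being related by a symmetric monoidal equivalence, there is an equivalence $\phi \co S(\catX) \tensorvcat S(\catY) \to S(\catX \sqcup \catY)$ in $\SMCatV$, characterized up to isomorphism by $\phi \circ \iota_1 \iso S(k_1)$ and $\phi \circ \iota_2 \iso S(k_2)$.

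It then remains to identify $\phi$ with $c_{\catX, \catY}$. One first checks that $c_{\catX, \catY}$ is a strong symmetric monoidal $\catV$-functor: it strictly preserves the unit, as $c_{\catX, \catY}(e \tensorvcat e) = e$, and its structure isomorphism is the shuffle permutation $\vec{x}_1 \oplus \vec{y}_1 \oplus \vec{x}_2 \oplus \vec{y}_2 \to \vec{x}_1 \oplus \vec{x}_2 \oplus \vec{y}_1 \oplus \vec{y}_2$ moving $\vec{y}_1$ past $\vec{x}_2$, which routinely satisfies the coherence and symmetry axioms. Since $S(\catX) \tensorvcat S(\catY)$ is the coproduct of $S(\catX)$ and $S(\catY)$ in $\SMCatV$, restriction along $\iota_1$ and $\iota_2$ gives an equivalence $\SMCatV[S(\catX) \tensorvcat S(\catY), S(\catX \sqcup \catY)] \simeq \SMCatV[S(\catX), S(\catX \sqcup \catY)] \times \SMCatV[S(\catY), S(\catX \sqcup \catY)]$, so to conclude $c_{\catX, \catY} \iso \phi$ it suffices to observe that $c_{\catX, \catY}(\iota_1(\vec{x})) = \vec{x} \oplus e = S(k_1)(\vec{x})$ and $c_{\catX, \catY}(\iota_2(\vec{y})) = e \oplus \vec{y} = S(k_2)(\vec{y})$. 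Hence $c_{\catX, \catY}$ is a symmetric monoidal equivalence. I expect the only mildly delicate point to be the bookkeeping of the monoidal structure on $c_{\catX, \catY}$ in this last step, but it involves no coend calculation; alternatively one may bypass the abstract argument entirely and check directly that $c_{\catX, \catY}$ is essentially surjective (every sequence in $S(\catX \sqcup \catY)$ is isomorphic, via a shuffle, to one having all its $\catX$-entries first) and fully faithful, using that $\otimes$ distributes over coproducts in $\catV$ and that tensoring with any object preserves the initial object, so that in $S(\catX \sqcup \catY)[\vec{x}_1 \oplus \vec{y}_1, \vec{x}_2 \oplus \vec{y}_2]$ only the permutations lying in $\Sigma_p \times \Sigma_q$ contribute a nonzero summand, recovering $S(\catX)[\vec{x}_1, \vec{x}_2] \otimes S(\catY)[\vec{y}_1, \vec{y}_2]$.
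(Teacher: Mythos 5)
Your argument is essentially the paper's own proof: it likewise notes that $c_{\catX,\catY}$ is symmetric monoidal, uses that $S$ preserves coproducts as a left adjoint together with Lemma~\ref{thm:smoncathascoprod}, and concludes that $c_{\catX,\catY}$ is an equivalence because it is compatible with the two coproduct cocones ($c \circ \iota_k \iso S(k_k)$ for $k=1,2$). Your sketch of the monoidal structure on $c_{\catX,\catY}$ and the optional direct check of essential surjectivity and full faithfulness are fine additional detail, but the route is the same.
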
 

\begin{proof} Let us write $c$ for $c_{\catX, \catY}$. 
It is easy to verify that  $c\co S(\catX) \tensorvcat S(\catY) \to 
S(\catX\sqcup \catY)$ is symmetric monoidal.
Let us show that it is an equivalence.
The 2-functor $S \co  \CatV \to \SMCatV$ preserves 
coproducts, since it is a left adjoint.
Thus, if $\iota_1\co  \catX\to  \catX\sqcup \catY$ and  $\iota_2\co  \catY\to \catX\sqcup \catY$
are the inclusions, then the functors  $S(\iota_1)$ and $S(\iota_2)$
exhibit $S(\catX \sqcup \catY)$ as the coproduct of~$S(\catX)$ and~$S(\catY)$.
But the functors  $\mathit{in}_1\co  S(\catX) \to  S(\catX)\otimes S(\catY)$ and  $\mathit{in}_2\co  S(\catY)\to S(\catX) \otimes S(\catY)$
defined by letting $\mathit{in}_1(\vec{x}) \defeq \vec{x}\otimes e$
and  $\mathit{in}_2(\vec{y}) \defeq e \otimes \vec{y}$, where we write $e$ for the empty sequence in both~$S(\catX)$ and~$S(\catY)$, 
exhibit~$S(\catX)\otimes S(\catY)$ as the coproduct of~$S(\catX)$ and~$S(\catY)$ by Lemma~\ref{thm:smoncathascoprod}
It follows that the $\catV$-functor  $c\co S(\catX) \tensorvcat S(\catY) \to 
S(\catX\sqcup \catY)$ is an equivalence,
since the diagram
\[
\xymatrix@C=1.5cm{
S(\catX) \ar[r]^-{\mathit{in}_1}  \ar@/_1pc/[dr]_{S(\iota_1)} & S(\catX) \tensorvcat S(\catY) \ar[d]_{c}  & S(\catY) \ar[l]_-{\mathit{in}_2}  \ar@/^1pc/[dl]^{S(\iota_2)} \\
 & S(\catX \sqcup \catY)  & }
 \]
 commutes.
\end{proof}

\section{$S$-distributors} 
\label{sec:sdist}

\begin{definition} Let $\catX, \catY$ be small $\catV$-categories. 
An \emph{$S$-distributor} $F \co  \catX \sym \catY$ is
a distributor $F\co  \catX\mat S(\catY)$, \ie a $\catV$-functor
$F \co  S(\catY)^\op  \tensorvcat \catX \to \catV$.
\end{definition}

We write $F[\vec{y};x]$ for the result of applying an $S$-distributor $F \co \catX \to \catY$
 to~$(\vec{y}, x) \in S(\catY)^\op \otimes \catX$. The next theorem introduces the bicategory of $S$-distributors. Once again, its
proof uses a Gabriel factorization.

\begin{theorem} Small $\catV$-categories, $S$-distributors and $\catV$-transformations
form a bicategory, called the bicategory of $S$-distributors and denoted by~$\SDistV$, which fits into a Gabriel factorization diagram
\[
\xymatrix@R=1.3cm{
\DistV \ar[rr]^{S} \ar[dr]_{\delta}&& \SMDistV \\
&\; \SDistV \, . \ar[ur]_{(-)^e} & }
\]
\end{theorem}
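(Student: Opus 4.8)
The plan is to follow the now-familiar Gabriel factorization recipe used throughout the paper for $\DistV$, $\LMDistV$, $\MDistV$, $\LSMDistV$ and $\SMDistV$, specializing it to the homomorphism $S \co \DistV \to \SMDistV$ of Theorem~\ref{freesymmtricdist}. Concretely: for small $\catV$-categories $\catX, \catY$, I would first declare the hom-category by letting $\SDistV[\catX, \catY] \defeq \DistV[\catX, S(\catY)] = \SMDistV[S(\catX), S(\catY)]$, where the second identification is exactly the adjunction equivalence $\iota_\bullet^\ast \co \SMDistV[S(\catX), S(\catY)] \to \DistV[\catX, S(\catY)]$ established in the proof of Theorem~\ref{freesymmtricdist}. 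This is the ``non-standard'' variant of Gabriel factorization described in the Background chapter around~\eqref{equ:gabrielnonstandard}: we are given the homomorphism $S \co \DistV \to \SMDistV$ together with, for each $\catX, \catY$, a prescribed category $\SDistV[\catX, \catY]$ and an equivalence $(-)^e \co \SDistV[\catX, \catY] \to \SMDistV[S(\catX), S(\catY)]$ (namely a quasi-inverse to $\iota_\bullet^\ast$, sending $F \co \catX \mat S(\catY)$ to its symmetric monoidal extension $F^e \co S(\catX) \mat S(\catY)$).

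Having set this up, the general machinery of~\eqref{equ:gabrielnonstandard} does the rest automatically. The composition functors of $\SDistV$ are the unique (up to canonical isomorphism) ones making the square
\[
\xymatrix@C=1.8cm{
\SDistV[\catY, \catZ] \times \SDistV[\catX, \catY] \ar[r]^-{(-) \circ (-)} \ar[d]_-{(-)^e \times (-)^e} &
\SDistV[\catX, \catZ] \ar[d]^-{(-)^e} \\
\SMDistV[S\catY, S\catZ] \times \SMDistV[S\catX, S\catY] \ar[r]_-{(-) \circ (-)} &
\SMDistV[S\catX, S\catZ]}
\]
commute up to natural isomorphism, and the identity $1_\catX \co \catX \sym \catX$ is the unique morphism with $(1_\catX)^e \iso 1_{S(\catX)}$; unwinding, $1_\catX = (\iota_\catX)_\bullet \co \catX \mat S(\catX)$, whose symmetric monoidal extension is $\id_{S(\catX)}$ by the uniqueness clause in Theorem~\ref{freesymmtricdist}. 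The associativity and unit constraints, and the verification that they satisfy the pentagon and triangle coherence conditions, are transported across the equivalences $(-)^e$ from the corresponding constraints in $\SMDistV$ exactly as in the displayed diagram~\eqref{equ:cohcomp} for $\DistV$; since the paper has explicitly established (via the Background discussion) that this transport always produces a well-defined bicategory, nothing further is needed. The homomorphism $(-)^e \co \SDistV \to \SMDistV$ is then defined on objects by $\catX^e \defeq S(\catX)$ and on hom-categories by the equivalences above, making it full and faithful by construction — this is the second leg of the Gabriel factorization.

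For the first leg, $\delta \co \DistV \to \SDistV$, the requirement that the triangle commute up to pseudo-natural equivalence forces $\delta$ to be the identity on objects and to send a distributor $G \co \catX \mat \catY$ to the $S$-distributor $\delta(G) \defeq \iota_\bullet \circ G \co \catX \mat S(\catY)$ (equivalently the $S$-distributor whose symmetric monoidal extension is $S(G) \co S(\catX) \mat S(\catY)$, matching the explicit formula for $S(G)$ given just before this section). One checks $\delta$ is essentially surjective — in fact bijective on objects — and that the canonical isomorphisms $(\iota_\bullet \circ (v \circ u)) \iso (\iota_\bullet \circ v) \bullet (\iota_\bullet \circ u)$ and $\iota_\bullet \circ 1_\catX \iso 1_\catX^{\SDistV}$, coming from functoriality of $(-)_\bullet$ and associativity/unitality in $\DistV$ together with the defining property of composition in $\SDistV$, satisfy the coherence conditions for a homomorphism — again routine given the setup.

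I do not anticipate a serious obstacle: the only mildly delicate point is confirming that the ``non-standard Gabriel factorization'' of the Background chapter genuinely applies here, i.e.\ that we really are handed an equivalence $(-)^e \co \SDistV[\catX,\catY] \to \SMDistV[S\catX, S\catY]$ rather than merely an abstract equivalence of categories — and this is precisely the content of Theorem~\ref{freesymmtricdist} (the adjunction $S \dashv U$ between $\DistV$ and $\SMDistV$), whose restriction functor $\iota_\bullet^\ast$ is shown there to be an equivalence, with quasi-inverse the symmetric monoidal extension. So the proof reduces to citing Theorem~\ref{freesymmtricdist} and the general Gabriel-factorization construction, with the explicit formulas for composition in $\SDistV$ to be extracted (via~\eqref{equ:compdist} and~\eqref{equ:fprime2}) in the subsequent discussion rather than in the proof itself.
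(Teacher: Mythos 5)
Your proposal is correct and follows essentially the same route as the paper: the paper likewise sets $\SDistV[\catX,\catY]\defeq \DistV[\catX,S(\catY)]$, takes $(-)^e$ to be the symmetric monoidal extension (the quasi-inverse of $\iota_\bullet^\ast$ from Theorem~\ref{freesymmtricdist}), transports composition and identities across these equivalences in the Gabriel-factorization style, and defines $\delta(F)\defeq \iota_\bullet\circ F$ with pseudo-functoriality obtained from $S(G\circ F)\iso S(G)\circ S(F)$ and the isomorphism $\delta(F)^e\iso S(F)$. The only difference is presentational: the paper also records the explicit coend formulas for composition and identities inside the proof, which you defer to the subsequent discussion.
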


\begin{proof} Let $\catX$, $\catY$ be small $\catV$-categories. We define the hom-category of
$S$-distributors from~$\catX$ to~$\catY$ by letting $\SDistV[\catX,\catY]\defeq \DistV[\catX,S(\catY)]$.
We then define a functor 
\begin{equation}
\label{equ:efullandfaith}
(-)^e \co \SDistV[\catX,\catY] \to \SMDistV[S(\catX), S(\catY)]
\end{equation}
by mapping an $S$-distributor $F \co \catX \sym \catY$, given by a distributor $F \co \catX \mat S(\catY)$,
 to its symmetric monoidal extension $F^e \co S(\catX) \mat S(\catY)$, defined as 
in~\eqref{equ:fprime2}. We represent the
action of this functor by the derivation 

\[
\begin{prooftree}
F\co \catX \sym \catY
\justifies
 F^e\co S(\catX) \mat S( \catY).
\end{prooftree} \medskip
\]
Given two $S$-distributors $F \co  \catX \sym \catY$ and $G \co  \catY \sym \catZ$, we define their 
composite $G \circ F \co  \catX \sym \catZ$ so as to have an isomorphism $(G\circ F)^e \iso G^e\circ F^e$.
For $\vec{x}\in  S(\catX)$ and $\vec{z}\in S(\catZ)$, we have
\[
(G^e \circ F^e)[ \vec{z}; \vec{x}]   = 
\int^{\vec{y} \in S(\catY)} 
G^e[ \vec{z};  \vec{y}]  \, \otimes  F^e[ \vec{y};\vec{x}]  \, ,
\]
and therefore,  for  $x\in \catX$ and $\vec{z}\in  S(\catZ)$, we let
\begin{equation}
\label{equ:Sdistcomp} 
(G \circ F)[ \vec{z}; x]  \defeq  
\int^{\vec{y} \in S(\catY)} 
G^e[ \vec{z}; \vec{y}]  \, \otimes  F[ \vec{y};x ] \, .
\end{equation}
Here, the distributor $G^e \co S(\catY) \mat S(\catZ)$ is defined via the formula in~\eqref{equ:fprime2}.
The horizontal composition of  $S$-distributors is functorial, coherently associative  since the  horizontal composition 
of the bicategory $\SMDistV$ is so. The identity $S$-distributor $\Id_\catX\co \catX\sym \catX$ 
is determined in an analogous way as the composition operation; explicitly, we define it 
 as the distributor $(\iota_\catX)_\bullet \co  \catX\mat S(\catX)$, where $\iota_\catX\co  \catX\to S(\catX)$
is the inclusion $\catV$-functor. Thus,
\begin{equation}
\label{equ:idSdist}
\Id_\catX[ x_1, \ldots, x_n; x]  \defeq   S(\catX)\big[  (x_1, \ldots, x_n), (x)  \big] 
= \left\{
\begin{array}{ll}
\catX[ x_1, x]  & \text{if }  n = 1 \, , \\
0 & \text{otherwise.} 
\end{array} 
\right.
\end{equation} 
This completes the definition of the bicategory $\SDistV$. The homomorphism 
\[
(-)^e\co \SDistV \to \SMDistV 
\]
is then defined as follows. Its action on objects is defined by letting $\catX^e \defeq S(\catX)$,
while its action on hom-categories is  given by the functors in~\eqref{equ:efullandfaith}.  
We complete the definition of the required Gabriel factorization by defining the homomorphism $\delta\co \DistV \to 
\SDistV$. Its action on object is the identity. Given a distributor
 $F\co \catX\mat \catY$ we define the S-distributor $\delta(F) \co \catX \sym \catY$ by letting $\delta(F) \defeq \iota_\bullet \circ F$, where $\iota\co \catY \to S(\catY)$ is the inclusion
 $\catV$-functor. In this
way, we have
 \begin{equation}
 \label{equ:pscommsdist}
 S(F) \iso (\iota_\bullet \circ F)^e = \delta(F)^e  \,  \, .
 \end{equation}
Explicitly, we have 
\[
\delta(F)[ \vec{y},x]  \defeq
\left\{
\begin{array}{ll}
F[ y,x]   & \text{ if } \vec{y}=(y)\in S^1(\catY) \, , \\
0   & \text{ otherwise.}
\end{array}
\right.
\]
For distributors $F \co  \catX \mat \catY$ and $G \co  \catY \mat \catZ$, the pseudo-functoriality isomorphism 
\[
\delta(G)\circ \delta(F) \iso \delta(G\circ F)
\] 
is obtained combining the fact that the functor in~\eqref{equ:efullandfaith} is an equivalence with the existence of an isomorphism 
\[
 \delta(G\circ F)^e  \iso S(G\circ F)  \iso S(G) \circ S(F)  \iso \delta(G)^e \circ \delta(F)^e \, ,
\]
 where we used  the isomorphism in~\eqref{equ:pscommsdist} twice. 
In a similar way one obtains an isomorphism~$\delta(\id_\catX) \iso  \id_\catX$ for every small $\catV$-category~$\catX$.
 \end{proof}

\begin{remark} For every pair of small $\catV$-categories $\catX$, $\catY$, the following diagram commutes:
\[
\xymatrix@C=1.5cm{
\DistV[\catX, \catY] \ar[r]^{\lambda} \ar[d]_-{(-)^e} & \CatV[\catX, PS(\catY) \ar[d]^-{(-)^e} \\
\SMDistV[S(\catX), S(\catY)] \ar[r]_-{\lambda} & \SMCatV[S(\catX), PS(\catY)] \, .}
\]
Thus, for a distributor $F \co \catX \to \catY$, we write $\lambda F^e \co S(\catX) \to PS(\catY)$ for
the common value of the composite functors. Note that the functor $(\lambda F^e)_c\co \psh{S(\catX)}\to \psh{S(\catY)}$
is a homomorphism of symmetric $\catV$-rigs.
 Symbolically,
 \[
\begin{prooftree} 
\[
\[
F\co \catX \sym \catY \text{ in } \SDistV
 \justifies
F^e \co S(\catX) \mat S(\catY)  \text{ in } \SMDistV
\]
\justifies
\lambda F^e \co  S(\catX)\to  \psh{S(\catY)}  \text{ in } \SMCatV
\]
\justifies
(\lambda F^e)_c \co \psh{S(\catX)}\to \psh{S(\catY)   \text{ in } \SRigV  \, .}
\end{prooftree}
\]
\end{remark}

\begin{lemma} \label{lemmacoprodSdist}  
If $\catX_1$ and $\catX_2$ are small $\catV$-categories
and  $\iota_k\co \catX_k\to \catX_1\sqcup \catX_2$ is the $k$-th inclusion, 
then the symmetric monoidal distributors 
$S(\iota_k)_\bullet \co  S(\catX_k)\mat S(\catX_1\sqcup \catX_2)$
 for $k=1,2$ 
 exhibit $S(\catX_1 \sqcup \catX_2)$ as the 
 coproduct of $S(\catX_1)$ and $S(\catX_2)$ in the bicategory $\SMDistV$.
\end{lemma}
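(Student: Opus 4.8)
The plan is to transport the required universal property across the Gabriel factorization $\SMCatV \xrightarrow{(-)_\bullet} \SMDistV \xrightarrow{(-)^\dag} \SRigV$ of Theorem~\ref{thm:bicatsmdistv}, reducing the statement to the fact that the composite homomorphism $\psh \circ S \co \CatV \to \SRigV$ preserves coproducts. I would first record the relevant features of this factorization: $(-)^\dag$ is full and faithful, it acts as $\catA \mapsto \pshA$ on objects, and for every symmetric monoidal $\catV$-functor $u \co \catA \to \catB$ one has $(u_\bullet)^\dag \iso u_! = \psh(u)$ (as in the proof of Theorem~\ref{thm:bicatlsmdistv}); in particular $(S(\iota_k)_\bullet)^\dag \iso \psh(S(\iota_k))$ for $k = 1,2$. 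I would also recall that $S \co \CatV \to \SMCatV$ is left adjoint to the forgetful $2$-functor (used already in the proof of Proposition~\ref{thefunctorc}) and that $\psh \co \SMCatV \to \SRigV$ is left adjoint to the forgetful homomorphism, since $\yon_\catA$ exhibits $\pshA$ as the free symmetric $\catV$-rig on $\catA$ --- the restriction of~\eqref{equ:yonsmoncat} to symmetric, rather than symmetric lax, homomorphisms. Hence $\psh \circ S$, being a composite of left adjoints, preserves coproducts, just as in the proof of Lemma~\ref{bulletcomp}.

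Next I would fix an arbitrary small symmetric monoidal $\catV$-category $\catB$. Since $\catX_1 \sqcup \catX_2$, together with the inclusions $\iota_k \co \catX_k \to \catX_1 \sqcup \catX_2$, is the coproduct of $\catX_1$ and $\catX_2$ in $\CatV$, the previous paragraph shows that $\psh(S(\catX_1 \sqcup \catX_2))$, together with the morphisms $\psh(S(\iota_k))$, is the coproduct of $\psh(S(\catX_1))$ and $\psh(S(\catX_2))$ in $\SRigV$; equivalently, precomposition with the $\psh(S(\iota_k))$ yields an equivalence of categories
\[
\SRigV[\psh(S(\catX_1 \sqcup \catX_2)), \pshB] \;\simeq\; \SRigV[\psh(S(\catX_1)), \pshB] \times \SRigV[\psh(S(\catX_2)), \pshB] \, .
\]

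Then I would invoke the pseudofunctoriality of $(-)^\dag$ --- the structural isomorphisms $g^\dag \circ f^\dag \iso (g \circ f)^\dag$, natural in $g$, making precomposition commute with $(-)^\dag$ up to coherent isomorphism --- together with the identifications $(S(\iota_k)_\bullet)^\dag \iso \psh(S(\iota_k))$ and $\catB^\dag = \pshB$, to produce a square
\[
\xymatrix@C=1.3cm{
\SMDistV[S(\catX_1 \sqcup \catX_2), \catB] \ar[r] \ar[d]_-{(-)^\dag} & \SMDistV[S(\catX_1), \catB] \times \SMDistV[S(\catX_2), \catB] \ar[d]^-{(-)^\dag \times (-)^\dag} \\
\SRigV[\psh(S(\catX_1 \sqcup \catX_2)), \pshB] \ar[r] & \SRigV[\psh(S(\catX_1)), \pshB] \times \SRigV[\psh(S(\catX_2)), \pshB]
}
\]
commuting up to natural isomorphism, in which the top horizontal arrow is precomposition with $S(\iota_1)_\bullet$ and $S(\iota_2)_\bullet$. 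The two vertical arrows are equivalences because $(-)^\dag$ is full and faithful, and the bottom arrow is an equivalence by the display above; hence the top arrow is an equivalence of categories. Since $\catB$ was arbitrary, this is exactly the assertion that $S(\catX_1 \sqcup \catX_2)$, equipped with the symmetric monoidal distributors $S(\iota_k)_\bullet$, is the coproduct of $S(\catX_1)$ and $S(\catX_2)$ in $\SMDistV$.

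The step I expect to require the most care is the commutativity of this last square, that is, verifying that $(-)^\dag$ carries precomposition with $S(\iota_k)_\bullet$ to precomposition with $\psh(S(\iota_k))$ coherently; this is precisely the kind of coherence bookkeeping that a Gabriel factorization is set up to absorb, so I expect it to be routine once the pseudofunctoriality data of $(-)^\dag$ is unwound. As an alternative that sidesteps this point, one could instead combine Lemma~\ref{thm:smoncathascoprod} and Proposition~\ref{thefunctorc}, which together exhibit $S(\catX_1 \sqcup \catX_2) \simeq S(\catX_1) \tensorvcat S(\catX_2)$ as the coproduct of $S(\catX_1)$ and $S(\catX_2)$ in $\SMCatV$, compatibly with the inclusions $S(\iota_k)$; it would then only remain to check that $(-)_\bullet \co \SMCatV \to \SMDistV$ sends this coproduct to a coproduct, which once more reduces to $(u_\bullet)^\dag \iso \psh(u)$ together with $\psh$ preserving coproducts.
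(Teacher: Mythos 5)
Your argument is correct, but it takes a genuinely different route from the paper. The paper's proof is two lines: by Proposition~\ref{bulletcomp2} the distributors $(\iota_k)_\bullet$ exhibit $\catX_1\sqcup\catX_2$ as the coproduct of $\catX_1$ and $\catX_2$ in $\DistV$, and the homomorphism $S\co \DistV\to\SMDistV$ of Theorem~\ref{freesymmtricdist} is a left adjoint (to the forgetful homomorphism), hence preserves coproducts; the only identification needed is $S((\iota_k)_\bullet)\iso S(\iota_k)_\bullet$. You instead descend one level further, through the Gabriel factorization of Theorem~\ref{thm:bicatsmdistv}, arguing that $\psh\circ S\co \CatV\to\SRigV$ preserves coproducts and then lifting the universal property back along the full and faithful $(-)^\dag$ using $(S(\iota_k)_\bullet)^\dag\iso\psh(S(\iota_k))$; this mirrors exactly how the paper proves Proposition~\ref{bulletcomp2} itself for $\DistV$, transplanted to the symmetric monoidal setting, and your alternative via Lemma~\ref{thm:smoncathascoprod} and Proposition~\ref{thefunctorc} is likewise sound. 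What the paper's route buys is economy: it stays entirely inside the distributor bicategories and reuses the adjunction $S\dashv U$ already established, so it never has to invoke the free-rig universal property for \emph{strong} (rather than lax) homomorphisms, whereas your argument quietly relies on the restriction of~\eqref{equ:yonsmoncat} to $\SRigV[\pshA,\rigR]\simeq\SMCATV[\catA,\rigR]$ --- true, and implicitly needed for Theorem~\ref{thm:bicatsmdistv} anyway, but not stated in the paper --- together with the same mild size-glossing (testing coproducts of small symmetric monoidal $\catV$-categories against large ones) that the paper permits itself in Lemma~\ref{bulletcomp}. What your route buys in exchange is a slightly more self-contained verification (you never need the explicit formula identifying $S((\iota_k)_\bullet)$ with $S(\iota_k)_\bullet$), and it makes visible that the statement is really an instance of $(-)_\bullet$ and $(-)^\dag$ preserving coproducts, in the spirit of Proposition~\ref{coprodSdist}.
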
 

\begin{proof} The distributors $(\iota_k)_\bullet\co \catX_k\to \catX_1\sqcup \catX_2$ (for $k=1,2$)
exhibit $\catX_1\sqcup \catX_2$ as the coproduct of~$\catX_1$ and~$\catX_2$ in the bicategory $\DistV$ by Proposition~\ref{bulletcomp2}.
Hence the symmetric monoidal distributors $S(\iota_k)_\bullet =S((\iota_k)_\bullet)\co S(\catX_k)\to S(\catX_1\sqcup \catX_2)$ 
exhibits the coproduct of $S(\catX_1)$ and $S(\catX_2)$ in the bicategory $\SMDistV$,
 since the homomorphism $S\co  \DistV \to \SMDistV$
is a left adjoint and hence preserves finite coproducts.
\end{proof}

\begin{proposition} \label{coprodSdist} 
The bicategory $\SDistV$ has finite coproducts and the homomorphisms 
\[
\delta \co  \DistV\to \SDistV \, ,   \quad (-)^e\co \SDistV \to \SMDistV 
\]
 preserve finite coproducts. 
\end{proposition}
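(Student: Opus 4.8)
The plan is to reduce the claim to the two structural facts already available: that $\SDistV$ fits into the Gabriel factorization $\DistV \xrightarrow{\;\delta\;} \SDistV \xrightarrow{\;(-)^e\;} \SMDistV$, and that $\DistV$ has coproducts preserved by $(-)_\bullet$ (Proposition~\ref{bulletcomp2}), while $S \co \DistV \to \SMDistV$ preserves finite coproducts (being a left adjoint, by Theorem~\ref{freesymmtricdist}), and the coproduct of $S(\catX_1)$ and $S(\catX_2)$ in $\SMDistV$ is computed as in Lemma~\ref{lemmacoprodSdist}. The strategy is standard: exhibit $S(\catX_1 \sqcup \catX_2)$, equipped with the $S$-distributors $\delta((\iota_k)_\bullet) \co \catX_k \sym \catX_1 \sqcup \catX_2$ (for $k = 1, 2$, where $\iota_k$ is the inclusion into the coproduct in $\CatV$), as the coproduct of $\catX_1$ and $\catX_2$ in $\SDistV$, and simultaneously observe that $\delta$ and $(-)^e$ send this diagram to coproduct diagrams.

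First I would recall that, by definition, $\SDistV[\catX, \catY] = \DistV[\catX, S(\catY)]$ and that the functor $(-)^e \co \SDistV[\catX, \catY] \to \SMDistV[S(\catX), S(\catY)]$ is full and faithful (indeed an equivalence onto its image) by construction of the Gabriel factorization. To show $\bot \defeq \zerocat$ is initial in $\SDistV$, note $\SDistV[\zerocat, \catY] = \DistV[\zerocat, S(\catY)]$, and $\zerocat$ is initial in $\DistV$ (by Proposition~\ref{bulletcomp2}, $\DistV$ has coproducts, and the empty coproduct is $\zerocat$, whose presheaf category $\onecat$ is initial in $\PCATV$), so this hom-category is equivalent to $\onecat$, as required. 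For the binary coproduct, fix a small $\catV$-category $\catZ$ and consider the functor
\[
\pi \co \SDistV[\catX_1 \sqcup \catX_2, \catZ] \to \SDistV[\catX_1, \catZ] \times \SDistV[\catX_2, \catZ]
\]
given by precomposition with $\delta((\iota_1)_\bullet)$ and $\delta((\iota_2)_\bullet)$. Unwinding the definition, this is precomposition in $\DistV$ with the distributors $(\iota_k)_\bullet \co \catX_k \mat \catX_1 \sqcup \catX_2$, viewed as landing in $\DistV[\catX_k, S(\catZ)]$. Since $\catX_1 \sqcup \catX_2$ together with the $(\iota_k)_\bullet$ is a coproduct diagram in $\DistV$ (Proposition~\ref{bulletcomp2}), the functor
\[
\DistV[\catX_1 \sqcup \catX_2, S(\catZ)] \to \DistV[\catX_1, S(\catZ)] \times \DistV[\catX_2, S(\catZ)]
\]
is an equivalence of categories, which is exactly the statement that $\pi$ is an equivalence. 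This establishes that $\SDistV$ has finite coproducts, with $\catX_1 \sqcup \catX_2$ serving also as the coproduct in $\SDistV$.

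It then remains to check that $\delta$ and $(-)^e$ preserve finite coproducts. For $\delta \co \DistV \to \SDistV$: its action on objects is the identity, so $\delta(\catX_1 \sqcup \catX_2) = \catX_1 \sqcup \catX_2$, and $\delta$ sends the coproduct inclusions $(\iota_k)_\bullet$ of $\DistV$ to the coproduct inclusions $\delta((\iota_k)_\bullet)$ of $\SDistV$ identified above, so $\delta$ preserves the empty and binary coproducts. For $(-)^e \co \SDistV \to \SMDistV$: on objects it sends $\catX_1 \sqcup \catX_2$ to $S(\catX_1 \sqcup \catX_2)$, and it sends the inclusion $\delta((\iota_k)_\bullet) \co \catX_k \sym \catX_1 \sqcup \catX_2$ to $(\delta((\iota_k)_\bullet))^e \iso S((\iota_k)_\bullet) = S(\iota_k)_\bullet$ using the identity~\eqref{equ:pscommsdist}; by Lemma~\ref{lemmacoprodSdist} these exhibit $S(\catX_1 \sqcup \catX_2)$ as the coproduct of $S(\catX_1)$ and $S(\catX_2)$ in $\SMDistV$. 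For the empty coproduct, $(-)^e$ sends $\zerocat$ to $S(\zerocat) \iso \UnitCat$, which is the initial object of $\SMDistV$ (its presheaf rig being $\psh(\zerocat) \simeq \catV$, initial in $\SRigV$ by Remark~\ref{thm:tensorvrig}). The only point requiring any care is the bookkeeping that precomposition with $\delta((\iota_k)_\bullet)$ in $\SDistV$ really does correspond, under the identification $\SDistV[-, \catZ] = \DistV[-, S(\catZ)]$, to precomposition with $(\iota_k)_\bullet$ in $\DistV$; this follows from the definition of composition in $\SDistV$ via~\eqref{equ:Sdistcomp} together with the fact that $\delta((\iota_k)_\bullet) = (\iota_{\catX_k})_\bullet \circ \cdots$ is built from an identity-type $S$-distributor, but it is routine and I would dispatch it quickly rather than belabour it. I expect no genuine obstacle here — the result is a formal consequence of the Gabriel factorization and Proposition~\ref{bulletcomp2}, exactly as indicated by the one-line proof that will follow.
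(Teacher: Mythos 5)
Your argument is correct, but it does not follow the paper's route. The paper's proof is a one-liner: it invokes Lemma~\ref{lemmacoprodSdist} (the cones $S(\iota_k)_\bullet$ exhibit $S(\catX_1\sqcup\catX_2)$ as a coproduct in $\SMDistV$, because $S\co\DistV\to\SMDistV$ is a left adjoint) and then, implicitly, reflects this coproduct back along the locally fully faithful homomorphism $(-)^e\co\SDistV\to\SMDistV$, which simultaneously yields existence in $\SDistV$ and preservation by both $\delta$ and $(-)^e$. You instead verify the universal property directly in $\SDistV$, using the defining identification $\SDistV[-,\catZ]=\DistV[-,S(\catZ)]$ together with Proposition~\ref{bulletcomp2}, and only call on Lemma~\ref{lemmacoprodSdist} to check preservation by $(-)^e$; this is essentially the chain of equivalences the paper itself displays informally after Proposition~\ref{thm:anafprod}. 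What each approach buys: the paper's reflection argument is shorter and purely formal, while yours is more self-contained and makes explicit which cones are the coproduct inclusions in $\SDistV$. The one step you rightly flag as needing bookkeeping is cleanly dispatched by noting that, for $G\co\catX_1\sqcup\catX_2\sym\catZ$, formula~\eqref{equ:Sdistcomp} gives $G\circ\delta((\iota_k)_\bullet)=G^e\circ\iota_\bullet\circ(\iota_k)_\bullet\iso G\circ(\iota_k)_\bullet$ in $\DistV$, using the defining triangle $G^e\circ\iota_\bullet\iso G$ of the symmetric monoidal extension (Theorem~\ref{freesymmtricdist}); so there is no gap there. Two trivial points: your citation for initiality of $\catV$ among symmetric $\catV$-rigs should be the unnumbered remark in Section~\ref{sec:smoncat} rather than Remark~\ref{thm:tensorvrig} (the non-symmetric case), and the isomorphism $(\delta((\iota_k)_\bullet))^e\iso S(\iota_k)_\bullet$ you use is exactly~\eqref{equ:pscommsdist}, as you note.
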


\begin{proof} This follows from Lemma~\ref{lemmacoprodSdist}, since the homomorphism $S\co  \DistV \to \SMDistV$
preserves finite coproducts.
\end{proof}

We establish an explicit formula for  the coproduct homomorphism on $S$-distributors, which will be used in the proof of Theorem~\ref{thm:smondistiscc}.
By definition, the coproduct of two $S$-distributors 
$F_1\co  \catX_1 \sym \catY_1$ and $F_2\co  \catX_2\sym  \catY_2$
is a $S$-distributor $F_1\sqcup_S F_2\co  
 \catX_1 \sqcup  \catX_2 \sym  \catY_1 \sqcup  \catY_2$
 fitting in a commutative diagram of $S$-distributors,
\begin{equation}
\label{equ:sdistcommdiag}
\vcenter{\hbox{\xymatrix@C=2cm{
\catX_1 \ar[r]^{F_1} \ar[d]_{\delta(i^1_\bullet)}  & \catY_1\ar[d]^{\delta(j^1_\bullet)}  \\
  \catX_1\sqcup \catX_2 \ar[r]^{F_1\sqcup_S F_2} & \catY_1\sqcup \catY_2\\
\catX_2   \ar[u]^{\delta(i^2_\bullet)} \ar[r]_{F_2} &\catY_2 \ar[u]_{\delta(j^2_\bullet)} \ , }}}
\end{equation}
where $i^k\co \catX_k\to  \catX_1\sqcup   \catX_2$
and $j^k\co \catY_k\to  \catY_1\sqcup   \catY_2$ are the inclusions.

\begin{proposition} \label{lem:multiplicative2} Given $S$-distributors $F_1\co  \catX_1\sym \catY_1$ and $F_2\co  \catX_2\sym  \catY_2$,
then 
\[
 (F_1\sqcup_S F_2)^e [ \vec{y}_1\oplus \vec{y}_2 ,\vec{x}_1\oplus \vec{x}_2]  \iso
 F_1^e[ \vec{y}_1,\vec{x}_1] \otimes F_2^e[\vec{y}_2,\vec{x}_2] 
 \]
 for $\vec{x}_1\in S(\catX_1)$, $\vec{x}_2\in S(\catX_2)$,
 $\vec{y}_1\in S(\catY_1)$ and $\vec{y}_2\in S(\catY_2)$.
\end{proposition}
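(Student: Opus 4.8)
The plan is to reduce the statement to computing a coproduct of $1$-cells in $\SMDistV$ and then to read off the value of the resulting distributor at concatenations by means of Lemma~\ref{bulletlambda}(ii). Write $c_1 = c_{\catX_1, \catX_2} \co S(\catX_1) \tensorvcat S(\catX_2) \to S(\catX_1 \sqcup \catX_2)$ and $c_2 = c_{\catY_1, \catY_2} \co S(\catY_1) \tensorvcat S(\catY_2) \to S(\catY_1 \sqcup \catY_2)$; by Proposition~\ref{thefunctorc} these are symmetric monoidal equivalences, and its proof shows $c_1 \circ \mathit{in}_k = S(i^k)$ and $c_2 \circ \mathit{in}_k = S(j^k)$, where $\mathit{in}_k$ denotes the relevant insertion into a tensor product of symmetric monoidal $\catV$-categories. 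Since $(-)_\bullet$ is a homomorphism and preserves equivalences, $(c_1)_\bullet$ and $(c_2)_\bullet$ are equivalences in $\SMDistV$ with quasi-inverses $(c_1)^\bullet$ and $(c_2)^\bullet$, and $(c_1)_\bullet \circ (\mathit{in}_k)_\bullet \iso S(i^k)_\bullet$, $(c_2)_\bullet \circ (\mathit{in}_k)_\bullet \iso S(j^k)_\bullet$. By Lemma~\ref{lemmacoprodSdist} the distributors $S(i^k)_\bullet$, resp.\ $S(j^k)_\bullet$, exhibit $S(\catX_1 \sqcup \catX_2)$, resp.\ $S(\catY_1 \sqcup \catY_2)$, as a coproduct in $\SMDistV$; conjugating by the equivalences $(c_1)_\bullet$ and $(c_2)_\bullet$, the distributors $(\mathit{in}_k)_\bullet$ likewise exhibit $S(\catX_1) \tensorvcat S(\catX_2)$ and $S(\catY_1) \tensorvcat S(\catY_2)$ as coproducts in $\SMDistV$.

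Next I would identify $(F_1 \sqcup_S F_2)^e$ with a coproduct of $1$-cells. The homomorphism $(-)^e \co \SDistV \to \SMDistV$ preserves finite coproducts (Proposition~\ref{coprodSdist}); applying it to the defining diagram~\eqref{equ:sdistcommdiag} of $F_1 \sqcup_S F_2$, and using $\delta(i^k_\bullet)^e \iso S(i^k)_\bullet$ (which follows from~\eqref{equ:pscommsdist}) together with the analogous fact on the $\catY$-side, shows that $(F_1 \sqcup_S F_2)^e$ is the coproduct of $F_1^e$ and $F_2^e$ relative to the cocones $\big(S(i^k)_\bullet\big)$ and $\big(S(j^k)_\bullet\big)$. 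Conjugating by $(c_1)_\bullet$ and $(c_2)_\bullet$, it follows that $(c_2)^\bullet \circ (F_1 \sqcup_S F_2)^e \circ (c_1)_\bullet$ is the coproduct of $F_1^e$ and $F_2^e$ relative to the cocones $\big((\mathit{in}_k)_\bullet\big)$. On the other hand, I claim the tensor product distributor $F_1^e \tensorvcat F_2^e \co S(\catX_1) \tensorvcat S(\catX_2) \mat S(\catY_1) \tensorvcat S(\catY_2)$ already enjoys this universal property, i.e.\ that there are isomorphisms $(F_1^e \tensorvcat F_2^e) \circ (\mathit{in}_k)_\bullet \iso (\mathit{in}_k)_\bullet \circ F_k^e$ for $k = 1, 2$. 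This is checked by a short coend computation from the composition formula~\eqref{equ:compdist} for distributors, the definition of the tensor product of distributors, and the co-Yoneda isomorphism; the only delicate point is that $F_j^e$ evaluated at the empty sequence in its covariant variable equals $S(\catY_j)[-, e]$ --- precisely the contribution of the insertion $(\mathit{in}_k)_\bullet$ for the index $j \neq k$ --- as one reads off from~\eqref{equ:fprime2} with $n = 0$. By uniqueness up to isomorphism of the coproduct $1$-cell, we conclude $(c_2)^\bullet \circ (F_1 \sqcup_S F_2)^e \circ (c_1)_\bullet \iso F_1^e \tensorvcat F_2^e$.

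Finally, since $c_1(\vec{x}_1 \tensorvcat \vec{x}_2) = \vec{x}_1 \oplus \vec{x}_2$ and $c_2(\vec{y}_1 \tensorvcat \vec{y}_2) = \vec{y}_1 \oplus \vec{y}_2$ by~\eqref{equ:cfunctordef}, Lemma~\ref{bulletlambda}(ii) and the previous step yield
\begin{align*}
(F_1 \sqcup_S F_2)^e[\vec{y}_1 \oplus \vec{y}_2, \vec{x}_1 \oplus \vec{x}_2]
&\iso \big((c_2)^\bullet \circ (F_1 \sqcup_S F_2)^e \circ (c_1)_\bullet\big)[\vec{y}_1 \tensorvcat \vec{y}_2, \vec{x}_1 \tensorvcat \vec{x}_2] \\
&\iso (F_1^e \tensorvcat F_2^e)[\vec{y}_1 \tensorvcat \vec{y}_2, \vec{x}_1 \tensorvcat \vec{x}_2] \\
&= F_1^e[\vec{y}_1, \vec{x}_1] \otimes F_2^e[\vec{y}_2, \vec{x}_2] \, ,
\end{align*}
which is the desired isomorphism. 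The main obstacle is the middle step --- verifying that $F_1^e \tensorvcat F_2^e$ realizes the coproduct of $F_1^e$ and $F_2^e$ relative to the insertions $(\mathit{in}_k)_\bullet$; this is where the routine but somewhat delicate coend bookkeeping lives, in particular the $n = 0$ case of the symmetric monoidal extension formula. Everything else is formal manipulation of equivalences and universal properties.
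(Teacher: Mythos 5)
Your proof is correct and follows essentially the same route as the paper: apply $(-)^e$ to the defining coproduct diagram, use Proposition~\ref{thefunctorc} to bring in the equivalence $c_\bullet$, identify the conjugate of $(F_1\sqcup_S F_2)^e$ with $F_1^e\tensorvcat F_2^e$, and finish with part~(ii) of Lemma~\ref{bulletlambda}. The only difference is that where the paper simply asserts the commuting square relating $(F_1\sqcup_S F_2)^e\circ c_\bullet$ to $c_\bullet\circ(F_1^e\tensorvcat F_2^e)$, you justify it via the coproduct universal property in $\SMDistV$ together with the coend check that $(F_1^e\tensorvcat F_2^e)\circ(\mathit{in}_k)_\bullet\iso(\mathit{in}_k)_\bullet\circ F_k^e$ (using the $n=0$ case of~\eqref{equ:fprime2}), which is exactly the detail the paper leaves implicit.
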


\begin{proof} 
The image of the diagram in~\eqref{equ:sdistcommdiag} by the homomorphism  $(-)^e\co \SDistV \to \SMDistV $
is a commutative diagram of symmetric monoidal distributors,
\[
\xymatrix@C=2.5cm{
S(\catX_1) \ar[r]^{F_1^e} \ar[d]_{S(i_1)_\bullet}  & S( \catY_1)\ar[d]^{S(j_1)_\bullet}  \\
S( \catX_1\sqcup \catX_2) \ar[r]^{(F_1\oplus F_2)^e} & S( \catY_1\sqcup \catY_2)\\
S(\catX_2)   \ar[u]^{S(i_2)_\bullet} \ar[r]_{F_2^e}  & S(\catY_2)  \ar[u]_{S(j_2)_\bullet} \, ,}
\]
from which we obtain the following commutative square
of symmetric monoidal distributors,
\[
\xymatrix@C=2,5cm{
S(\catX_1) \tensorvcat S(\catX_2)  \ar[r]^{F_1^e \tensorvcat F_2^e} \ar[d]_{c_\bullet}  &
S( \catY_1)\ar[d]^{c_\bullet} \tensorvcat S(\catY_2)   \\
S( \catX_1\sqcup \catX_2) \ar[r]_{(F_1\oplus F_2)^e}& S(\catY_1\sqcup \catY_2) \, ,}
\]
where $c\co S(\catX_1) \tensorvcat S(\catX_2) \to 
S( \catX_1\sqcup \catX_2)$ is the $\catV$-functor of~\eqref{equ:cfunctordef}, which in this case is defined by 
letting $c(\vec{x}_1 \tensorvcat \vec{x}_2) \defeq \vec{x}_1\oplus \vec{x}_2$.
We have $c^\bullet \circ c_\bullet \iso \Id_{S(\catY_1) \otimes S(Y_2)}$, since $c$ is an equivalence by Proposition~\ref{thefunctorc}.
Hence the following diagram of distributors commutes 
\[
\xymatrix@C=2.5cm{
S(\catX_1) \tensorvcat S(\catX_2)  \ar[r]^{F_1^e  \tensorvcat F_2^e}  \ar[d]_{c_\bullet}  &
S( \catY_1) \tensorvcat S(\catY_2)   \\
S( \catX_1\sqcup \catX_2) \ar[r]_{(F_1\oplus F_2)^e} &S( \catY_1\sqcup \catY_2)
 \ar[u]_{c^\bullet}\, . }
\]
This proves the result, since 
\[
(F_1^e \tensorvcat F_2^e)[ \vec{y}_1  \tensorvcat \vec{y}_2,  \vec{x}_1  \tensorvcat \vec{x}_2] 
\defeq  
F_1^e[ \vec{y}_1,\vec{x}_1]  \otimes F_2^e[ \vec{y}_2,\vec{x}_2] 
\]
and, by part (ii) of Lemma~\ref{bulletlambda}, we have
\[
(c^\bullet \circ (F_1\oplus F_2)^e \circ c_\bullet)[ \vec{y}_1  \tensorvcat \vec{y}_2,  \vec{x}_1  \tensorvcat \vec{x}_2]
 \iso 
 (F_1\oplus F_2)^e[ \vec{y}_1\oplus \vec{y}_2 ,\vec{x}_1\oplus \vec{x}_2]   \, . \qedhere
\]
\end{proof}

\begin{remark}[The bicategory of $S$-matrices] \label{thm:Smatrices}
 We give an explicit description of the full the sub-bicategory of~$\SDistV$ spanned by sets, in analogy with what we did in Remark~\ref{thm:matrices}, where we showed how the bicategory of matrices~$\MatV$ can be seen as the full sub-bicategory of the bicategory of distributors spanned by sets. If~$\mathbb{M}$ is a symmetric monoidal category, then~$\mathbb{M} \cdot \UnitCat$
has the structure of a symmetric monoidal $\catV$-category and the functor mapping $\mathbb{M}$
to  $\mathbb{M} \cdot \UnitCat$ is left adjoint to the functor~$\mathrm{Und} \co \SMCatV \to \SMCat$
mapping  a symmetric monoidal $\catV$-category to its underlying symmetric monoidal category.
We also have a natural isomorphism,
\[
 S (\catC) \cdot \UnitCat  \iso S(  \mathbb{C} \cdot \UnitCat ) 
\]
since the diagram 
\[
\xymatrix{
\SMCatV \ar[r]^{\mathrm{Und}} \ar[d]_U &\SMCat  \ar[d] ^U\\
\CatV \ar[r]_{\mathrm{Und}} & \, \Cat }
\]
commutes and a composite of left adjoints is left adjoint to the composite. As a special case of the definitions in Section~\ref{sec:fresmc}, the free symmetric monoidal
category $S(X)$ on a set $X$ admits the following direct description. For $n \in \Nat$, let $S^n(X)$ be the category whose objects
are sequences $\vec{x} = (x_1, \ldots, x_n)$
of elements of $X$ and whose morphisms $\sigma \co (x_1, \ldots, x_n) \to (x_1', \ldots, x_n')$ are permutations $\sigma \in \Sigma_n$ such that~$x_i' = x_{\sigma(i)}$ for $1 \leq i \leq n$. We then let $S(X)$ be the coproduct of the categories $S_n(X)$, for $
n \in \Nat$,
\[
S(X) \defeq  \bigsqcup_{n \in \Nat} S^n(X) \, . 
\]
For sets $X$ and $Y$, we define an \emph{$S$-matrix} $F \co X \sym Y$ to be a functor $F \co S(Y)^\op \times X \to \catV$. 
Sets, $S$-matrices and natural transformations form a bicategory $\SMatV$ which can be identified with the
full sub-bicategory of the bicategory $\SDistV$ of $S$-distributors spanned by discrete $\catV$-categories. 
Indeed, for sets $X$ and $Y$, we have the following chain of isomorphisms:
\begin{align*} 
\SMatV[X, Y] & = \Cat[ S(Y)^\op \times X \, , \catV ] \\
  & \iso \CatV \big[ \big( S(Y)^\op \times X \big) \cdot \UnitCat, \catV ] \\
  & \iso \CatV \big[  \big( S(Y)^\op \cdot \UnitCat \big) \otimes \big(   X \cdot \UnitCat \big), \catV \big] \\
  & \iso \CatV \big[  \big( S(Y)\cdot \UnitCat \big)^\op \otimes \big(   X \cdot \UnitCat \big), \catV \big] \\
  & \iso \SDistV[ X \cdot \UnitCat \, ,  Y \cdot \UnitCat ] \, .
  \end{align*}
The composition operation and the identity morphisms of~$\SMatV$ are determined by those of~$\SDistV$ analogously to 
the way in which composition operation and the identity morphisms of~$\MatV$ are determined by those of~$\DistV$. We
do not unfold these definitions, since in Section~\ref{sec:regbss} we will describe explicitly its opposite bicategory. Note that we obtain 
the following diagram of inclusions:
\[
\xymatrix{
\MatV \ar[r] \ar[d] & \SMatV \ar[d] \\
\DistV \ar[r] & \, \SDistV \, . }
\]
\end{remark}

We conclude this section by defining the operation of compositon of an $S$-distributor with a functor, in analogy with 
the definition of composition of a distributor with a functor, given in~\eqref{equ:compdistfun}.  Let $\rigR$ be a symmetric $\catV$-rig.
Then the \emph{composite} of an $S$-distributor  $F\co \catX\sym \catY$ with a $\catV$-functor $T\co  \catY\to \rigR$
is the $\catV$-functor $T\circ F\co \catX\to \rigR$ defined by letting
\begin{equation}
\label{equ:extrastartf}
(T\circ F)(x)  \defeq \int^{\vec{y}\in S(\catY)}T^e(\vec{y})\otimes F[\vec{y}; x] \, ,
\end{equation}
for $x\in \catX$.

\begin{lemma}\label{actiononrighomomo} Let $\catX, \catY$ be small $\catV$-categories and $\rigR$ be a symmetric $\catV$-rig.
For every $S$-distributor  $F\co \catX\sym \catY$ and $\catV$-functor $T\co  \catY\to \rigR$, we have 
\[
(T\circ F)^e \iso T^e\circ F^e \, ,
\]
where 
\[
(T^e\circ F^e)(\vec{x}) = \int^{\vec{y} \in S(\catY)}  T^e(\vec{y}) \otimes F^e[\vec{y};\vec{x}]  \, .
\]
\end{lemma}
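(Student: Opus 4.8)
The plan is to prove the isomorphism $(T\circ F)^e \iso T^e \circ F^e$ by exploiting the universal property of the free symmetric monoidal $\catV$-category $S(\catX)$ and the characterization of $(-)^e$ as a symmetric monoidal extension. The key observation is that both sides of the claimed isomorphism, when restricted along the inclusion $\iota_\catX \co \catX \to S(\catX)$, should agree with $T\circ F$, and that both are symmetric monoidal extensions of $T\circ F$, so uniqueness of such extensions will force them to be isomorphic. Concretely, I would first compute $(T^e\circ F^e)\circ (\iota_\catX)_\bullet$: using the formula~\eqref{equ:fprime2} for $F^e$ together with the fact that $F^e\circ (\iota_\catX)_\bullet \iso F$ (since $F^e$ is the symmetric monoidal extension of $F$ along $(\iota_\catX)_\bullet$, as in~\eqref{equ:fprime}), and using that $T^e\circ \iota_\catY = T$, I would unwind the coend and verify that the composite agrees with $T\circ F$ as defined in~\eqref{equ:extrastartf}.

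The second step is to recognize that $T^e\circ F^e \co S(\catX)\to \rigR$ is symmetric monoidal. Here $T^e \co S(\catY)\to \rigR$ is a symmetric monoidal $\catV$-functor by construction, and $F^e \co S(\catX)\mat S(\catY)$ is a symmetric monoidal distributor; the operation of composing a distributor with a $\catV$-functor, restricted to the symmetric monoidal case, was shown in Proposition~\ref{actionsymmetricmono} to produce a symmetric monoidal functor. So $T^e\circ F^e$ is symmetric monoidal. On the other hand, $(T\circ F)^e$ is by definition the symmetric monoidal extension of the $\catV$-functor $T\circ F\co \catX\to \rigR$ along $\iota_\catX$, and is characterized up to unique isomorphism of symmetric monoidal $\catV$-functors by the property that restricting it along $\iota_\catX$ recovers $T\circ F$. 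Since $T^e\circ F^e$ is a symmetric monoidal functor whose restriction along $\iota_\catX$ is isomorphic to $T\circ F$ (by Step~1), the uniqueness clause in the universal property of $S(\catX)$ yields an isomorphism $(T\circ F)^e \iso T^e\circ F^e$ of symmetric monoidal $\catV$-functors.

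Finally, I would record the explicit coend formula for the composite $T^e\circ F^e$, namely
\[
(T^e\circ F^e)(\vec{x}) = \int^{\vec{y} \in S(\catY)}  T^e(\vec{y}) \otimes F^e[\vec{y};\vec{x}] \, ,
\]
which is just the instance of~\eqref{equ:compdistfun} with the distributor $F^e$ and the $\catV$-functor $T^e$; this is the assertion in the second displayed equation of the lemma and follows directly from the definition.

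I expect the only mildly delicate step to be Step~1, the verification that $(T^e\circ F^e)\circ(\iota_\catX)_\bullet \iso T\circ F$: this requires carefully pushing the coend formula~\eqref{equ:fprime2} for $F^e$ through the formula~\eqref{equ:compdistfun} and then using $F^e[\,(y);x\,]\iso F[y;x]$ together with the definition~\eqref{equ:extrastartf} of $T\circ F$ and the multiplicativity of $T^e$. This is a routine coend manipulation (of the same flavour as the calculations in Lemma~\ref{contraactionlemma} and Lemma~\ref{monoidalequiv}), so the argument should be short; alternatively, one can bypass the explicit calculation entirely by invoking Lemma~\ref{contraactionlemma} (applied in $\pshA$-land) together with the fact that $\lambda F^e = (\lambda F)^e$ and that the cocontinuous extension of $T^e$ computes $T^e\circ F^e$ on representables, mirroring the proof of Lemma~\ref{contraactionlemma} in the symmetric monoidal setting.
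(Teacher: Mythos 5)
Your proposal follows essentially the same route as the paper's proof: observe via Proposition~\ref{actionsymmetricmono} that $T^e\circ F^e$ is symmetric monoidal, check that its value on a one-element sequence $(x)$ is $(T\circ F)(x)$, and conclude by the uniqueness up to unique isomorphism of the symmetric monoidal extension. One small notational correction in your sketch: the reduction needed in that check is $F^e[\vec{y};(x)]\iso F[\vec{y};x]$ (singleton in the $S(\catX)$-slot), not $F^e[(y);x]$, and it follows from a single co-Yoneda step without invoking the multiplicativity of~$T^e$.
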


\begin{proof} 
The functor  $T^e\circ F^e\co S(\catX)\to \rigR$ is symmetric monoidal by Proposition~\ref{actionsymmetricmono},
since the distributor $F^e\co S(\catX)\mat S(\catY)$ is symmetric monoidal
and the $\catV$-functor $T^e\co  S(\catY)\to \rigR$  is symmetric monoidal.
Moreover, if $\vec{x}=(x)$ with $x\in \catX$,
then
\[
(T^e\circ F^e)((x)) = \int^{\vec{y}\in S(\catY)}  T^e(\vec{y}) \otimes F^e[ \vec{y};(x)] =
 \int^{\vec{y}\in S(\catY)}  T^e(\vec{y}) \otimes F[\vec{y}; x] =( T\circ F)(x) \, .
 \]
Hence the symmetric monoidal functor $T^e\circ F^e\co S(\catX)\to \rigR$ is an
extension of~$T\circ F\co \catX\to \rigR$.
This shows that $(T\circ F)^e \iso T^e\circ F^e$ by the uniqueness up to unique isomorphism of the extension.
\end{proof}

\begin{proposition}\label{assoactionSdist} Let $\catX, \catY, \catZ$ be small $\catV$-categories and $\rigR$ 
be a symmetric $\catV$-rig.
\begin{enumerate}[(i)]
\item  For all distributors $F\co \catX \sym \catY$, $G\co \catY\sym \catZ$ and $\catV$-functors $T\co  \catZ\to   \rigR$, 
$(T\circ G)\circ F \iso T\circ (G\circ F)$.
\item For every $\catV$-functor $T\co  \catX \to  \rigR$,  $T\circ \id_\catX \iso T$.
\end{enumerate} 
\end{proposition}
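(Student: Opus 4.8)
The plan is to follow the same strategy used for Proposition~\ref{contraactionprop}, reducing each statement to an isomorphism of symmetric monoidal extensions and then invoking the uniqueness (up to unique isomorphism) of such extensions. Recall from Section~\ref{sec:fresmc} that the restriction functor $\iota_\catX^\ast \co \SMCatV[S(\catX),\rigR] \to \CatV[\catX,\rigR]$ is an equivalence and that $\iota_\catX^\ast(H^e) \iso H$ for every $\catV$-functor $H \co \catX \to \rigR$; hence two $\catV$-functors $\catX \to \rigR$ are isomorphic as soon as their symmetric monoidal extensions are isomorphic as symmetric monoidal functors $S(\catX) \to \rigR$. So it suffices to prove, in part~(i), an isomorphism $((T\circ G)\circ F)^e \iso (T\circ(G\circ F))^e$, and in part~(ii), an isomorphism $(T\circ\id_\catX)^e \iso T^e$.

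For part~(i) I would assemble the chain
\[
((T\circ G)\circ F)^e \iso (T\circ G)^e\circ F^e \iso (T^e\circ G^e)\circ F^e \iso T^e\circ(G^e\circ F^e) \iso T^e\circ(G\circ F)^e \iso (T\circ(G\circ F))^e \, ,
\]
where the first, second and last isomorphisms are three applications of Lemma~\ref{actiononrighomomo} (to the $S$-distributor $F$ and the $\catV$-functor $T\circ G$; to $G$ and $T$; and to $G\circ F$ and $T$, respectively); the third isomorphism is the associativity isomorphism of Proposition~\ref{contraactionprop}(i) applied to the distributors $F^e \co S(\catX)\mat S(\catY)$, $G^e \co S(\catY)\mat S(\catZ)$ and the $\catV$-functor $T^e \co S(\catZ)\to\rigR$; and the fourth uses the defining isomorphism $G^e\circ F^e \iso (G\circ F)^e$ of the composition operation of $\SDistV$, transported along composition with $T^e$. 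Part~(ii) is the shorter chain $(T\circ\id_\catX)^e \iso T^e\circ(\id_\catX)^e \iso T^e\circ\id_{S(\catX)} \iso T^e$, using Lemma~\ref{actiononrighomomo}, the fact that the homomorphism $(-)^e \co \SDistV\to\SMDistV$ preserves identity morphisms, and Proposition~\ref{contraactionprop}(ii) (the unit law for composition with the identity distributor).

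The only real care needed is bookkeeping: one must keep track of which composites are composites of $S$-distributors, which are composites of ordinary distributors, and which are composites of a $\catV$-functor with a distributor (in the sense of~\eqref{equ:compdistfun} or~\eqref{equ:extrastartf}), so that each earlier result is cited in exactly the right form. I do not expect a genuine obstacle here: all the content is already packaged in Lemma~\ref{actiononrighomomo}, Proposition~\ref{contraactionprop} and the construction of $\SDistV$, so the argument is a formal chaining of known isomorphisms followed by the uniqueness of the symmetric monoidal extension.
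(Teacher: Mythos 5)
Your proposal is correct and follows essentially the same route as the paper: reduce each claim to an isomorphism of symmetric monoidal extensions (justified by the uniqueness of the extension along $\iota_\catX$), then chain Lemma~\ref{actiononrighomomo}, the associativity/unit statements of Proposition~\ref{contraactionprop} applied to $F^e$, $G^e$, $T^e$, and the defining isomorphism $(G\circ F)^e \iso G^e\circ F^e$. The only difference is that you spell out the final application of Lemma~\ref{actiononrighomomo} and the justification of the reduction step, which the paper leaves implicit.
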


\begin{proof} For part~(i), it suffices to show that $((T\circ G)\circ F)^e \iso (T\circ (G\circ F))^e$. But,
by Proposition~\ref{contraactionprop} and Lemma~\ref{actiononrighomomo}, we have 
 \begin{align*} 
 ((T\circ G)\circ F)^e & \iso (T\circ G)^e\circ F^e \\
  & \iso  (T^e\circ G^e)\circ F^e \\
  & \iso T^e\circ (G^e\circ F^e) \\
  & \iso T^e \circ (G\circ F)^e \, .
 \end{align*} 
 The proof of part (ii) is analogous.
 \end{proof}

\section{Symmetric sequences and analytic functors}
\label{sec:symsaf}

\begin{definition} Let $\catX, \catY$ be small  $\catV$-categories. A \emph{categorical symmetric sequence} $F \co \catX \to \catY$ is
an $S$-distributor from $\catY$ to $\catX$, \ie a $\catV$-functor $F \co S(\catX)^\op \otimes \catY \to \catV$.
\end{definition}

If $F \co \catX \to \catY$ is a categorical symmetric sequence, then for a symmetric $\catV$-rig $\rigR = (\rigR, \diamond, e)$ we  define 
its associated analytic functor $F \co \rigR^\catX\to \rigR^\catY$ by letting, for $T \in \rigR^\catX$ and~$y \in \catY$, 
\[
F(T)(y) \defeq \int^{\vec{x}\in S(\catX)} F[\vec{x};y] \otimes  T^{\vec{x}} \, , 
\]
where, for $\vec{x} = (x_1, \ldots, x_n) \in S(\catX)$, $T^{\vec{x}} \defeq  T^e(\vec{x}) = T(x_1) \diamond \ldots \diamond T(x_n)$. We 
represent the correspondence between categorical symmetric sequences and analytic functors as follows:
\[
\begin{prooftree}
F\co  \catX \sym \catY
\justifies
F \co \rigR^\catX\to \rigR^\catY \, .
\end{prooftree} 
\]

\begin{example} For $\catV = \rigR  = \Set$ and $\catX = \catY = \onecat$, where $\onecat$ is the terminal category, we obtain exactly the notion of an analytic functor introduced 
in~\cite{JoyalA:fonaes}. Indeed, a symmetric sequence~$F \co \onecat \sym \onecat$ is the same thing as a functor $F \co \Scat \to \Set$. Here, $\Scat = S(\onecat)$ is the category of natural numbers and
permutations. The analytic functor  $F \co \Set \to \Set$ associated to such a symmetric sequence has the following form:
\[
F(T) \defeq \int^{n \in \Scat} F[n] \otimes T^{n} \, .
\]
See~\cite{AguiarM:monfsh,BergeronF:comstls,JoyalA:thecsf} for applications of the theory of
analytic functors to combinatorics and~\cite{AdamekJ:anafwp} for recent work on categorical
aspects of the theory.
\end{example}

\begin{example} For $\catV = \rigR  = \Set$,
we obtain the notion of  analytic functor between categories of covariant presheaves\footnote{The analytic functors studied in~\cite{FioreM:carcbg} 
were between categories of presheaves, but we prefer to consider covariant preshaves to match our earlier definitions.} considered in~\cite{FioreM:carcbg}. In that
context, the analytic functor $F \co \Set^\catX \to \Set^\catY$ of a symmatric sequence
$F \co \catX \sym \catY$, \ie a functor $F \co S(\catX)^\op \otimes \catY \to \Set$, is obtained as a left Kan extension, fitting in the diagram
\[
\xymatrix{
S(\catX)^\op \ar@/_1pc/[dr]_{\lambda F} \ar[r]^{\sigma_\catX} & \Set^\catX \ar[d]^{F} \\
 & \Set^\catY \, ,}
 \]
where $\sigma_\catX \co S(\catX)^\op  \to \Set^\catX$ is the functor defined by letting
\[ 
\sigma_\catX(x_1, \ldots, x_n) \defeq  \bigsqcup_{1 \leq k \leq n } \catX[x_k, -] \, . 
\]
Hence, for $T \in \Set^\catX$ and $y \in \catY$, we have
\begin{align*}
F(T)(y) & = \int^{\vec{x}  \in S(\catX)} \lambda F(\vec{x})(y) \otimes \big[ \sigma_\catX(\vec{x}), T \big]  \\
 & = \bigsqcup_{n \in \mathbb{N}} \; 
 \int^{\vec{x} \in S^n(\catX)} F[\vec{x}; y] \otimes  \Big[   \bigsqcup_{1 \leq k \leq n} \catX[x_k, -] , T \Big]  \\
 & = \bigsqcup_{n \in \mathbb{N}} \; 
 \int^{\vec{x} \in S^n(\catX)} F[\vec{x}; y] \otimes  \bigsqcap_{1 \leq k \leq n} \pshX \big[    \catX[x_k, -] , T \big]  \\
 & = \bigsqcup_{n \in \mathbb{N}} \; 
 \int^{\vec{x} \in S^n(\catX)} F[\vec{x}; y] \otimes  \bigsqcap_{1 \leq k \leq n} T(x_k)   \\
 & = \int^{\vec{x} \in S(\catX)} F[\vec{x}; y] \otimes  T^{\vec{x}} \, , 
 \end{align*}
 where for $\vec{x} = (x_1, \ldots, x_n)$, we have $T^{\vec{x}} = T(x_1) \times \ldots \times T(x_n)$. Note how this construction
of analytic functors as a left Kan extension does not carry over to the enriched setting.
\end{example}

\medskip
 
Small $\catV$-categories, categorical symmetric sequences and~$\catV$-natural transformations form a bicategory, 
called the bicategory of categorical symmetric sequences and denoted by~$\CatSymV$. This bicategory is defined as the opposite of the bicategory~$\SDistV$ of
$S$-distributors:
\[
\CatSymV \defeq (\SDistV)^\op \, .
\]
In particular, for small $\catV$-categories $\catX$ and $\catY$, we have
\[
\CatSymV[\catX ,\catY ] =  \SDistV[\catY,\catX] = \DistV[\catY, S(\catX)] = \CATV[S(\catX)^\op \otimes \catY, \catV] \, .
\]

\begin{remark}[The bicategory of symmetric sequences]
We write $\SymV$ for the full sub-bicategory of $\CatSymV$ spanned by sets, viewed as discrete $\catV$-categories. This
bicategory can be defined simply as
\[
\SymV \defeq (\SMatV)^\op \, ,
\]
where $\SMatV$ is  full sub-bicategory of $\SDistV$ spanned by sets, as defined 
in Remark~\ref{thm:Smatrices}. We unfold some definitions since they will be useful in Section~\ref{sec:regbss}. 
The objects of $\SymV$ are sets and the hom-category between two sets~$X$ and~$Y$ is defined by
\[
\SymV[X,Y] \defeq \SMatV[Y,X] = \Cat[S(X)^\op \times Y, \catV] \, .
\]
For sets $X$ and $Y$, we define a \emph{symmetric sequence} $F \co X \to Y$  to be a functor $F \co S(X)^\op \times Y \to~\catV$.
For a symmetric $\catV$-rig $\rigR$, the extension $F \co \rigR^X \to \rigR^Y$ of such a symmetric sequence is given by
\begin{equation}
\label{equ:anasym}
F(T)(y) \defeq  \bigsqcup_{n \in \Nat} \int^{(x_1, \ldots, x_n) \in S^n(X)} 
F[x_1, \ldots, x_n; y] \otimes T(x_1) \otimes \ldots \otimes T(x_n) \, .
\end{equation}
It will be useful to have also an explicit description of the composition operation in $\SymV$. For sets $X, Y, Z$ and symmetric sequences
$F \co X \to Y$, $G \co Y \to Z$, their composite $G \circ F \co X \to Z$ is given by 
\begin{multline}
\label{equ:symcomp}
(G  \circ F)(\vec{x}; z) \defeq   \bigsqcup_{m \in \Nat} \int^{(y_1, \ldots, y_m) \in S^m(Y)} G[y_1, \ldots, y_m; z] \, \otimes  \\
\coend^{\vec{x}_1 \in S(X)} \cdots \coend^{\vec{x}_m \in S(X)}  
[\vec{x}, \vec{x}_1 \oplus \ldots \oplus \vec{x}_m] \otimes   F[\vec{x}_1; y_1] \otimes \cdots \otimes F[\vec{x}_m; y_m] \, .
\end{multline}
For a set $X$, the identity symmetric sequence $\Id_X \co X \to X$ is defined by letting
\begin{equation}
\label{equ:symid}
\Id_X[ \vec{x}; x] = 
\left\{
\begin{array}{ll}
I &  \text{ if } \vec{x} = (x)  \, , \\
0 & \text{ otherwise. }
\end{array}
\right.
\end{equation}
\end{remark}

\begin{example} \label{thm:keyyremark}
Let $\Scat = S(1)$, the category of natural numbers and permutations. The monoidal structure on $\SymV[1,1] \iso [\Scat^\op, \catV]$ given by
the horizontal composition in~$\SymV$ as defined in~\eqref{equ:symcomp} is exactly the substitution monoidal structure discussed in the 
introduction, which characterizes the notion of a single-sorted operad, in the sense that
monoids in $[\Scat^\op, \catV]$ with respect to this monoidal structure are exactly single-sorted operads~\cite{KellyGM:opejpm}. Indeed, as a special case of the formula 
in~\eqref{equ:symcomp}, we get
\[
(G \circ F)[n]  = \int^{m \in \Scat} G[m] \, \otimes  
\coend^{n_1 \in \Scat} \cdots \coend^{n_m \in \Scat}  
[n, n_1 + \ldots n_m] \otimes   F[ n_1]  \otimes \cdots \otimes F[n_m]  \, .
\]
Similarly, the unit $J$ for the substitution tensor product is given by a special case of the formula in~\eqref{equ:symid}:
\[
J[n]  = 
\left\{
\begin{array}{ll}
I &  \text{ if } n = 1  \, , \\
0 & \text{ otherwise. }
\end{array}
\right.
\]
We will see in Section~\ref{sec:monmb} that the horizontal composition of $\SymV$ can be used to characterize the notion of an operad.
\end{example}

We conclude this section by relating the composition of categorical symmetric sequences with the composition of analytic functors. In particular, it shows how the analytic functor associated to the composite of two categorical symmetric sequences is isomorphic
to the composites of the analytic functors associated to the categorical symmetric sequences. We also show
that the analytic functor associated to the identity categorical symmetric sequence is naturally isomorphic to the identity functor. This generalizes
Theorem~3.2 in~\cite{FioreM:carcbg} to the enriched setting.

\begin{theorem} \label{thm:anafuncomp} Let $\catX, \catY, \catZ$ be small $\catV$-categories and $\rigR$ 
be a symmetric $\catV$-rig.
\begin{enumerate}[(i)]
\item For every pair of categorical symmetric sequences $F \co  \catX \sym \catY$ and $G \co  \catY \sym \catZ$, there is a natural isomorphism with components
\[
(G\circ F)(T)  \iso G (F (T)) \, .
\]
\item There is a natural isomorphism with components
\[
 \Id_\catX(T) \iso T \, .
\]
\end{enumerate}
\end{theorem}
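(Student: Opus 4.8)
The statement is essentially a transfer of Theorem~3.2 of~\cite{FioreM:carcbg} to the enriched setting, and the key observation is that it follows almost formally from the compatibility results about composition of $S$-distributors with $\catV$-functors already established in Section~\ref{sec:sdist}. Recall that a categorical symmetric sequence $F \co \catX \sym \catY$ \emph{is} an $S$-distributor, and that for a symmetric $\catV$-rig $\rigR$ we have, for $T \in \rigR^\catX$ regarded as a $\catV$-functor $T \co \catX \to \rigR$,
\[
F(T)(y) = \int^{\vec{x} \in S(\catX)} F[\vec{x};y] \otimes T^{\vec{x}} = \int^{\vec{x} \in S(\catX)} T^e(\vec{x}) \otimes F[\vec{x};y] = (T \circ F)(y) \, ,
\]
using $T^{\vec{x}} = T^e(\vec{x})$ and the definition of composition of an $S$-distributor with a functor from~\eqref{equ:extrastartf}. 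Thus the analytic functor associated to $F$ is literally the operation $T \mapsto T \circ F$, and the theorem becomes a statement about this composition operation, namely that $(T \circ G) \circ F \iso T \circ (G \circ F)$ and $T \circ \id_\catX \iso T$.

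The plan is therefore as follows. First I would record the identification $F(T) \iso T \circ F$ displayed above, noting that it is natural in $T$ since both sides are given by the same coend formula. For part~(i), given categorical symmetric sequences $F \co \catX \sym \catY$ and $G \co \catY \sym \catZ$ and $T \in \rigR^\catX$, I would compute
\[
(G \circ F)(T) \iso T \circ (G \circ F) \iso (T \circ G) \circ F \iso (G(T)) \circ F \iso \cdots
\]
wait---more carefully: $(G \circ F)(T) \iso T \circ (G \circ F)$, and then by Proposition~\ref{assoactionSdist}(i) this is isomorphic to $(T \circ G) \circ F$; but $T \circ G = G(T)$ by the identification, so this equals $(G(T)) \circ F = F(G(T))$. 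Hmm, the variance needs care: here $G(T) \in \rigR^\catY$ and then applying $F$ requires $F \co \rigR^\catX \to \rigR^\catY$, so in fact the correct reading is $(G \circ F)(T) \iso G(F(T))$ with $F(T) \in \rigR^\catY$ and $G \co \rigR^\catY \to \rigR^\catZ$. Tracing it: $(G \circ F)(T) \iso T \circ (G \circ F)$; by associativity (Proposition~\ref{assoactionSdist}(i)) this is $\iso (T \circ F) \circ G$; since $T \circ F \iso F(T)$, this is $\iso (F(T)) \circ G \iso G(F(T))$, as required. For part~(ii), I would invoke Proposition~\ref{assoactionSdist}(ii): $\Id_\catX(T) \iso T \circ \id_\catX \iso T$.

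The main work---and the only place requiring genuine verification rather than citation---is confirming that the coend appearing in the definition of $F(T)$ really coincides with the one in~\eqref{equ:extrastartf} defining $T \circ F$, i.e. that the notational conventions match; this is immediate once one recalls $T^{\vec{x}} \defeq T^e(\vec{x})$. There is no serious obstacle here: all the substantive content (functoriality and coherent associativity of composition of $S$-distributors with $\catV$-functors, and the unit law) has already been proved in Proposition~\ref{assoactionSdist}, which in turn rests on Lemma~\ref{actiononrighomomo} and Proposition~\ref{contraactionprop}. The one point of care is bookkeeping the variances correctly in the chain of isomorphisms for part~(i), as indicated above; I would write it out as a short display of isomorphisms with each step annotated by the result invoked.
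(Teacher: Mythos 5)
Your proposal is correct and is essentially the paper's own proof: the paper likewise first identifies $F(T)$ with the composite $T \circ F^\op$ of the underlying $S$-distributor with $T$ via the coend formula in~\eqref{equ:extrastartf}, and then deduces (i) and (ii) from parts (i) and (ii) of Proposition~\ref{assoactionSdist}. The only difference is cosmetic: the paper introduces the notation $F^\op$ for the $S$-distributor corresponding to a categorical symmetric sequence, which makes the variance bookkeeping you worried about explicit, but your final chain of isomorphisms coincides with the paper's.
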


\begin{proof}  For this proof, it is  convenient to have some auxiliary notation: for a categorical symmetric sequence $F \co \catX \to \catY$,
we write $F^\op \co \catY \sym \catX$ for the corresponding $S$-distributor.
For part (i), let $T\co \catX \to \rigR$. We begin by showing
that~$F(T) = T \circ F^\op$, where $T \circ F^\op$ denotes the composite of the $S$-distributor $F^\op \co \catY \sym \catX$ with
the $\catV$-functor $T \co \catX \to \rigR$, defined in~\eqref{equ:extrastartf}. For~$y \in \catY$, we have
\begin{align*} 
F (T)(y) & = \int^{\vec{x}\in S(\catX)}  F[\vec{x}; y] \otimes T^{\vec{x}} \\
&  =  \int^{\vec{x}\in S(\catX)}  F[\vec{x}; y] \otimes  T^e(\vec{x}) \\
& = (T \circ F^\op)(y)  \, . \phantom{\int^{\vec{z}}}
\end{align*} 
It then follows by part~(i) of Proposition~\ref{assoactionSdist} that we have
\begin{align*} 
(G\circ F)(T) & =   T \circ (G \circ F)^\op \\
 & =  T  \circ (F^\op \circ G^\op) \\ 
 & \iso (T \circ F^\op) \circ G^\op  \\
  & =  F(T) \circ G^\op \\
  & = G (F (T))\, .
  \end{align*}
  For part (ii), if $T\co \catX\to \rigR$, then by part (ii) of Proposition~\ref{assoactionSdist} we have
  \begin{align*} 
  \id_{\catX} (T) & =  T \circ \Id_\catX^\op \\
  & \iso T  \, . \qedhere
  \end{align*} 
\end{proof}

\section{Cartesian closure of categorical symmetric sequences}
\label{sec:carcsc}

We conclude this chapter with our first main result, asserting that that the bicategory of categorical symmetric sequences is cartesian closed. This  generalizes to the enriched setting the main result of~\cite{FioreM:carcbg}. The existence of products
in~$\CatSymV$ follows easily by our earlier results, but we state the result explicitly for emphasis.

\begin{proposition} \label{thm:anafprod}
The bicategory $ \CatSymV$ has finite products. In particular, the product of 
two small $\catV$-categories $\catX$ and $\catY$ in $\CatSymV$ is given by 
their coproduct $\catX \sqcup \catY$ in $\CatV$.
\end{proposition}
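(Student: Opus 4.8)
The plan is to verify directly that $\catX \sqcup \catY$, together with suitable projection morphisms, satisfies the universal property of the cartesian product in the bicategory $\CatSymV$, as spelled out in the review of cartesian bicategories. Since $\CatSymV = (\SDistV)^\op$ by definition, this amounts to showing that $\catX \sqcup \catY$ is a \emph{coproduct} of $\catX$ and $\catY$ in $\SDistV$; but Proposition~\ref{coprodSdist} already tells us precisely that $\SDistV$ has finite coproducts, with the coproduct of $\catX$ and $\catY$ given by $\catX \sqcup \catY$. So the substance of the proof is just the translation between the two statements.

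Concretely, I would proceed as follows. First, take the inclusions $\iota_k \co \catX_k \to \catX_1 \sqcup \catX_2$ in $\CatV$ and form the $S$-distributors $\delta((\iota_k)_\bullet) \co \catX_k \sym \catX_1 \sqcup \catX_2$, which by Proposition~\ref{coprodSdist} exhibit $\catX_1 \sqcup \catX_2$ as the coproduct in $\SDistV$. Dualizing, these become categorical symmetric sequences $\pi_k \co \catX_1 \sqcup \catX_2 \to \catX_k$, which I take as the projections. Then for any small $\catV$-category $\catC$, the statement that the $\pi_k$ exhibit $\catX_1 \sqcup \catX_2$ as a coproduct in $\SDistV$ means exactly that the functor
\[
\SDistV[\catX_1 \sqcup \catX_2, \catC] \to \SDistV[\catX_1, \catC] \times \SDistV[\catX_2, \catC]
\]
is an equivalence of categories; reversing the direction of the $1$-cells, this says that
\[
\pi \co \CatSymV[\catC, \catX_1 \sqcup \catX_2] \to \CatSymV[\catC, \catX_1] \times \CatSymV[\catC, \catX_2]
\]
given by $\pi(F) = (\pi_1 \circ F, \pi_2 \circ F)$ is an equivalence, which is the defining condition for $\catX_1 \sqcup \catX_2$ to be the cartesian product in $\CatSymV$.

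Finally, I would note that $\CatSymV$ has a terminal object: the initial object of $\SDistV$ (equivalently, of $\CatV$, namely the empty $\catV$-category $\zerocat$, which is initial in $\SDistV$ since both $\delta$ and $(-)^e$ preserve finite coproducts and hence the empty coproduct) becomes terminal in the opposite bicategory, and $\CatSymV[\catC, \zerocat] = \SDistV[\zerocat, \catC] = \DistV[\zerocat, S(\catC)]$, which is equivalent to the terminal category since $S(\zerocat)$ is again (equivalent to) the empty $\catV$-category and distributors out of the empty $\catV$-category form a terminal category. Combining this with the existence of binary products from the previous paragraph gives that $\CatSymV$ is cartesian. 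I do not expect any serious obstacle here: the only thing to be careful about is correctly tracking the variance when passing to the opposite bicategory, so that ``coproduct in $\SDistV$'' really does turn into ``product in $\CatSymV = (\SDistV)^\op$'' with the projections pointing the right way. Everything else is an immediate consequence of Proposition~\ref{coprodSdist}.
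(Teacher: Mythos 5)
Your argument is precisely the paper's: Proposition~\ref{coprodSdist} gives finite coproducts in $\SDistV$ (computed as coproducts in $\CatV$), and since $\CatSymV = (\SDistV)^\op$ these dualize to finite products, which is literally the paper's one-line proof, with your projections and universal-property check just unpacking that statement. One harmless slip: $S(\zerocat)$ is not the empty $\catV$-category but the unit one (it contains the empty sequence); this does not matter, since $\CatSymV[\catC,\zerocat] = \CATV[S(\catC)^\op \otimes \zerocat, \catV]$ is terminal simply because $\zerocat$ itself is empty, i.e.\ by the nullary case of Proposition~\ref{coprodSdist}.
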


\begin{proof} This follows by Proposition~\ref{coprodSdist} by duality, since the bicategory $\SDistV$ has finite coproducts 
and these are given by coproducts in $\CatV$.
\end{proof}

As done in~\cite{FioreM:carcbg} in the case $\catV = \Set$, the fact that coproducts in  $\CatSymV$ are given by coproducts in $\CatV$ can be seen intuitively by the following chain of equivalences:
\begin{align*}
\CatSymV[ \catZ, \catX] \times \CatSymV[\catZ, \catY]  & =  
\SDistV[\catX, \catZ] \times \SDistV[\catY, \catZ] \\
& = \DistV[ \catX, S(\catZ)] \times \DistV[\catY, S(\catZ)] \\
& \simeq  \DistV[ \catX \sqcup \catY, S(\catZ)]  \\
 & = \SDistV[ \catX \sqcup \catY, \catZ] \\
 & = \CatSymV[ \catZ, \catX \sqcup \catY] \, .
\end{align*}
We now consider the definition of exponentials in~$\CatSymV$.
For small $\catV$-categories~$\catX$ and~$\catY$, 
let us define
\[
[\catX, \catY] \defeq  S(\catX)^{\op} \otimes \catY \, .
\]
Then for every a small $\catV$-category $\catZ$,
 we have
 \begin{eqnarray*}
 \CatSymV[\catZ, [\catX, \catY] ] & = &\SDistV[S(\catX)^{\op} \otimes \catY, \catZ] \\
 & = &\DistV[S(\catX)^{\op} \otimes \catY, S(\catZ)] \\
  & = &\CatV[S(\catZ)^\op\otimes S(\catX)^{\op} \otimes \catY, \catV] \\
    & = &\DistV[\catY, S(\catZ)\otimes S(\catX) ] \\
    & \simeq & \DistV[ \catY, S(\catZ \sqcup \catX) ] \\
    & = & \SDistV[ \catY, \catZ \sqcup \catX] \\
    & = &  \CatSymV[ \catZ \sqcap \catX, \catY]    \, .
     \end{eqnarray*}

Let us consider the effect of this chain of equivalences on a categorical symmetric sequence $F \co  \catZ \to [\catX, \catY]$, which is
is a distributor $F\co  \catY \mat S(\catZ)\otimes S(\catX) $.
Let $c = c_{\catZ, \catX} \co S(\catZ) \tensorvcat S(\catX) \mat S( \catZ\sqcup \catX)$ be the functor in~\eqref{equ:cfunctordef}, which
in this case is given by $c(\vec{z} \tensorvcat \vec{x}) \defeq \vec{z}\oplus \vec{x}$.
The distributor $c_\bullet\co S(\catZ) \tensorvcat S(\catX) \to S( \catZ\sqcup \catX)$
is an equivalence, since the functor $c$ is an equivalence, as stated in Proposition~\ref{thefunctorc}. We then have
 \[
\begin{prooftree} 
\[ 
\[ 
\[ 
F  \co  \catZ   \to  [\catX, \catY]  \text{ in } \CatSymV
\justifies
F\co  \catY \mat S(\catZ)\otimes S(\catX)  \text{ in } \DistV
\] 
 \justifies
 c_\bullet \circ F\co    \objY  \mat  S(\catZ\sqcup \catX)   \text{ in } \DistV
\] 
  \justifies
  c_\bullet \circ F \co  \objY   \sym \catZ\sqcup \catX  \text{ in } \SDistV
\] 
  \justifies
  c_\bullet \circ F  \co  \catZ \sqcap \catX  \to \catY   \text{ in }  \CatSymV \, .
  \bigskip
\end{prooftree} 
\]

\smallskip

\noindent
By considering the particular case of $\catZ =[\catX,\catY]$ we define the categorical symmetric sequence
\[
\ev  \co   [\catX ,\catY ]\sqcap  \catX \to \catY  \, ,
\]
by letting $\ev \defeq c_\bullet \circ \Id$,  where $\Id  \co [\catX,\catY]  \to [\catX,\catY] $ is the identity categorical symmetric sequence on $[\catX,\catY]$. By definition, we have
\[
\ev =  (c_\bullet \circ \Id) \co   \catY \sym \big( S(\catX)^\op \otimes \catY \big) \sqcup  \catX  \, 
\]
where $\Id \co S(\catX)^{\op} \otimes \catY \sym S(\catX)^{\op} \otimes \catY$ is the identity $S$-distributor, as in~\eqref{equ:idSdist}, which in
this case is given by the distributor $\Id\co S(\catX)^{\op} \otimes \catY \mat S(S(\catX)^{\op} \otimes \catY)$ defined by
\[
\Id(\vec{v},\vec{x}^{\op}\otimes  y)=\big[  \vec{v}, \vec{x}^{\op} \otimes y \big] \defeq   
S(S(\catX)^{\op} \otimes \catY)[ \vec{v}, \vec{x}^{\op} \otimes y] \, ,
\]
for $\vec{v}\in S(S(\catX)^{\op} \otimes \catY)$, $\vec{x}\in S(\catX)$ and $y\in \catY$. Hence, 
\begin{equation}
\label{equ:defevalu}
\ev[\vec{w}; y]  =  \int^{\vec{v}\in   S(S(\catX)^{\op} \otimes \catY) }
 \int^{\vec{x}\in  S(\catX)} 
\big[\vec{w}, \vec{v}\oplus \vec{x}]\otimes \big[ \vec{v}, \vec{x}^{\op} \otimes y\big]   =   \int^{\vec{x}\in  S(\catX)}  
  [\vec{w}, (\vec{x}^{\op} \otimes y) \oplus \vec{x}\big]  \, , 
\end{equation}
for $\vec{w}\in S(S(\catX)^\op\otimes \catY \sqcup  \catX)$
and $y\in \catY$. 

\medskip

We can now state and prove our first main result, which generalizes to the enriched case the main result in~\cite{FioreM:carcbg}.

\begin{theorem} \label{thm:smondistiscc}
 The bicategory $ \CatSymV$ is cartesian closed. 
More precisely, the analytic functor $\ev \co [\catX,\catY] \sqcap \catX \to  
\catY$ exhibits  the exponential of $\catY$ by $\catX$.
\end{theorem}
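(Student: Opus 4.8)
The plan is to verify that for every small $\catV$-category $\catK$, the functor
\[
\CatSymV[\catK, [\catX,\catY]] \to \CatSymV[\catK \sqcap \catX, \catY]
\]
induced by first taking the product with $\catX$ and then composing with $\ev$ is an equivalence of categories, and moreover that it agrees (up to natural isomorphism) with the chain of equivalences
\[
\CatSymV[\catK, [\catX,\catY]] \simeq \DistV[\catY, S(\catK) \otimes S(\catX)] \simeq \DistV[\catY, S(\catK \sqcup \catX)] = \CatSymV[\catK \sqcap \catX, \catY]
\]
displayed immediately before the statement. Since each step of that chain is an equivalence (using Proposition~\ref{thefunctorc} for the middle step and the definitional unfoldings of $\CatSymV$, $\SDistV$ and $\DistV$ for the others), establishing this agreement is enough. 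The key point is that the chain of equivalences sends a categorical symmetric sequence $F \co \catK \to [\catX, \catY]$ to $c_\bullet \circ F$, where $c = c_{\catK, \catX}$, while the functor in the definition of exponential sends $F$ to $\ev \circ (F \sqcap \catX)$; so the crux is the natural isomorphism
\[
\ev \circ (F \sqcap \catX) \iso c_\bullet \circ F \, .
\]

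First I would record the explicit formula for $\ev$ from~\eqref{equ:defevalu}, and the explicit formula for the product homomorphism $(-) \sqcap \catX$ on $1$-cells, which by duality is the coproduct homomorphism $(-) \sqcup_S (-)$ on $S$-distributors described in Proposition~\ref{lem:multiplicative2}; concretely $F \sqcap \catX$ is the $S$-distributor $F^{\op} \sqcup_S \id_\catX$ (read in $\SDistV$), whose symmetric monoidal extension factorizes a tensor product as in Proposition~\ref{lem:multiplicative2}. Then I would compute the composite $\ev \circ (F \sqcap \catX)$ in $\SDistV$ using the composition formula~\eqref{equ:Sdistcomp} together with the extension formula~\eqref{equ:fprime2}, and the factorization
\[
(F^{\op} \sqcup_S \id_\catX)^e[\vec{k}\oplus\vec{x}; \vec{w}] \iso (F^{\op})^e[\vec{k};\vec{w}_1] \otimes (\id_\catX)^e[\vec{x};\vec{w}_2]
\]
coming from Proposition~\ref{lem:multiplicative2}, where the second factor is a hom-object of $S(\catX)$ by~\eqref{equ:idSdist}. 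After a coend manipulation — plugging in the formula~\eqref{equ:defevalu} for $\ev$, using the co-Yoneda lemma to eliminate the variables coming from $S(S(\catX)^{\op}\otimes\catY)$ and from the identity $S$-distributor on $\catX$ — the composite should collapse to exactly the coend defining $c_\bullet \circ F$, where $c_\bullet[\vec{k}\oplus\vec{x}, \vec{z}] = S(\catK\sqcup\catX)[\vec{k}\oplus\vec{x}, c(\vec{z})]$.

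The step I expect to be the main obstacle is this coend computation: keeping careful track of how the symmetric group actions in $S(\catK)$, $S(\catX)$ and $S(\catK\sqcup\catX)$ interact under the factorization of Proposition~\ref{lem:multiplicative2} and under $c$, and making sure the co-Yoneda reductions apply cleanly rather than leaving spurious summands. An alternative, less computational route — which I would fall back on if the direct calculation becomes unwieldy — is to observe that the chain of equivalences before the theorem is, by construction, natural in $\catK$ (each arrow being a composite of canonical equivalences of hom-categories), so it constitutes a pseudo-natural equivalence of prestacks $\CatSymV[-, [\catX,\catY]] \simeq \CatSymV[(-)\sqcap \catX, \catY]$; by the bicategorical Yoneda lemma such an equivalence is determined by its value at $\catK = [\catX,\catY]$ on the identity, and that value is by definition $\ev = c_\bullet \circ \Id$. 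This shows that $([\catX,\catY], \ev)$ represents the prestack $\CatSymV[(-)\sqcap\catX, \catY]$, which is exactly the defining condition for the exponential, completing the proof.
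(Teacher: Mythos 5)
Your proposal is correct and follows essentially the same route as the paper: the paper's proof also reduces the statement to the identity $\ev \circ (F \sqcap \catX) \iso c_\bullet \circ F$ (checked by composing with $c^\bullet$ and collapsing the coend using the formula~\eqref{equ:defevalu} for $\ev$ together with Proposition~\ref{lem:multiplicative2}), and then concludes from the fact that $c_\bullet$ is an equivalence by Proposition~\ref{thefunctorc}. Your fallback Yoneda-style argument is a reasonable alternative packaging, but the computational route you outline first is exactly what the paper does.
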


\begin{proof}  
We have to show that the functor 
\[
\varepsilon\co   \CatSymV[\catZ, [\catX,\catY] ] \to   \CatSymV[\catZ \sqcap \catX, \catY]
\]
defined by letting $\varepsilon(F) \defeq \ev \circ (F \sqcap \catX)$
is an equivalence of categories for every small $\catV$-category~$\catZ$.
By duality, this amounts to showing that the functor 
\[
\varepsilon\co   \SDistV[S(\catX)^{\op}\otimes \catY, \catZ] \to   \SDistV[\cat Y, \catZ\sqcup \catX]
\]
defined by letting $\varepsilon(F) \defeq (F\sqcup \catX)\circ \ev$
is an equivalence of categories for every small $\catV$-category~$\catZ$.
Notice that $\varepsilon$ has the form
\[
\varepsilon\co    \DistV[\catY , S(\catZ)\otimes S(\catX)]  \to   \DistV[\catY, S(\catZ\sqcup \catX)]
\]
since 
\[
 \SDistV[S(\catX)^{\op}\otimes \catY, \catZ] =\CatV[S(\catZ)^\op \otimes S(\catX)^{\op}\otimes \catY, \catV]= \DistV[\catY , S(\catZ)\otimes S(\catX)]  \, .
 \]
If $c\co S(\catZ) \tensorvcat S(\catX) \to S( \catZ\sqcup \catX)$
is the functor in~\eqref{equ:cfunctordef}, which in this case is defined by letting $c(\vec{z}\otimes \vec{x}) \defeq \vec{z}\oplus \vec{x}$,
let us show that we have $\varepsilon(F)=c_\bullet \circ F$
\[
\xymatrix{
\catY \ar[rr]^F \ar@/_1pc/[rrd]_{\varepsilon(F)} && S(\catZ)\otimes S(\catX)\ar[d]^{c_\bullet}\\
&& S(\catZ \sqcup \catX) \, . }
\]
Observe that we have $ c_\bullet \circ c^\bullet \iso \id_{S(\catZ) \otimes S(\catX)}$, since the functor $c$
is an equivalence of categories by Proposition~\ref{thefunctorc}.
Hence it suffices to show that we have  $c^\bullet  \circ \varepsilon(F)=F$, as in the following diagram of distibutors:
\[
\xymatrix{
\catY \ar[rr]^F \ar@/_1pc/[rrd]_{\varepsilon(F)} && S(\catZ)\otimes S(\catX) \\
&& S(\catZ \sqcup \catX) \ar[u]_{c^\bullet} \, . }
\]
In other words, it suffice to show that for $\vec{z}\in  S(\catZ)$, $ \vec{x}\in  S(\catX)$ and $y\in \catY$ we have
\[
\varepsilon(F)[ \vec{z}\oplus \vec{x};y] =F[\vec{z} \otimes \vec{x};y]  \, .
\]
By definition, $\varepsilon(F)$ is a composite
of $S$-distributors:
\[
\xymatrix{
\catY \ar[rr]^-{\ev} \ar@/_1pc/[rrd]_-{\varepsilon(F)} && \big( S(\catX)^\op\otimes \catY \big) \sqcup \catX \ar[d]^{F\sqcup \catX} \\
&& \catZ \sqcup \catX \, .  }
\]
 Thus,
 \[
 \varepsilon(F)[ \vec{z}\oplus \vec{x};y] =
\int^{\vec{w} \in S(S(\catX)^{\op}\otimes \catY\sqcup \catX) } 
(F\sqcup \catX)^e[ \vec{z}\oplus \vec{x}, \vec{w}]  \otimes \ev(\vec{w};y) \, .
\]
 It then follows from~\eqref{equ:defevalu} that
  \begin{eqnarray*}
\varepsilon(F)(\vec{z}\oplus \vec{x};y) &=& \int^{\vec{w}\in S (\catZ^\catY \sqcup \catY) }
  \int^{\vec{v}\in  S(\catY)}  
(F\sqcup \catY)^e[ \vec{z}\oplus \vec{x}, \vec{w}] \otimes   [\vec{w}, (\vec{v}^{\op} \otimes y) \oplus \vec{v}\big]\\
&=& 
 \int^{\vec{v}\in  S(\catY)}  
 (F\sqcup \catY)^e[ \vec{z}\oplus \vec{x},  (\vec{v}^{\op} \otimes y) \oplus \vec{v}]  \, .
  \end{eqnarray*}
  But we have 
   $(F\sqcup \catY)^e[ \vec{z}\oplus \vec{x},  (\vec{v}^{\op} \otimes y) \oplus \vec{v}] = F^e[ \vec{z}, \vec{v}^{\op} \otimes y]   \otimes  [\vec{x},\vec{v}]  $
  by Proposition~\ref{lem:multiplicative2}. Thus, 
  \begin{eqnarray*}
 \varepsilon(F)(\vec{z}\oplus \vec{x};y) &=& 
 \int^{\vec{v}\in  S(\catY)}   F^e[ \vec{z}, \vec{v}^{\op} \otimes y]  \otimes  [\vec{x},\vec{v}] \\
 &=&  F^e[ \vec{z}, \vec{x}^{\op} \otimes y]   \\
  &=& F^e[ \vec{z} \otimes \vec{x}, y]  \\
  &=& 
F(\vec{z} \otimes \vec{x}, y) \, .
  \end{eqnarray*}
This proves that $\varepsilon(F)=c_\bullet \circ F$. 
Hence the functor mapping $F$ to $\varepsilon(F)$
is an equivalence, as required, since the distributor $c_\bullet$ is an equivalence.
\end{proof}

\chapter{The bicategory of operad bimodules}
\label{cha:bicob}

The aim of this chapter is to define the bicategory of operad bimodules, which we denote by~$\OpdV$. The first step to do this 
is to identify operads, operad bimodules and operad bimodules maps as 
monads, monad bimodules and bimodule maps in the bicategory of symmetric sequences~$\SymV$
introduced in Chapter~\ref{cha:syms}. 
The second step is to apply to $\SymV$ the so-called bimodule construction~\cite{StreetR:enrcc}, which takes a 
bicategory $\bcatE$ satisying appropriate assumptions and produces a bicategory $\bimE$ with monads, 
monad bimodules and bimodule maps in $\bcatE$ as 0-cells, 1-cells and 2-cells, respectively. The appropriate 
assumptions on $\bcatE$ to perform this construction are that its hom-categories  have reflexive coequalizers and the composition 
functors preserve coequalizers in each variable. Although there are important examples of such bicategories,  they do not 
seem to have been isolated with specific terminology; we will call them \emph{tame}. Thus, we are led to establish that $\SymV$ is tame, a fact that allows
us to define the bicategory of operad bimodules by letting 
\[
\OpdV \defeq \Bim(\SymV) \, .
\]
We establish that $\SymV$ is tame as a consequence of the fact that the bicategory of $\CatSymV$ is tame, a result that will  
be useful also to prove that $\OpdV$ is cartesian closed in Chapter~\ref{cha:carcob}

This chapter is organized as follows. Section~\ref{sec:monmb} recalls the notions of a monad, monad bimodule and 
bimodule map in a bicategory, and shows how operads, operad bimodules and operad bimodule maps are instances of 
these notions. Section~\ref{sec:regbbb} recalls the bimodule construction and establishes some elementary facts about
it. In Section~\ref{sec:monadmorphisms} relates, for a tame bicategory $\bcatE$, the bicategory of bimodules 
$\bimE$ with the bicategory of monads $\MndE$, defined as in~\cite{StreetR:fortm}. The results established
there are used in Section~\ref{sec:anaf} to provide examples of analytic functors and in Chapter~\ref{cha:carcob}
to prove that~$\OpdV$ is cartesian closed. Section~\ref{sec:regbss} is devoted to proving that $\SymV$ is
tame. We end the chapter in Section~\ref{sec:anaf} by introducing the analytic functor associated to an operad
bimodule and providing examples thereof.

\section{Monads, modules and bimodules}
\label{sec:monmb}

For this section, let $\bcatE$ be a fixed bicategory.

\begin{definition} \label{thm:monad} Let $\objX \in \bcatE$. 
\begin{enumerate}[(i)]
\item A \emph{monad} on $\objX$ is a triple $(A, \mu, \eta)$ consisting of a 
morphism $A \co  \objX \to \objX$, a 2-cell $\mu \co  A \circ A \rightarrow A$,
called the {\em multiplication} of the monad, and a 2-cell $\eta \co  1_\objX \rightarrow A$,
called the {\em unit}  of the monad, such that the following diagrams (expressing
associativity and unit axioms) commute:
\[
\xymatrix@C=1.5cm{
A\circ A\circ A \ar[r]^-{A \circ \mu} \ar[d]_{\mu\circ A} & A\circ A  \ar[d]^{\mu} \\
A\circ A  \ar[r]_{\mu} & A ,} \qquad
\xymatrix{
A \ar[r]^-{A \circ \eta} \ar@/_1pc/[dr]_{1_A} & A\circ A  \ar[d]^{\mu} & A \ar[l]_-{\eta \circ A} \ar@/^1pc/[dl]^{1_A} \\
 & \, A  .&  }
\]
\item Let $A = (A, \mu_A, \eta_A)$ and $B = (B, \mu_B, \eta_B)$ be monads on $\objX$. A \emph{monad map} from~$A$ to~$B$ is a 2-cell $\pi \co  A \to B$ 
such that the following diagrams commute:
\[
 \xymatrix@C=1.5cm{
  A \circ A \ar[d]_{\mu_A} \ar[r]^{\pi \circ \pi} & B \circ B \ar[d]^{\mu_B} \\
   A \ar[r]_{\pi} & B } \qquad
\xymatrix{
1_\objX \ar[r]^{\eta_A} \ar@/_1pc/[dr]_{\eta_B} & A \ar[d]^{\pi} \\
  & B }  
\]
\end{enumerate}
\end{definition}

For $\objX \in \bcatE$, we write $\Mon(\objX)$ for the category of monads on $\objX$ and monad maps. Sometimes we will use~$\mu$ and~$\eta$ for the multiplication and the unit of 
different monads, whenever the context does not lead to confusion. Note that the notion of a monad is self-dual, in the sense that a monad in~$\bcatE$ is the same thing as a monad in~$\bcatE^\op$. 
The category $\Mon(\objX)$ can be  defined equivalently as the category of monoids and monoid morphisms in the category~$\bcatE[\objX,\objX]$, 
considered as a monoidal
category with composition as tensor product and the identity morphism $1_\objX \co  \objX  \to \objX$ as unit. Hence, 
a homomorphism~$\Phi\co  \bcatE \to \bcatF$ sends a monad $A \co  \objX \to \objX$  to a monad $\Phi(A) \co  \Phi(\objX) \to \Phi(\objX)$, since 
it induces a monoidal functor $\Phi_{\objX, \objX} \co  \pcatE[\objX,\objX]\to \bcatF[\Phi(\objX),\Phi(\objX)]$.
Clearly, monads on a small $\catV$-category in the 2-category $\CatV$ are $\catV$-monads in the usual sense. We give some further examples below.

\begin{example}[Monoids as monads] For a monoidal category $\catC$, 
monads in the bicategory~$\Sigma(\catC)$ are monoids in $\catC$~\cite[Section~5.4.1]{BenabouJ:intb}. In particular, monads in
$\Sigma(\Ab)$ are rings~\cite[Section~VII.3]{MacLaneS:catwm}. 
\end{example}

\begin{example}[Categories as monads]
We recall from~\cite{BenabouJ:intb} that a monad on a set~$X$ in the bicategory of matrices $\MatV$ of Section~\ref{sec:dist} is the same thing as a $\catV$-category with $X$ as its set of objects. Indeed, if $A \co  X \mat X$ is a monad, we can define a $\catV$-category $\catX$ with $\Obj(\catX) = X$ 
by letting $\catX[x,y] \defeq A[x,y]$, for $x, y \in X$, since the matrix $A \co  X \mat X$ is a function~$A \co  X \times X \to \catV$. The composition operation and the identity morphisms of
$\catX$ are given by the multiplication and unit of the monad, since they have components of the form
\[
\mu_{x,z} \co  \bigsqcup_{y \in X} \catX[y,z] \times \catX[x,y] \to \catX[x,z] \, , \quad
\eta_x \co  I \to \catX[x,x] \, .
\]
The associativity and unit axioms for a monad, as stated in Definition~\ref{thm:monad}, then reduce to the associativity and unit axioms for the composition operation in a~$\catV$-category.
\end{example}

\begin{example}[Operads as monads] \label{thm:opdasmnd} 
A monad on a set $X$ in~$\SymV$ is the same thing as an operad (by which we 
mean a symmetric many-sorted $\catV$-operad, which is the same thing as a symmetric $\catV$-multicategory), 
with $X$ as its set of sorts (or set of objects). For $\catV = \Set$, this was shown in~\cite[Proposition~9]{BaezJ:higdan}.
We give an outline of the proof for a general $\catV$, which follows from an an immediate generalization of a result by Kelly~\cite{KellyGM:opejpm} recalled in Remark~\ref{thm:keyyremark}. Let $A \co  X \sym X$ be a monad in~$\SymV$, \ie a symmetric sequence $A \co  X \sym X$, given by a functor $A \co  S(X)^\op \times X \to \catV$, equipped with a multiplication and  a unit. We define an operad with set of objects $X$ as follows. 
First of all, for~$x_1, \ldots, x_n, x \in X$, the object of  operations with inputs of sorts~$x_1, \ldots, x_n$ and output of sort~$x$ to be $A[x_1,\ldots, x_n; x]$. 
The symmetric group actions required to have an operad follow from the functoriality of $A$, since we have a morphism
\[
 \sigma^* \co  \catX[x_{\sigma(1)}, \ldots, x_{\sigma(n)}; x ] \to \catX[ x_1, \ldots, x_n; x] 
 \]
for each each permutation $\sigma \in \Sigma_n$ and $(x_1, \ldots, x_n, x) \in S_n(X)^\op \times X$.  By the definition of the composition operation in $\SymV$,
as given in~\eqref{equ:symcomp},  the multiplication $\mu \co  A \circ A \to A$ amounts to having a family of morphisms
\[
\theta_{\vec{x}_1,  \ldots, \vec{x}_m, \vec{x}, x}  
\co   A[ \vec{x}; x]  \otimes   A[ \vec{x}_1; x_1]  \otimes \ldots \otimes  A[\vec{x}_m; x_m]   \to 
A[ \vec{x}_1 \oplus \ldots \oplus \vec{x}_m; x]  \, , 
\]
where $\vec{x} = (x_1, \ldots, x_m)$, which is  natural in  $\vec{x}, \vec{x}_1, \ldots, \vec{x}_m \in S(X)^\op $ and
satisfies the equivariance condition expressed by the 
commutativity of the following diagram:

\[
\begin{xy} 
(0,30)*+{A[ \vec{y}_\sigma ; z]  \otimes A[ \vec{x}_1; y_1]  \otimes \ldots \otimes A[ \vec{x}_m; y_m] } ="1"; 
(-40,0)*+{A[ \vec{y}_\sigma; z) \otimes A[ \vec{x}_{\sigma(1)}; y_{\sigma(1)}] \ldots \otimes A[ \vec{x}_{\sigma(m)}; y_{\sigma(m)}] }="2"; 
(-25,-30)*+{ A[\vec{x}_{\sigma(1)} \oplus \ldots \oplus \vec{x}_{\sigma(m)}; z] }="3"; 
(25,-30)*+{ A[\vec{x}_1 \oplus \ldots \oplus \vec{x}_m; z] \, , }="4"; 
(40,0)*+{A[ \vec{y};z] \otimes A[ \vec{x}_1; y_1]  \otimes \ldots \otimes A[ \vec{x}_m; y_m] }="5";
{\ar_{\iso}  "1";"2"};
{\ar^{\sigma^* \otimes 1} "1";"5"};
{\ar^{\theta} "5";"4"};
{\ar_{\theta} "2";"3"};
{\ar_{\langle \sigma \rangle^*} "3";"4"};
\end{xy}
\] 
where $\langle \sigma \rangle \co  \vec{x}_{\sigma(1)} \oplus \ldots \oplus \vec{x}_{\sigma(m)} \to \vec{x}_1 \oplus \ldots \oplus \vec{x}_m$ is the evident morphism in $S(X)^\op$ induced
by~$\sigma$. It is common to represent maps $f \co I \to A[x_1, \ldots, x_n; x]$ as corollas of the form
\[
\vcenter{\hbox{\xymatrix@C=2cm@R=1cm{
 \ar@{-}[drr]_(0.3){x_1} & \ar@{-}[dr]_(0.3){x_2}  & \ldots &  \ar@{-}[dl]_(0.5){x_{n-1}} &  \ar@{-}[dll]^(0.3){x_n}  \\
 & & *+[F] {f} \ar@{-}[d]^-{x}  & & \\
 & &  & &  }}}
\]
With this notation, the composition operation may be represented diagrammatically as a grafting operation. For example, the composite represented by the following grafting diagram
\[
\vcenter{\hbox{\xymatrix@C=1cm@R=1cm{
\ar@/1pc/@{-}[dr]_(0.3){x_{1,1}}  & \ar@{-}[d]_(0.3){x_{1,2}} &  \ar@{-}[dl]^-{x_{1,3}}  &    \ar@{-}[dr]_(0.3){x_{2, 1}} & &  \ar@{-}[dl]^(0.3){x_{2,2}} & &  \ar@{-}[d]_(0.3){ x_{3,1}  }  \\
            & *+[F] {f_1}   \ar@/_1pc/@{-}[drrr]_{x_1} &  & & *+[F] {f_2}  \ar@{-}[d]_{x_2} & &  & *+[F] {f_3}  \ar@/^1pc/@{-}[dlll]^{x_3}   \\
            &   &  & & *+[F] {g}\ar@{-}[d]^{x}   &  & & \\
            &   &  & &                                  & &  &   }}}
\]
is represented by
\[         
\vcenter{\hbox{\xymatrix@C=2cm@R=0.5cm{
 \ar@{-}[ddrr]_(0.3){x_{1,1}} &   \ar@{-}[ddr]^(0.3){x_{1,2}} &   \ar@{-}[dd]_(0.2){x_{1,3} } &    \ar@{-}[ddl]_(0.3){x_{2,1}}  &  \ar@{-}[ddll]_(0.3){x_{2,2}}  &   \ar@{-}[ddlll]^(0.3){x_{3,1}}  \\     
 & & & & &    \\
            &  &  *+[F] {g \circ (f_1, f_2, f_3) } \ar@{-}[dd]^-{x} &  & & \\
             & & & & &    \\
            & &   &  & }}}
 \]
 where $g \circ (f_1, f_2, f_3) = \theta(g, f_1, f_2, f_3)$.
By the definition of the identity symmetric sequence in~\eqref{equ:symid}, the unit $\eta \co  \Id_X \to A$ then amounts to having a  morphism $1_x \co  I \to A[ (x); x]$ for each~$x \in X$. These give the identity operations of the operad. The associativity and unit axioms for a monad then correspond to the associativity and unit axioms for  operads.

An operad $(X,A)$ determines, for every symmetric $\catV$-rig $\rigR$, a monad $A \co \rigR^X \to \rigR^X$, whose underlying functor is the analytic functor associated to the symmetric sequence $A \co X
\to X$, which is given by the formula
\[
 A(T)(x) \defeq \int^{\vec{x} \in S(X)} A[\vec{x}, x] \otimes T^{\vec{x}} \, . 
 \]
We write $\Alg_\rigR(A)$ for the category of algebras and algebra morphisms for this monad, which is related to the category $\catV^X$ by a monadic adjunction 
\[
\xymatrix@C=1.5cm{\Alg_\rigR(A) \ar@<-1ex>[r]_-{U} \ar@{}[r]|-{\bot} & \rigR^X \ar@<-1ex>[l]_-{F} \, .} 
\]
\end{example}

It should be mentioned here 
that characterizations of  several kinds of operads as monads in appropriate bicategories are also given in~\cite{LeinsterT:higohc}.
Our setting is different to that considered in~\cite{LeinsterT:higohc}, since we work with bicategories of distributors rather than bicategories of spans (see~\cite{CurienPL:opecdl} for a discussion of the two settings). 

\medskip

The next definition recalls the standard notion of a left module for a monad, \ie what is often called a generalized algebra~\cite{LackS:a2cc}.

\begin{definition}  Let $A \co  \objX \to \objX$ be a monad in $\bcatE$. Let $\objK \in \bcatE$. 
\begin{enumerate}[(i)]
\item A \emph{left $A$-module with domain $\objK$} is a morphism~$M \co  \objK \rightarrow \objX$ equipped with a left $A$-action, \ie a 2-cell $\lambda \co   A\circ M \rightarrow M$ such that the following diagrams commute:
\[
\xymatrix@C=1.5cm{
A \circ A \circ M \ar[r]^-{A \circ \lambda} \ar[d]_-{\mu \circ M } & A \circ M \ar[d]^{\lambda} \\
A \circ M  \ar[r]_{\lambda} & M, } \hspace{1cm}
\xymatrix@C=1cm{
M \ar[r]^-{\eta\circ M} \ar@/_1pc/[dr]_-{1_M}  & A\circ M  \ar[d]^{\lambda}  \\
 & \,  M \, .}
 \]
\item  If~$M$ and~$M'$ are left $A$-modules with domain $\objK$, then a \myemph{
left $A$-module map} from~$M$ to~$M'$ is a 2-cell $f\co  M \rightarrow M'$ such that the following diagram commutes:
\[
\xymatrix{
A \circ M   \ar[r]^-{A \circ f }  \ar[d]_{\lambda} & A \circ M'  \ar[d]^{\lambda'}  \\
M \ar[r]_{f} & \, M' \, . }
\]
\end{enumerate}
We write~$\bcatE[\objK,\objX]^A$ for the category of left $A$-modules with domain~$\objK$ and 
left $A$-module maps. 
\end{definition}

\begin{example}[Left modules in a  monoidal category] For a monoidal category $\catC = (\catC, \otimes, I)$, a left module over a monoid $A$, viewed as a monad in $\Sigma(\catC)$, is the same thing as an object $M \in \catC$ equipped with a left $A$-action, \ie   a morphism $\lambda \co  A \otimes M \to M$ satisfying associativity and unit axioms. In particular,  
we obtain the familiar notion of left modules over a ring when~$\catC = \Ab$.
\end{example}

\begin{example}[Left modules for categories] For a small $\catV$-category $\catX$, viewed as a monad on $X \defeq \Obj(\catX)$ in~$\MatV$, left $\catX$-modules are families of presheaves on $\catX$, \ie controvariant $\catV$-functors from $\catX$ to~$\catV$. Indeed, a left $\catX$-module $M$ with domain~$K$ is a matrix $M \co  K \mat \Obj(\catX)$, \ie a functor $M \co  \Obj(\catX) \times K \to \catV$, equipped with a natural transformation with components
\[
\lambda_{x, k} \co  \bigsqcup_{x' \in \Obj(\catX)} \catX[x,x']  \otimes M[ x',k] \to M[x,k]  \, , 
\]  
satisfying associativity and unit axioms. It is immediate to see that this is the same thing as a family of~$\catV$-functors $M_k \co  \catX^\op \to \catV$, for $k \in K$.
\end{example}

\begin{example}[Left modules for operads] Let us consider an operad, given as a monad $(X,A)$ in~$\SymV$. A left $A$-module with domain~$K$ consists of an
symmetric sequence $M \co  K \sym X$, \ie a functor $M \co  S(K)^\op \times X      \to           \catV$ equipped with a left $A$-action
$\lambda \co   A  \circ M \to M$. By the definition of the composition operation in~$\SymV$, as given in~\eqref{equ:symcomp}, such a left action amounts to having maps in $\catV$ of the form
\[
M[ \vec{k}_1; x_1]  \otimes \ldots \otimes M[ \vec{k}_m; x_m] \otimes A[ x_1, \ldots, x_m;x] \to M[ \vec{k}_1 \oplus \ldots \oplus \vec{k}_m; x] \, , 
\]
which satisfy associativity and unit axioms and an equivariance condition. When $K = \emptyset$, we have $S(K) \iso 1$ and 
therefore  a left $A$-module with domain~$\emptyset$ is a family of objects~$M(x) \in \catV$, for~$x \in X$, equipped with maps  in $\catV$
of the form
\[
M(x_1) \otimes \ldots \otimes M(x_m)  \otimes A[x_1, \ldots, x_m; x] \to M(x)  \, ,
\]
satisfying the associativity and unit axioms and an equivariance condition.  Such left modules and their left module maps are exactly algebras and algebra morphisms 
for $A$ in $\catV$, in the sense of Example~\ref{thm:opdasmnd}, and so we have $\SymV[\emptyset, X]^A = \Alg_\catV(A)$.
Diagrammatically, if we represent a map $m \co I \to M(x)$ as 
\[
\xymatrix{
*+[F]{m} \ar@{-}[d]^-{x} \\
\quad }
\]
 then the left $A$-action can be seen, for example as acting as follows:
  \[
\vcenter{\hbox{\xymatrix@C=0.3cm@R=0.8cm{
            & *+[F]{m_1} \ar@/_1pc/@{-}[drrr]_(0.3){x_1}       &  & & *+[F]{m_2}   \ar@{-}[d]_{x_2}   &  & & *+[F]{m_3}    \ar@/^1pc/@{-}[dlll]^(0.3){x_3} \\
            &   &  & & *+[F] {f} \ar@{-}[d]^(0.6){x}   &  & & \\
            &   &  &  & \quad & &   }  }} \qquad \longmapsto \qquad
           \vcenter{\hbox{\xymatrix@C=0.3cm@R=1cm{
*++[F]{f \cdot (m_1, m_2, m_3)} \ar@{-}[d]^(0.6){x} \\ 
         \quad }}}
           \]
   
\end{example}

\begin{remark}
Let $\catK \in \bcatE$. The homomorphism~$\bcatE[\objK, -] \co 
\bcatE \to \Cat$ sends a monad $A \co  \objX \to \objX$ to a monad
$\bcatE[\objK,A]  \co  \bcatE[\objK,\objX] \to \bcatE[\objK,\objX]$.
Then, the category of left $A$-modules with domain~$\objK$ and left $A$-module maps
is exactly the category of algebras and algebra morphisms for the monad~$\bcatE[\objK,A]$. 
Therefore, we have an adjunction
\begin{equation*}
\xymatrix{
\bcatE[\objK,\objX]^A \ar@<-1.1ex>[r] \ar@{}[r]|-{\bot} & \bcatE[\objK,\objX] , \ar@<-1.1ex>[l] }
\end{equation*}
where the right adjoint is the forgetful functor and the left adjoint takes a morphism $M \co \objK\to \objX$
to the free left $A$-module on it, $A\circ M \co  \objK \to \objX$.
\end{remark}

Right modules and right module maps are defined in a dual way to left modules and left module maps. 
We state the explicit definition below.

\begin{definition} Let $A \co  \objX \to \objX$ be a monad in $\bcatE$. Let $\objK \in \bcatE$. 
\begin{enumerate}[(i)] 
\item A \emph{right $A$-module with codomain $\objK$}  is a morphism $M \co  \objX \rightarrow \objK$ equipped with a right $A$-action
$\rho \co  M\circ A \rightarrow M$ such that the following diagrams commute:
\[
\xymatrix{
M\circ A\circ A \ar[r]^(0.6){\rho\circ A}  \ar[d]_{M\circ \mu} & M\circ A \ar[d]^{\rho}  \\
M\circ A \ar[r]_{\rho} & \,  M \, ,} \qquad
\xymatrix{
M \ar[r]^(0.4){ M \circ \eta} \ar@/_1pc/[dr]_{1_M}  & M\circ A \ar[d]^{\rho} \\
 & \, M \, . } 
\]
\item If $M$ and $M'$ are two right $A$-modules with codomain $\objK$, then  a \myemph{right $A$-module map}
from $M$ to $M'$ is a 2-cell $f \co  M \rightarrow M'$ such that the following diagram commutes:
\[
\xymatrix{
M\circ A \ar[d]_{\rho} \ar[r]^{f\circ A}  & M'\circ A \ar[d]^{\rho'} \\
M \ar[r]_{f} & M'.}
\]
\end{enumerate}
We write $\bcatE[\objX, \objK]_A$ for the category of right $A$-module with codomain $\objK$ and 
right $A$-module maps. 
\end{definition}

\begin{example}[Right modules in a monoidal category] For a monoidal category $\catC = (\catC, \otimes, I)$,  
a right module over a monoid $A$, viewed as a monad in $\Sigma(\catC)$,  is the same thing as an object $M \in \catC$ equipped with a 
right $A$-action $\rho \co  M \otimes A \to M$. In particular, right modules in $\Ab$ are the same thing as right modules  over a ring in 
the usual sense.
\end{example}

\begin{example}[Right modules for categories] For a small $\catV$-category $\catX$, viewed as a monad $(X,A)$ in~$\MatV$, right $A$-modules 
are families of covariant $\catV$-functors from $\catX$ to $\catV$. Indeed, 
a right $A$-module with codomain~$K$ is a matrix $M \co  X \mat K$, \ie a functor $M \co  K \times X \to \catV$, equipped with a natural transformation with components
\[
\rho_{k,x} \co  \bigsqcup_{x' \in X} M[k,x'] \otimes A[x',x] \to M[k,x] \, , 
\]  
satisfying associativity and unit axioms. It is immediate to see that this is the same thing as a family of~$\catV$-functors $M_k \co  \catX \to \catV$, for $k \in K$.
\end{example}

\begin{example}[Right modules for operads] For a small $\catV$-operad, viewed as a monad $(X,A)$ in~$\SymV$, a right $A$-module with 
codomain~$K$ consists of a symmetric sequence~$M \co  X \to K$, \ie a functor $M \co  S(X)^\op \times K  \to   \catV$
equipped with a right $\catX$-action
$\rho \co   M \circ A  \to M$. By the definition of composition in $\SymV$, as given in~\eqref{equ:symcomp}, such an action amounts to having maps in $\catV$ of the form
\[
A[ \vec{x}_1; x_1]  \otimes \ldots \otimes A[\vec{x}_m; x_m]  \otimes M[ x_1, \ldots, x_m;k]  \to M[ \vec{x}_1 \oplus \ldots \oplus \vec{x}_m; k]  
\]
which satisfy associativity and unit axioms, as well as an equivariance condition. 
Diagrammatically, using notation analogous to the one adopted above, we have, for example, that

         \[
         \smallskip
\vcenter{\hbox{\xymatrix@C=1cm@R=0.8cm{
\ar@{-}[dr]_(0.3){x_{1,1}} &  \ar@{-}[d]_(0.3){x_{1,2}} & \ar@{-}[dl]^(0.3){x_{1,3}}  &    \ar@{-}[dr]_(0.3){x_{2, 1}} & &  \ar@{-}[dl]^(0.3){x_{2,2}} & &     \ar@{-}[d]^(0.4){x_{3,1}}  \\
            & *+[F]{f_1}    \ar@{-}[drrr]_{x_1} &  & & *+[F]{f_2}  \ar@{-}[d]_{x_2} & &  & *+[F]{f_3}  \ar@{-}[dlll]^{x_3}  \\
            &   &  & & *+[F]{m} \ar@{-}[d]^k   &  & & \\
            &   &  & & \quad &  & &   } }}
            \]
            is mapped to
            \[
             \smallskip
                \vcenter{\hbox{\xymatrix@C=1.3cm@R=0.5cm{
\ar@{-}[ddrr]_(0.3){x_{1,1}} &   \ar@{-}[ddr]^(0.3){x_{1,2}} &    \ar@{-}[dd]_(0.3){x_{1,3}} &    \ar@{-}[ddl]_(0.3){x_{2,1}}  &  \ar@{-}[ddll]_(0.3){x_{2,2}} &   \ar@{-}[ddlll]^(0.3){x_{3,1}}  \\     
 & & & & &    \\
            &  & *++[F]{m \cdot (f_1, f_2, f_3)}  \ar@{-}[d]^(0.6){k} &  & & \\
                         & & \quad &  & }}}
            \]
When $X = 1$,  these maps have the form
\[
A[n_1] \otimes \ldots \otimes A[n_m] \otimes M_k(m) \to M_k(n_1 + \ldots + n_m) \, ,
\]
where we write $M_k(n)$ for $M[n;k]$. These are $K$-indexed families of right $A$-modules for the operad, as usually defined in the literature  (see,
for example,~\cite{FresseB:modoof,KapranovM:modmto}). \end{example}

\begin{remark}
Let $\catK \in \bcatE$. The homomorphism~$\bcatE[\objK, -] \co 
\bcatE \to \Cat$ sends a monad $A \co  \objX \to \objX$ to a monad
$\bcatE[\objK, A] \co  \bcatE[\objX, \objK]  \to \bcatE[\objX, \objK]$.
Right $A$-modules with domain $\objK$ and right $A$-module maps are
the algebras and the algebra morphisms for the monad $\bcatE[\objK, A]$. Hence, we have an adjunction
\begin{equation*}
\xymatrix{
\bcatE[\objK,\objX]_A \ar@<-1.1ex>[r] \ar@{}[r]|-{\bot} & \bcatE[\objK,\objX] , \ar@<-1.1ex>[l] }
\end{equation*}
where the right adjoint is the forgetful functor and the left adjoint takes a morphism $M \co \objX \to \objK$
to the right $A$-module $M \circ A \co  \objK \to \objX$.
\end{remark}

\medskip

Next, we define the notions of a bimodule and bimodule map, which will play a fundamental role throughout the rest of the paper.
In particular, in Section~\ref{sec:regbbb} we will recall how  (under appropriate assumptions on $\bcatE$) monads, bimodules and 
bimodule maps form a bicategory.

\begin{definition}  Let $A \co  \objX \to \objX$ and $B \co  \objX \to \objX$ be monads in $\bcatE$.
\begin{enumerate}[(i)] 
\item  A \myemph{$(B,A)$-bimodule} is a morphism
$M \co  \objX \rightarrow \objY$ equipped with a left $B$-action 
$\lambda \co  B \circ M \rightarrow M$ and a right $A$-action
$\rho \co  M\circ A \rightarrow M$ which commute with each other,
in the sense that the following diagram commutes:
\begin{equation}
\label{equ:commutingactions}
\xycenter{
B\circ M \circ A \ar[d]_-{B \circ \rho} \ar[r]^-{\lambda\circ A} &  M \circ A \ar[d]^{\rho} \\
B \circ  M  \ar[r]_-{\lambda} & M \, .}
\end{equation}
\item If $M, M \co  \objX \to \objY'$ are $(B,A)$-bimodules, then a {\em bimodule map} from $M$ to $M'$
is a 2-cell $f \co  M \to M'$ that is a map of left $B$-modules and of 
right $A$-modules. 
\end{enumerate}
We write $\bcatE[\objX,\objY]^B_A$ for the
category of $(B,A)$-bimodules and bimodule maps.
\end{definition}

\begin{example}[Bimodules in a monoidal category] For a monoidal category $\catC = (\catC, \otimes, I)$, 
bimodules in a $\Sigma(\catC)$ are the same thing as objects of $\catC$ equipped with a 
right action and a left action by a monoid which distribute over each other. In particular, bimodules in~$\Sigma(\Ab)$ 
in the sense of the previous definition are the same thing as bimodules over a ring in the standard algebraic sense. 
\end{example}

\begin{example}[Bimodules for categories] As is well-known, bimodules in the bicategory $\MatV$  are exactly distributors, in the sense of Definition~\ref{thm:dis}.
 Indeed, for a small $\catV$-category $\catX$ with set of objects $X$
and a small $\catV$-category $\catY$ with set of objects $Y$, 
a  $(\catY, \catX$)-bimodule is a function $M \co  Y \times X \to \catV$ equipped with natural transformations with components
\[
\rho_{y,x} \co  \bigsqcup_{x' \in X} M[y,x'] \otimes \catX[x',x] \to M[y,x] \, , \quad
\lambda_{x,y} \co  \bigsqcup_{y' \in X}  \catY[y,y'] \otimes M[y',y]  \to M[y,x] \, , 
\]  
satisfying associativity and unit axioms. It is immediate to see that this is the same thing as a functor~$M \co  \catY^\op  \tensorvcat \catX \to \catV$, \ie a distributor
$M \co  \catX \mat \catY$.
\end{example}

\begin{example}[Operad bimodules] \label{thm:operadbimodule}
Bimodules in $\SymV$ are operad bimodules~\cite{MarklM:modo,RezkC:spaasc}, which we define explicitly below. Let us consider two operads $\catX = (X,A)$ and $\catY = (Y, B)$. Then,
an $(B, A)$-bimodule consists of a symmetric sequence $M \co  X \sym Y$, 
\ie a functor $M \co  S(X)^\op \times Y \to \catV$, equipped with a right $A$-action $\rho \co  M \circ A   \to M$ and a left $B$-action $\lambda \co  B \circ  M  
 \to M$ satisfying the compatibility condition in~\eqref{equ:commutingactions}. Explicitly, the right $A$-action amounts to having maps of the form
\[
A[\vec{x}_1; x_1] \otimes \ldots \otimes A[\vec{x}_m; x_m] \otimes M[x_1, \ldots, x_m;y] \to M[\vec{x}_1 \oplus \ldots \oplus \vec{x}_m; y]  \, ,
\]
while the left $B$-action amounts to having maps of the form
\[
M[ \vec{x}_1; y_1] \otimes \ldots \otimes M[ \vec{x}_m; y_m] \otimes B[ y_1, \ldots, y_m;y]  \to M[ \vec{x}_1 \oplus \ldots \oplus \vec{x}_m;y] \, ,
\]
all satisfying associativity, unit, compatibility and equivariance conditions. Bimodules for non-symmetric operads were defined
in~\cite[Definition~2.36]{LeinsterT:higohc}.
\end{example}

\begin{proposition} 
\label{bimoduleisbimonadic}
The forgetful functor $U \co  \bcatE[\objX,\objY]^B_A\to \bcatE[\objX,\objY]$
is monadic.
\end{proposition}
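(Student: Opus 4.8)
The plan is to apply the (crude, or Beck) monadicity theorem to the forgetful functor $U \co \bcatE[\objX,\objY]^B_A \to \bcatE[\objX,\objY]$. The strategy is to factor $U$ through an intermediate category and exhibit both legs of the factorization as monadic, then invoke the fact (recalled implicitly in the text, via the adjunction displayed after the definition of left modules) that a composite of monadic functors whose induced monads distribute suitably is again monadic. Concretely, a $(B,A)$-bimodule is a left $B$-module in $\bcatE[\objX,\objY]$ that is simultaneously a right $A$-module, with the two actions commuting. So first I would consider the forgetful functor $V \co \bcatE[\objX,\objY]_A \to \bcatE[\objX,\objY]$ from right $A$-modules, which is monadic by the remark preceding this proposition: it is the Eilenberg--Moore category of the monad $(-) \circ A$ on $\bcatE[\objX,\objY]$, which exists because $\bcatE[\objX,\objY]$ has the relevant structure and $(-)\circ A$ is an honest monad (being $\bcatE[\objX, A]^{\op}$-flavoured, or directly: horizontal composition with a monad is a monad on the hom-category).

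Next I would observe that a $(B,A)$-bimodule is the same as a left $B$-module \emph{internal to} $\bcatE[\objX,\objY]_A$: the monad $B \circ (-)$ on $\bcatE[\objX,\objY]$ lifts to a monad $\widetilde{B}$ on $\bcatE[\objX,\objY]_A$ — the lift exists precisely because left $B$-action and right $A$-action commute, i.e.\ the compatibility square~\eqref{equ:commutingactions} is exactly the distributivity datum making $B \circ (-)$ lift along $V$. Then $\bcatE[\objX,\objY]^B_A$ is the Eilenberg--Moore category of $\widetilde{B}$, so the forgetful functor $W \co \bcatE[\objX,\objY]^B_A \to \bcatE[\objX,\objY]_A$ is monadic. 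Since $U = V \circ W$ and both $V$ and $W$ are monadic, $U$ is the forgetful functor of the composite monad $(-) \circ A$ followed by $B \circ (-)$ on $\bcatE[\objX,\objY]$, hence $\bcatE[\objX,\objY]^B_A$ is the category of algebras for the monad $M \mapsto B \circ M \circ A$ and $U$ is monadic. One checks directly that an algebra structure on $M \co \objX \to \objY$ for $B \circ (-) \circ A$, subject to the associativity and unit axioms for this composite monad, unpacks exactly into a pair of commuting actions — this is the bookkeeping that identifies the composite-monad algebras with $(B,A)$-bimodules.

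Alternatively — and this is probably the cleanest way to write it up — I would verify Beck's conditions directly for $U$: $U$ has a left adjoint sending $M$ to the free bimodule $B \circ M \circ A$ (with actions induced by $\mu_B$ and $\mu_A$), $U$ reflects isomorphisms (a bimodule map that is invertible in $\bcatE[\objX,\objY]$ has inverse automatically compatible with both actions, by the usual diagram chase), and $U$ creates coequalizers of $U$-split pairs (given a split coequalizer below, the colimit lifts uniquely to a bimodule because the split coequalizer is an absolute colimit, hence preserved by $B \circ (-) \circ A$). The first two points are immediate; the third is the standard argument that forgetful functors from Eilenberg--Moore categories create absolute coequalizers, and here it works without any hypothesis on $\bcatE$ (no coequalizers in the hom-categories are needed) precisely because split coequalizers are absolute. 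The main obstacle, such as it is, is purely organizational: making the identification of $(B,A)$-bimodules with algebras for the composite endofunctor $M \mapsto B\circ M \circ A$ precise, i.e.\ checking that the composite-monad multiplication built from $\mu_B$, $\mu_A$ and the interchange 2-cell has algebras that are exactly commuting-action bimodules. I expect this to be routine diagram-chasing once the lifted-monad picture is set up, so I would state it briefly and defer the verification to the reader, in keeping with the style of the surrounding text.
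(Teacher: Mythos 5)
Your proposal is correct and its essential content coincides with the paper's proof: the key step there is exactly your "bookkeeping" identification of $(B,A)$-bimodules with algebras for the composite monad $\bcatE[A,B] \co M \mapsto B\circ M\circ A$ (with multiplication $\bcatE[\mu_A,\mu_B]$ and unit $\bcatE[\eta_A,\eta_B]$), after which $U$ is literally the forgetful functor from an Eilenberg--Moore category and monadicity is immediate. The extra scaffolding you propose (the distributive-law/lifted-monad factorization through $\bcatE[\objX,\objY]_A$, or the direct verification of Beck's conditions) is sound but not needed once that identification is made, and indeed the paper requires no hypotheses on the hom-categories for this proposition.
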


\begin{proof} 
Observe that the endofunctor
$\bcatE[A,B]\co  \bcatE[\objX,\objY] \to \bcatE[\objX,\objY]$
has the structure of a monad with multiplication $\mu= \bcatE[\mu_A, \mu_B]$
and unit $\eta=\bcatE[\eta_A, \eta_B]$.
A $(B,A)$-bimodule is the same thing as an $ \bcatE[A,B]$-algebra, which is a morphism $M\co \objX \to \objY$,
equipped with a 2-cell $\alpha \co  B\circ M\circ A \rightarrow M$ such that the following diagrams commute:
\[
\xymatrix{
B\circ B\circ M\circ A\circ A \ar[rr]^-{B\circ \alpha \circ A}  \ar[d]_{\mu_B\circ M\circ \mu_A} && B\circ M\circ A \ar[d]^{\alpha}  \\
B\circ M\circ A \ar[rr]_-{\alpha} && \, M \, ,} \qquad
\xymatrix{
M \ar[rr]^-{\eta_B\circ M\circ \eta_A} \ar@/_1pc/[drr]_{1_M}  && B\circ M\circ A \ar[d]^{\alpha} \\
& & M.}
\]
From  the 2-cell $\alpha$
we obtain two actions $\lambda=\alpha \cdot (B\circ M\circ \eta_A)$ 
and $\rho=\alpha\cdot (\eta_B\circ M\circ A)$ which commute with each other.
Conversely, from a commuting pair of actions $(\lambda,\rho)$ we obtain a 
2-cell~$\alpha$ which makes the required
diagrams commute by letting $\alpha \defeq \rho\cdot (\lambda \circ A)=\lambda\cdot (B\circ \rho)$, i.e.\
the common value of the composites in~\eqref{equ:commutingactions}.
\end{proof}

\begin{remark}
 The category $\bcatE[\objX,\objY]^B_A$ is related to the categories $\bcatE[\objX,\objY]^B$ and $\bcatE[\objX,\objY]_A$
by the following commutative squares of monadic forgetful functors (all written $U$) and left adjoints (all written~$F$):
\[
\xymatrix{
\bcatE[\objX,\objY]^B_A\ar[r]^U   \ar[d]_-{U}   & \bcatE[\objX,\objY]^B   \ar[d]^-{U} \\
\bcatE[\objX,\objY]_A  \ar[r]_-{U}    &  \bcatE[\objX,\objY]   \, ,}
\quad \quad 
\xymatrix{
\bcatE[\objX,\objY]^B_A &
\ar[l]_-{F} \bcatE[\objX,\objY]^B \\
\bcatE[\objX,\objY]_A   \ar[u]^-{F}  & \ar[l]^-{F} \bcatE[\objX,\objY]   \ar[u]_-{F}    \, , }
\]
\[
\xymatrix{
\bcatE[\objX,\objY]^B_A \ar[d]_-{U}  & \bcatE[\objX,\objY]^B  \ar[l]_-{F}  \ar[d]^-{U} \\
\bcatE[\objX,\objY]_A   & \ar[l]^-{F} \bcatE[\objX,\objY]   \, ,}
\quad \quad 
\xymatrix{
\bcatE[\objX,\objY]^B_A\ar[r]^-{U}    &\bcatE[\objX,\objY]^B   \\
\bcatE[\objX,\objY]_A \ar[r]_-{U}    \ar[u]^-{F}  & \bcatE[\objX,\objY]   \ar[u]^-{F}  \, .}
\]
\end{remark}

 \section{Tame bicategories and bicategories of bimodules}
\label{sec:regbbb}

We review the bimodule construction, which assembles monads, bimodules and bimodule maps in a bicategory $\bcatE$ satisfying appropriate assumptions into a new bicategory $\bimE$. For more information on the bimodule construction, 
see~\cite{BettiR:varte,CarboniA:axibb,GarnerR:enrcfc,KoslowskiJ:monib,StreetR:enrcc}.

\begin{definition} \label{def:tame} \hfill 
\begin{enumerate}[(i)] 
\item We say that $\bcatE$ is \emph{tame} if for every~$\objX,\objY \in \bcatE$ the 
category $\bcatE[\objX,\objY]$ has reflexive coequalizers and the horizontal composition functor of $\bcatE$
preserves coequalizers in each variable. 
\item If $\bcatE$ and $\bcatF$ are tame bicategories, we say that a homomorphism $\Phi\co \bcatE \to \bcatF$
is \emph{tame} if for every~$\objX,\objY\in \bcatE$, the functor $\Phi_{\objX,\objY} \co  \bcatE[\objX,\objY] \to \bcatF[\Phi \objX,\Phi\objY]$ 
preserves reflexive coequalizers.
\end{enumerate}
\end{definition}

As we will see below, the condition that a bicategory is tame is used in order to define the composition of bimodules.  If $\bcatE $ and $\bcatF$ are tame bicategories, we write $\REG[\bcatE, \bcatF]$ for
the full sub-bicategory of $\HOM[\bcatE, \bcatF]$
whose objects are tame homomorphisms from
$\bcatE$ to $\bcatF$.

\medskip

In order to make evident how different components of the structure of a bimodule play different roles in the definition of the
composition functors, we discuss ways of combining a bimodule with a (right or left) module.  From now until the end of the section, let $\bcatE$ be a fixed tame bicategory.  Let $A \co \objX \to \objX$, $B \co \objY \to \objY$ be monads in
 $\bcatE$. For a  left $A$-module $M \co \objK \to \objX$ and a $(B,A)$-bimodule $F \co \objX \to \objY$, we define a left $B$-module $F \circ_A M 
 \co \objK \to \objY$ as follows. Its underlying morphism is defined by following reflexive coequalizer diagram:
 \begin{equation}
 \label{equ:compositionleftmodulebimodule}
\xymatrix@C=1.2cm{
F  \circ  A  \circ M  \ar@<0.8ex>[r]^-{\rho\circ  M}  \ar@<-0.8ex>[r]_-{F \circ \lambda} 
&  F \circ   M  \ar[r]^q &  F \circ_{A} M \, .}
\end{equation}
The left $B$-action is determined by the universal property of coequalizers, as follows:
\[
\xymatrix{
B \circ F \circ A \circ M  \ar@<0.8ex>[rr]^{B \circ \rho\circ M}   \ar@<-0.8ex>[rr]_{B \circ F \circ \lambda}
\ar[d]_{\lambda \circ A \circ M} & & 
B \circ F  \circ M  \ar[rr]^{B \circ q} \ar[d]^{\lambda \circ M} &&  B \circ F \circ_A   M  \ar[d]^{\lambda}   \\
F  \circ  A  \circ M  \ar@<0.8ex>[rr]^-{\rho\circ  M}  \ar@<-0.8ex>[rr]_-{F \circ \lambda} 
& &  F \circ   M  \ar[rr]_q & &  F \circ_{A} M  \, ,}
\]
Here, the top row is a coequalizer diagram by the assumption that $\bcatE$ is tame, being obtained from the diagram in~\eqref{equ:compositionleftmodulebimodule}
by composition with $B$. The verification of the axioms for a left~$A$-action uses the fact that the actions of $A$ and $B$  commute with each other and it
is essentially straightforward. Furthermore, this definition can easily be shown to  extend to a functor
\[
(-) \circ_A (-) \co\bcatE[\objX, \objY]^B_A  \times  \bcatE[\objK, \objX]^A  \to \bcatE[\objK, \objY]^B \, .
\]
Dually, for a $(B,A)$-bimodule $F \co \objX \to \objY$ and a right $B$-module $M \co \objY \to \objK$, we define a right $A$-module $M \circ_B F 
 \co \objX \to \objK$ as follows. Its underlying morphism is defined by following reflexive coequalizer diagram:
 \begin{equation}
 \label{equ:compositionbimodulerightmodule}
\xymatrix@C=1.2cm{
M  \circ  B  \circ F  \ar@<0.8ex>[r]^-{M \circ \lambda}  \ar@<-0.8ex>[r]_-{\rho \circ F} 
&  M \circ   F  \ar[r]^q &  M \circ_{B} F \, . }
\end{equation}
The left $B$-action is determined by the universal property of coequalizers, as follows:
\begin{equation}
\label{equ:leftBactioninduced}
\xymatrix{
M \circ B \circ F \circ A  \ar@<0.8ex>[rr]^{M \circ \lambda \circ A}   \ar@<-0.8ex>[rr]_{\rho \circ F \circ A}
\ar[d]_{M \circ B \circ \rho} & & 
M  \circ F  \circ A  \ar[rr]^{q \circ A} \ar[d]^{M \circ \rho} &&  M \circ_B F \circ   A  \ar[d]^{\rho}   \\
M  \circ  B  \circ F  \ar@<0.8ex>[rr]^-{M \circ \lambda}  \ar@<-0.8ex>[rr]_-{\rho \circ F} 
& &  M \circ  F  \ar[rr]_q & &  M \circ_{B} F  \, .}
\end{equation}
In this way, we obtain a functor
\[
(-) \circ_B (-) \co \bcatE[\objX, \objY]^B_A \times \bcatE[\objY, \objK]_B \to \bcatE[\objX, \objK]_A \, .
\]

Let us now assume we have monads $A \co \objX \to \objX$, $B \co \objY \to \objY$ and $C \co \objZ \to \objZ$,
a $(B,A)$-bimodule $F \co \objX \to \objY$ and a $(C,B)$-bimodule $G \co \objY \to \objZ$. If we consider
$F$ as a left $B$-module and $G$ as a $(C,B)$-bimodule, the morphism given by the formula 
in~\eqref{equ:compositionleftmodulebimodule} coincides with the morphism given by the formula in~\eqref{equ:compositionbimodulerightmodule},
applied considering $F$ as a $(B,A)$-bimodule and $G$ as a right $B$-module. Hence, this morphism $G \circ_B F \co \catX \to \catZ$
is equipped with both a left $C$-action and a right $A$-action, which can be easily seen to 
 commute with each other, thus giving us a $(C,A)$-bimodule $G \circ_B F \co \objX \to \objZ$. Explicitly, the morphism~$G \circ_B F$  is  defined by the  reflexive coequalizer
\[
\xymatrix@C=1.2cm{
G \circ  B  \circ F  \ar@<0.8ex>[r]^-{\rho\circ  F}  \ar@<-0.8ex>[r]_-{G \circ \lambda} 
&  G \circ   F  \ar[r]^q &  G \circ_{B} F \, . }
\]
Its right $A$-action is determined by the diagram
\[
\xymatrix{
G \circ B \circ F \circ A  \ar@<0.8ex>[rr]^{\rho\circ F\circ A}   \ar@<-0.8ex>[rr]_{G\circ \lambda\circ A}
\ar[d]_{G\circ B\circ \rho} & & 
G \circ F  \circ A  \ar[rr]^{q\circ A} \ar[d]^{G\circ \rho} &&  (G \circ_B F )  \circ A  \ar[d]^{\rho}   \\
G \circ B \circ F  \ar@<0.8ex>[rr]^{\rho\circ F} \ar@<-0.8ex>[rr]_{G\circ \lambda} & & 
G \circ F \ar[rr]_q &&  G \circ_B F \, ,}
\]
while its left $C$-action  is given by the diagram
\[
\xymatrix{
C \circ G \circ B \circ F  \ar@<0.8ex>[rr]^{C\circ G\circ  \lambda}   \ar@<-0.8ex>[rr]_{C \circ \rho \circ F}
\ar[d]_{\lambda \circ B \circ F } & & 
C \circ G \circ F \ar[rr]^{C\circ q} \ar[d]^{\lambda\circ F} && C \circ (  G \circ_B F )   \ar[d]^{\lambda}   \\
G \circ B \circ F  \ar@<0.8ex>[rr]^{G\circ \lambda } \ar@<-0.8ex>[rr]_{\rho\circ F} & & G \circ 
F \ar[rr]_q &&  G \circ_B F \, . }
\]
In this way, we obtain a functor
\begin{equation}
\label{equ:bimcompfunctor} 
(-) \circ_B (-) \co \bcatE[\objY, \objZ]^C_B \times  \bcatE[\objX, \objY]^B_A \to \bcatE[\objX, \objZ]^C_A \, ,
\end{equation}
which we sometimes call \emph{relative composition}. This operation generalizes  the 
circle over construction defined by Rezk~\cite[Section~2.3.10]{RezkC:spaasc}, which is called the relative composition product in~\cite[Section~5.1.5]{FresseB:modoof}.

\medskip

 We can now define the bicategory $\bimE$. The objects of~$\bimE$ are 
 pairs of the form $(\objX, A)$, where $\objX \in \bcatE$ and $A \co  \objX \to \objX$ is a monad.
In order to simplify the notation, if we consider a pair~$(\objX,A)$ as an object of $\bimE$ we write it as $\objX/A$ and sometimes refer to it simply as a monad. For a pair of monads~$\objX/A$ and~$\objY/B$, we then define
\[
\bimE[\objX/A,\objY/B] \defeq  \bcatE[\objX,\objY]^B_A \, .
\]
Hence, a morphism in $\bimE$ from $\objX/A$ to $\objY/B$ is a $(B,A)$-bimodule $M \co  \objX/A \to \objY/B$
and a 2-cell $f \co  M\to N$ in $\bimE$ is a bimodule map. The composition operation of $\bimE$ is then 
given by the relative composition of bimodules in~\eqref{equ:bimcompfunctor}. 
For an object $\objX/A \in \bimE$, the identity bimodule $1_{\objX/A} \co  \objX/A \to \objX/A$
is given by the morphism $A \co  \objX \rightarrow \objX$, viewed as an $(A,A)$-bimodule by 
taking the monad multiplication $\mu \co  A \circ A \to A$ 
as both the left and the right $A$-action.
In order to complete the definition of the data of the bicategory $\bimE$, it remains to
exhibit the associativity and unit isomorphisms. For the associativity isomorphisms, 
let us define the joint composition of three bimodules
\[
\xymatrix{
(\objV,D)  \ar[r]^L   &(\objX,A)     \ar[r]^{M}   &(\objY,B)  \ar[r]^{N}  &(\objZ,C)   
}
\]
as the colimit  $N \circ_B M  \circ_A L$ of  a double (reflexive) graph
\[
\xymatrix@R=0.5cm@C=0.5cm{
N \circ B \circ M \circ A \circ L  \ar@<0.8ex>[dd]  \ar@<-0.8ex>[dd]  \ar@<0.8ex>[rr]  \ar@<-0.8ex>[rr]   &&  
N\circ B\circ M\circ L  \ar@<0.8ex>[dd]  \ar@<-0.8ex>[dd]   \\
 \\
N\circ M\circ A\circ L \ar@<0.8ex>[rr]  \ar@<-0.8ex>[rr]  && N\circ M\circ L \, .}
\]
If the colimit is calculated horizontally and then vertically, we obtain $ N \circ_B( M  \circ_A L)$. If the colimit is calculated vertically and then horizontally, instead, we obtain $(N \circ_B M)  \circ_A L$. 
Thus, we have the required isomorphism $a_{L,M,N} \co  N \circ_B( M  \circ_A L) \to  (N \circ_B M)  \circ_A L. $
This isomorphism is the unique 2-cell $a$
fitting in the commutative diagram  of canonical maps,
\[
\xymatrix{
N \circ (M \circ L) \ar[d]  \ar@{=}[r] & (N \circ M) \circ L \ar[d]\\
 N \circ_B (M \circ_A L)   \ar[r]_-a &(N \circ_B M) \circ_A L \, . }
\]
For the unit isomorphisms, observe that for $\objX/A, \objY/B\in \bimE$ and  $M \co  \objX/A \rightarrow \objY/B$,
 we have an isomorphism $\ell_M \co  B \circ_B M \to M $ which fits in the diagram
\begin{equation}
  \label{equ:leftunitisomorphism}
{\vcenter{\hbox{\xymatrix@C=1.5cm{
B \circ B \circ M  \ar@<1ex>[r]^-{M\circ \mu} \ar@<-1ex>[r]_-{\rho\circ A}  & 
B \circ M             \ar[r]^-{q} \ar@/_1pc/[dr]_-{\rho}     &  
B \circ_B M          \ar[d]^{\ell_M}   \\   
 & 
 & 
 M  }}}}
\end{equation}
since 
\[
\xymatrix@C=1.5cm{
B \circ B \circ M 
\ar@<0.8ex>[r]^-{\mu \circ M} 
\ar@<-0.8ex>[r]_-{ B\circ \lambda} 
& B \circ M \ar[r]^-{\lambda}   \ar@/_2pc/[l]_-{\eta\circ B\circ M}  &   M  \ar@/_2pc/[l]_-{\eta\circ M}    }
\]
is a split fork. Dually, we have also an isomorphism $r_M \co  M \circ_A A \to M$ making the following diagram commute
  \begin{equation}
  \label{equ:rightunitisomorphism}
  {\vcenter{\hbox{\xymatrix@C=1.5cm{
M \circ A \circ A 
\ar@<0.8ex>[r]^-{M\circ \mu} 
\ar@<-0.8ex>[r]_-{\rho\circ A} 
&  M \circ A \ar[r]^-{q}   \ar@/_1pc/[dr]_{\rho} &   M \circ_A A \ar[d]^{r_M} \\
 &    &   M  }}}}
\end{equation}
since 
\[
\xymatrix@C=1.5cm{
M \circ A \circ A 
\ar@<0.8ex>[r]^-{M\circ \mu} 
\ar@<-0.8ex>[r]_-{\rho\circ A} 
&  M \circ A \ar[r]^-{\rho}   \ar@/_2pc/[l]_-{M\circ A\circ \eta}  &   M  \ar@/_2pc/[l]_-{M\circ \eta}    }
\]
is a split fork. The verification of the coherence  axioms for a bicategory is a straightforward diagram-chasing argument.

\begin{remark} \label{rem:je}
For every tame bicategory $\bcatE$, there is a homomorphism 
\[
\JE \co  \bcatE \rightarrow \bimE
\]
which maps an object~$\objX \in \bcatE$ to the identity monad  $\objX/1_\objX$. The action of $\JE$ on morphisms and 2-cells is evident 
and hence we do not spell it out.  
\end{remark}

\begin{remark} If the monad $B \co \objY \to \objY$ is the identity $1_\objY \co  \objY \to \objY$, then we have a canonical isomorphism $G \circ_{1_\objY} F \iso
G \circ F$, which we will consider as an equality for simplicity.
\end{remark}

\begin{example}[Bimodules in a monoidal category] For a monoidal category $\catC$ with reflexive coequalizers in which the
tensor product preserves reflexive coequalizers, the bicategory $\Bim(\catC)$ has monoids in $\catC$ as 
objects, bimodules as morphisms and bimodule maps as 2-cells (see also~\cite{BarrM:chuc} for a
discussion of this example). In particular, $\Bim(\Ab)$ is the
bicategory of rings, ring bimodules and bimodule maps. Given a~$(B,A)$-bimodule~$M$ and a $(C,B)$-bimodule~$N$, where~$A$,~$B$ 
and~$C$ are rings, their tensor product~$N \otimes_B M$ fits in the following reflexive coequalizer in $\Ab$:
\[
\xymatrix@C=1.2cm{
N  \otimes  B  \otimes M  \ar@<0.8ex>[r]^-{\rho\otimes  M}  \ar@<-0.8ex>[r]_-{N \otimes \lambda} 
&  N \otimes   M  \ar[r]^q &  N \otimes_{B} M \, . }
\]
\end{example} 

\begin{example}[Distributors] The bicategory of bimodules in the bicategory  of matrices $\MatV$ is exactly the bicategory of distributors:
\[
\DistV = \Bim(\MatV) \, .
\]  
Indeed, we have seen that small $\catV$-categories, which are the objects of $\DistV$, are monads in $\MatV$, distributors
$F \co  \catX \mat \catY$ are the same thing as $(\catY, \catX)$-bimodules, and  $\catV$-natural transformations between distributors
are exactly a bimodule maps. Furthermore, composition and identities in~$\DistV$ arise as special cases of the general definition of
composition and identities in bicategories of bimodules, as direct calculations show. 
\end{example}

\begin{remark} \label{thm:dualitybimod}
For every tame bicategory $\bcatE$, there is an isomorphism
\[
\bimE^\op \iso \Bim(\bcatE^\op) \, .
\]
Recall that we write $F^\op \co \catY \to \catX$ for the 
morphism in $\bcatE^\op$ associated to a morphism $F \co \catX \to \catY$ in $\bcatE$.
The required isomorphism sends an object $\objX/A \in  \bimE^\op$ to the object 
$\objX/A^\op \in \Bim(\bcatE^\op)$. Given $\objX/A \, , \objY/B \in \bimE$,
if $M \co \objX \to \objY$ has a $(B,A)$-bimodule structure in $\bcatE$, then
$M^\op \co \objY \to \objX$ has an $(A^\op,B^\op)$-bimodule structure in $\bcatE^\op$ and so we have an isomorphism
\[
\Bim(\bcatE^\op) [\objY/B^\op, \objX/A^\op] \iso  \bimE[\objX/A, \objY/B] = (\bimE)^\op[\objY/B, \objX/A] \, .
\]
Moreover, if $N \co \objY \to \objZ$ has a $(C,B)$-bimodule structure, we
have $(N \circ_{B} M)^\op = M^\op \circ_{B^\op} N^\op$.
\end{remark}

\begin{remark} \label{thm:bimfunctorial} For every tame homomorphism $\Phi \co  \bcatE \to \bcatF$, there is a homomorphism 
\[
\Bim(\Phi) \co  \bimE \to \Bim(\bcatF)
\]
defined by letting $\Bim(\Phi)(\objX/A) \defeq \Phi(\objX)/{\Phi(A)}$
for $\objX/A\in \Bim(\bcatE)$,
$  \Bim(\Phi)(M) \defeq \Phi(M)$ for $M \co  \objX/A\to \objX/B$
 and  $\Bim(\Phi)(\alpha) \defeq \Phi(\alpha)$ for $\alpha \co  M \to N$.
The condition that $\Phi$ is tame ensures that $\Bim(\Phi) \co  \bimE \to \Bim(\bcatF)$ preserves
composition and identity morphisms up to coherent natural isomorphism. 
In this way, an inclusion $\bcatE \subseteq \bcatF$ determines an inclusion $ \bimE \subseteq \bimF$.
\end{remark}

\section{Monad morphisms and bimodules}
\label{sec:monadmorphisms}

The aim of this section is to relate the bicategory of monads, bimodules and bimodule maps introduced in Section~\ref{sec:regbbb} to the bicategory of monads, monad morphisms and maps of monad morphisms defined as in~\cite{StreetR:fortm}.  We begin by 
recalling  some definitions from~\cite{StreetR:fortm}, using a slightly different terminology.

\begin{definition}  Let $A \co \objX \to \objX \, , B \co \objY \to \objY$ be monads in $\bcatE$.
\begin{enumerate}[(i)]
\item A  \emph{lax monad morphism} $(F,\phi)\co (\objX,A) \to (\objY,B)$ consists of a morphism
$F \co  \objX \to \objY$ in~$\bcatE$ and a 2-cell $\phi \co  B\circ F \to F \circ A$ such that 
 the following diagrams commute:
\[
\xymatrix@C=1.2cm{
B\circ B\circ F\ar[d]_{\mu_B\circ F}\ar[r]^{B\circ \phi }&B \circ F \circ A \ar[r]^{\phi \circ A} & F \circ  A\circ A \ar[d]^{F \circ \mu_A} \\
B\circ F \ar[rr]_\phi&&  \, F \circ A \, , 
}\quad\quad
\xymatrix{
 F \ar[r]^-{\eta_B \circ F}  \ar@/_1pc/[dr]_-{F \circ \eta_A } & B \circ F \ar[d]^-{\phi}  \\
 &  F \circ A \, . }
\]
\item A \emph{map of lax monad morphisms} $f \co  (F, \phi) \rightarrow (F', \phi')$ 
 is a 2-cell $f \co   F \rightarrow  F'$ in $\bcatE$ such that
 the following diagram commutes:
 \[
 \xymatrix@C=1.2cm{
 B \circ F   \ar[r]^{\phi} \ar[d]_{B \circ f}  & F \circ A\ar[d]^{f \circ A} \\
B \circ F'   \ar[r]_{\phi'}  &  F' \circ A \, .} 
 \]
\end{enumerate}
\end{definition} 

Following~\cite{StreetR:fortm}, we write $\MndE$ for the bicategory whose objects are pairs~$(\objX, A)$,
where~$\objX \in \bcatE$ and~$A \co \objX \to \objX$ is a monad, morphisms are lax monad morphisms
and 2-cells are maps of lax monad morphisms. The composition operation of morphisms in $\MndE$ is defined in the following way: for monad morphisms $(F, \phi) \co  
(\objX, A) \to (\objY, B)$ and $(G, \psi) \co  (\objY, B) \to (\objZ, C)$, their composite is given by
the morphism $G \circ F 
\co  \objX \to \objZ$ equipped with the 2-cell
\[
\xymatrix{
C \circ G \circ F \ar[r]^{\psi \circ F} & G \circ B \circ F \ar[r]^{G \circ \phi} & G \circ F \circ A \, .}
\]
Note that a lax monad morphism $(F,\phi)\co (\objX,A)\to (\objY,B)$  is an equivalence in $\MndE$ if and only if $F
\co \objX \to \objY$ is an equivalence in $\bcatE$ and $\phi$ is invertible. Also note that a homomorphism $\Phi\co  \bcatE \to \bcatF$ induces a homomorphism $\Mnd(\Phi)\co \Mnd(\bcatE)\to \Mnd(\bcatF)$, defined in the evident way. As we show in Lemma~\ref{monadmorphtomod} below, every lax monad morphism gives rise to a bimodule and every map of lax monad morphisms gives rise to a bimodule map.

\begin{lemma} \label{monadmorphtomod} \hfill
\begin{enumerate}[(i)]
\item  If $(F,\phi)\co (\objX,A) \to (\objY,B)$ is a lax monad morphism, then the morphism $F \circ A \co \objX \to \objY$
has the structure of a $(B,A)$-bimodule.
\item If $f \co (F,\phi)\to (F',\phi')$ 
is a map of lax monad morphisms, then $f \circ A\co F \circ A\to F' \circ A$
is a bimodule map.
\end{enumerate}
\end{lemma}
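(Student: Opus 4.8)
The plan is to construct, given a lax monad morphism $(F,\phi)\co (\objX,A)\to(\objY,B)$, the two actions on $F\circ A$ and verify the required axioms. First I would define the right $A$-action
\[
\rho \defeq (F\circ \mu_A)\co F\circ A\circ A\to F\circ A \, ,
\]
using simply the multiplication of the monad $A$; the associativity and unit axioms for this right action are immediate from the monad axioms for $A$ (composed with $F$ on the left). Next I would define the left $B$-action as the composite
\[
\lambda \defeq (F\circ \mu_A)\cdot(\phi\circ A)\co B\circ F\circ A\to F\circ A\circ A\to F\circ A \, ,
\]
that is, first apply the structure 2-cell $\phi\co B\circ F\to F\circ A$ (whiskered by $A$ on the right), then multiply the two copies of $A$ together.

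The verification of the left $B$-module axioms is where the two coherence diagrams in the definition of a lax monad morphism come into play. For the unit axiom, I would paste the triangle $\eta_B\circ F$ followed by $\phi$ equals $F\circ\eta_A$ (whiskered by $A$) against the right-unit law of $\mu_A$. For the associativity axiom for $\lambda$, I would expand $\lambda\cdot(B\circ\lambda)$ and $\lambda\cdot(\mu_B\circ F\circ A)$ and reduce both to a common 2-cell $B\circ F\circ A\to F\circ A$ using: the hexagon axiom for $\phi$ (relating $\mu_B$, two applications of $\phi$, and $\mu_A$), naturality of $\phi$ whiskered against $\mu_A$, and the associativity of $\mu_A$. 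This is a routine but somewhat lengthy diagram chase; I would present it as a single large commutative diagram rather than in prose.

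The last point is the compatibility condition~\eqref{equ:commutingactions}, i.e.\ that the left $B$-action and right $A$-action on $F\circ A$ commute. Unravelling the definitions, both composites $B\circ F\circ A\circ A\to F\circ A$ are built from one copy of $\phi$ and two copies of $\mu_A$, and they agree by naturality of $\phi$ (whiskered on the right by $A$) together with associativity of $\mu_A$; in fact this is essentially the instance of the associativity of $\mu_A$ that says $\mu_A\cdot(A\circ\mu_A)=\mu_A\cdot(\mu_A\circ A)$, whiskered by $F$. For part~(ii), given a map of lax monad morphisms $f\co(F,\phi)\to(F',\phi')$, I would take $f\circ A\co F\circ A\to F'\circ A$ and check it is a right $A$-module map (immediate from naturality of horizontal composition, since $\rho$ on both sides is just $F\circ\mu_A$ resp.\ $F'\circ\mu_A$) and a left $B$-module map (using precisely the defining square $(\,f\circ A\,)\cdot\phi=\phi'\cdot(B\circ f)$ of a map of lax monad morphisms, whiskered by $A$, together with naturality of $\mu_A$).

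The main obstacle is the associativity axiom for the left $B$-action: it is the one place where the hexagon coherence condition for $\phi$ is genuinely needed, and assembling the diagram so that every region is visibly an instance of a monad axiom, a lax-monad-morphism axiom, or naturality requires some care. Everything else reduces to whiskering monad axioms for $A$ by $F$ and to naturality of $2$-cells, which I would treat as routine.
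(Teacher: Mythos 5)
Your construction is exactly the one in the paper: the right $A$-action is $F\circ\mu_A$, the left $B$-action is $(F\circ\mu_A)\cdot(\phi\circ A)$, and the axioms (which the paper dismisses as a straightforward diagram chase) are verified with precisely the ingredients you list, including the whiskered defining square for part~(ii). The proposal is correct and takes essentially the same route as the paper's proof.
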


\begin{proof} The  morphism $F \circ A  \co  \objX \to \objY$ has the structure 
of a free right $A$-module with the right action $\rho=F \circ \mu_A\co (F \circ A)\circ A\to F \circ A$.
The left action $\lambda\co  B\circ (F \circ A)\to F \circ A$ of  the monad~$B$
is defined to be the composite
\[
\xymatrix{
B\circ F \circ A\ar[r]^{\phi\circ A} & F \circ A\circ A\ar[rr]^{F \circ \mu_A}&& F \circ A  \, . }
\]
The proof the required properties is a straightforward diagram-chasing argument.
\end{proof}

Lemma~\ref{monadmorphtomod} allows us to define a homomorphism $R \co  \MndE \to \bimE$
as follows. 
Its action on objects is the identity. For a 
monad morphism $(F,\phi)\co  (\objX,A) \to (\objY,B)$, we define $R(F,\phi) \co  \objX/A  \to \objY/ B$
to be the $(B,A)$-bimodule with underlying morphism $F\circ A \co  \objX \to \objY$, as in the proof
of Lemma~\ref{monadmorphtomod},
Given a monad $2$-cell $f \co (F,\phi)\to (F',\phi')$, we define $R(f) \co F \circ A \to F' \circ A$ to be $f \circ A\co F \circ A\to F' \circ A$,
which is a bimodule map by part (ii) of Lemma~\ref{monadmorphtomod}. The remaining data necessary to define a homomorphism can be
derived easily. In particular, for lax monad morphisms  $(F,\phi)\co (\objX,A) \to (\objY,B)$ and $(G,\psi)\co  (\objY,B)\to (\objZ,C)$, we have an isomorphism
\[
R(G,\psi)\circ_B R(F,\phi) \iso
R(G \circ F, (G \circ \phi) \cdot (\psi \circ F))   \, , 
\]
since 
\[
R(G, \psi)\circ_B R(F,\phi)  = (G \circ B) \circ_B (F \circ A) \iso G \circ F \circ A = R(G \circ F) \, .
\]

Recall that for a bicategory $\bcatE$, we write $\bcatE^\op$ for the bicategory obtained from $\bcatE$
by formally reversing the direction of morphisms, but not that of 2-cells. Since $(\Bim(\bcatE^\op))^\op = 
\bimE$, Lemma~\ref{monadmorphtomod} admits a dual, which we state explicitly below since it will be useful in the
following. We begin by recalling from~\cite{StreetR:fortm} an explicit description of the morphisms and 2-cells 
in~$(\Mnd(\bcatE^\op))^\op$.

\begin{definition}
Let $A \co \objX \to \objX \, , B \co \objY \to \objY$ be monads in $\bcatE$.
\begin{enumerate}[(i)]
\item An  \emph{oplax monad morphism} $(F,\phi)\co (\objX,A) \to (\objY,B)$ consists of a morphism
$F \co  \objX \to \objY$ in~$\bcatE$ and a 2-cell $\psi \co  F \circ A \to B\circ F$ such that 
 the following diagrams commute:
\[
\xymatrix@C=1.2cm{
F \circ A \circ A \ar[d]_{F \circ \mu_A}\ar[r]^{\psi \circ A}& B \circ F \circ A  \ar[r]^{B \circ \psi} & B \circ  B \circ  F \ar[d]^{\mu_B \circ F} \\
F \circ A \ar[rr]_\psi&&  \, B \circ F \, , 
}\quad\quad
\xymatrix{
 F \ar[r]^-{F \circ \eta_A}  \ar@/_1pc/[dr]_-{\eta_B  \circ F } & F \circ A \ar[d]^-{\psi}  \\
 &  B \circ F \, . }
\]
\item A \myemph{map of oplax monad morphisms} $f \co  (F, \psi) \rightarrow (F', \psi')$ 
 is a 2-cell $f \co   F \rightarrow  F'$ in $\bcatE$ such that
 the following diagram commutes:
 \[
 \xymatrix@C=1.2cm{
 F \circ A \ar[r]^{\psi} \ar[d]_{f \circ A}  & B \circ F \ar[d]^{B \circ f} \\
 F' \circ A \ar[r]_{\psi'}  & B \circ F' \, .} 
 \]
\end{enumerate}
\end{definition}

The bicategory $\Mnd(\bcatE^\op)^\op$ has the same objects 
as $\MndE$, oplax monad morphisms as morphisms and maps of oplax
monad morphisms as 2-cells. We now state the dual of Lemma~\ref{monadmorphtomod}.

\begin{lemma} \label{monadmorphtomod2} \hfill
\begin{enumerate}[(i)]
\item  If $(F,\psi)\co (\objX,A) \to (\objY,B)$ is an oplax monad morphism
in a bicategory $\bcatE$, then the morphism $B \circ F  \co \objX \to \objY$
has the structure of a $(B,A)$-bimodule.
\item If $f \co (F,\psi)\to (F',\phi')$ 
is a map of oplax monad morphisms, then $B \circ f \co B \circ F \to B \circ F'$
is a bimodule map.
\end{enumerate}
\end{lemma}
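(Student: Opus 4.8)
The plan is to deduce this lemma from Lemma~\ref{monadmorphtomod} by applying that result to the opposite bicategory~$\bcatE^\op$, using the identification of oplax monad morphisms with lax ones and the duality for bimodules recorded in Remark~\ref{thm:dualitybimod}.

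First I would observe that, for monads $A\co \objX\to\objX$ and $B\co \objY\to\objY$ in~$\bcatE$, an oplax monad morphism $(F,\psi)\co (\objX,A)\to(\objY,B)$ in~$\bcatE$ is exactly the same data as a lax monad morphism $(F^\op,\psi)\co (\objY,B^\op)\to(\objX,A^\op)$ in~$\bcatE^\op$: the $2$-cell $\psi\co F\circ A\to B\circ F$ in~$\bcatE$ becomes a $2$-cell $A^\op\circ F^\op\to F^\op\circ B^\op$ in~$\bcatE^\op$, and the two coherence diagrams defining an oplax monad morphism are precisely the two coherence diagrams for a lax monad morphism once read in~$\bcatE^\op$ (this is the content of the identification $\Mnd(\bcatE^\op)^\op$ recalled above). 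Likewise, a map of oplax monad morphisms $f\co (F,\psi)\to(F',\psi')$ in~$\bcatE$ becomes a map of lax monad morphisms $(F^\op,\psi)\to(F'^\op,\psi')$ in~$\bcatE^\op$.

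Next I would apply Lemma~\ref{monadmorphtomod} in the bicategory~$\bcatE^\op$. By part~(i) of that lemma, the morphism $F^\op\circ B^\op\co \objY\to\objX$ of~$\bcatE^\op$ acquires an $(A^\op,B^\op)$-bimodule structure (free as a right $B^\op$-module, with left $A^\op$-action built from~$\psi$), and by part~(ii), for a map of oplax monad morphisms~$f$, the whiskered $2$-cell $f\circ B^\op$ is a bimodule map in~$\bcatE^\op$. It then remains to transport these conclusions back to~$\bcatE$: under the identification $(\Bim(\bcatE^\op))^\op=\bimE$ of Remark~\ref{thm:dualitybimod}, the morphism $F^\op\circ B^\op\co\objY\to\objX$ of~$\bcatE^\op$ corresponds to $B\circ F\co\objX\to\objY$ in~$\bcatE$, an $(A^\op,B^\op)$-bimodule structure on it corresponds to a $(B,A)$-bimodule structure on $B\circ F$ (with $B$ now acting \emph{freely} on the left, via $\mu_B\circ F$), and the bimodule map $f\circ B^\op$ corresponds to the bimodule map $B\circ f\co B\circ F\to B\circ F'$. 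This gives (i) and (ii).

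The only real work is bookkeeping: one must check that passing to~$\bcatE^\op$ swaps the roles of the left and right monads and that the coherence diagrams match up correctly after reversing $1$-cells — but this is exactly the kind of verification already packaged into Remark~\ref{thm:dualitybimod} and the discussion of $\Mnd(\bcatE^\op)^\op$, so no new difficulty arises. Alternatively, one could give a direct proof mirroring that of Lemma~\ref{monadmorphtomod}: equip $B\circ F$ with the free left $B$-module structure $\mu_B\circ F\co B\circ(B\circ F)\to B\circ F$ and the right $A$-action $(\mu_B\circ F)\cdot(B\circ\psi)\co (B\circ F)\circ A\to B\circ F$, and check that the two actions commute and that $B\circ f$ respects them by a routine diagram chase. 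The duality argument is shorter and reuses coherence already established, so that is the route I would take.
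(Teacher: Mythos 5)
Your proposal is correct. The only difference from the paper is one of emphasis: the paper motivates the statement by the duality $(\Bim(\bcatE^\op))^\op=\bimE$ but then \emph{proves} it by the direct mirrored construction — equipping $B\circ F$ with the free left $B$-action $\mu_B\circ F$ and the right $A$-action $(\mu_B\circ A)\cdot(B\circ\psi)$, with the diagram chase omitted "as for Lemma~\ref{monadmorphtomod}" — which is exactly the alternative you sketch at the end, with the same actions. Your primary route instead carries the duality through to completion: you identify an oplax monad morphism $(F,\psi)$ in $\bcatE$ with a lax monad morphism $(F^\op,\psi)\co(\objY,B^\op)\to(\objX,A^\op)$ in $\bcatE^\op$ (this is precisely the description of $\Mnd(\bcatE^\op)^\op$ given in the paper), apply Lemma~\ref{monadmorphtomod} there to get an $(A^\op,B^\op)$-bimodule structure on $F^\op\circ B^\op$ with free right $B^\op$-action, and transport back along Remark~\ref{thm:dualitybimod}, under which this becomes the $(B,A)$-bimodule structure on $B\circ F$ with free left $B$-action and right $A$-action induced by $\psi$; the translation of part~(ii) sending $f\circ B^\op$ to $B\circ f$ is likewise correct, since $2$-cells are not reversed in $\bcatE^\op$ and the oplax-map square becomes exactly the lax-map square. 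What the duality route buys is that no new diagram needs checking — the coherence is inherited from the lax case — at the cost of the bookkeeping you describe; the paper's direct route is self-contained but leaves the (routine) verification to the reader. Either is acceptable, and your write-up contains both.
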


\begin{proof} The  morphism $B \circ F \  \co  \objX \to \objY$ has the structure 
of a free left $B$-module with the left action $\mu  \circ F \co B \circ B \circ F \to B \circ B$.
The right $A$-action is defined to be the composite
\[
\xymatrix{
B \circ F \circ A\ar[r]^{B \circ \psi} & F \circ A\circ A\ar[rr]^{\mu_B \circ A}&& B  \circ F  \, . }
\]
As for Lemma~\ref{monadmorphtomod}, we omit the details of the verification.
\end{proof}

By Lemma~\ref{monadmorphtomod2}, it is possible to define a homomorphism
$L \co (\Mnd(\bcatE^\op))^\op \to \bimE$, in complete analogy with the way in
which we defined the homomorphism $R \co  \MndE \to \bimE$ using Lemma~\ref{monadmorphtomod}.
We omit the details, which are straightforward. The following remark will be useful in 
the proofs of Proposition~\ref{thm:bimcart} and Theorem~\ref{thm:ccbimod}.

\begin{remark} \label{thm:pseudonatmonadmorphism} 
Let $\Phi, \Psi\co  \bcatE \to \bcatF$ be homomorphisms  and let $F \co  \Phi \to \Psi$ be a 
pseudo-natural transformation. If $A \co \objX \to \objX$ is a monad in $\bcatE$,  we have monads 
$\Phi(A) \co \Phi(\objX) \to \Phi(\objX)$ and $\Psi(A) \co \Phi(\objX) \to \Psi(\objX)$ in $\bcatF$. Then,
the morphism $F_\objX \co \Phi(\objX) \to \Psi(\objX)$ and the pseudo-naturality 2-cell
\[
\xymatrix{
\Phi(\objX) \ar[r]^{F_\objX}  \ar[d]_{\Phi(A)} 
\ar@{}[dr]|{\quad \Downarrow \, f_A} 
& \Psi(\objX) \ar[d]^{\Psi(A)} \\
\Phi(\objX) \ar[r]_{F_\objX} & \Psi(\objX) }
\]
give us a lax monad morphism 
\[
(F_\objX, f_A) \co (\Phi(\objX), \Phi(A)) \to (\Psi(\objX), \Psi(A)) \, .
\]
Since the 2-cell $f_A$ is invertible, we also have an oplax monad morphism
\[
(F_\objX, f_A^{-1}) \co (\Phi(\objX), \Phi(A)) \to (\Psi(\objX), \Psi(A)) \, .
\]
\end{remark} 

\medskip

We now show how, under appropriate assumptions, for monads $A \co \objX \to \objX$ and $B \co \objY \to \objY$ in~$\bcatE$,
an adjunction  $(F, G, \eta, \varepsilon) \co \objX \rightarrow \objY$ induces an adjunction 
$(F', G', \eta', \varepsilon')  \co (\objX, A) \rightarrow (\objY, B)$ in $\bimE$. In order to do this, we will exploit
Lemma~\ref{monadmorphtomod}
and Lemma~\ref{monadmorphtomod2}.  Let us begin by observing that if $(F, G) \co \objX \rightarrow \objY$ is an adjunction in $\bcatE$, then a monad
$B \co \objY \to \objY$ induces a monad~$B' \co \objX \to \objX$, where~$B' \defeq G  \circ B 
\circ F$, with multiplication defined as the 
composite 
\[
\xymatrix@C=2cm{
G \circ B \circ F \circ G \circ B \circ F \ar[r]^-{G \circ B \circ \varepsilon \circ B \circ F} & 
G \circ B \circ B \circ F \ar[r]^-{G \circ \mu_B \circ F} & 
G \circ B \circ F \, ,}
\]
and unit $\eta' \co 1_\objX \to G \circ B \circ F$ defined as the composite 
\[
\xymatrix@C=1.5cm{
1_\objX \ar[r]^-{\eta} & G \circ F \ar[r]^-{G \circ \eta_B \circ F} & G \circ B \circ F\, .}
\]

Theorem~\ref{thm:transportadjunction} below allows us, under appropriate hypotheses, to construct
adjunctions in $\bimE$ from adjunctions in  $\bcatE$.

\begin{theorem} \label{thm:transportadjunction} Let $\bcatE$ be a tame bicategory.
Let $A \co \objX \to \objX \, , B \co \objY \to \objY$ be monads  in $\bcatE$ and let $(F, G, \eta, \varepsilon) \co \objX \rightarrow \objY$
be an adjunction in $\bcatE$. Then, a monad map $\xi \co A \to G \circ B \circ F$ determines an adjunction 
$(F', G', \eta', \varepsilon')  \co (\objX, A) \rightarrow (\objY, B)$ in $\bimE$.
\end{theorem}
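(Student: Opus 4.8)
The plan is to produce the adjunction in $\bimE$ by transporting the given adjunction $(F, G, \eta, \varepsilon)$ along the homomorphisms $R \co \MndE \to \bimE$ and $L \co (\Mnd(\bcatE^\op))^\op \to \bimE$ obtained from Lemma~\ref{monadmorphtomod} and Lemma~\ref{monadmorphtomod2}. The key observation is that the monad map $\xi \co A \to G \circ B \circ F$ is precisely the data needed to turn $F$ and $G$ into (op)lax monad morphisms. First I would use $\xi$ together with the unit $\eta$ and counit $\varepsilon$ to define a 2-cell $\phi \co B \circ F \to F \circ A$ and a 2-cell $\psi \co G \circ B \to A \circ G$, by the usual mate correspondence under the adjunction $F \dashv G$: explicitly $\phi$ is the composite $B \circ F \to F \circ G \circ B \circ F \to F \circ A$, using $F \circ \eta \circ$ on the first leg and $F \circ \xi^{-1}$... more precisely using the unit of the adjunction and then $F \circ (\text{mate of } \xi)$, and dually for $\psi$. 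One then checks that $(F, \phi) \co (\objX, A) \to (\objY, B)$ is an oplax monad morphism and $(G, \psi) \co (\objY, B) \to (\objX, A)$ is a lax monad morphism — these verifications are the compatibility of $\phi$ and $\psi$ with the monad multiplications and units, which follow from $\xi$ being a monad map and the triangle identities for $\eta, \varepsilon$.

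Next I would set $F' \defeq L(F, \phi) \co \objX/A \to \objY/B$, which has underlying 1-cell $B \circ F \co \objX \to \objY$ with its canonical free left $B$-module structure and the right $A$-action induced by $\phi$ as in Lemma~\ref{monadmorphtomod2}, and $G' \defeq R(G, \psi) \co \objY/B \to \objX/A$, which has underlying 1-cell $G \circ B \co \objY \to \objX$ with its canonical free right $B$-module structure and the left $A$-action induced by $\psi$. The task is then to exhibit unit and counit bimodule maps
\[
\eta' \co 1_{\objX/A} \to G' \circ_B F' \, , \qquad \varepsilon' \co F' \circ_B G' \to 1_{\objY/B}
\]
satisfying the triangle identities in $\bimE$. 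Here $1_{\objX/A}$ is $A \co \objX \to \objX$ with its bimodule structure, $1_{\objY/B}$ is $B$, and the relative composites are computed by the reflexive coequalizers of Section~\ref{sec:regbbb}. I would compute $G' \circ_B F' = (G \circ B) \circ_B (B \circ F)$; using the split-fork argument behind the unit isomorphisms~\eqref{equ:leftunitisomorphism} and~\eqref{equ:rightunitisomorphism}, this relative composite is canonically $G \circ B \circ F$ as a $(A,A)$-bimodule. Under this identification $\eta'$ is forced to be (a bimodule-map lift of) the monad map $\xi \co A \to G \circ B \circ F$ itself, and one checks directly that $\xi$ is a map of left and right $A$-modules precisely because it is a monad map. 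Dually, $F' \circ_B G' = (B \circ F) \circ_B (G \circ B)$ reduces, again via the split forks, to $B \circ F \circ G \circ B$, and $\varepsilon'$ is induced by $B \circ \varepsilon \circ B$ followed by $\mu_B$; compatibility with the $B$-actions on both sides is a diagram chase using that $\varepsilon$ is the counit.

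Finally I would verify the two triangle identities $(G' \circ \varepsilon') \cdot (\eta' \circ G') = 1_{G'}$ and $(\varepsilon' \circ F') \cdot (F' \circ \eta') = 1_{F'}$ in $\bimE$. Since every bimodule and every relative composite here is built from $A$, $B$, $F$, $G$ by pasting, and since the coequalizers involved split (so that the universal maps $q$ admit explicit sections), each triangle identity unwinds to a pasting equation in $\bcatE$ among $\eta$, $\varepsilon$, $\mu_A$, $\mu_B$, $\xi$; these follow from the ordinary triangle identities for $(F, G, \eta, \varepsilon)$, the monad axioms for $A$ and $B$, and the fact that $\xi$ is a monad map. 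The main obstacle I expect is bookkeeping: correctly identifying the relative composites $G' \circ_B F'$ and $F' \circ_B G'$ and the bimodule structures they carry, and checking that the candidate $\eta'$ and $\varepsilon'$ descend through the defining coequalizers and are bimodule maps — these are the points where the tameness of $\bcatE$ (so the coequalizers exist and are preserved by composition) is genuinely used, and where one must be careful that the various left/right actions match up. Once the coequalizer computations are in hand, the triangle identities themselves are a routine, if lengthy, diagram chase.
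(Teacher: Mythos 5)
Your overall route is the paper's: use $\xi$ and the adjunction to make $F$ an oplax and $G$ a lax monad morphism, take the associated bimodules $F' = B \circ F$ and $G' = G \circ B$ via Lemmas~\ref{monadmorphtomod2} and~\ref{monadmorphtomod}, identify $G' \circ_B F'$ with $G \circ B \circ F$ so that $\eta'$ is $\xi$, induce $\varepsilon'$ from $\mu_B \cdot (B \circ \varepsilon \circ B)$, and check the triangle laws through the coequalizers. However, two steps are wrong as written. First, your structural 2-cells point the wrong way, and your explicit formula invokes $\xi^{-1}$, which does not exist ($\xi$ is merely a monad map, not an isomorphism). What the mate construction actually yields is $\psi \co F \circ A \to B \circ F$, namely $(\varepsilon \circ B \circ F) \cdot (F \circ \xi)$, and $\phi \co A \circ G \to G \circ B$, namely $(G \circ B \circ \varepsilon) \cdot (\xi \circ G)$; these are precisely the oplax structure on $F$ and the lax structure on $G$ that Lemmas~\ref{monadmorphtomod2} and~\ref{monadmorphtomod} require. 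A 2-cell $B \circ F \to F \circ A$, as you wrote, is lax (not oplax) data and cannot be manufactured from $\xi$ without inverting it, so your stated $\phi$ and $\psi$ are not constructible; the repair is to take the mates in the directions above, after which your definitions of $F'$ and $G'$ agree with the paper's.

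Second, the counit composite is $F' \circ_A G'$ --- relative composition is over the middle monad $A$, not $B$ --- and its defining coequalizer $B \circ F \circ A \circ G \circ B \rightrightarrows B \circ F \circ G \circ B \to F' \circ_A G'$ does not split and is not isomorphic to $B \circ F \circ G \circ B$: the two parallel arrows use the $A$-actions built from $\xi$, and there is no canonical section. Only the unit-side composite $G' \circ_B F'$ collapses, by the split forks behind the unit isomorphisms, to $G \circ B \circ F$. So you cannot define $\varepsilon'$ directly on $B \circ F \circ G \circ B$ and declare the job done; instead, as in the paper, one checks that $\sigma \defeq \mu_B \cdot (B \circ \varepsilon \circ B)$ coequalizes the two legs and lets $\varepsilon'$ be the map induced on the quotient, and then verifies the triangle identities by precomposing with the regular epimorphisms $q$ (i.e.\ by the universal property of the coequalizers), not by appealing to sections that are not there. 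With these two corrections your argument coincides with the paper's proof.
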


\begin{proof} Given a monad map (in the sense of Definition~\ref{thm:monad}) $\xi \co A  \to G \circ B \circ F$, we define the 2-cell $\psi \co F \circ A \to B \circ F$ as the composite
\[
\xymatrix@C=1.2cm{F \circ A \ar[r]^-{F \circ \xi} &  F \circ G \circ B \circ F \circ \ar[r]^-{ \varepsilon \circ B \circ F} & B \circ F \, .}
\]
A  standard diagram-chasing argument shows that  $(F, \psi) \co (\objX,A) \to (\objY, B)$ is an oplax monad morphism. Similarly, we define the 2-cell 
$\phi \co A \circ G \to G \circ B$ as the composite
\[
\xymatrix@C=1.2cm{A \circ G \ar[r]^-{\xi \circ G} & G \circ B \circ F \circ G \ar[r]^-{G \circ B \circ \varepsilon} & G \circ B}
\]
and obtain a lax monad morphism $(G, \phi) \co (\objY, B) \to (\objX ,A)$. We define $F' \co (\objX, A) \rightarrow (\objY, B)$ to be the $(B,A)$-bimodule associated to the oplax monad morphism $(F, \psi) \co (\objX ,A) \to (\objY, B)$, as in Lemma~\ref{monadmorphtomod2}. Explicitly, the morphism $F' \defeq B \circ F$ is  
equipped with the left $B$-action $\lambda \co B \circ F' \to F'$ given by $\mu \circ F \co B \circ B \circ F \to B \circ F$ and the 
right $A$-action $\rho \co F' \circ A \to F'$  given by the composite 
\[
(\mu_B \circ F) \cdot (B \circ \psi) \co B \circ F \circ A \to B \circ F \, .
\]
Similarly, the right adjoint $G' \co (\objY, B) \to (\objX, A)$ is the $(A,B)$-bimodule associated to the lax monad 
morphism $(G, \phi) \co (\objY, B) \to (\objX ,A)$, as in Lemma~\ref{monadmorphtomod}. Explicitly,  $G' \defeq G \circ B$
is  equipped with the left $A$-action $\lambda \co A \circ G' \to G'$ given by the 
composite
\[
(G \circ \mu_B) \cdot (\phi \circ B) \co  A \circ G \circ B \to G \circ B
\] 
and the 
right $A$-action $\rho \co G' \circ A \to G'$ given by $G \circ \mu_B \co G \circ B \circ B \to G \circ B$.

In order to define the unit of the adjunction $\eta' \co 1_{(\objX, A)} \to G' \circ_B F'$, observe that by the
definition of the relative composition we have
\[
G' \circ_B F' \iso G \circ B \circ F \, .
\] 
Hence, we define $\eta'$ 
to be the monoid map $\xi \co A \to G \circ B \circ F$. The counit $\varepsilon' \co F' \circ_A G' \to 1_{(\objY, B)}$
is obtained via the universal property of coqualizers, via the diagram
\[
\xymatrix{
F' \circ A \circ G' \ar@<1ex>[r] \ar@<-1ex>[r]   & F' \circ G' \ar[r]  
\ar@/_1pc/[dr]_-{\sigma} & F' \circ_A G'  \ar[d]^{\varepsilon'} \\
  & & B \, , }
\]
where $\sigma$ is the composite
\[
\xymatrix{
B \circ F \circ G \circ B \ar[r]^-{B \varepsilon B} & B \circ B \ar[r]^-{\mu} & B \, .}
\]
Unfolding the relevant definitions, the triangular laws amount to the commutativity of the diagrams
\[
\xymatrix@C=1.5cm@R=1.5cm{
B \circ F \circ_A A \ar[r]^-{B \circ F \circ_A \xi} \ar@/_1pc/[dr]_{\ell_{B \circ F}} & B \circ F \circ_A G \circ B \circ F \ar[d]^{\varepsilon' \circ F} \\
 & B \circ F \, , } \qquad
 \xymatrix@C=1.5cm@R=1.5cm{
 A \circ_A  G \circ B \ar[r]^-{\xi \circ_A G \circ B} \ar@/_1pc/[dr]_{r_{G \circ B}} & G \circ B \circ F \circ_A  G \circ B \ar[d]^{G \circ \varepsilon'}  \\
 & G \circ B \, ,}
 \]
 where $\ell_{B \circ F}$ and $r_{G \circ B}$ are the unit isomorphisms of~$\bimE$, defined as in~\eqref{equ:leftunitisomorphism}
 and~\eqref{equ:rightunitisomorphism}. In both cases, the required commutativity follows by the universal property defining the relative composition operation and
in particular the definition of the unit isomorphisms.
\end{proof} 

Let us remark that the adjunction $(F', G', \eta', \varepsilon')$ in $\bimE$ constructed in
the proof of Theorem~\ref{thm:transportadjunction} is not obtained by
applying the homomorphism $\JE \co  \bcatE \rightarrow \bimE$ to the adjunction $(F, G, \eta, \varepsilon)$ in $\bcatE$. 
Indeed, applying $\JE$
to the latter would give us an adjunction in $\bimE$ between the identity monads $(X,1_X)$ and $(Y, 1_Y)$.

\section{Tameness of bicategories of symmetric sequences}
\label{sec:regbss}

The aim of this section is to prove that the bicategory $\SymV$ of symmetric sequences, defined in Section~\ref{sec:symsaf}, is tame. 
This generalizes the corresponding fact for the category of single-sorted symmetric sequences, equipped with the substitution monoidal
structure, proved in~\cite{RezkC:spaasc}. Showing that $\SymV$ is tame allows us to organize
 operads, operad bimodules and operad bimodule maps into a bicategory, called the bicategory of operad bimodules and denoted 
 by~$\OpdV$, using the bimodule construction of Section~\ref{sec:regbbb}. More precisely, we define
\begin{equation}
\label{equ:opd}
\OpdV \defeq \Bim(\SymV) \, . 
\end{equation}
In particular, for operads $(X,A)$ and $(Y,B)$, we have 
\[
\OpdV[(X,A), (Y,B)] = \SymV[X,Y]^B_A \, .
\] 

In order to prove that $\SymV$ is tame, we will show that the bicategory $\SDistV$ defined of Section~\ref{sec:sdist} is tame. 
Since, for small $\catV$-categories $\catX$ and $\catY$, we have
\[
\SDistV[\catX, \catY] = \CATV[S(\catY)^\op \otimes \catX, \catV]  \, ,
\]
the existence of reflexive coequalizers in the hom-categories of $\SDistV$ is clear. Thus, it remains to show that the
composition functors of $\SDistV$ preserve reflexive coequalizers in both variables.  We will actually show something stronger, namely that the composition functors in $\SDistV$ preserve 
sifted colimits in the first variable and are cocontinuous in the second variable, which implies that composition functors
preserve reflexive coequalizers, since reflexive coequalizers are sifted colimits.  For the convenience of the reader, we begin by recalling the notion of a sifted category and recall some basic facts about it. For further information on this notion, 
see~\cite[Chapter~3]{AdamekJ:algt}.

\begin{definition} We say that a small category $\catK$ is~\emph{sifted} 
if the colimit functor
\[
\colim_\catK\co  \Set^\catK\to \Set
\]
preserves finite products. A category $\catK$ is said to be \emph{cosifted}
if the opposite category $\catK^{\op}$ is sifted.
\end{definition}

A category $\catK$ is sifted if and only if  it is non-empty and the diagonal functor $d_\catK \co \catK \to \catK\times \catK$ is cofinal.
Dually, $\catK$ is cosifted if and only if  it is non-empty and the diagonal functor $d_\catK \co \catK\to \catK\times \catK$ is coinitial.
Let us say that a presheaf $X\co \catK^\op\to \Set$ is \emph{connected} if $\colim_\catK X=1$.
Then a category $\catK$ is cosifted if and only if  it is non-empty and
the cartesian product $\catK(-,j)\times \catK(-,k)$ of representable presheaves is connected.
A sifted (or cosifted) category is connected. In the statement of the next lemma, we write
$\Delta$ for the usual category of finite ordinals and monotone maps, and  $\Delta_{|_1}$
for  its full subcategory spanned by the objects $[0]$ and $[1]$.

\begin{lemma} The categories $\Delta$ and  $\Delta_{|_1}$ are cosifted.
\end{lemma}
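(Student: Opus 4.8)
The statement is that $\Delta$ and $\Delta_{|_1}$ are cosifted, equivalently that $\Delta^{\op}$ and $\Delta_{|_1}^{\op}$ are sifted. Using the criterion recalled just before the lemma, a non-empty category $\catK$ is cosifted if and only if the cartesian product $\catK(-,j) \times \catK(-,k)$ of representable presheaves is connected for every pair of objects $j, k$, i.e.\ its colimit over $\catK$ is terminal. So the plan is to verify this connectedness condition directly in each of the two cases. Both categories are clearly non-empty, so only the connectedness of products of representables needs checking.

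First I would handle $\Delta$. Here a presheaf $\Delta(-, [m]) \times \Delta(-, [n])$ sends $[k]$ to the set of pairs of monotone maps $([k] \to [m], [k] \to [n])$, which is the same as the set of monotone maps $[k] \to [m] \times [n]$, where $[m] \times [n]$ carries the product partial order. The colimit over $\Delta^{\op}$ of this presheaf is computed as $\pi_0$ of the simplicial set $N(P)$, the nerve of the poset $P = [m] \times [n]$. Since $[m] \times [n]$ is a connected poset (in fact it has a least element $(0,0)$), its nerve is connected, so $\pi_0 N(P)$ is a singleton. More concretely, one checks that the category of elements of this presheaf is connected: the element $((0,0)) \in \Delta([0],[m]) \times \Delta([0],[n])$ (the pair of constant-zero maps) can be connected to any other element by zig-zags, using that every monotone map $[k] \to [m] \times [n]$ restricts along the inclusion $[0] \hookrightarrow [k]$ of the bottom vertex and that this restriction is comparable (via a map in the category of elements) to the original. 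This shows $\colim_{\Delta^{\op}} \bigl( \Delta(-,[m]) \times \Delta(-,[n]) \bigr) = 1$, hence $\Delta$ is cosifted.

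Next, for $\Delta_{|_1}$ the same strategy applies but the computation is finite and explicit. The objects are $[0]$ and $[1]$, and one must check connectedness of $\Delta_{|_1}(-, j) \times \Delta_{|_1}(-, k)$ for $j, k \in \{[0], [1]\}$. The cases where $j = [0]$ or $k = [0]$ are immediate since $\Delta_{|_1}(-, [0])$ is terminal. The only real case is $j = k = [1]$: here the presheaf evaluates to a one-point set at $[0]$ (the pair of constant maps $[0] \to [1]$) and to $\{0,1\}^2$ at $[1]$, and one checks by hand that the category of elements is connected, the unique element over $[0]$ mapping to each of the four elements over $[1]$ via the two face maps $[0] \rightrightarrows [1]$, which together reach all four pairs. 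Hence $\colim_{\Delta_{|_1}^{\op}}$ of this product of representables is $1$, so $\Delta_{|_1}$ is cosifted.

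The main obstacle, such as it is, lies in the case of $\Delta$: one needs the (standard but not entirely trivial) fact that the product of representable presheaves on $\Delta$ has connected category of elements, which amounts to the connectedness of the nerve of a product poset. This can either be cited as well-known (the nerve of a poset with a minimum is contractible) or proved by the explicit zig-zag argument sketched above. The $\Delta_{|_1}$ case is purely a finite check and presents no difficulty. An alternative route, which I would mention if brevity is desired, is to invoke the known fact that $\Delta^{\op}$ is sifted because reflexive coequalizers and more generally geometric realizations of simplicial objects commute with finite products in $\Set$, together with the analogous elementary statement for $\Delta_{|_1}^{\op}$ governing reflexive coequalizers; but the direct verification via the connectedness criterion is self-contained and fits the style of the surrounding text.
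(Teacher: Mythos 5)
Your overall route is legitimate but genuinely different from the paper's: the paper verifies directly that the colimit functor on simplicial sets (respectively, reflexive graphs) is $\pi_0$ and that $\pi_0$ preserves binary products and the terminal object, whereas you verify the criterion recalled just before the lemma, namely connectedness of products of representables. Your treatment of $\Delta$ is correct: $\Delta(-,[m])\times\Delta(-,[n])$ is the nerve of the poset $[m]\times[n]$, which has a least element and is therefore connected, so its colimit is a point.

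The $\Delta_{|_1}$ case, however, is verified incorrectly as written. The presheaf $\Delta_{|_1}(-,[1])\times\Delta_{|_1}(-,[1])$ is not a one-point set at $[0]$: since $\Delta_{|_1}([0],[1])$ has two elements (the two maps $[0]\to[1]$), its value at $[0]$ is the four-element set $[1]\times[1]$, and its value at $[1]$ is the nine-element set $\mathrm{Hom}([1],[1])\times\mathrm{Hom}([1],[1])$, each factor consisting of the identity and the two constant maps. Consequently there is no ``unique element over $[0]$'', and the zig-zag you describe does not exist, so the only nontrivial case of your finite check does not go through as stated. The claim is true and easily repaired: the reflexive graph in question has vertex set $[1]\times[1]$ and, for each pair $(f,g)$ of monotone endomaps of $[1]$, an edge from $(f(0),g(0))$ to $(f(1),g(1))$; choosing $f$ with $f(0)=a$, $f(1)=1$ and $g$ with $g(0)=b$, $g(1)=1$ (each is either the identity or the constant map at $1$) joins an arbitrary vertex $(a,b)$ to $(1,1)$, so the graph is connected and its colimit is a point. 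Note that it is precisely the constant endomaps, which factor through $[0]$ via the degeneracy, that supply these edges; this is where reflexivity enters, exactly as in the paper's remark that $\pi_0$ of a product of reflexive graphs is the product of the $\pi_0$'s.
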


\begin{proof} The colimit of a simplicial set $X\co \Delta^{\op}\to \Set$
is the set $\pi_0(X)$ of its connected components.
It is well known that the canonical map $\pi_0(X\times Y)\to \pi_0(X)\times \pi_0(Y)$
is bijective for any pair of simplicial sets $X$ and $Y$. Moreover,
$\pi_0(\Delta[0])=1$. Similarly, a presheaf $X$ on $\Delta_{|_1}$
is a reflexive graph and its colimit 
is the set $\pi_0(X)$ of its connected components.
It is easy to verify that the canonical map $\pi_0(X\times Y)\to \pi_0(X)\times \pi_0(Y)$
is bijective for any pair of reflexive graphs~$X$ and~$Y$.
Moreover, $\pi_0(\Delta[0])=1$. 
\end{proof}

\begin{remark} Since a reflexive graph in $\bcatE$ is exactly a contravariant functor $X\co \Delta_{|_1}^{\op}\to \bcatE$, reflexive coequalizers are sifted colimits.
\end{remark}
 
 We now establish some auxiliary facts which will allow us to establish that $\SDistV$ is tame. 
If  $\catK$ is a small category and $\rigR$ is a symmetric $\catV$-rig, then category $\rigR^\catK$
of $\catK$-indexed diagrams in $\rigR$ has a symmetric monoidal (closed) 
structure with the pointwise tensor product:
\[
(A\otimes B)(k) \defeq A(k)\otimes B(k) \, .
\]
The unit object for the pointwise tensor product
is the  constant diagram $cI\co \catK\to \rigR$
with value the unit object $I\in \rigR$.
If $A,B\in \rigR^\catK$
then the canonical map
\begin{equation}
\label{equ:colimmonoidal}
\colim_{(i,j)\in \catK\times \catK} A(i)\otimes B(j) \to \colim_{i\in \catK} A(i) \otimes  \colim_{j\in \catK}  B(j)
\end{equation}
is an isomorphism, since the tensor product functor of $\rigR$
is cocontinuous in each variable. If $\catK$ is sifted, then
the canonical map 
\[
\colim_{i\in \catK} A(i)\otimes B(i)\to \colim_{(i,j)\in \catK\times \catK}A(i)\otimes B(j)
\]
is an isomorphism, since the diagonal $d_\catK \co \catK\to \catK\times \catK$ is cofinal.

\begin{proposition}\label{siftedtensor1}
Let $\catK$ be a small category and $\rigR$ a symmetric $\catV$-rig.
If $\catK$ is sifted,
then $\colim_\catK\co  \rigR^\catK\to \rigR$
is a symmetric monoidal functor.
\end{proposition}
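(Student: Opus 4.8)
The plan is to realize $\colim_\catK$ as an oplax symmetric monoidal functor in a canonical way and then observe that, under the hypotheses, its structure maps are invertible. Recall that $\colim_\catK \co \rigR^\catK \to \rigR$ is left adjoint to the constant-diagram functor $c \co \rigR \to \rigR^\catK$, and that $c$ is strong symmetric monoidal: the pointwise tensor product on $\rigR^\catK$ satisfies $c(X) \otimes c(Y) \iso c(X \otimes Y)$ and $c(I)$ is precisely the unit $cI$ of $\rigR^\catK$, and these isomorphisms respect the associativity, unit and symmetry constraints because they hold pointwise in $\rigR$. By the standard mate correspondence (doctrinal adjunction), a left adjoint of a strong symmetric monoidal functor carries a canonical oplax symmetric monoidal structure; applying this to $\colim_\catK \dashv c$ we obtain natural transformations
\[
m_{A,B} \co \colim_\catK(A \otimes B) \to \colim_\catK A \otimes \colim_\catK B \, , \quad n \co \colim_\catK(cI) \to I \, ,
\]
satisfying all the coherence axioms of an oplax symmetric monoidal functor automatically.

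Next I would identify these structure maps explicitly. Unfolding the mate construction, $m_{A,B}$ is exactly the canonical comparison map, which factors as the composite of the canonical map $\colim_{i \in \catK} A(i) \otimes B(i) \to \colim_{(i,j) \in \catK \times \catK} A(i) \otimes B(j)$ induced by the diagonal $d_\catK \co \catK \to \catK \times \catK$, followed by the canonical map $\colim_{(i,j)} A(i) \otimes B(j) \to \colim_i A(i) \otimes \colim_j B(j)$, while $n$ is the canonical map from the colimit of the constant diagram at $I$ to $I$. These are precisely the two maps discussed immediately before the statement.

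Then the key point is that both structure maps are isomorphisms. The map $\colim_{(i,j)} A(i) \otimes B(j) \to \colim_i A(i) \otimes \colim_j B(j)$ is invertible because $\otimes$ on the symmetric $\catV$-rig $\rigR$ is cocontinuous in each variable, so the colimit may be computed one variable at a time; the map $\colim_i A(i) \otimes B(i) \to \colim_{(i,j)} A(i) \otimes B(j)$ is invertible because $\catK$ is sifted, hence the diagonal $d_\catK$ is cofinal; and $n$ is invertible because a sifted category is connected and the colimit of a constant diagram over a connected category returns the value $I$. Therefore $m_{A,B}$ and $n$ are isomorphisms, and an oplax symmetric monoidal functor all of whose structure maps are invertible is a (strong) symmetric monoidal functor, with monoidal structure $m_{A,B}^{-1}$ and $n^{-1}$ and coherence transferred from the oplax axioms. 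Hence $\colim_\catK \co \rigR^\catK \to \rigR$ is symmetric monoidal.

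The only slightly delicate point — and the one I would take care over — is checking that the abstract structure maps produced by the mate correspondence really coincide with the explicit canonical maps of the preceding paragraph, so that the invertibility statements already recorded in the excerpt apply verbatim; this is a routine but not entirely trivial diagram chase. Alternatively one can bypass doctrinal adjunction altogether and simply define $m_{A,B}^{-1}$ and $n^{-1}$ as those canonical isomorphisms directly, verifying the monoidal and symmetry coherence conditions by hand using the universal property of colimits and naturality of the comparison maps; I expect this direct route to be the one worth writing out in full if the mate-correspondence formalism is not developed elsewhere in the paper.
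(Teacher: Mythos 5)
Your proof is correct and follows essentially the same route as the paper: the paper likewise observes that the canonical comparison maps $\colim_{i} A(i)\otimes B(i)\to \colim_i A(i)\otimes\colim_j B(j)$ and $\colim_\catK cI\to I$ are invertible, using cocontinuity of the tensor product in each variable, cofinality of the diagonal (siftedness), and connectedness of $\catK$. The only difference is presentational: the paper takes the coherence of these canonical maps for granted, whereas you justify it explicitly via the oplax structure coming from the adjunction with the strong monoidal constant-diagram functor.
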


\begin{proof} We saw above that the map in~\eqref{equ:colimmonoidal}
is an isomorphism for any pair of diagrams 
$A,B \co \catK\to  \rigR$.
Moreover, the canonical map 
$\colim_{\catK} cI \to I$ is an isomorphism,
since $\catK$ is connected.
\end{proof}

We define the  $n$-fold tensor product functor $T^n\co \rigR^n \to \rigR$ by letting
\[
T^n(X_1,\ldots, X_n) \defeq X_1\otimes \ldots \otimes  X_n \, .
\]

\begin{corollary}\label{nfoldtensorproduct}
The $n$-fold tensor product functor $T^n\co \rigR^n \to \rigR$ preserves sifted colimits for every $n\geq 0$.
\end{corollary}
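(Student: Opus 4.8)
The plan is to deduce this corollary directly from Proposition~\ref{siftedtensor1} by an inductive argument on $n$. The $n$-fold tensor product functor $T^n \co \rigR^n \to \rigR$ should be recognised as built up from the binary tensor product, and since a sifted colimit of $n$-tuples of diagrams can be computed componentwise, it suffices to handle the binary case and then iterate. The base cases are immediate: for $n = 0$ the functor $T^0 \co \rigR^0 \to \rigR$ picks out the unit object $I$, which is constant and hence trivially preserves all colimits; for $n = 1$ the functor is the identity.

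For the inductive step, I would first record the key categorical fact that for a sifted category $\catK$, a functor $F \co \rigR^n \to \rigR$ preserves sifted colimits if and only if the colimit of a $\catK$-indexed diagram in $\rigR^n$, computed componentwise, is carried by $F$ to the colimit of the composite diagram. Given diagrams $A_1, \ldots, A_n \co \catK \to \rigR$, we have
\[
T^n \bigl( \colim_\catK A_1, \ldots, \colim_\catK A_n \bigr) = \bigl( \colim_\catK A_1 \bigr) \otimes T^{n-1} \bigl( \colim_\catK A_2, \ldots, \colim_\catK A_n \bigr) \, .
\]
By the inductive hypothesis, $T^{n-1}$ preserves sifted colimits, so the second factor is isomorphic to $\colim_\catK \bigl( A_2 \otimes \cdots \otimes A_n \bigr)$. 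Then Proposition~\ref{siftedtensor1}, applied to the symmetric monoidal functor $\colim_\catK$, together with the fact that the binary tensor product of $\rigR$ is cocontinuous in each variable, shows that
\[
\bigl( \colim_\catK A_1 \bigr) \otimes \bigl( \colim_\catK ( A_2 \otimes \cdots \otimes A_n ) \bigr) \iso \colim_\catK \bigl( A_1 \otimes \cdots \otimes A_n \bigr) \, ,
\]
which is exactly $\colim_\catK \bigl( T^n \circ (A_1, \ldots, A_n) \bigr)$. Chasing through the canonical comparison maps confirms that this composite isomorphism is the canonical one, which gives the claim.

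I do not expect any serious obstacle here: the corollary is essentially a formal consequence of Proposition~\ref{siftedtensor1}, the componentwise computation of colimits in a product category $\rigR^n$, and the cocontinuity of $\otimes$ in each variable (which is part of the definition of a symmetric $\catV$-rig, since it is monoidal closed). The only point requiring a little care is to make sure the isomorphism produced is the \emph{canonical} comparison map rather than merely some isomorphism, but this follows by the usual naturality bookkeeping, since each step above is given by a canonical map. Alternatively, one could phrase the whole argument in one line by noting that $T^n$ factors as $\rigR^n \simeq (\rigR^\catK)^n \to \rigR^\catK \xrightarrow{\colim_\catK} \rigR$ when restricted to $\catK$-diagrams and invoking Proposition~\ref{siftedtensor1} directly.
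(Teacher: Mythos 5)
Your proof is correct and takes essentially the same route as the paper: the paper just notes that $\colim_\catK \co \rigR^\catK \to \rigR$ is monoidal by Proposition~\ref{siftedtensor1} and hence preserves $n$-fold tensor products, which is exactly what your induction on $n$ (together with the componentwise computation of sifted colimits in $\rigR^n$ and the binary comparison isomorphism) spells out. The only slip is the remark that the constant functor at $I$ ``trivially preserves all colimits'' --- it preserves only connected colimits --- but since sifted categories are connected this does not affect the $n=0$ case, and it is precisely the observation used for the unit comparison in the proof of Proposition~\ref{siftedtensor1}.
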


\begin{proof}
If $\catK$ is a sifted category,
then the colimit functor
$\colim_{\catK}\co \rigR^\catK\to \rigR$
is monoidal by Proposition~\ref{siftedtensor1}. It thus 
preserves $n$-fold tensor products, from which the claim follows.
 \end{proof}

Let $\mathbb{W}$ be a small $\catV$-category and $[\mathbb{W},\rigR]$
be the $\catV$-category of  $\catV$-functors from $\mathbb{W}$ to $\rigR$.
If $w\in \mathbb{W}$, then the evaluation functor $ev_w\co [\mathbb{W},\rigR]\to [\mathbb{W},\rigR]$
defined by letting $ev_w(F)=F(w)$ is cocontinuous.
If $\vec{w}=(w_1,\ldots, w_n) \in \mathbb{W}^n$, let us put 
\[
ev_{\vec{w}}(F)=F(w_1)\otimes \ldots \otimes F(w_n) \, .
\]

\begin{lemma}\label{tensorpower} 
The functor $ev_{\vec{w}}\co [\mathbb{W},\rigR] \to \rigR$
preserves sifted colimits
for every $\vec{w}\in \mathbb{W}^n$.
\end{lemma}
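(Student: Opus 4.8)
The plan is to express $ev_{\vec{w}}$ as a composite of two functors, each of which is already known to preserve sifted colimits, and then invoke the fact that such functors are closed under composition. Concretely, write $\vec{w} = (w_1, \ldots, w_n)$ and consider the functor
\[
e \co [\mathbb{W},\rigR] \to \rigR^n \, , \quad e(F) \defeq (F(w_1), \ldots, F(w_n)) \, ,
\]
together with the $n$-fold tensor product functor $T^n \co \rigR^n \to \rigR$ introduced just above. By construction $ev_{\vec{w}} = T^n \circ e$, since $T^n(e(F)) = F(w_1) \otimes \ldots \otimes F(w_n) = ev_{\vec{w}}(F)$.

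First I would observe that $e$ is cocontinuous, hence in particular preserves sifted colimits. This is immediate: colimits in the functor $\catV$-category $[\mathbb{W},\rigR]$ are computed pointwise, so each evaluation functor $ev_{w_i} \co [\mathbb{W},\rigR] \to \rigR$ is cocontinuous (as already recalled in the paragraph preceding the statement), and colimits in the product $\catV$-category $\rigR^n$ are likewise computed componentwise, so the tuple $e = (ev_{w_1}, \ldots, ev_{w_n})$ is cocontinuous. Next I would invoke Corollary~\ref{nfoldtensorproduct}, which states precisely that $T^n \co \rigR^n \to \rigR$ preserves sifted colimits. Finally, since a composite of functors each preserving sifted colimits again preserves sifted colimits, it follows that $ev_{\vec{w}} = T^n \circ e$ preserves sifted colimits, as required.

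There is essentially no obstacle here: the content has been isolated into Corollary~\ref{nfoldtensorproduct} (which in turn rests on Proposition~\ref{siftedtensor1}), and the remaining points are the pointwise computation of colimits in diagram categories and the trivial stability of the class of sifted-colimit-preserving functors under composition. The only minor care needed is to note that the argument is uniform in $n \geq 0$, the case $n = 0$ being the constant functor at the unit object $I \in \rigR$, for which the claim holds since a sifted category is connected and hence its colimit of the constant diagram $cI$ is $I$ itself.
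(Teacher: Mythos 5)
Your proposal is correct and follows essentially the same route as the paper: the paper also factors $ev_{\vec{w}}$ as the (cocontinuous) tuple functor $\rho_{\vec{w}}(F) = (F(w_1),\ldots,F(w_n))$ followed by $T^n$, and then invokes Corollary~\ref{nfoldtensorproduct}. Your extra remark about the case $n=0$ is a harmless addition already covered by that corollary.
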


\begin{proof} The functor $ev_{\vec{w}}$ is the composite
of the functor
$\rho_{\vec{w}}\co [\mathbb{W},\rigR] \to \rigR^n$
defined by letting
$\rho_{\vec{w}}(F) \defeq (F(w_1), \ldots ,F(w_n))$
followed by the 
$n$-fold tensor product functor $T^n\co \rigR^n \to \rigR$.
The first functor is cocontinuous, while the second preserves
sifted colimits by Corollary~\ref{nfoldtensorproduct}.
\end{proof}

Recall from Section~\ref{sec:sdist} that to an $S$-distributor $F\co  \catX \sym \catY$, \ie
a distributor $F \co \catX \mat S(\catY)$, 
we associate a distributor $F^e  \co S(\objX) \mat S(\catY)$ defined as in~\eqref{equ:fprime2}.

\begin{lemma}\label{coend}  
For every pair of small $\catV$-categories  $\catX, \catY$ the functor 
\[
(-)^e \co  \DistV[\catX, S(\catY)] \to \DistV[S(\catX), S(\catY)] 
\]
 preserves sifted colimits.
\end{lemma}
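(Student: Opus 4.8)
The plan is to exhibit the functor $(-)^e$ as built out of evaluation functors and coends, each of which preserves sifted colimits, and then invoke the fact that colimits in functor categories and iterated colimits are computed pointwise. Recall from~\eqref{equ:fprime2} that for an $S$-distributor $F \co \catX \sym \catY$ and $\vec{x} = (x_1, \ldots, x_n) \in S^n(\catX)$, $\vec{y} \in S(\catY)$, we have
\[
F^e[\vec{y}; \vec{x}] = \int^{\vec{y}_1 \in S(\catY)} \cdots \int^{\vec{y}_n \in S(\catY)} F[\vec{y}_1; x_1] \otimes \cdots \otimes F[\vec{y}_n; x_n] \otimes S(\catY)[\vec{y}, \vec{y}_1 \oplus \cdots \oplus \vec{y}_n] \, .
\]
Since colimits in $\DistV[S(\catX), S(\catY)] = [S(\catX)^\op \otimes S(\catY), \catV]$ are computed pointwise, it suffices to fix $\vec{x}$ and $\vec{y}$ and show that the functor sending $F$ to the right-hand side above preserves sifted colimits, as a functor $\DistV[\catX, S(\catY)] \to \catV$.

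First I would observe that, for a fixed pair $(\vec{y}_1, \ldots, \vec{y}_n)$ of objects of $S(\catY)$ and a fixed tuple $(x_1, \ldots, x_n)$, the functor
\[
F \mapsto F[\vec{y}_1; x_1] \otimes \cdots \otimes F[\vec{y}_n; x_n]
\]
preserves sifted colimits. This is exactly an instance of Lemma~\ref{tensorpower}: take $\mathbb{W} = S(\catY)^\op \otimes \catX$ (so $[\mathbb{W}, \catV] = \DistV[\catX, S(\catY)]$), let $\rigR = \catV$, and let $\vec{w} = ((\vec{y}_1, x_1), \ldots, (\vec{y}_n, x_n)) \in \mathbb{W}^n$; then $ev_{\vec{w}}(F) = F[\vec{y}_1; x_1] \otimes \cdots \otimes F[\vec{y}_n; x_n]$, which preserves sifted colimits by that lemma. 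Tensoring further with the fixed object $S(\catY)[\vec{y}, \vec{y}_1 \oplus \cdots \oplus \vec{y}_n] \in \catV$ still preserves sifted colimits, since tensoring with a fixed object of a $\catV$-rig is cocontinuous (it has a right adjoint, $\catV$ being closed).

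Next I would push the coends through. Each coend $\int^{\vec{y}_i \in S(\catY)}$ is a colimit, and colimits commute with colimits; more precisely, the coend functor $\int^{S(\catY)} \co [S(\catY)^\op \otimes S(\catY), \catV] \to \catV$ is cocontinuous, and a sifted colimit of $S$-distributors, being computed pointwise in $\DistV[\catX, S(\catY)]$, is carried by $F \mapsto F[-; x_i]$ to the corresponding sifted colimit, so the whole integrand-then-coend expression preserves sifted colimits. Iterating over $i = 1, \ldots, n$ handles all $n$ coends. Composing these observations: the pointwise value $F \mapsto F^e[\vec{y}; \vec{x}]$ is a composite of a sifted-colimit-preserving functor (the iterated tensor–evaluation of Lemma~\ref{tensorpower}, tensored with a constant) followed by cocontinuous coend functors, hence preserves sifted colimits. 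Since this holds for every $\vec{x}, \vec{y}$, the functor $(-)^e$ preserves sifted colimits.

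The main obstacle, such as it is, is purely bookkeeping: one must be careful that the coends range over $S(\catY)$, which is itself a coproduct $\bigsqcup_n S^n(\catY)$, so each coend decomposes as a coproduct of coends over the groupoids $S^n(\catY)$; but coproducts and these finite-groupoid colimits are all absolute enough that cocontinuity is unproblematic, and in any case the only structural input needed is that coends are colimits and that $\catV$ is a $\catV$-rig. No genuinely new idea beyond Lemma~\ref{tensorpower} is required; the argument is a routine assembly. (One could alternatively phrase the whole thing without fixing $\vec{x}, \vec{y}$ by noting that $(-)^e = \lambda^{-1} \circ ((-)^e \text{ on } \SMCatV\text{-level}) \circ \lambda$ via the commuting square in the Remark following the construction of $\SDistV$, but the pointwise computation is the most direct route.)
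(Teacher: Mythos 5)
Your argument is correct, and it rests on the same key input as the paper's proof, namely Lemma~\ref{tensorpower}, but you instantiate it differently and consequently do more work by hand. The paper first transports the problem across the equivalence $\lambda$, viewing $(-)^e$ as the functor $\CATV[\catX, \psh S(\catY)] \to \CATV[S(\catX), \psh S(\catY)]$, and then observes that $F^e(\vec{x}) = \mathit{ev}_{\vec{x}}(F)$, where the tensor product is the Day convolution of the symmetric $\catV$-rig $\psh S(\catY)$; Lemma~\ref{tensorpower} applied with $\mathbb{W} = \catX$ and $\rigR = \psh S(\catY)$ then finishes the proof in one line, because all the coends in~\eqref{equ:fprime2} are absorbed into the convolution monoidal structure already established in Chapter~\ref{cha:mond}. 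You instead stay with the unfolded coend formula, apply Lemma~\ref{tensorpower} with $\mathbb{W} = S(\catY)^\op \otimes \catX$ and $\rigR = \catV$, and then handle the convolution coends explicitly, using that tensoring with a fixed object is cocontinuous, that colimits in functor categories are pointwise, and that coends commute with colimits. This is essentially an unpacking of the paper's argument: it is more elementary in that it only uses $\catV$ itself as a rig rather than the Day convolution structure on $\psh S(\catY)$, at the cost of the bookkeeping you acknowledge; the paper's route buys brevity precisely by reusing that structure. Your closing parenthetical about transporting along $\lambda$ is exactly the paper's proof, so you had both routes in view.
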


\begin{proof} The claim follows if we show that the functor
\[
(-)^e \co \CATV[\catX, PS(\catY)] \to  \CAT[S(\catX), PS(\catY)]
\]
preserves sifted colimits. But this is a consequence of Lemma~\ref{tensorpower} since for a $\catV$-functor $F \co \catX \to PS(\catY)$, we have $F^e(\vec{x}) = ev_{\vec{x}}(F)$.
\end{proof}

\begin{theorem}  \label{thm:sdisttame} The bicategory $ \SDistV$ is tame. In particular, 
the horizontal composition functors of~$\SDistV$ preserve sifted colimits in the first variable and are cocontinuous in the second 
variable.
\end{theorem}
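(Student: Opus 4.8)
The plan is to establish the claim by analysing the explicit formula for composition of $S$-distributors given in~\eqref{equ:Sdistcomp}, and reducing all the colimit-preservation statements to the auxiliary lemmas just proved. Recall that for $S$-distributors $F \co \catX \sym \catY$ and $G \co \catY \sym \catZ$, given by distributors $F \co \catX \mat S(\catY)$ and $G \co \catY \mat S(\catZ)$, the composite $G \circ F \co \catX \sym \catZ$ is defined by
\[
(G \circ F)[\vec{z}; x] = \int^{\vec{y} \in S(\catY)} G^e[\vec{z}; \vec{y}] \otimes F[\vec{y}; x] \, ,
\]
where $G^e \co S(\catY) \mat S(\catZ)$ is the symmetric monoidal extension of $G$. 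Since the hom-categories of $\SDistV$ are functor $\catV$-categories into $\catV$, colimits in them are computed pointwise, so it suffices to check the colimit-preservation statements after evaluating at a fixed pair $(\vec{z}, x) \in S(\catZ)^\op \otimes \catX$.

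First I would treat the first variable. Fix $F \co \catX \mat S(\catY)$ and consider the functor sending $G \co \catY \mat S(\catZ)$ to $(G \circ F)[\vec{z}; x]$. By the formula above, this is the composite of three functors: the functor $(-)^e \co \DistV[\catY, S(\catZ)] \to \DistV[S(\catY), S(\catZ)]$, which preserves sifted colimits by Lemma~\ref{coend}; then evaluation at $\vec{z}$, i.e.\ $H \mapsto H[\vec{z}; -] \co \DistV[S(\catY), S(\catZ)] \to [S(\catY)^\op, \catV]$, which is cocontinuous since colimits are pointwise; and finally the coend functor $K \mapsto \int^{\vec{y} \in S(\catY)} K(\vec{y}) \otimes F[\vec{y}; x] \co [S(\catY)^\op, \catV] \to \catV$, which is cocontinuous, being a weighted colimit. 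A composite of a sifted-colimit-preserving functor with cocontinuous functors preserves sifted colimits; in particular it preserves reflexive coequalizers.

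Next I would treat the second variable, which is easier. Fix $G \co \catY \mat S(\catZ)$ and consider the functor sending $F \co \catX \mat S(\catY)$ to $(G \circ F)[\vec{z}; x] = \int^{\vec{y} \in S(\catY)} G^e[\vec{z}; \vec{y}] \otimes F[\vec{y}; x]$. This is the composite of evaluation at $x$, namely $F \mapsto F[-; x] \co \DistV[\catX, S(\catY)] \to [S(\catY)^\op, \catV]$, which is cocontinuous, followed by the coend functor $K \mapsto \int^{\vec{y} \in S(\catY)} G^e[\vec{z}; \vec{y}] \otimes K(\vec{y})$, again a cocontinuous weighted-colimit functor. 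Hence the composition functor is cocontinuous in the second variable, in particular preserving sifted colimits and reflexive coequalizers. Putting the two halves together, $\SDistV$ has hom-categories with reflexive coequalizers (indeed all colimits) and the horizontal composition functors preserve reflexive coequalizers in each variable, so $\SDistV$ is tame, and the sharper sifted/cocontinuous statements hold as claimed.

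The main obstacle, and the only genuinely non-formal point, is Lemma~\ref{coend}: the assertion that $(-)^e$ preserves sifted colimits. This is where the hypothesis on $\catV$ (that its tensor product preserves sifted colimits in each variable, a consequence of being monoidal closed) gets used, via Corollary~\ref{nfoldtensorproduct} and Lemma~\ref{tensorpower}; the point is that the formula for $F^e$ involves $n$-fold tensor products $F[\vec{y}_1; x_1] \otimes \cdots \otimes F[\vec{y}_n; x_n]$, and such multi-variable tensor functors fail to be cocontinuous but do preserve sifted colimits because the diagonal of a sifted category is cofinal. Since Lemma~\ref{coend} is already established in the excerpt, the remaining argument is a matter of assembling cocontinuous and sifted-colimit-preserving functors, which is routine.
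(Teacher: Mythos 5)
Your proof is correct and follows essentially the same route as the paper: both rewrite $G\circ F$ as $G^e\circ F$, reduce the first-variable statement to Lemma~\ref{coend} composed with cocontinuous functors, and obtain cocontinuity in the second variable from the cocontinuity of composition in $\DistV$ (which you merely unpack pointwise into evaluation and coend functors). The difference is only presentational, so nothing further is needed.
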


\begin{proof} The hom-categories of $\SDistV$
clearly have reflexive coequalizers. Recall from~\eqref{equ:Sdistcomp} that for
$F\in   \SDistV[\objX,\objY]$ and
$G\in   \SDistV[\objY,\objZ]$ we have
\begin{align*} 
(G\circ F)[ \vec{z}; x]  & =\int^{\vec{y} \in S(\catY)} G^e[ \vec{z}; \vec{y}] \otimes F[ \vec{y};x]  \\
 & = (G^e \circ F)[ \vec{z}; x]  \,  . \phantom{\int^{\vec{x}}} 
\end{align*} 
But the composition functors in $\DistV$ are  cocontinuous
in each variable. 
Hence, the functor $F \mapsto G\circ F$ is cocontinuous, while the  functor $G\mapsto G^e \circ F$ preserves sifted
colimits by Lemma~\ref{coend}.
\end{proof}

 \begin{corollary} \label{thm:smattame} The bicategories $\CatSymV$ and $\SymV$ are tame. 
 \end{corollary}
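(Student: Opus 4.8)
The plan is to deduce Corollary~\ref{thm:smattame} directly from Theorem~\ref{thm:sdisttame}, which asserts that $\SDistV$ is tame, together with the definitions $\CatSymV \defeq (\SDistV)^\op$ and $\SymV \defeq (\SMatV)^\op$, where $\SMatV$ is the full sub-bicategory of $\SDistV$ spanned by sets (viewed as discrete $\catV$-categories). The first observation to record is that the property of being tame is self-dual: a bicategory $\bcatE$ is tame precisely when each hom-category $\bcatE[\objX,\objY]$ has reflexive coequalizers and the horizontal composition functors preserve them in each variable, and since $\bcatE^\op[\objX,\objY] = \bcatE[\objY,\objX]$ and the horizontal composition of $\bcatE^\op$ is obtained from that of $\bcatE$ by swapping the order of the two variable slots, $\bcatE$ is tame if and only if $\bcatE^\op$ is tame. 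Hence from Theorem~\ref{thm:sdisttame} we get immediately that $\CatSymV = (\SDistV)^\op$ is tame.

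For $\SymV$, I would first check that $\SMatV$ is tame as a consequence of $\SDistV$ being tame. Since $\SMatV$ is a \emph{full} sub-bicategory of $\SDistV$ with the same composition and identities (as recalled in Remark~\ref{thm:Smatrices}), its hom-categories $\SMatV[X,Y] \iso \SDistV[X \cdot \UnitCat, Y \cdot \UnitCat]$ have reflexive coequalizers, and these are computed as in $\SDistV$ because the inclusion is full and faithful. One only needs that the class of discrete $\catV$-categories is closed under the relevant colimit computations: the composition functors of $\SMatV$ are the restrictions of those of $\SDistV$ along the inclusion, and since that inclusion reflects (and creates) the reflexive coequalizers in question, preservation in each variable transfers from $\SDistV$ to $\SMatV$. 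Then, applying the self-duality of tameness once more, $\SymV = (\SMatV)^\op$ is tame.

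The only point requiring a little care — and the closest thing to an obstacle — is verifying that the reflexive coequalizers in $\SMatV[X,Y]$ really are computed exactly as in $\SDistV[X \cdot \UnitCat, Y \cdot \UnitCat]$, i.e.\ that the full inclusion $\SMatV \subseteq \SDistV$ both preserves and reflects them, so that the preservation-in-each-variable property genuinely descends. This is routine: for a full subcategory the inclusion reflects any colimit that exists and lands in the subcategory, and here the hom-categories of $\SMatV$ are (isomorphic to) functor categories $[S(Y)^\op \times X, \catV]$ in which colimits are pointwise, matching the pointwise colimits in $\SDistV[X \cdot \UnitCat, Y \cdot \UnitCat] \iso [(S(Y) \cdot \UnitCat)^\op \otimes (X \cdot \UnitCat), \catV]$ under the chain of equivalences displayed in Remark~\ref{thm:Smatrices}. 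With this in hand the proof is a two-line argument: $\SDistV$ is tame by Theorem~\ref{thm:sdisttame}, tameness is self-dual, $\SMatV$ inherits tameness from $\SDistV$ as a full sub-bicategory closed under the relevant colimits, and therefore $\CatSymV = (\SDistV)^\op$ and $\SymV = (\SMatV)^\op$ are both tame.
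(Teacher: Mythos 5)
Your argument is correct and is essentially the paper's own proof: tameness is self-dual, so $\CatSymV=(\SDistV)^\op$ is tame by Theorem~\ref{thm:sdisttame}, and $\SymV$ inherits tameness as a full sub-bicategory of a tame bicategory. The extra verification you flag is actually vacuous: since the inclusion $\SMatV\subseteq\SDistV$ is \emph{full}, the hom-categories $\SMatV[X,Y]$ coincide (up to isomorphism) with the corresponding hom-categories of $\SDistV$, so their reflexive coequalizers and the composition functors are literally the same and nothing about ``closure under colimits'' needs to be checked.
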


\begin{proof} Since a bicategory is tame if and only if its opposite is so, the claim that the bicategory $\CatSymV$ is tame follows from 
Theorem~\ref{thm:sdisttame}, since $\CatSymV$ is the opposite of $\SDistV$, which is tame by Theorem~\ref{thm:sdisttame}. The bicategory $\SymV$ is tame since it is a full sub-bicategory 
of the tame bicategory~$\CatSymV$. 
\end{proof}

Since we defined the bicategory of categorical symmetric sequences as the opposite of the bicategory of
$\SDistV$, by letting $\CatSymV \defeq \SDistV^\op$
and the bicategory of operad bimodules by letting
$\OpdV \defeq \Bim(\SMatV^\op)$, the inclusion $\SMatV^\op \subseteq \SDistV^\op$ induces an inclusion 
\begin{equation}
\label{equ:opdbim}
\OpdV \subseteq \Bim(\CatSymV) \, .
\end{equation}
As we will see in Theorem~\ref{thm:keybiequiv}, this inclusion is actually an equivalence.

\section{Analytic functors}
\label{sec:anaf}

We define the analytic functors associated to an operad bimodule. Recall that for an operad~$(X,A)$, we write $\Alg_\rigR(A)$ for its category of algebras and algebra morphisms in 
a symmetric $\catV$-ring $\rigR$. Let $(X,A)$ and $(Y, B)$ be operads.
Given an operad bimodule $F \co (X,A) \to (Y,B)$, for a symmetric $\catV$-rig $\rigR$ we define the \emph{analytic functor}
\[
\Alg_\rigR(F) \co \Alg_\rigR(A) \to \Alg_\rigR(B)
\]
associated to $F$ as follows. For an $A$-algebra~$M$, we let
\begin{equation}
\label{equ:anafunalg}
\Alg_\rigR(F)(M) \defeq F \circ_A M \, ,
\end{equation}
where $F \circ_A M$ is given  by the following reflexive coequalizer diagram
\[
\xymatrix@C=1.2cm{
F \circ A \circ M \ar@<1ex>[r]^-{\rho \circ M} \ar@<-1ex>[r]_-{F \circ \lambda} & F \circ M \ar[r] & F \circ_A M \, .}
\]
This object has a $B$-algebra structure given as in~\eqref{equ:leftBactioninduced}. This functor fits in the following commutative diagram 
\[
\xymatrix@C=1.5cm{
\Alg_\rigR(A) \ar[r]^{\Alg_\rigR(F)} & \Alg_\rigR(B) \ar[d] \\
\rigR^X \ar[u] \ar[r]_{F}  & \rigR^Y \, , }
\]
where the vertical arrows are the evident free algebra and forgetful functors, and $F \co \rigR^X \to \rigR^Y$ is the analytic functor associated to 
the symmetric sequence $F \co X \sym Y$, defined in~\eqref{equ:anasym}. 

\medskip

We now show how the \emph{restriction functor} and \emph{extension functor} (see, \eg, \cite{FresseB:modoof} for these functors
in the case of single-sorted operads) associated to an operad morphism are analytic functors. This will follow by an application of Theorem~\ref{thm:transportadjunction}.
Let us begin by recalling some definitions.  Let $\catX = (X, A)$,  $\catY = (Y, B)$
be operads, viewed as monads in the bicategory~$\SymV$. Thus,  $X \, , Y$ are sets and $A \co X \sym X$, $B \co Y \sym Y$ are symmetric sequences
equipped with multiplication and unit. Let us also fix an operad morphism~$(u,\xi) \co \objX \to \catY$, which consists of a function $u \co X \to Y$ and a monoid 
morphism~$\xi \co A \to B'$, where $B' \co X \sym X$ is the monad whose underlying symmetric sequence is defined by letting
\begin{equation}
\label{equ:bprime}
B'[ x_1, \ldots, x_n; x]  \defeq B[ u x_1, \ldots, u x_n; u x] \, .
\end{equation}
It will be useful to define the symmetric sequence $u^\circ \co X \sym Y$ and $u_\circ \co Y \to X$ by letting
\begin{equation}
\label{equ:ucirc}
u^\circ[ x_1, \ldots, x_n; y]  \defeq B[ u x_1, \ldots, u x_n; y] \, , \quad 
u_\circ[ y_1, \ldots, y_n; x]  \defeq B[ y_1, \ldots, y_n; u x]  \, .
\end{equation}
We wish to show that $u_\circ$ and $u^\circ$ form an adjunction in $\SymV$. We make some preliminary observations.
As a special case of the corresponding facts for $\catV$-functors and distributors, recalled in Section~\ref{sec:dist}, 
the function $u \co X \to Y$ determines an adjunction $(u_\bullet, u^\bullet) \co X \mat Y$ in $\MatV$.
The homomorphism $\delta \co \MatV \to \SMatV$ takes this adjunction to an adjunction $(\delta(u_\bullet), \delta(u^\bullet)) 
\co X \sym Y$ in $\SMatV$, which gives us an adjunction $(\delta(u^\bullet), \delta(u_\bullet)) \co X \sym Y$ in~$\SymV$ by duality. 
Explicitly, the symmetric sequence $\delta(u^\bullet) \co X \sym Y$ and $\delta(u_\bullet) \co Y \sym X$ are defined by
letting
\[
\delta(u^\bullet)[ \vec{x}; y]  \defeq
\left\{
\begin{array}{ll}
I & \text{if } \vec{x} = (x) \text{ and } ux = y \, , \\
0 & \text{otherwise,}
\end{array}
\right. \quad
\delta(u_\bullet)[\vec{y}; x] \defeq
\left\{
\begin{array}{ll}
I & \text{if } \vec{y} = (ux) \, , \\
0 & \text{otherwise.}
\end{array}
\right.
\]
Here, $I$ and $0$ denote the unit and the initial object of $\catV$, respectively.  
We  now state and prove a lemma which relates these symmetric sequences
with those defined in~\eqref{equ:ucirc}.

\begin{lemma} \label{thm:circbullet} There are isomorphisms 
\begin{enumerate}[(i)]
\item $u^\circ \iso B \circ \delta(u^\bullet)$,
\item $u_\circ \iso  \delta(u_\bullet) \circ B$.
\end{enumerate}
\end{lemma}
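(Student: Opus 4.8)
The plan is to compute both sides of each isomorphism explicitly using the formula for composition of symmetric sequences in~\eqref{equ:symcomp} and the explicit descriptions of $\delta(u^\bullet)$ and $\delta(u_\bullet)$ just recorded, and check they agree pointwise. For part~(i), I would evaluate $(B \circ \delta(u^\bullet))[\vec{x}; y]$ at $\vec{x} = (x_1, \ldots, x_n) \in S^n(X)$ and $y \in Y$. By~\eqref{equ:symcomp}, this is a coproduct over $m \in \Nat$ of a coend over $(z_1, \ldots, z_m) \in S^m(X)$ of $B[z_1, \ldots, z_m; y]$ tensored with a coend over $\vec{x}_1, \ldots, \vec{x}_m \in S(X)$ of $[\vec{x}, \vec{x}_1 \oplus \ldots \oplus \vec{x}_m]$ tensored with $\delta(u^\bullet)[\vec{x}_1; z_1] \otimes \cdots \otimes \delta(u^\bullet)[\vec{x}_m; z_m]$. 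Since $\delta(u^\bullet)[\vec{x}_k; z_k]$ is $I$ when $\vec{x}_k$ is a one-element sequence $(x_k')$ with $u x_k' = z_k$ and $0$ otherwise, the coend over the $\vec{x}_k$ collapses: only the summand $m = n$ survives (matching the length of $\vec{x}$, via the hom-object $[\vec{x}, \vec{x}_1 \oplus \ldots \oplus \vec{x}_m]$ in $S(X)$ which forces $\vec{x}_1 \oplus \ldots \oplus \vec{x}_m$ to have the same length as $\vec{x}$), each $\vec{x}_k$ is forced to be $(x_{\sigma(k)})$ for some permutation $\sigma$, and $z_k = u x_{\sigma(k)}$. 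Using the co-Yoneda reduction to absorb the hom-object and the permutation, one is left with $B[u x_1, \ldots, u x_n; y]$, which is exactly $u^\circ[\vec{x}; y]$ by~\eqref{equ:ucirc}.

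For part~(ii), the argument is dual but slightly simpler because $\delta(u_\bullet)$ sits on the right. I would evaluate $(\delta(u_\bullet) \circ B)[\vec{y}; x]$ at $\vec{y} = (y_1, \ldots, y_n) \in S^n(Y)$ and $x \in X$. Again applying~\eqref{equ:symcomp}, we get a coproduct over $m$ of a coend over $(w_1, \ldots, w_m) \in S^m(X)$ of $\delta(u_\bullet)[w_1, \ldots, w_m; x]$ tensored with the inner coend involving $B[\vec{y}_1; w_1] \otimes \cdots \otimes B[\vec{y}_m; w_m]$. Since $\delta(u_\bullet)[\vec{w}; x]$ is $I$ precisely when $\vec{w} = (u x)$ (a one-element sequence) and $0$ otherwise, only $m = 1$ and $w_1 = u x$ contribute, and the inner coend collapses via co-Yoneda to force $\vec{y}_1 = \vec{y}$, leaving $B[y_1, \ldots, y_n; u x]$, which is $u_\circ[\vec{y}; x]$ by~\eqref{equ:ucirc}.

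The main obstacle will be bookkeeping the co-Yoneda (density) reductions cleanly: one must be careful that the coends over the $\vec{x}_k$ (resp.\ $\vec{y}_k$) together with the $S(X)$-hom-object really do reduce to a sum over permutations with the expected equivariance, and that these permutations are exactly absorbed by the functoriality built into $B$ and the coend. I would phrase this as follows: since $\delta(u^\bullet)[\vec{x}_k; z_k] = 0$ unless $\vec{x}_k \in S^1(X)$, the outer structure of $S(X)$ in the hom-object $[\vec{x}, \vec{x}_1 \oplus \ldots \oplus \vec{x}_m]$ contributes a coproduct over permutations $\sigma \in \Sigma_n$ (after identifying $m = n$), and the co-Yoneda lemma applied in each variable $\vec{x}_k$ then identifies the whole expression with $B[u x_{\sigma(1)}, \ldots, u x_{\sigma(n)}; y]$ summed over $\sigma$ modulo the identifications coming from $B$'s equivariance; this collapses to the single value $B[u x_1, \ldots, u x_n; y]$. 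A clean alternative, which I would prefer if the direct computation gets unwieldy, is to observe that $\delta(u^\bullet)$ is (by its defining formula) the identity $S$-distributor $\Id_X$ composed with $u$ in the appropriate sense — more precisely $\delta(u^\bullet) = \delta(u_\bullet)$ transported through the free-symmetric-monoidal functor — and then invoke Proposition~\ref{assoactionSdist} or Lemma~\ref{bulletlambda} together with the fact that $B \circ \Id_X \iso B$ and the explicit form of $B'$ in~\eqref{equ:bprime}, reducing both isomorphisms to unit laws in $\SymV$ rather than direct coend manipulation.
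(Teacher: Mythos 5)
Your direct computation is exactly the paper's own proof: unfold $(B \circ \delta(u^\bullet))[\vec{x};y]$ and $(\delta(u_\bullet)\circ B)[\vec{y};x]$ via the composition formula in~\eqref{equ:symcomp}, use that $\delta(u^\bullet)$ and $\delta(u_\bullet)$ are concentrated on singleton sequences to collapse the inner coends, and finish with a co-Yoneda reduction absorbing the free-symmetric-monoidal hom-object (with the permutations absorbed by the coend and the functoriality of $B$), giving $B[ux_1,\ldots,ux_n;y]=u^\circ[\vec{x};y]$ and $B[\vec{y};ux]=u_\circ[\vec{y};x]$. The only blemishes are cosmetic: in both parts your middle variables should range over $S^m(Y)$ rather than $S^m(X)$, and the closing ``clean alternative'' (identifying $\delta(u^\bullet)$ with $\delta(u_\bullet)$ transported through $S$ and reducing to unit laws) is not needed and is not correct as stated, but the main argument stands.
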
 

\begin{proof} For the proof, we write $u \vec{x}$ for $(ux_1, \ldots, ux_n)$, where $\vec{x} = (x_1, \ldots, x_n)$. 
For (i), observe that by the definition of composition in $\SymV$, we have
\begin{multline*}
(B \circ \delta(u^\bullet))[ \vec{x}; y]  = \bigsqcup_{m \in \mathbb{N}} 
\int^{(y_1, \ldots, y_m) \in S^n(Y)}  \int^{\vec{x}_1 \in S(X)} \cdots \int^{\vec{x}_m \in S(X)} \delta(u^\bullet)[ \vec{x}_1; y_1]  \otimes \ldots \\[1ex]
 \ldots \otimes \delta(u^\bullet)[ \vec{x}_m; y_m] \otimes S(X)[\vec{x}, \vec{x}_1 \oplus \ldots \oplus \vec{x}_m] \otimes B[y_1, \ldots y_m; y] \, .
\end{multline*}
By the definition of $\delta(u^\bullet)$, the right-hand side is isomorphic to
\[
 \bigsqcup_{m \in \mathbb{N}} 
 \int^{x_1 \in X} \cdots \int^{x_m \in X}  S(X)[\vec{x}, (x_1, \ldots, x_m)] \otimes B[ ux_1, \ldots ux_m; y] \, .
 \]
This, in turn, is isomorphic to $B[ u \vec{x}; y]$, as required. The proof of (ii) is similar.
\end{proof}

The next lemma gives an alternative description of the monad $B' \co X \sym X$ defined in~\eqref{equ:bprime}.

\begin{lemma} \label{thm:bprimebullet}
There is an isomorphism $B' \iso \delta(u_\bullet)  \circ B \circ \delta(u^\bullet)$.
\end{lemma}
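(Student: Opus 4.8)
The plan is to combine the two isomorphisms established in Lemma~\ref{thm:circbullet} with the associativity of horizontal composition in $\SymV$ and the observation that $\delta(u^\bullet)$ and $\delta(u_\bullet)$ are adjoint. First I would record the chain of isomorphisms
\[
\delta(u_\bullet) \circ B \circ \delta(u^\bullet) \iso \delta(u_\bullet) \circ (B \circ \delta(u^\bullet)) \iso \delta(u_\bullet) \circ u^\circ \, ,
\]
using part~(i) of Lemma~\ref{thm:circbullet}, and then unfold the right-hand side using the formula in~\eqref{equ:symcomp} for composition in $\SymV$ together with the explicit description of $\delta(u_\bullet)$ recalled just above the statement. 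Alternatively, and perhaps more cleanly, I would write $B \circ \delta(u^\bullet) \iso u^\circ$ and $\delta(u_\bullet) \circ B \iso u_\circ$ and then compare $\delta(u_\bullet) \circ B \circ \delta(u^\bullet)$ directly to the defining formula~\eqref{equ:bprime} for $B'$.

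Concretely, the key step is a coend computation: unfolding $(\delta(u_\bullet) \circ (B \circ \delta(u^\bullet)))[\vec{x}; x]$ via~\eqref{equ:symcomp}, the factor $\delta(u_\bullet)[\vec{y}; x]$ forces $\vec{y} = (ux)$ (a one-element sequence), contributing a copy of the unit $I$, so the outer coproduct over $m \in \Nat$ collapses to the single summand $m = 1$ and the coend over $S^1(Y)$ collapses, leaving $(B \circ \delta(u^\bullet))[\vec{x}; ux]$. By part~(i) of Lemma~\ref{thm:circbullet} this is $u^\circ[\vec{x}; ux] = B[u x_1, \ldots, ux_n; ux] = B'[\vec{x}; x]$, which is exactly~\eqref{equ:bprime}. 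The verification that these isomorphisms are natural in $\vec{x} \in S(X)^\op$ and $x \in X$, and compatible with the monoid structures if needed, is routine and follows from the corresponding naturality already built into Lemma~\ref{thm:circbullet} and into the composition operation of $\SymV$.

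The main obstacle, such as it is, is purely bookkeeping: keeping track of the collapsing coends and coproducts when one of the symmetric sequences being composed is (a $\delta$ of) a representable-type matrix concentrated on one-element sequences, and making sure the isomorphisms of Lemma~\ref{thm:circbullet} are invoked with the correct variances. There is no genuine mathematical difficulty; the statement is essentially a formal consequence of Lemma~\ref{thm:circbullet}, the associativity of composition in $\SymV$, and the unit laws. I would therefore keep the proof short, citing Lemma~\ref{thm:circbullet}(i) and the explicit composition formula, and leaving the elementary coend manipulation to the reader if space is tight. A one-line proof of the form ``By Lemma~\ref{thm:circbullet}(i) and associativity of composition in $\SymV$, $\delta(u_\bullet) \circ B \circ \delta(u^\bullet) \iso \delta(u_\bullet) \circ u^\circ$, and unfolding~\eqref{equ:symcomp} using the description of $\delta(u_\bullet)$ shows this is isomorphic to $B'$'' would suffice, though I would probably spell out the collapsing coend for clarity.
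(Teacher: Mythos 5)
Your proposal is correct and follows essentially the same route as the paper: reduce via Lemma~\ref{thm:circbullet}(i) to exhibiting $B' \iso \delta(u_\bullet) \circ u^\circ$, then collapse the coends in the composition formula~\eqref{equ:symcomp} using the explicit description of $\delta(u_\bullet)$ and the identity $u^\circ[\vec{x}; ux] = B'[\vec{x}; x]$. The only difference is presentational: the paper spells out the intermediate coend $\int^{\vec{x}' \in S(X)} u^\circ[\vec{x}'; ux] \otimes S(X)[\vec{x}, \vec{x}']$ before applying the co-Yoneda collapse, which you compress into one step.
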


\begin{proof} By part (i) of Lemma~\ref{thm:circbullet}, it is sufficient to exhibit an isomorphism $B' \iso  \delta(u_\bullet)  \circ u^\circ$. We have
\begin{multline*}
(\delta(u_\bullet) \circ u^\circ)[ \vec{x}; x]  = \bigsqcup_{n \in \mathbb{N}} 
\int^{(y_1, \ldots, y_n) \in S^n(Y)}  \int^{\vec{x}_1 \in S(X)} \cdots \int^{\vec{x}_n \in S(X)}  u^\circ[ \vec{x}_1; y_1] \otimes \ldots \\[1ex]
 \ldots \otimes u^\circ[ \vec{x}_n; y_n]  \otimes S(X)[\vec{x}, \vec{x}_1 \oplus \ldots \oplus \vec{x}_n] \otimes \delta(u_\bullet)[ y_1, \ldots, y_n; x] \, .
\end{multline*}
By the definition of $\delta(u_\bullet)$, the left-hand side is isomorphic to
\[
\int^{\vec{x}' \in S(X)}  u^\circ[ \vec{x}'; ux]  \otimes S(X)[\vec{x}, \vec{x}'] \, ,
\]
which is isomorphic to $u^\circ[ \vec{x}; ux]$. But, by definition of $u^\circ$ and $B'$, we have $u^\circ[ \vec{x}; ux] = B'[ \vec{x}; x]$.
\end{proof}

\begin{lemma} \label{thm:adjointcirc} 
The symmetric sequence $u^\circ \co X \sym Y$ has the structure of a $(B,A)$-bimodule, the 
symmetric sequence $u_\circ \co Y \sym X$ has the structure of an $(A,B)$-bimodule and the resulting 
operad bimodules form an adjunction $(u^\circ, u_\circ) \co (X, A) \to (Y, B)$ in the bicategory $\OpdV$.
\end{lemma}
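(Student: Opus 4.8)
The plan is to derive this adjunction from Theorem~\ref{thm:transportadjunction} rather than verifying the triangle identities by hand. Since $\SymV$ is tame by Corollary~\ref{thm:smattame}, the bicategory $\OpdV = \Bim(\SymV)$ is defined and Theorem~\ref{thm:transportadjunction} applies to it. As the underlying adjunction in $\SymV$ I would take $(F, G) \defeq (\delta(u^\bullet), \delta(u_\bullet)) \co X \to Y$, which is exactly the adjunction exhibited just before the statement of the lemma by transporting the adjunction $u_\bullet \dashv u^\bullet$ in $\MatV$ along the homomorphism $\delta$ and dualizing. For this choice of $(F,G)$, the monad on $X$ induced by $G \circ B \circ F$ in the sense of the discussion preceding Theorem~\ref{thm:transportadjunction} is precisely $\delta(u_\bullet) \circ B \circ \delta(u^\bullet)$, which by Lemma~\ref{thm:bprimebullet} is isomorphic to the monad $B'$ of~\eqref{equ:bprime}. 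Composing the operad morphism $\xi \co A \to B'$ with this isomorphism then yields a monad map $\xi \co A \to G \circ B \circ F$ in the sense of Definition~\ref{thm:monad}, which is exactly the input required by Theorem~\ref{thm:transportadjunction}.

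Theorem~\ref{thm:transportadjunction} then produces an adjunction $(F', G', \eta', \varepsilon') \co (X, A) \to (Y, B)$ in $\OpdV$. Reading off the explicit descriptions in the proof of that theorem, the left adjoint $F'$ is the $(B,A)$-bimodule with underlying symmetric sequence $B \circ F = B \circ \delta(u^\bullet)$, obtained from the oplax monad morphism of Lemma~\ref{monadmorphtomod2}, and the right adjoint $G'$ is the $(A,B)$-bimodule with underlying symmetric sequence $G \circ B = \delta(u_\bullet) \circ B$, obtained from the lax monad morphism of Lemma~\ref{monadmorphtomod}. By parts~(i) and~(ii) of Lemma~\ref{thm:circbullet} there are isomorphisms $u^\circ \iso B \circ \delta(u^\bullet)$ and $u_\circ \iso \delta(u_\bullet) \circ B$ of symmetric sequences. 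Transporting the bimodule structures on $F'$ and $G'$ along these isomorphisms equips $u^\circ$ with a $(B,A)$-bimodule structure and $u_\circ$ with an $(A,B)$-bimodule structure, and conjugating the unit $\eta'$ and counit $\varepsilon'$ by the same isomorphisms shows that the resulting operad bimodules form an adjunction $(u^\circ, u_\circ) \co (X, A) \to (Y, B)$ in $\OpdV$, as claimed.

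The main point requiring care is the compatibility of the isomorphism $B' \iso \delta(u_\bullet) \circ B \circ \delta(u^\bullet)$ of Lemma~\ref{thm:bprimebullet} with the monad structures: one must check that it identifies the multiplication and unit of $B'$ from~\eqref{equ:bprime} with the monad structure on $G \circ B \circ F$ induced by the adjunction $\delta(u^\bullet) \dashv \delta(u_\bullet)$. This is a direct diagram chase using the counit of that adjunction and the composition formula~\eqref{equ:symcomp} in $\SymV$, and once it is in place the monad map $\xi$ above is legitimate. The remaining work — matching the left and right actions on $F'$ and $G'$ produced by Lemma~\ref{monadmorphtomod2} and Lemma~\ref{monadmorphtomod} with explicit actions on $u^\circ$ and $u_\circ$ under the isomorphisms of Lemma~\ref{thm:circbullet} — is routine bookkeeping and does not present any genuine difficulty.
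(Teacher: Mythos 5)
Your proposal is correct and follows essentially the same route as the paper: the paper also applies Theorem~\ref{thm:transportadjunction} to the adjunction $(\delta(u^\bullet), \delta(u_\bullet))$ in $\SymV$, uses Lemma~\ref{thm:bprimebullet} to turn $\xi \co A \to B'$ into a monad map into $\delta(u_\bullet) \circ B \circ \delta(u^\bullet)$, and then identifies the resulting adjoint bimodules $B \circ \delta(u^\bullet)$ and $\delta(u_\bullet) \circ B$ with $u^\circ$ and $u_\circ$ via Lemma~\ref{thm:circbullet}. Your extra remark about checking that the isomorphism of Lemma~\ref{thm:bprimebullet} respects the monad structures is a point the paper passes over silently, so flagging it is reasonable care rather than a divergence.
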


\begin{proof} First of all, observe that we have an adjunction $(\delta(u^\bullet), \delta(u_\bullet)) \co X \sym Y$ in the 
bicategory~$\SymV$. Secondly, the monoid morphism $\xi \co A \to B'$ determines, by Lemma~\ref{thm:bprimebullet}, 
a monoid morphism $\xi' \co A \to \delta(u_\bullet) \circ B \circ \delta(u^\bullet)$. By Proposition~\ref{thm:transportadjunction}, it 
follows that we have an adjunction 
\[
(B \circ \delta(u^\bullet), \delta(u_\bullet) \circ B) \co (X,A) \sym (Y,B) 
\]
in $\OpdV$. But, by Lemma~\ref{thm:circbullet}, the symmetric sequences $B \circ \delta(u^\bullet)$ and $\delta(u_\bullet) \circ B$ are
isomorphic to the symmetric sequences~$u^\circ$ and~$u_\circ$, which therefore inherit a bimodule structure so as to give us the required adjunction.
\end{proof} 

We can apply Lemma~\ref{thm:adjointcirc} to give a general version of the restriction and extension functors between categories of algebras for operads. We continue
to consider a fixed operad morphism $(u, \xi) \co (X,A) \to (Y, B)$. For a set $K$, we define the restriction functor $u^* \co  [K,X]^A \to [K,Y]^B$ as follows.
 For a left $B$-module $N \co K \sym Y$, we define the left $A$-module $u^*(N) \co K \sym X$  by letting 
\[
u^*(N)[ k_1, \ldots, k_n; x] \defeq N[ k_1, \ldots, k_n; u(x)]   \, . 
\]

\begin{theorem} For every operad morphism $(u,\xi) \co \objX \to \objY$, the functor 
\[
u^* \co  [K,X]^A \to [K,Y]^B
\] 
is
 an analytic functor and it has a left adjoint 
 \[
 u_! \co [K,Y]^B \to [K,X]^A \, ,
 \] 
 which is also an analytic functor.
\end{theorem}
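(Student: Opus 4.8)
The plan is to deduce the theorem from the adjunction of operad bimodules furnished by Lemma~\ref{thm:adjointcirc}, together with the fact that a homomorphism of bicategories preserves adjunctions. Since $\SymV$ is tame (Corollary~\ref{thm:smattame}), the bicategory $\OpdV = \Bim(\SymV)$ is defined, and for a set $K$ the pair $(K, 1_K)$ is an object of $\OpdV$. An $(A, 1_K)$-bimodule in $\SymV$ is the same thing as a left $A$-module with domain $K$, the right $1_K$-action being unique, so that $\OpdV[(K,1_K), (X,A)] \simeq [K,X]^A$ and $\OpdV[(K,1_K), (Y,B)] \simeq [K,Y]^B$. Under these identifications, the covariant homomorphism $\OpdV[(K,1_K), -] \co \OpdV \to \CAT$ sends an operad bimodule $G \co (X,A) \to (Y,B)$ to the functor $G \circ_A (-) \co [K,X]^A \to [K,Y]^B$ obtained by relative composition, as in~\eqref{equ:bimcompfunctor}. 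A functor of this shape is analytic: it is given by the formula $M \mapsto G \circ_A M$ of~\eqref{equ:anafunalg}, now with $M$ ranging over left $A$-modules with domain $K$ rather than over $A$-algebras.

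Next I would identify the restriction functor $u^*$ with $u_\circ \circ_B (-)$. By part~(ii) of Lemma~\ref{thm:circbullet}, the underlying symmetric sequence of the $(A,B)$-bimodule $u_\circ$ is $\delta(u_\bullet) \circ B$, equipped with the free right $B$-module structure; hence, for a left $B$-module $N \co K \sym Y$, the reflexive coequalizer defining $u_\circ \circ_B N$ is in fact a split fork, by the same argument that yields the unit isomorphisms in a bicategory of bimodules (recalled in~\eqref{equ:leftunitisomorphism} and~\eqref{equ:rightunitisomorphism}), with colimit $\delta(u_\bullet) \circ N$. A short calculation using the composition formula~\eqref{equ:symcomp} in $\SymV$ and the explicit description of $\delta(u_\bullet)$ then gives $(\delta(u_\bullet) \circ N)[\vec{k}; x] \iso N[\vec{k}; u(x)]$, naturally in $\vec{k} \in S(K)$ and $x \in X$, and one checks that the induced left $A$-action agrees with the one defining $u^*(N)$. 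Thus $u^* \iso u_\circ \circ_B (-) = \OpdV[(K,1_K), u_\circ]$, and in particular $u^*$ is analytic.

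Finally, Lemma~\ref{thm:adjointcirc} asserts that $u^\circ$ and $u_\circ$ form an adjunction $(u^\circ, u_\circ) \co (X,A) \to (Y,B)$ in $\OpdV$, with $u^\circ$ the left adjoint. Applying the adjunction-preserving homomorphism $\OpdV[(K,1_K), -]$ produces an adjunction between the corresponding hom-categories, namely $u^\circ \circ_A (-) \dashv u_\circ \circ_B (-)$. Setting $u_! \defeq u^\circ \circ_A (-) = \OpdV[(K,1_K), u^\circ] \co [K,X]^A \to [K,Y]^B$, we conclude that $u_! \dashv u^*$ and that $u_!$, being relative composition with the operad bimodule $u^\circ$, is again analytic.

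The main obstacle is the bookkeeping in the second step, namely verifying that the isomorphism $u_\circ \circ_B N \iso u^*(N)$ respects the left $A$-module structures; this requires unwinding the description of $u_\circ$ as the bimodule attached, via Lemma~\ref{monadmorphtomod}, to the lax monad morphism obtained from $\xi$ through Theorem~\ref{thm:transportadjunction} and Lemma~\ref{thm:bprimebullet}. Everything else is a formal consequence of the preservation of adjunctions by homomorphisms of bicategories.
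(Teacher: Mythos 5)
Your proposal is correct and follows essentially the same route as the paper: identify $u^*(N)$ with $u_\circ \circ_B N$ via part~(ii) of Lemma~\ref{thm:circbullet} and the unit isomorphism $B \circ_B N \iso N$, define $u_!$ as relative composition with $u^\circ$, and deduce $u_! \dashv u^*$ from the adjunction $u^\circ \dashv u_\circ$ of Lemma~\ref{thm:adjointcirc}. Your packaging of the last step through the representable homomorphism $\OpdV[(K,1_K), -]$ (using that $(A,1_K)$-bimodules are left $A$-modules with domain $K$) is just a more explicit way of stating what the paper asserts directly.
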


\begin{proof} We show that $u^*$ is the  analytic functor associated to the operad bimodule $u_\circ \co (Y,B) \to (X,A)$. By the formula
in~\eqref{equ:anafunalg}, this
amounts to showing that we have a natural isomorphism 
\begin{equation}
\label{equ:upperstaranalytic}
u^*(N) \iso u_\circ \circ_B N \, , 
\end{equation}
for every left $B$-module $N \co K \sym Y$.
By part (ii) of Lemma~\ref{thm:circbullet} and the unit isomorphism of~$\OpdV$, we have
$u_\circ \circ_B N  \iso \delta(u_\bullet) \circ B \circ_B N \iso \delta(u_\bullet) \circ N$.
Hence, it suffices to exhibit an isomorphism $u^*(N) \iso  \delta(u_\bullet) \circ N$, which can be done using calculations
similar to those in the proofs of Lemma~\ref{thm:circbullet} and Lemma~\ref{thm:bprimebullet}. By the isomorphism 
in~\eqref{equ:upperstaranalytic} and Lemma~\ref{thm:adjointcirc}, it follows that we can define the left adjoint $u_! \co [K,Y]^B \to [K,X]^A$
as the analytic functor associated to the operad bimodule $u^\circ : (X,A) \to (Y,B)$. Explicitly, for a left $A$-module $M \co K \sym X$, we have
\[
 u_{!}(M) \defeq u^\circ \circ_A M \, .
\]
The required adjointness $u_{!} \dashv u^*$ now follows immediately from the adjointness $u^\circ \dashv u_\circ$ proved in Lemma~\ref{thm:adjointcirc}.
\end{proof}

\chapter{Cartesian closure of operad bimodules}
\label{cha:carcob}

The goal of this chapter is to prove that the bicategory of operad bimodules $\OpdV$ is cartesian closed, 
which is our second main result. The proof of this fact uses two auxiliary results. The first  is that, for a 
tame bicategory~$\bcatE$, if $\bcatE$ is cartesian closed, then so is $\bimE$. We establish this in
Section~\ref{sec:carcbb}. The second auxiliary result is that the inclusion $\OpdV \subseteq \Bim(\CatSymV)$
determined by the inclusion $\SymV \subseteq \CatSymV$ is an equivalence. We establish this in 
Section~\ref{sec:bicbem}, as a consequence of the development of some aspects of monad theory within 
tame bicategories and bicategories of bimodules, which is given in Section~\ref{sec:montrb} and
Section~\ref{sec:montbb}. In particular, we show how, for a tame
bicategory $\bcatE$, the bicategory $\bimE$ can be seen as the Eilenberg-Moore completion of $\bcatE$
as a tame bicategory, which is a special case of a general result obtained independently in~\cite{GarnerR:enrcfc}.
This is proved in Appendix~\ref{app:proof}.

\section{Cartesian closed bicategories of bimodules}
\label{sec:carcbb}

We show that if a tame bicategory $\bcatE$ is cartesian closed, then so is the bicategory $\bimE$.
We begin by considering the cartesian structure.

\begin{proposition} \label{thm:bimcart}
 Let $\bcatE$ be a tame bicategory. If $\bcatE$ is cartesian, then so
is~$\bimE$.
\end{proposition}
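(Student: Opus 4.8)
The plan is to establish that $\bimE$ has a terminal object and binary products, transferring the cartesian structure from $\bcatE$ via the homomorphism $\JE \co \bcatE \to \bimE$ of Remark~\ref{rem:je} together with the description of monads on products. First I would treat the terminal object: if $\top \in \bcatE$ is terminal, then $1_\top \co \top \to \top$ is the only monad on $\top$ up to isomorphism, so I claim $\top/1_\top$ is terminal in $\bimE$. To see this, for any monad $\objX/A$ the category $\bimE[\objX/A, \top/1_\top] = \bcatE[\objX, \top]^{1_\top}_A$ consists of morphisms $\objX \to \top$ equipped with a right $A$-action (the left $1_\top$-action being trivial data up to coherent isomorphism). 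Since $\bcatE[\objX,\top] \simeq 1$, the unique morphism $\objX \to \top$ carries an essentially unique right $A$-action, so $\bimE[\objX/A,\top/1_\top] \simeq 1$, as required.

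For binary products, given monads $\objX/A$ and $\objY/B$, I would form the product $\objX \sqcap \objY$ in $\bcatE$ with projections $\pi_1, \pi_2$, and build a monad on it as follows. The homomorphism $\bcatE[-, \objX\sqcap\objY] \simeq \bcatE[-,\objX] \times \bcatE[-,\objY]$ (the defining equivalence of the product) lets one transport the pair of monads: concretely, the monad $A \sqcap B$ on $\objX\sqcap \objY$ is the one whose underlying morphism is $\langle A\circ\pi_1, B\circ\pi_2\rangle$, i.e.\ the morphism $\objX\sqcap\objY \to \objX\sqcap\objY$ corresponding under the product equivalence to the pair $(A \circ \pi_1, B \circ \pi_2)$, with multiplication and unit induced componentwise. (Here one uses that $\bcatE[\objX\sqcap\objY, \objX\sqcap\objY] \simeq \bcatE[\objX\sqcap\objY,\objX] \times \bcatE[\objX\sqcap\objY,\objY]$ is a product of monoidal categories only after composing, so a little care is needed; alternatively, observe that a monad on $\objX\sqcap\objY$ projecting to $A$ and $B$ is exactly what is needed, and exhibit it directly.) I would then define the projection bimodules $\Pi_1 \co (\objX\sqcap\objY)/(A\sqcap B) \to \objX/A$ and $\Pi_2$ as the images under $R \co \MndE \to \bimE$ (from Section~\ref{sec:monadmorphisms}) of the evident lax monad morphisms $(\pi_k, \phi_k)$, where $\phi_k$ comes from the pseudo-naturality data of Remark~\ref{thm:pseudonatmonadmorphism} applied to $\pi_k$; explicitly $\Pi_1 = \pi_1 \circ (A\sqcap B)$ with its canonical $(A, A\sqcap B)$-bimodule structure, and similarly $\Pi_2$.

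The heart of the proof is verifying that the functor
\[
\pi \co \bimE[\objK/C, (\objX\sqcap\objY)/(A\sqcap B)] \to \bimE[\objK/C, \objX/A] \times \bimE[\objK/C, \objY/B] \, ,
\]
sending $M \mapsto (\Pi_1 \circ_{A\sqcap B} M, \Pi_2 \circ_{A\sqcap B} M)$, is an equivalence of categories for every monad $\objK/C$. The strategy is to unwind both sides in terms of $\bcatE$-level data and use that $\bcatE[\objK, \objX\sqcap\objY] \simeq \bcatE[\objK,\objX]\times\bcatE[\objK,\objY]$ is an equivalence compatible with the monad actions. A $(A\sqcap B, C)$-bimodule $M \co \objK \to \objX\sqcap\objY$ corresponds under the product equivalence to a pair $(M_1, M_2)$ with $M_k \co \objK \to$ (the $k$-th factor); the right $C$-action transports to right $C$-actions on each $M_k$, and the left $(A\sqcap B)$-action transports — using the explicit form of $A\sqcap B$ and that the product equivalence is by whiskering with $\pi_k$ — to a left $A$-action on $M_1$ and a left $B$-action on $M_2$, with the commutation axiom on each side following from that on $M$. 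One then checks that $\Pi_k \circ_{A\sqcap B} M$ is naturally isomorphic to $M_k$ as a $(A,C)$- (resp.\ $(B,C)$-) bimodule: since $\Pi_k = \pi_k \circ (A\sqcap B)$ is a free-on-the-right construction, the relative composite $\Pi_k \circ_{A\sqcap B} M$ computes (using the split fork as in~\eqref{equ:leftunitisomorphism}, or directly the reflexive coequalizer defining $\circ_{A\sqcap B}$ together with tameness) to $\pi_k \circ M$, which under the product equivalence is exactly $M_k$. This exhibits $\pi$ as (naturally isomorphic to) the product equivalence restricted to bimodule categories, hence an equivalence; and one must also note it is fully faithful on $2$-cells, which is immediate since bimodule maps between the $M$'s correspond to pairs of bimodule maps between the $M_k$'s.

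The main obstacle I anticipate is bookkeeping rather than conceptual: making precise the construction of the monad $A\sqcap B$ on $\objX\sqcap\objY$ and verifying that whiskering the product equivalence $\bcatE[\objK,\objX\sqcap\objY]\simeq\bcatE[\objK,\objX]\times\bcatE[\objK,\objY]$ with it is compatible with all the module and bimodule actions up to the required coherent isomorphisms — in particular that a left $(A\sqcap B)$-module structure on $M$ is the same thing as a left $A$-module structure on $M_1$ and a left $B$-module structure on $M_2$. Here the formal theory of monads from Section~\ref{sec:monadmorphisms} (especially the homomorphisms $R$ and the behaviour of monad morphisms under homomorphisms) does most of the work, since $\bcatE[\objK,-]$ sends the product diagram to a product diagram in $\CAT$ and sends monads to monads, reducing the claim to the observation that algebras for a product monad in $\CAT$ are pairs of algebras. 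I would invoke this to keep the verification short, and relegate the remaining diagram chases (the coherence of the associativity and unit isomorphisms of the product in $\bimE$) to a remark that they follow by the universal property of the relative composites, exactly as the coherence axioms for $\bimE$ itself were verified in Section~\ref{sec:regbbb}.
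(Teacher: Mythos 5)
Your proposal is correct and follows essentially the same route as the paper: the terminal object is handled by noting that the monad action on $\bcatE[\objX,\top]\simeq 1$ is trivial, the product is the product object $\objX\sqcap\objY$ equipped with the product monad, the projections are the bimodules $\pi_k\circ(A\sqcap B)$ obtained from the lax monad morphisms of Remark~\ref{thm:pseudonatmonadmorphism} via Lemma~\ref{monadmorphtomod}, and the universal property is verified by transporting the equivalence $\bcatE[\objK,\objX\sqcap\objY]\simeq\bcatE[\objK,\objX]\times\bcatE[\objK,\objY]$ through the monad--algebra machinery together with the isomorphism $\pi_k\circ(A\sqcap B)\circ_{A\sqcap B}M\iso\pi_k\circ M$. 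No essential differences from the paper's proof.
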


\begin{proof}  Let us first verify that a terminal object $\top$ in $ \bcatE$ 
remains a terminal object in~$\bimE$.  For this, we have to show that
the category~$\bcatE[\objX/A,\top]$
is equivalent to the terminal category for every object $\objX/A\in \bimE$.  
By definition,  $\bcatE[\objX/A,\top]=\bcatE[\objX,\top]_A$ is the category
of algebras of the monad $\bcatE[A,\top]$ acting on the category $\bcatE[\objX,\top]$.
But the monad $\bcatE[A,\top]$ is isomorphic to the identity monad,
since every morphism in $\bcatE[\objX,\top]$ is invertible. 
It follows that the category $\bcatE[\objX,\top]_A$ is equivalent
to the category $\bcatE[\objX,\top]$. Hence the category $\bcatE[\objX/A,\top]$
is equivalent to the terminal category.

Let us now show that the category $\Bim(\bcatE)$ 
admits binary cartesian products.
The cartesian product homomorphism $(-)\times (-)\co \bcatE\times \bcatE \to \bcatE$
takes a monad $(B_1,B_2) \co (\objY_1,\objY_2) \to (\objY_1,\objY_2)$ in~$\bcatE\times \bcatE$
to a monad $B_1\times B_2 \co \objY_1\times \objY_2 \to \objY_1\times \objY_2$  in $\bcatE$.
We will prove that 
\[
\objY_1/B_1\times \objY_2/B_2=(\objY_1\times \objY_2)/(B_1\times B_2) \, , 
\]
\ie that the product of $\objY_1/B_1$ and $\objY_2/B_2$ in 
 $\Bim(\bcatE)$ is given by $(\objY_1\times \objY_2)/(B_1\times B_2)$.
The projections $\pi_1\co \objY_1\times \objY_2\to Y_1$
and $\pi_2\co \objY_1\times \objY_2 \to \objY_2$
are components
of pseudo-natural transformations.
Hence the left hand  square in the following diagrams commute up to a canonical
isomorphism  $\sigma_1\co B_1\circ \pi_1\iso \pi_1\circ (B_1\times B_2)$ 
and  the right hand square  up to a canonical
isomorphism  $\sigma_2\co B_2\circ \pi_2 \iso \pi_2\circ (B_1\times B_2)$: 
\begin{equation}
 \label{equ:b1b2}
 {\vcenter{\hbox{\xymatrix@C=2cm{
\obj Y_1 \ar[d]_{B_1} & \objY_1\times \objY_2 \ar[r]^-{ \pi_2} \ar[l]_-{ \pi_1} \ar[d]_{B_1\times B_2} & 
\objY_2\ar[d]^{B_2} \\
 \obj Y_1  & \objY_1\times \objY_2 \ar[r]_-{\pi_2} \ar[l]^-{\pi_1} & \objY_2 \, . }}}}
 \end{equation}
 Moreover,  $(\pi_1,\sigma_1) \co  (\objY_1\times \objY_2, B_1\times B_2)\to ( \objY_1, B_1)$
and  $(\pi_2,\sigma_2) \co  (\objY_1\times \objY_2, B_1\times B_2)\to (\objY_2, B_2)$ are lax monad morphisms by Remark~\ref{thm:pseudonatmonadmorphism}.
It follows by Lemma~\ref{monadmorphtomod} 
that the morphism 
\[
\tilde\pi_1\defeq\pi_1\circ (B_1\times B_2)\co  \objY \to \objY_1
\]
has the structure of a 
$(B_1, B_1\times B_2)$-bimodule
and that the morphism 
\[
\tilde \pi_2\defeq \pi_2\circ (B_1\times B_2)\co  \objY \to \objY_2
\]
has the structure of a $(B_2, B_1\times B_2)$-bimodule.
Let us put $\obj Y \defeq \objY_1\times \objY_2$ and $B \defeq B_1\times B_2$ and show that the
 object $ \objY/B\in \Bim(\bcatE)$ equipped with the 
morphisms $\tilde \pi_1\co  \objY/B\to \objY_1/B_1$
and $\tilde \pi_2\co  \objY/B\to \objY_2/B_2$ is the cartesian product of the objects 
$\objY_1/B_1$ and $\objY_2/B_2$.
For this we have to show that
the functor 
 \begin{equation}
  \label{equ:xoveryoverb1yoverb2}
  {\vcenter{\hbox{\xymatrix{
(\tilde \pi_1,\tilde \pi_2)\circ_B(-)\co 
\bcatE[\objX,\objY]^{B_1\times B_2}_A\ar[r]& \bcatE[\objX,\objY_1]^{B_1}_A \times \bcatE[\objX,\objY_2]^{B_2}_A }}}}
\end{equation}
defined by letting $(\tilde \pi_1,\tilde \pi_2)\circ_B M=(\tilde \pi_1\circ_B M,\tilde \pi_2\circ_B M)$
is an equivalence of categories for every object $\objX/A\in \Bim(\bcatE)$. The equivalence of categories
\[
(\pi_1,\pi_2)\circ(-)=(\bcatE[\objX, \pi_1],\bcatE[\objX, \pi_2])\co  \bcatE[\objX,\objY]\to \bcatE[\objX,\objY_1]\times \bcatE[\objX,\objY_2]
\]
is the component  associated to  the triple
$(\objX, \objY_1,\objY_2)\in \bcatE^\op\times  \bcatE \times \bcatE$ of a pseudo-natural transformation.
By Remark~\ref{thm:pseudonatmonadmorphism}, it induces a lax monad morphism
 \[
\big( \bcatE[\objX, \objY_1 \times \objY_2], \bcatE[A,B_1\times B_2]\big)  \to 
\big( \bcatE[\objX, \objY_1], \bcatE[A,B_1] \big) \times \big( \bcatE[\objX,  \objY_2], \bcatE[A,B_2] \big) 
 \]
in $\Cat$. This is an equivalence in the 2-category $\Mnd(\Cat)$ 
since the functor $ (\bcatE[\objX, \pi_1],\bcatE[\objX, \pi_2])$ is an equivalence. Hence the induced functor 
 \begin{equation}
  \label{equ:yb1yb2}
  {\vcenter{\hbox{\xymatrix{
\bcatE[\objX,\objY]^{B_1\times B_2}_A\ar[r]& \bcatE[\objX,\objY_1]^{B_1}_A \times \bcatE[\objX,\objY_2]^{B_2}_A \, , 
}}}}
\end{equation}
which  takes a $M\in \bcatE[\objX,\objY]^{B}_A$ to $(\pi_1\circ M,\pi_2\circ M)$,  is an equivalence of categories. But we have
\[
\tilde \pi_1\circ_{B} M \iso \pi_1\circ B \circ_{B}\circ M
 \iso \pi_1\circ M \quad{\rm and} \quad \tilde \pi_2\circ_{B} M \iso \pi_2\circ B\circ_{B}\circ M
\iso \pi_2\circ M \, .
\]
Thus, $(\tilde \pi_1\circ_B M,\tilde \pi_2\circ_B M) \iso (\pi_1\circ M,\pi_2\circ M)$.
This shows that the functor in (\ref{equ:xoveryoverb1yoverb2}) is an equivalence of categories. 
\end{proof}

\begin{theorem} \label{thm:ccbimod}
Let $\bcatE$ be a tame bicategory. If $\bcatE$ is cartesian closed, then so is
 $\bimE$.
\end{theorem}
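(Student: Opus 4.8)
The plan is to use Proposition~\ref{thm:bimcart}, which already gives that $\bimE$ is cartesian, and then to construct exponentials. Fix objects $\objX/A$ and $\objY/B$ of $\bimE$. Since $\bcatE$ is cartesian closed, the object $\objX$ has an exponential homomorphism $(-)^\objX\co\bcatE\to\bcatE$ right adjoint to $(-)\sqcap\objX$, with evaluation $\ev\co\objY^\objX\sqcap\objX\to\objY$, and moreover $(\objX',\objY')\mapsto(\objY')^{\objX'}$ is a homomorphism $\bcatE^{\op}\times\bcatE\to\bcatE$, contravariant in the first variable. I would define the exponential $(\objY/B)^{(\objX/A)}$ to have underlying object $\objY^\objX$, equipped with a monad $E$ built from $A$ and $B$ as follows.

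Applying $(-)^\objX$ to the monad $B$ on $\objY$ yields a monad $B^\objX$ on $\objY^\objX$; applying the contravariant homomorphism $\objY^{(-)}$ to the monad $A$ on $\objX$ (legitimate since monads are self-dual, so $A$ is also a monad in $\bcatE^{\op}$) yields a monad $\objY^A$ on $\objY^\objX$. Because these two monads arise from the two separate variables of the homomorphism $\bcatE^{\op}\times\bcatE\to\bcatE$ above, there is a canonical invertible $2$-cell $B^\objX\circ\objY^A\iso\objY^A\circ B^\objX$ making $E\defeq B^\objX\circ\objY^A$ a monad on $\objY^\objX$; equivalently, $E$ is the image of the product monad $(A,B)$ on $(\objX,\objY)$ under that homomorphism. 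I would then set $(\objY/B)^{(\objX/A)}\defeq(\objY^\objX)/E$, with evaluation the bimodule $\ev\co(\objY/B)^{(\objX/A)}\sqcap(\objX/A)\to\objY/B$ obtained by transporting the identity bimodule on $(\objY^\objX)/E$ across the equivalence established below.

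The heart of the argument is the universal property. Fix $\objZ/C\in\bimE$. By Proposition~\ref{thm:bimcart} one has $(\objZ/C)\sqcap(\objX/A)=(\objZ\sqcap\objX)/(C\sqcap A)$, so $\bimE[(\objZ/C)\sqcap(\objX/A),\objY/B]=\bcatE[\objZ\sqcap\objX,\objY]^B_{C\sqcap A}$, whereas $\bimE[\objZ/C,(\objY^\objX)/E]=\bcatE[\objZ,\objY^\objX]^E_C$. By Proposition~\ref{bimoduleisbimonadic} the first is the category of algebras for the monad $N\mapsto B\circ N\circ(C\sqcap A)$ on $\bcatE[\objZ\sqcap\objX,\objY]$, and the second the category of algebras for the monad $M\mapsto E\circ M\circ C$ on $\bcatE[\objZ,\objY^\objX]$. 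I would show that the currying equivalence $\bcatE[\objZ,\objY^\objX]\simeq\bcatE[\objZ\sqcap\objX,\objY]$ coming from cartesian closure of $\bcatE$ intertwines these two monads. This reduces, via the defining properties of $\ev$ and of transposition, to three elementary identities: uncurrying turns postcomposition with $B^\objX$ into postcomposition with $B$, precomposition with $C$ into precomposition with $C\sqcap\objX$, and postcomposition with $\objY^A$ into precomposition with $\objZ\sqcap A$; combining these gives a natural isomorphism $\text{uncurry}(E\circ M\circ C)\iso B\circ\text{uncurry}(M)\circ(C\sqcap A)$ compatible with the monad units and multiplications. The induced equivalence of algebra categories is pseudo-natural in $\objZ/C$ by pseudo-naturality of the currying equivalence, and, being pseudo-natural, it is represented (by the bicategorical Yoneda lemma) by its value at the identity bimodule, which is exactly the evaluation map $\ev$; hence $(\objY^\objX)/E$ with $\ev$ exhibits the exponential $(\objY/B)^{(\objX/A)}$.

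I expect the main obstacle to be the last paragraph: not any single conceptual step, but the bookkeeping needed to transport the cartesian closed structure of $\bcatE$ — which is given only up to coherent isomorphism — through the monadic description of the hom-categories of $\bimE$ (Proposition~\ref{bimoduleisbimonadic}), tracking the monad multiplications and units and checking pseudo-naturality in $\objZ/C$ with respect to relative composition of bimodules. By contrast, verifying that $E=B^\objX\circ\objY^A$ is a monad, and that the product and terminal object of $\bimE$ behave as in Proposition~\ref{thm:bimcart}, is comparatively routine.
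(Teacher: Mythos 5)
Your proposal is correct and follows essentially the same route as the paper: the candidate exponential is the same (underlying object the exponential in $\bcatE$, monad the image of the pair of monads under the internal hom homomorphism, i.e.\ your $E=B^\objX\circ\objY^A$ is the paper's $C^B$), and the universal property is likewise obtained by using the currying equivalence, as a component of a pseudo-natural transformation, to intertwine the two induced monads on hom-categories and hence to identify the categories of bimodules via Proposition~\ref{bimoduleisbimonadic}. The step you defer as ``bookkeeping'' --- naturality in $\objZ/C$ with respect to relative composition and representation by the evaluation --- is exactly what the paper completes by exhibiting $\widetilde{\ev}=\ev\circ(C^B\times B)$ as a bimodule (using the commuting decomposition of the monad and Lemma~\ref{monadmorphtomod}) and verifying directly that the transposition is given by relative composition with it, rather than by an appeal to the bicategorical Yoneda lemma.
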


\begin{proof} 
The internal hom homomorphism $(-)^{(-)}\co \bcatE^\op \times \bcatE \to \bcatE$
takes a monad $(B,C)$ on the object $(\objY,\objZ)\in \bcatE^\op \times \bcatE$
to a monad $C^B$ on the object $\obj Z^\objY \in \bcatE$.
We will prove that 
\begin{equation}\label{expbimE}
(\objZ/C )^{\objY/B}=\objZ^\objY/C^B \, ,
\end{equation}
\ie that the exponential of $(\objZ/C )$ by ${\objY/B}$ 
in the bicategory $\Bim(\bcatE)$ is given by $\objZ^\objY/C^B$.
If $\ev\co \objY^\obj Z\times \obj Y\to \obj Z$ is the evaluation
then the adjunction
$$\theta\co \bcatE[\obj X,\objZ^\objY ] \to \bcatE[\objX\times \obj Y, \objZ] $$
is defined by letting
$\theta(M) \defeq \ev\circ(M\times \objY)$ for every $\obj X\in \bcatE$ and $M\co \objX\to \objZ^\objY$.
If $(A,B,C)$ is a monad on the object $(\objX,\objY,\objZ)\in \bcatE^\op\times \bcatE^\op \times \bcatE$,
then by Remark~\ref{thm:pseudonatmonadmorphism}, $\theta$ induces a lax monad morphism
\[
\big(\bcatE[\objX, \objZ^\objY], \bcatE[A,C^B] \big) \to  \big(\bcatE[\objX \times \objY, \objZ], \bcatE[A\times B, C] \big) \, , 
\]
since it is a component of a pseudo-natural transformation.
This lax monad morphism induces an equivalence between the categories of algebras
over these monads
\[
\tilde \theta\co  \bcatE[\obj X, \objZ^\objY]_A^{C^B}\to \bcatE[\objX\times \obj Y, \objZ]^C_{A\times B} 
\]
since $\theta$ is an equivalence. 
By definition, we have a commutative square
\[
\xymatrix@C=2cm{
\bcatE[\objX, \objZ^\objY]_A^{C^B} \ar[r]^{\tilde \theta} \ar[d] & \bcatE[\objX \times \objY, \objZ]^C_{A\times B} 
\ar[d] \\
\bcatE[\objX, \objZ^\objY] \ar[r]_{\theta} & \bcatE[\objX \times \objY, \objZ] \, ,}
\]
in which the vertical arrows are forgetful functors.
Note that 
\[
\bcatE[\objX, \objZ^\objY]_A^{C^B}=\Bim(\bcatE)[\objX/A,\  \objZ^\objY/C^B]
\]
and 
\[
\bcatE[\objX \times \objY, \objZ]^C_{A\times B} =\Bim(\bcatE)[\objX/A \times\! \objY/B, \ \objZ/C] \, .
\]
Let us show that the equivalence
\[
\tilde \theta\co \Bim(\bcatE)[\objX/A,\  \objZ^\objY/C^B]\to\Bim(\bcatE)[\objX/A \times\! \objY/B, \ \objZ/C]
\]
is natural in  $\objX/A\in \Bim(\bcatE)$.
But $\tilde \theta$ is natural if and only if it is of the form 
\[
\tilde\theta (M)= \widetilde{\ev} \circ_{C^B\times B} (M\times  \objY/B)
\]
for some morphism  $ \widetilde{\ev} \co  \objZ^\objY/C^B\times \objY/B \to \objZ/C$
in the bicategory $\Bim(\bcatE)$. If we apply this formula to the case $M= 1_{\objZ^\objY} =C^B$,
we obtain that
\[
\widetilde{\ev} =\tilde\theta(C^B\times B)=\ev \circ (C^B\times B) \, .
\]
Conversely, let us define $ \widetilde{\ev}\co \objZ^\objY\times\! \objY\to \objZ$
by letting 
\[
\widetilde{\ev} \defeq \ev \circ (C^B\times B) \, .
\]
Let us show that the morphism $\widetilde{\ev}$ so defined
 has the structure of a $(C, C^B\times B)$-bimodule.
Note that $C^B\times B=(C^\objY\times \objY) \circ  (\objZ^B\times B)$
and that the monad $C^\objY\times \objY$ commutes with the monad~$\objZ^B\times B$, since we have
\[
C^\objY\circ \objZ^B=C^B= \objZ^B \circ C^\objY
\]
by functoriality.
The morphism $\ev\co  \objZ^\objY \times \obj Y\to \objZ$
is a component of a pseudo-natural transformation.
By Remark~\ref{thm:pseudonatmonadmorphism} it defines a lax monad morphism
$(\ev,\alpha)\co   (\objZ^\objY \times \objY, C^\objY\times \objY)\to  (\objZ, C)$
and it follows by Lemma~\ref{monadmorphtomod} that the morphism $\ev \circ (C^\objY\times \objY)$
has the structure of a $(C, C^\objY\times \objY)$-bimodule.
Hence the morphism
\[
\widetilde{\ev} \defeq \ev \circ (C^B\times B)=   \ev \circ(C^\objY\times \objY) \circ  (\objZ^B\times B)
\]
has the structure  of a $(C, C^B\times B)$-bimodule, 
since the monad $\objZ^B\times B$ commutes with the monad~$C^\objY\times \objY$.
For every $M\in\bcatE[\objX, \objZ^\objY]_A^{C^B}$
we have
\[
\tilde \theta(M) =\ev\circ  (M\times \objY)=\ev\circ (C^B\times B)\circ_{(C^B\times B)} 
 (M\times \objY)=\widetilde{\ev}\circ_{(C^B\times B)} (M\times \objY) \, .
 \]
This shows that the equivalence  $\tilde \theta$ is natural and hence that 
$(\objZ/C)^{\objY/B}=\objZ^\objY/C^B$.
\end{proof}

Recall from Theorem~\ref{thm:smondistiscc} that the tame bicategory $\CatSymV$ is cartesian closed. Hence, by
Theorem~\ref{thm:ccbimod}, the bicategory~$\Bim(\CatSymV)$ is also cartesian closed. In order to prove that 
$\OpdV$ is cartesian closed, recall that  $\OpdV = \Bim(\SymV)$ and that have an inclusion $\SymV \subseteq \CatSymV$. Thus, we have an induced inclusion 
\begin{equation*}
\OpdV \subseteq \Bim(\CatSymV)
\end{equation*}
Thus, in order to prove that $\OpdV$ is cartesian
closed, it is sufficient to show that this inclusion is an equivalence. 
The next two, final, sections of this paper lead to a proof of this fact.

\section{Monad theory in tame bicategories}
\label{sec:montrb}

The aim of this section is to develop some aspects of the formal theory of monads in the setting of a tame bicategory. 
We begin by reviewing some notions and results  from~\cite{StreetR:fortm}, adapting them from the setting of 2-category to that of a bicategory, and then focus on the particular aspects that arise in tame bicategories.
 Let $\bcatE$ be a fixed bicategory. 
Recall from Section~\ref{sec:monmb} that, for a monad $A \co  \objX \to \objX$ in~$\bcatE$ and~$\objK \in \bcatE$,
we  write~$\bcatE[\objK,\objX]^A$ for the category of left $A$-modules with domain $\objK$ and  left $A$-module maps. The category $\bcatE[\objK,\objX]^A $ depends 
pseudo-functorially on $\objK$ and so we obtain a prestack $\bcatE[-,\objX]^A \co  \bcatE^{\op} \rightarrow \Cat$.

\begin{definition}  An \emph{Eilenberg-Moore object} for a monad  $A \co \objX \to \objX$ in $\bcatE$ is a 
representing object for the prestack $\bcatE[-,\objX]^A \co  \bcatE^{\op} \rightarrow \Cat$.
\end{definition}

Concretely, an Eilenberg-Moore object for a monad $A \co  \objX \to \objX$ consists of an object $\objX^A \in \bcatE$
and a morphism~$U \co  \objX^A \rightarrow \objX$ equipped with a left $A$-action $\lambda \co  A\circ U \to U$, 
which is universal in the following sense: the functor 
\begin{equation}
\label{equ:emchar}
 \bcatE[\objK ,\objX^A]\to  \bcatE[\objK ,\objX]^A
\end{equation}
which takes a morphism $N\co  \objK\to   \objX^A$
to the morphism $U \circ N \co  \objK \to \objX$ equipped with the left 
action~$\lambda \circ N\co  A\circ U \circ N \to U \circ N$ is an equivalence 
of categories for every object $\objK\in \bcatE$.
In particular,
for any  left $A$-module $M\co \objK\to \objX$ 
 there exists a morphism $N\co \objK\to \objX^A$ together with 
 an invertible 2-cell $\alpha\co M \to U  \circ N$ such that the following square
 commutes,
\[
\xymatrix{
 A \circ M  \ar[d]_{\lambda}  \ar[rr]^{A\circ \alpha} \ar[d] && A\circ U \circ N \ar[d]^{\lambda \circ N}  \\
M \ar[rr]_\alpha  &&  U \circ N.
 }
 \]
 In the following, we will adopt a slight abuse of language and refer to either the object $\objX^A$ or the morphism $U \co  \objX^A \to \objX$
 as the Eilenberg-Moore object for a monad $A \co  \objX \to \objX$. Note that the universal property characterizing an Eilenberg-Moore object  considered here 
is weaker than introduced in~\cite{StreetR:fortm}, even in a 2-category, since we require the functor in~\eqref{equ:emchar} to be an equivalence rather than
an isomorphism.
In particular, an Eilenberg-Moore object for a monad, as defined here, is unique up to equivalence rather than up to isomorphism as in~\cite{StreetR:fortm}.

 \begin{proposition} \label{thm:EMconservative} 
 If $U \co  \objX^A\to  \objX$ 
 is an  Eilenberg-Moore object 
 for a monad~$A \co  \objX \to \objX$,
 then the functor
$\bcatE[\objK, U] \co \bcatE[\objK,\objX^A]\to \bcatE[\objK,\objX] $
is monadic for every  $\objK\in \bcatE$.
  \end{proposition}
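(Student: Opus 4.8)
The plan is to transport the monadicity of $\bcatE[\objK,A] \co \bcatE[\objK,\objX] \to \bcatE[\objK,\objX]$ across the equivalence supplied by the universal property of the Eilenberg-Moore object. Fix $\objK \in \bcatE$. By the defining property of $\objX^A$ recorded in~\eqref{equ:emchar}, the functor
\[
\Theta \co \bcatE[\objK,\objX^A] \to \bcatE[\objK,\objX]^A \, , \qquad N \mapsto (U \circ N, \lambda \circ N)
\]
is an equivalence of categories. As observed in the remark following the definition of left modules, the category $\bcatE[\objK,\objX]^A$ is precisely the Eilenberg-Moore category of the monad $\bcatE[\objK,A] \co \bcatE[\objK,\objX] \to \bcatE[\objK,\objX]$, so the forgetful functor $V \co \bcatE[\objK,\objX]^A \to \bcatE[\objK,\objX]$ is monadic. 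The key point is then that $\bcatE[\objK,U] \co \bcatE[\objK,\objX^A] \to \bcatE[\objK,\objX]$ factors as $V \circ \Theta$, up to natural isomorphism: indeed $\bcatE[\objK,U](N) = U \circ N = V(\Theta(N))$ on the nose.

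First I would make this triangle precise, checking that the isomorphism $\bcatE[\objK,U] \iso V \circ \Theta$ is indeed the identity (or at worst a canonical coherence isomorphism coming from the pseudo-functoriality of $\bcatE[\objK,-]$). Then I would invoke the standard fact that monadicity is stable under precomposition with an equivalence: if $V$ is monadic and $\Theta$ is an equivalence, then $V \circ \Theta$ is monadic. This is a routine consequence of (a suitable version of) Beck's monadicity theorem — $V \circ \Theta$ creates (equivalently, reflects and preserves) those absolute coequalizers, being the composite of a functor that does so with an equivalence — or alternatively one observes directly that $\bcatE[\objK,\objX^A]$ is equivalent, over $\bcatE[\objK,\objX]$, to the category of algebras for the monad induced by the adjunction $\bcatE[\objK,F] \dashv \bcatE[\objK,U]$, where $F \co \objX \to \objX^A$ is the free-algebra morphism obtained from the universal property (applied to the left $A$-module $A \co \objX \to \objX$). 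Since the excerpt works throughout at the level of equivalences rather than isomorphisms (as emphasised in the note comparing this Eilenberg-Moore universal property with Street's), the appropriate notion is monadicity up to equivalence, and the argument is exactly this two-step transport.

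The main obstacle, such as it is, is bookkeeping rather than mathematics: one must be slightly careful that the left adjoint to $\bcatE[\objK,U]$ exists and is the expected one, i.e.\ that the composite $\bcatE[\objK,F]$ with $F$ the free-algebra morphism genuinely provides a left adjoint, and that the comparison functor into the category of algebras for the resulting monad is (not just essentially surjective and fully faithful, which follows from $\Theta$ being an equivalence, but) compatible with the forgetful functors. All of this is formal once the factorisation $\bcatE[\objK,U] \iso V \circ \Theta$ is in hand, so I would state the lemma-level fact ``monadic functors are closed under precomposition with equivalences'' explicitly and apply it, leaving the verification of the triangle to a one-line check. I do not expect any serious difficulty; the proposition is essentially immediate from~\eqref{equ:emchar} together with the identification of $\bcatE[\objK,\objX]^A$ as an Eilenberg-Moore category.
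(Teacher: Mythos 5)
Your proposal is correct and follows essentially the same route as the paper: factor $\bcatE[\objK,U]$ as the equivalence $\bcatE[\objK,\objX^A]\simeq\bcatE[\objK,\objX]^A$ supplied by the universal property, followed by the monadic forgetful functor from left $A$-modules (the algebras for the monad $\bcatE[\objK,A]$), and conclude since a monadic functor composed with an equivalence is monadic. The extra bookkeeping you flag is indeed routine and the paper does not dwell on it either.
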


\begin{proof} 
We have a commutative diagram of functors
\[
\xymatrix{
\bcatE[\objK,\objX^A]\ar[rr]^{\simeq} \ar@/_1pc/[rrd]_{\bcatE[\objK, U]}  && \bcatE[\objK,\objX]^A \, , \ar[d] \\
&&  \bcatE[\objK,\objX] \, , 
}
\]
where the vertical arrow is the evident forgetful functor, which is monadic by construction.
Hence also the functor $\bcatE[\objK, U]$, since it is the composite of a monadic functor
and an equivalence of categories.
\end{proof}

The next proposition is a version of~\cite[Theorem~2]{StreetR:fortm} in the context of bicategories. We omit the proof.

 \begin{proposition} \label{thm:dualityexact} 
 Every Eilenberg-Moore object $U \co  \objX^A\to  \objX$
 for a monad $A \co  \objX \to \objX$ has a left adjoint
 $F \co  \objX \to \objX^A$
 and the monad map $\pi\co A\to U \circ F$ given by the composite
 \[
 \xymatrix@C=1cm{
 A  \ar[r]^-{A \circ \eta} & A \circ U \circ F \ar[r]^-{\lambda \circ F} &  U \circ F}
 \] 
 is invertible. \qed
 \end{proposition}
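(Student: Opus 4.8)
The plan is to produce the left adjoint $F$ by representability and then to compute the comparison $2$-cell $\pi$ directly from the universal property of free modules. Write $\eta_A \co 1_\objX \to A$ and $\mu_A \co A \circ A \to A$ for the unit and multiplication of the monad $A$. For each $\objK \in \bcatE$ the forgetful functor $V_\objK \co \bcatE[\objK, \objX]^A \to \bcatE[\objK, \objX]$ has a left adjoint sending $M$ to the free left $A$-module $A \circ M$ with action $\mu_A \circ M$ (this is the adjunction recorded in the Remark after the definition of left module in Section~\ref{sec:monmb}), and both functors are pseudo-natural in $\objK$. By the defining property of the Eilenberg--Moore object, the comparison functor in~\eqref{equ:emchar} is an equivalence $\Theta_\objK \co \bcatE[\objK, \objX^A] \simeq \bcatE[\objK, \objX]^A$, pseudo-natural in $\objK$, with $V_\objK \circ \Theta_\objK \iso \bcatE[\objK, U]$ witnessed by the universal action $\lambda$. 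First I would choose quasi-inverses $\Theta_\objK^{-1}$ and form the composites $\Phi_\objK \co \bcatE[\objK, \objX] \to \bcatE[\objK, \objX^A]$ ($\Phi_\objK \defeq \Theta_\objK^{-1}$ precomposed with the free-module functor); these assemble into a pseudo-natural transformation $\bcatE[-,\objX] \Rightarrow \bcatE[-, \objX^A]$, each $\Phi_\objK$ being left adjoint to $\bcatE[\objK, U]$. Invoking the full-faithfulness of the Yoneda homomorphism, this pseudo-natural transformation is isomorphic to $\bcatE[-, F]$ for $F \defeq \Phi_\objX(1_\objX) \co \objX \to \objX^A$, so that $\bcatE[\objK, F] \dashv \bcatE[\objK, U]$ naturally in $\objK$; transporting the unit, counit and triangle identities back through Yoneda produces the desired adjunction $(F, U, \eta, \varepsilon) \co \objX \to \objX^A$ in $\bcatE$, with $\eta \co 1_\objX \to U \circ F$ its unit. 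Alternatively, one could mimic the explicit construction of the Eilenberg--Moore adjunction in~\cite[Theorem~2]{StreetR:fortm}, reading the strict equalities there as coherent isomorphisms.

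For the invertibility of $\pi = (\lambda \circ F) \cdot (A \circ \eta) \co A \to U \circ F$, the key observation is that $\Theta_\objX(F) \iso (A, \mu_A)$ by construction, the latter being the free left $A$-module on $1_\objX$. I would fix an isomorphism of left $A$-modules $\theta \co (A, \mu_A) \xrightarrow{\iso} \Theta_\objX(F) = (U \circ F, \lambda \circ F)$ and use that, because $(A,\mu_A)$ is free on $1_\objX$, an $A$-module map out of it into $(N, \nu)$ is determined by its composite with $\eta_A$; concretely, the passage from $2$-cells $h \co 1_\objX \to N$ to $A$-module maps sends $h$ to $\nu \cdot (A \circ h)$. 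Applied with $(N,\nu) = (U \circ F, \lambda \circ F)$ and $h = \eta$, this yields exactly $\pi$, so $\pi$ is an $A$-module map with $\pi \cdot \eta_A = \eta$, while $\theta$ corresponds to the $2$-cell $\theta \cdot \eta_A$. Hence it suffices to check $\eta = \theta \cdot \eta_A$, after which $\pi$ and $\theta$ are $A$-module maps corresponding to the same $2$-cell, so $\pi = \theta$ and in particular $\pi$ is invertible. Checking $\eta = \theta \cdot \eta_A$ amounts to unwinding the construction of $\eta$ as the unit of the composite adjunction $\Phi_\objX \dashv (V_\objX \circ \Theta_\objX)$, using the left-module unit law $\lambda \cdot (\eta_A \circ U) = 1_U$ and the interchange law to move $\eta_A$ past $\eta$. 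That $\pi$ is moreover a monad map in the sense of Definition~\ref{thm:monad} follows by the same bookkeeping as in~\cite{StreetR:fortm} and is not the difficult point.

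The conceptual content is light, and the main obstacle is purely the $2$-categorical bookkeeping. Two spots need care: promoting the levelwise adjunctions $\bcatE[\objK, F] \dashv \bcatE[\objK, U]$ to an internal adjunction $F \dashv U$ in $\bcatE$ (handled either via the full-faithfulness of Yoneda or by adapting Street's explicit argument), and verifying $\eta = \theta \cdot \eta_A$, which is exactly the computation singling out $\pi$ — rather than merely some isomorphism $A \iso U \circ F$ — as the canonical comparison $2$-cell. Both are routine once the coherence $2$-cells are tracked carefully, but they are the parts of the argument that genuinely use that we work in a bicategory rather than a $2$-category, where the universal property of $\objX^A$ is only up to equivalence.
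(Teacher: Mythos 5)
Your argument is correct, but note that the paper itself gives no proof of this proposition: it simply declares it to be a bicategorical version of Theorem~2 of Street's \emph{The formal theory of monads} and omits the details. Compared with that (standard) route, your construction of the left adjoint is packaged differently: Street obtains $F$ directly from the one-dimensional aspect of the universal property of $\objX^A$ applied to the free left $A$-module $(A,\mu_A)$ on $1_\objX$, and then builds the counit from the two-dimensional aspect, whereas you first assemble the levelwise left adjoints $\Phi_\objK \defeq \Theta_\objK^{-1}\circ(A\circ -)$ of $\bcatE[\objK,U]$ into a pseudo-natural transformation and invoke the bicategorical Yoneda lemma to represent it by $F \defeq \Phi_\objX(1_\objX)$, reflecting the resulting adjunction in $\mathbf{P}(\bcatE)$ back along the full and faithful Yoneda homomorphism. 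The two constructions produce the same $F$ (your $\Phi_\objX(1_\objX)$ is exactly the classifier of the free module), and your treatment of the comparison $2$-cell $\pi$ — identifying $\Theta_\objX(F)=(U\circ F,\lambda\circ F)$ with the free module $(A,\mu_A)$ and using that a module map out of a free module is determined by its restriction along $\eta_A$, so that $\pi$ coincides with the chosen isomorphism $\theta$ once $\eta=\theta\cdot\eta_A$ is checked — is precisely the standard argument. What the Yoneda packaging buys you is that the coherence bookkeeping (associativity and unit constraints) is absorbed into general statements about $\mathbf{P}(\bcatE)$; what it costs is the two compatibility points you yourself flag, namely that the levelwise units and counits are modifications (so that the pointwise adjunctions form an adjunction of prestacks, which does hold here since the free-module adjunctions are given by whiskering with $\eta_A$ and $\mu_A$) and that the transported unit $\eta$ really is $\theta\cdot\eta_A$ for the canonically chosen $\theta$; both are routine, and with them supplied the proof is complete, including the verification that $\pi$ respects multiplications, which you rightly defer to the same kind of diagram chase.
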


Let  $A \co \objX \to \objX$ be a monad in~$\bcatE$. 
Recall from Section~\ref{sec:monmb}  that we  write $\bcatE[\objK,\objX]_A$ for the category of right $A$-modules with domain $\objK$ and 
right~$A$-module maps between them. The category $\bcatE[\objK,\objX]_A $ depends 
pseudo-functorially 
on the object $\objK$ and so we obtain a prestack $\bcatE[ \objX, -]_A \co  \bcatE^{\op} \rightarrow \Cat$.

\begin{definition}  A \myemph{Kleisli object} for a monad $A \co \objX \to \objX$ in a bicategory $\bcatE$
is a representing object for the prestack $\bcatE[\objX,-]_A \co  \bcatE^{\op} \rightarrow \Cat$.
\end{definition}

Concretely, a Kleisli object for a monad $A \co  \objX \to \objX$ consists of an object $\objX_A \in \bcatE$ and
a morphism $F \co  \objX \rightarrow \objX_A$ equipped with a right $A$-action $\rho \co  F\circ A \to F$
which is universal in the following sense: the functor 
\[
\bcatE[F, \objK]  \co \bcatE[\objX_A ,\objK]\to  \bcatE[\objX ,\objK]_A
\]
which takes a morphism $N\co  \objX_A\to   \objK$
to the morphism $N\circ F_A \co  \objX \to \objK$ equipped with the right action
$N\circ \rho \co  N \circ F  \circ A  \to N\circ F$ is an equivalence 
of categories for every  $\objK\in \bcatE$.
In particular, for any right $A$-module $M\co \objX\to \objK$ there exists a morphism $N\co \objX_A \to \objK$ together with 
 an invertible 2-cell $\alpha\co M \to N \circ F$ such that the following square
 commutes:
\[
\xymatrix{
 M \circ A  \ar[d]_{\rho}  \ar[rr]^{\alpha \circ A} \ar[d] &&N \circ F  \circ A \ar[d]^{N \circ \rho}  \\
M \ar[rr]_\alpha  && \,  N\circ F \, .
 }
 \]
Observe that a right $A$-module $F \co  \objX \to \objX_A$ is a Kleisli object for
a monad $A \co \objX \to \objX$ in $\bcatE$ if and only if the left $A^\op$-module $F^\op \co \objX_A \to \objX$ in $\bcatE^\op$
is an Eilenberg-Moore object for the monad $A^\op \co \objX \to \objX$ in $\bcatE^\op$.

 \begin{proposition} \label{thm:KLconservative} 
 If $F \co  \objX\to  \objX_A$ 
 is a Kleisli object 
 for a monad  $A \co \objX \to \objX$, 
 then the functor
$\bcatE[F, \objK] \co \bcatE[\objX_A ,\objK]\to  \bcatE[\objX ,\objK]$
is monadic for every object $\objK\in \bcatE$.
  \end{proposition}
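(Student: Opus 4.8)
The plan is to deduce this statement from \cite[Theorem~2]{StreetR:fortm} (the bicategorical version recalled as Proposition~\ref{thm:dualityexact} above) together with the duality between Kleisli objects and Eilenberg-Moore objects. First I would observe that, by the remark immediately preceding the statement, a right $A$-module $F \co \objX \to \objX_A$ is a Kleisli object for the monad $A \co \objX \to \objX$ in $\bcatE$ if and only if the left $A^\op$-module $F^\op \co \objX_A \to \objX$ in $\bcatE^\op$ is an Eilenberg-Moore object for the monad $A^\op \co \objX \to \objX$ in $\bcatE^\op$. Since the contravariant homomorphism $\bcatE[-, \objK] \co \bcatE \to \CAT$ is the same thing as the covariant homomorphism $\bcatE^\op[\objK, -] \co \bcatE^\op \to \CAT$, the functor $\bcatE[F, \objK] \co \bcatE[\objX_A, \objK] \to \bcatE[\objX, \objK]$ coincides with the functor $\bcatE^\op[\objK, F^\op] \co \bcatE^\op[\objK, \objX_A] \to \bcatE^\op[\objK, \objX]$.

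Next I would apply Proposition~\ref{thm:EMconservative} in the bicategory $\bcatE^\op$: since $F^\op \co \objX_A \to \objX$ is an Eilenberg-Moore object for the monad $A^\op \co \objX \to \objX$ in $\bcatE^\op$, the functor $\bcatE^\op[\objK, F^\op] \co \bcatE^\op[\objK, \objX_A] \to \bcatE^\op[\objK, \objX]$ is monadic for every object $\objK \in \bcatE^\op$, equivalently for every object $\objK \in \bcatE$. By the identification in the previous paragraph, this is exactly the assertion that $\bcatE[F, \objK] \co \bcatE[\objX_A, \objK] \to \bcatE[\objX, \objK]$ is monadic, as required.

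Alternatively, should one prefer a self-contained argument mirroring the proof of Proposition~\ref{thm:EMconservative}, one would note that the universal property of the Kleisli object gives a commutative triangle in which the equivalence $\bcatE[F, \objK] \co \bcatE[\objX_A, \objK] \to \bcatE[\objX, \objK]_A$ is followed by the evident forgetful functor $\bcatE[\objX, \objK]_A \to \bcatE[\objX, \objK]$, which is monadic by construction (it has a left adjoint sending $M \co \objX \to \objK$ to the free right $A$-module $M \circ A$, and one applies the dual of Proposition~\ref{bimoduleisbimonadic}, or directly Beck's monadicity criterion). The composite of a monadic functor with an equivalence of categories is monadic, which yields the claim.

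I do not expect any genuine obstacle here: the result is purely formal, obtained by dualizing Proposition~\ref{thm:EMconservative} via the identification of Kleisli objects in $\bcatE$ with Eilenberg-Moore objects in $\bcatE^\op$. The only point requiring a modicum of care is keeping straight which variance is being dualized—since passing to $\bcatE^\op$ reverses $1$-cells but not $2$-cells, one must check that a \emph{right} $A$-module in $\bcatE$ becomes a \emph{left} $A^\op$-module in $\bcatE^\op$ and that the prestack $\bcatE[\objX, -]_A$ becomes the prestack $\bcatE^\op[-, \objX]_{A^\op}$ of the appropriate handedness; this is exactly the content of the remark preceding the statement and has already been recorded in the excerpt.
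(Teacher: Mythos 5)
Your proposal is correct and follows exactly the paper's route: the paper's proof of this proposition is simply "This follows by duality from Proposition~\ref{thm:EMconservative}," and your argument spells out precisely that duality (Kleisli objects in $\bcatE$ as Eilenberg-Moore objects in $\bcatE^\op$, and $\bcatE[F,\objK]$ as $\bcatE^\op[\objK, F^\op]$). The variance check you flag at the end is indeed the only delicate point, and it is handled by the remark preceding the statement, as you note.
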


\begin{proof} This follows by duality from Proposition~\ref{thm:EMconservative}.
\end{proof}

 \begin{proposition} \label{thm:dualityexactKL} 
 Every Kleisli object $F \co  \objX_A \to  \objX$
 for a monad  $A \co \objX \to \objX$ has a right adjoint
 $U \co \objX\to \objX_A$
 and the monad map $\pi\co A\to U  \circ F$ given by the composite
\[
 \xymatrix@C=1cm{
 A  \ar[r]^-{\eta \circ A} &  U \circ F \circ A \ar[r]^-{U  \circ \rho} &  U \circ F}
 \] 
 is invertible.
 \end{proposition}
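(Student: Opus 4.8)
The plan is to obtain this statement from Proposition~\ref{thm:dualityexact} by the same duality principle that was used to derive Proposition~\ref{thm:KLconservative} from Proposition~\ref{thm:EMconservative}. The key observation, already recorded just before Proposition~\ref{thm:KLconservative}, is that a right $A$-module $F \co \objX \to \objX_A$ is a Kleisli object for the monad $A \co \objX \to \objX$ in $\bcatE$ if and only if the left $A^\op$-module $F^\op \co \objX_A \to \objX$ in $\bcatE^\op$ is an Eilenberg-Moore object for the monad $A^\op \co \objX \to \objX$ in $\bcatE^\op$. So the whole argument is: transport the hypothesis to $\bcatE^\op$, apply Proposition~\ref{thm:dualityexact} there, and transport the conclusion back.

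First I would note that the monad $A \co \objX \to \objX$ in $\bcatE$ gives a monad $A^\op \co \objX \to \objX$ in $\bcatE^\op$ (monads are self-dual, as remarked after Definition~\ref{thm:monad}), and that $F^\op \co \objX_A \to \objX$ is an Eilenberg-Moore object for $A^\op$ in $\bcatE^\op$ by the equivalence just recalled. Proposition~\ref{thm:dualityexact} applied in $\bcatE^\op$ then yields a left adjoint $U^\op \co \objX \to \objX_A$ to $F^\op$ in $\bcatE^\op$, together with an invertible monad map $\pi^\op \co A^\op \to U^\op \circ F^\op$ in $\bcatE^\op$ given by the composite $(\lambda^\op \circ F^\op) \cdot (A^\op \circ \eta^\op)$, where $\eta^\op \co 1_\objX \to F^\op \circ U^\op$ is the unit of that adjunction and $\lambda^\op$ is the $A^\op$-action on $F^\op$. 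Translating back along the correspondence between adjunctions in $\bcatE^\op$ and adjunctions in $\bcatE$ (as recalled in Section~\ref{sec:revbt}: if $(F,U,\eta,\varepsilon)$ is an adjunction in $\bcatE$ then $(U^\op, F^\op, \eta, \varepsilon)$ is one in $\bcatE^\op$), a left adjoint to $F^\op$ in $\bcatE^\op$ is precisely a right adjoint $U \co \objX \to \objX_A$ to $F$ in $\bcatE$.

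It then remains to check that the monad map $\pi \co A \to U \circ F$ in $\bcatE$ obtained by dualizing $\pi^\op$ is exactly the composite $(U \circ \rho) \cdot (\eta \circ A)$ displayed in the statement, where $\rho \co F \circ A \to F$ is the right $A$-action and $\eta \co 1_\objX \to U \circ F$ is now the unit of the adjunction $F \dashv U$. This is a direct matter of reversing the direction of the 1-cells in the composite defining $\pi^\op$: the 2-cell $A^\op \circ \eta^\op$ becomes $\eta \circ A$ (the unit 2-cell is unchanged under the dualization of adjunctions), the $A^\op$-action $\lambda^\op$ on the left module $F^\op$ corresponds to the right $A$-action $\rho$ on $F$, and composition of 1-cells reverses order, so $\lambda^\op \circ F^\op$ becomes $U \circ \rho$. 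Since a 2-cell is invertible in $\bcatE$ if and only if the corresponding 2-cell is invertible in $\bcatE^\op$, invertibility of $\pi$ follows from that of $\pi^\op$.

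I do not expect any real obstacle here: the proof is a mechanical dualization, entirely parallel to the proof of Proposition~\ref{thm:KLconservative}. The only point requiring a moment of care is the bookkeeping of which 2-cell plays which role under the two dualities being composed (the self-duality of monads, and the $(-)^\op$ correspondence for adjunctions and for Eilenberg-Moore/Kleisli objects), so as to confirm that the explicit composite in the statement matches the dual of the composite in Proposition~\ref{thm:dualityexact}; this is routine diagram-chasing. Accordingly I would simply write: ``This follows by duality from Proposition~\ref{thm:dualityexact}, using the fact that $F \co \objX \to \objX_A$ is a Kleisli object for $A$ in $\bcatE$ if and only if $F^\op \co \objX_A \to \objX$ is an Eilenberg-Moore object for $A^\op$ in $\bcatE^\op$.''
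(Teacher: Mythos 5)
Your proposal is correct and follows exactly the paper's own argument: the paper proves this proposition simply by duality from Proposition~\ref{thm:dualityexact}, using the observation recorded before Proposition~\ref{thm:KLconservative} that a Kleisli object in $\bcatE$ is an Eilenberg-Moore object in $\bcatE^\op$. Your additional bookkeeping of how the unit, the action, and the composite dualize is a sound elaboration of the same one-line argument.
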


\begin{proof} This follows by duality from Proposition~\ref{thm:dualityexact}.
\end{proof}

The next definition introduces the notions of an opmonadic  and bimonadic adjunction. Opmonadic adjunctions should not be confused with the
comonadic adjunctions, which involve comonads rather than monads.

\begin{definition} \label{thm:monadicopmonadic}
We say that an adjunction $(F, U, \eta,\varepsilon)\co \objX \rightarrow \objY$  in $\bcatE$ is
\begin{enumerate}[(i)] 
\item  \emph{monadic} if the morphism 
$U\co \objY \to \objX$, equipped with the left action by the monad $U\circ F$, is an Eilenberg-Moore object for the monad $U\circ F \co \objX \to \objX$,
\item \emph{opmonadic} if the morphism 
$F\co \objX\to \objY$, equipped with the right action of the monad $U \circ F$, is a Kleisli object for the monad $U\circ F \co \objX \to \objX$, 
\item \emph{bimonadic} if it is both monadic and opmonadic.  
\end{enumerate}
\end{definition}

In the 2-category $\Cat$, an adjunction is monadic in the sense of Definition~\ref{thm:monadicopmonadic} if and only if it is monadic in
the usual sense~\cite[Section 3.3]{BarrM:toptt}. An adjunction $(F, U) \co \objX \rightarrow \objY$ in a bicategory
$\bcatE$ is monadic if and only if the adjunction  $(\bcatE[ \objK, F],  \bcatE[\objK, G])  \co  \bcatE[\objK,\objX] \rightarrow \bcatE[\objK, \objY]$ is monadic 
in $\Cat$ for every $\objK \in \bcatE$.
By Proposition~\ref{thm:dualityexact}, the adjunction $(F, U) \co \objX \rightarrow \objX^A$   associated to
an Eilenberg-Moore object is monadic. Dually, by Proposition~\ref{thm:dualityexactKL}, the adjunction $(F, U) \co \objX \rightarrow \objX_A$
associated to a Kleisli object  is opmonadic. Observe that an adjunction  $(F, U, \eta,\varepsilon)$
is opmonadic   in $\bcatE$ if and only if the opposite adjunction $(U^\op, F^\op, \eta,\varepsilon)$  is monadic in $\bcatE^\op$.  Consequently, the notion of a bimonadic
adjunction is self-dual, in the sense that an adjunction  $(F, U, \eta,\varepsilon)$ in $\bcatE$
is bimonadic if  and only if the opposite adjunction 
$(U^\op, F^\op, \eta,\varepsilon)$  in  $\bcatE^\op$
is bimonadic.

\begin{definition}
We will say that an adjunction $(F, U, \eta,\varepsilon)\co  \objX \rightarrow \objY$
in  $\bcatE$  is \emph{effective}  if the fork
 \begin{equation*}
\xymatrix{
F \circ U\circ F \circ U 
\ar@<0.8ex>[rr]^-{ \varepsilon\circ  F \circ U} 
\ar@<-0.8ex>[rr]_-{F\circ U\circ \varepsilon } 
&&  F \circ U \ar[r]^-{\varepsilon}  & 1_\objY 
}\end{equation*} 
 is a coequalizer diagram.  
 \end{definition}

The notion of an effective adjunction is self-dual, in the sense that an adjunction $(F, U, \eta,\varepsilon)$
  is effective in~$\bcatE$ if and only if the opposite adjunction~$(U^\op, F^\op, \eta,\varepsilon)$ is effective  in $\bcatE^\op$. Effective
 adjunctions have been studied extensively in category theory (see~\cite{KellyGM:adjwca} and
 references therein for further information).

\begin{proposition} \label{monadiciseffective}  \label{opmonadiciseffective} \hfill 
\begin{enumerate}[(i)] 
\item A monadic adjunction is effective.
\item An opmonadic adjunction is effective.
\end{enumerate}
\end{proposition}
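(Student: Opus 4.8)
The plan is to prove both statements by reducing to the case of the 2-category $\Cat$. For part (i), suppose $(F, U, \eta, \varepsilon) \co \objX \to \objY$ is monadic in $\bcatE$, and set $A \defeq U \circ F$. We must show that the fork
\[
\xymatrix{
F \circ U \circ F \circ U \ar@<0.8ex>[rr]^-{\varepsilon \circ F \circ U} \ar@<-0.8ex>[rr]_-{F \circ U \circ \varepsilon} && F \circ U \ar[r]^-{\varepsilon} & 1_\objY}
\]
is a coequalizer in $\bcatE[\objY, \objY]$. The key observation is that, by the remark following Definition~\ref{thm:monadicopmonadic}, the adjunction $(\bcatE[\objY, F], \bcatE[\objY, U]) \co \bcatE[\objY, \objX] \to \bcatE[\objY, \objY]$ is monadic in $\Cat$. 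A classical fact from monad theory (the Beck-type criterion, or simply the construction of Eilenberg--Moore categories) is that a monadic adjunction in $\Cat$ is effective: the counit of a monadic adjunction is the coequalizer of the canonical pair, because every algebra is canonically a coequalizer of free algebras. Applying this to the monadic adjunction in $\Cat$ above, and evaluating at the object $1_\objX \in \bcatE[\objY, \objX]$ (or rather noting that the relevant coequalizer in $\bcatE[\objY, \objY]$ is computed as the image of the standard resolution), we obtain that the fork above is a coequalizer in $\bcatE[\objY, \objY]$. More directly: the functor $\bcatE[\objY, U] \co \bcatE[\objY, \objY] \to \bcatE[\objY, \objX]$ is monadic, hence creates those colimits that the relevant monad preserves; the coequalizer in question is precisely the one exhibiting $1_\objY$ as the Eilenberg--Moore algebra $(U, \lambda)$ viewed appropriately, and monadicity in $\Cat$ guarantees it exists and is computed as stated.

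For the precise argument I would proceed as follows. First, recall that an Eilenberg--Moore object $U \co \objY \to \objX$ for $A$ satisfies: for every $\objK$, the functor $\bcatE[\objK, U] \co \bcatE[\objK, \objY] \to \bcatE[\objK, \objX]$ is monadic (Proposition~\ref{thm:EMconservative}), with induced monad $\bcatE[\objK, A]$. Taking $\objK = \objY$, the free algebra on $1_\objX \co \objX \to \objX$ — wait, more carefully, take the object $U \in \bcatE[\objY, \objX]$; its image under the left adjoint $F \circ (-)$ is $F \circ U \in \bcatE[\objY, \objY]$, and the standard coequalizer presentation of the algebra $(U, \lambda)$ in $\bcatE[\objY, \objX]^A$, transported across the equivalence $\bcatE[\objY, \objY] \simeq \bcatE[\objY, \objX]^A$, yields exactly the fork displayed. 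Since this equivalence carries coequalizers to coequalizers, and the standard presentation is a coequalizer in the algebra category (a general fact valid in any category of algebras with reflexive coequalizers, or here simply because the split coequalizer in $\bcatE[\objY, \objX]$ is preserved by the monadic functor's left adjoint applied suitably), we conclude the fork is a coequalizer. I would phrase this cleanly using the equivalence $\bcatE[\objY, \objY] \simeq \bcatE[\objY, \objX]^A$ together with the fact that $\varepsilon \co F \circ U \to 1_\objY$ corresponds to the algebra structure map of $(U, \lambda)$.

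For part (ii), this follows from part (i) by duality: an adjunction $(F, U, \eta, \varepsilon)$ is opmonadic in $\bcatE$ if and only if the opposite adjunction $(U^\op, F^\op, \eta, \varepsilon)$ is monadic in $\bcatE^\op$, and the notion of effective adjunction is self-dual (as noted in the paragraph preceding Proposition~\ref{monadiciseffective}). Hence applying part (i) in $\bcatE^\op$ gives that $(U^\op, F^\op, \eta, \varepsilon)$ is effective in $\bcatE^\op$, which means $(F, U, \eta, \varepsilon)$ is effective in $\bcatE$.

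\textbf{Main obstacle.} The delicate point is the reduction to $\Cat$: I want to deduce that a coequalizer fork of 2-cells in $\bcatE[\objY, \objY]$ exists from the monadicity of $\bcatE[\objY, U]$. The cleanest route is not to invoke Beck's theorem in its full strength but rather to use directly that the equivalence $\bcatE[\objY, \objY] \simeq \bcatE[\objY, \objX]^A$ (from the universal property of the Eilenberg--Moore object) identifies the displayed fork with the canonical presentation of the free-forgetful algebra $F \circ U$ — no wait, with the presentation of $(U, \lambda)$ itself — as a reflexive coequalizer of free algebras, which is a coequalizer in any Eilenberg--Moore category whenever the relevant diagram is reflexive (and it always is, with common section $F \circ \eta \circ U$). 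One must be a little careful about which object plays which role and verify that $\varepsilon$ is genuinely the comparison 2-cell; this bookkeeping, together with checking that the equivalence preserves the relevant coequalizer, is the only real content, and it is routine.
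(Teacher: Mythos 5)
Your argument is correct and takes essentially the same route as the paper: reduce to $\Cat$ using the monadicity of $\bcatE[\objY,F] \dashv \bcatE[\objY,U]$ (the Eilenberg--Moore universal property at $\objK=\objY$), exploit the fact that the fork becomes split after applying $U\circ(-)$, and deduce (ii) from the self-duality of effectiveness. The only difference is packaging: the paper invokes Beck's theorem (monadic functors create coequalizers of $U$-split pairs), whereas you transport the canonical presentation of the algebra $(U,\lambda)$ along the equivalence $\bcatE[\objY,\objY]\simeq\bcatE[\objY,\objX]^{U\circ F}$ -- the same splitting via $\eta$ underlies both, so the arguments are interchangeable.
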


\begin{proof}  For part (i), let us show that a monadic adjunction
$(F, U, \eta,\varepsilon)\co \objX\rightarrow \objY$ is effective.
The adjunction
\[
\xymatrix@C=1.5cm{
\bcatE[\objK,\objX] \ar@<1ex>[r]^{\bcatE[\objK, F]} \ar@{}[r]|{\bot} & \bcatE[\objK,\objY]  \ar@<1ex>[l]^{\bcatE[\objK, U]}}
\]
is  monadic for every $\objK\in \bcatE$, since  $U \co \objY \to \objX$ is 
an Eilenberg-Moore object for the monad $U\circ F \co \objX \to \objX$.
This is true in particular in the case where $\objK=\objY$.
Let us now show that the fork 
 \begin{equation}\label{fork12}
\xymatrix@C=1.2cm{
F \circ U\circ F \circ U 
\ar@<0.8ex>[r]^-{ \varepsilon\circ  F \circ U} 
\ar@<-0.8ex>[r]_-{F\circ U\circ \varepsilon } 
&  F \circ U \ar[r]^-{\varepsilon}  & 1_\objY 
}\end{equation} 
is a coequalizer diagram. But the image of the fork in~\eqref{fork12}
by the functor $U\circ (-)$ is 
a split coequalizer:
 \begin{equation*}
\xymatrix@C=1.8cm{
U\circ  F \circ U\circ F \circ U 
\ar@<0.8ex>[r]^-{ U\circ  \varepsilon\circ  F \circ U} 
\ar@<-0.8ex>[r]_-{U\circ  F\circ U\circ \varepsilon } 
&  U\circ  F \circ U  \ar@/_3pc/[l]_-{\eta\circ U \circ F\circ U}  \ar[r]^-{U\circ \varepsilon} 
& U   \ar@/_2pc/[l]_-{\eta\circ U}.
}\end{equation*} 
By Beck's monadicity theorem, which we can apply because the adjunction~$\bcatE[\objK, F] \dashv \bcatE[\objK, U]$ is monadic, 
 the fork in~\eqref{fork12} is a coequalizer diagram. Part~(ii) follows by duality.
\end{proof}

Theorem~\ref{effectiveismonadic}
below shows that in a tame bicategory $\bcatE$ also the converse of each of the implications of
Proposition~\ref{monadiciseffective} also holds and therefore monadic, opmonadic and effective adjunctions
coincide.  Its proof makes use of the Crude Monadicity Theorem, according to which
an adjunction $(F, U) \co  \objX \rightarrow \objY$ in the 2-category $\Cat$, where $\objY$ has 
reflexive coequalizers, is monadic if the functor $U$  preserves reflexive coequalizers
and is conservative~\cite[Section~3.5]{BarrM:toptt}.

\begin{theorem} \label{effectiveismonadic} For an adjunction $(F, U, \eta,\varepsilon)\co \objX \rightarrow \objY$ 
in a tame bicategory $\bcatE$, the conditions of being effective, monadic, opmonadic and
bimonadic are equivalent. 
\end{theorem}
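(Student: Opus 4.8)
The plan is to prove a cycle of implications linking the four conditions. By Proposition~\ref{monadiciseffective}, both ``monadic $\Rightarrow$ effective'' and ``opmonadic $\Rightarrow$ effective'' already hold in any bicategory, and ``bimonadic $\Rightarrow$ monadic'' is immediate from the definition. So the real content is to show that, in a tame bicategory, \emph{effective} $\Rightarrow$ \emph{monadic} and \emph{effective} $\Rightarrow$ \emph{opmonadic}; the second follows from the first by duality, since the notions of effective adjunction and of opposite bicategory are self-dual (an adjunction is opmonadic in $\bcatE$ iff its opposite is monadic in $\bcatE^\op$, and $\bcatE^\op$ is tame whenever $\bcatE$ is). Once ``effective $\Rightarrow$ monadic'' and ``effective $\Rightarrow$ opmonadic'' are established, every effective adjunction is bimonadic, and combined with Proposition~\ref{monadiciseffective} all four conditions coincide.

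First I would set up the reduction of ``effective $\Rightarrow$ monadic'' to a statement about the 2-category $\Cat$. Let $(F,U,\eta,\varepsilon)\co \objX \to \objY$ be an effective adjunction and write $A \defeq U \circ F$ for the induced monad on $\objX$. For each $\objK \in \bcatE$, applying the homomorphism $\bcatE[\objK,-]$ gives an adjunction $(\bcatE[\objK,F], \bcatE[\objK,U]) \co \bcatE[\objK,\objX] \to \bcatE[\objK,\objY]$ in $\Cat$ whose induced monad is $\bcatE[\objK,A]$. By the characterization of monadic adjunctions in a bicategory recalled after Definition~\ref{thm:monadicopmonadic}, it suffices to show each of these adjunctions in $\Cat$ is monadic. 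I would verify the hypotheses of the Crude Monadicity Theorem: the category $\bcatE[\objK,\objY]$ has reflexive coequalizers (because $\bcatE$ is tame), and I must check that $\bcatE[\objK,U]$ preserves reflexive coequalizers and is conservative. Conservativity of $\bcatE[\objK,U]$ follows because $\varepsilon$ admits a section up to the split-fork structure coming from $\eta$, together with effectiveness: effectiveness says precisely that $\varepsilon \co F \circ U \to 1_\objY$ is a coequalizer of a specific pair, and I would use this (as in the proof of Proposition~\ref{monadiciseffective}, but run in reverse) to reconstruct any $1$-cell out of $\objY$ from its image under $\bcatE[\objK,U]$, giving that $\bcatE[\objK,U]$ reflects isomorphisms.

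The main obstacle — and the step I would spend most care on — is showing that $\bcatE[\objK,U] \co \bcatE[\objK,\objY]\to \bcatE[\objK,\objX]$ preserves reflexive coequalizers. The idea is that every object $G$ of $\bcatE[\objK,\objY]$ is canonically a reflexive coequalizer built from free objects: effectiveness of the adjunction gives, after composing the canonical split-fork data with $G$, a presentation
\[
\xymatrix@C=1.4cm{
F \circ U \circ F \circ U \circ G \ar@<0.7ex>[r] \ar@<-0.7ex>[r] & F \circ U \circ G \ar[r]^-{\varepsilon \circ G} & G}
\]
which is a coequalizer because $\bcatE$ is tame and horizontal composition with $G$ preserves coequalizers; moreover the first two objects lie in the essential image of $\bcatE[\objK,F]$. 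A general reflexive coequalizer in $\bcatE[\objK,\objY]$ is then handled by presenting both its vertices this way and interchanging colimits, using tameness of $\bcatE$ to commute the relevant coequalizers past horizontal composition, exactly as in the joint-composition-of-three-bimodules computation in Section~\ref{sec:regbbb}. With preservation of reflexive coequalizers and conservativity in hand, the Crude Monadicity Theorem yields that $(\bcatE[\objK,F],\bcatE[\objK,U])$ is monadic for every $\objK$, hence $U\co \objX^A \to \objX$ realizes $\objX^A$ as an Eilenberg--Moore object for $A$, i.e.\ the adjunction is monadic. Dualizing gives effective $\Rightarrow$ opmonadic, and the theorem follows.
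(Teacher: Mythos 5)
Your overall route is the paper's: reduce to $\Cat$ by applying $\bcatE[\objK,-]$, verify the hypotheses of the Crude Monadicity Theorem for the adjunction $(\bcatE[\objK,F],\bcatE[\objK,U])$, and obtain the opmonadic case by duality, effectiveness being self-dual and $\bcatE^\op$ being tame. However, you have located the difficulty in the wrong place. Preservation of reflexive coequalizers by $\bcatE[\objK,U]=U\circ(-)$ is immediate from tameness: horizontal composition preserves reflexive coequalizers in each variable, so postcomposition with the fixed morphism $U$ does. The presentation-and-interchange argument you single out as the main obstacle is therefore unnecessary, and effectiveness plays no role in that step.

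Where effectiveness is genuinely needed is the conservativity of $\bcatE[\objK,U]$, and your sketch of that step is not yet a proof. There is no section of $\varepsilon$ itself; only $U\circ\varepsilon$ is split (by $\eta\circ U$), which is what makes monadic adjunctions effective, not the converse, and one cannot literally reconstruct a morphism $G\co\objK\to\objY$ from $U\circ G$. The argument that works (and is the paper's) is: given $f\co M\to N$ in $\bcatE[\objK,\objY]$ with $U\circ f$ invertible, compose the effectiveness coequalizer $F\circ U\circ F\circ U\rightrightarrows F\circ U\to 1_\objY$ with $M$ and with $N$; these remain coequalizer diagrams because $(-)\circ M$ and $(-)\circ N$ preserve reflexive coequalizers by tameness, and the two left-hand vertical 2-cells $F\circ U\circ F\circ U\circ f$ and $F\circ U\circ f$ are invertible since $U\circ f$ is, so the induced map on coequalizers, which is $f$, is invertible. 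With the emphasis corrected in this way, your plan goes through exactly as in the paper.
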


\begin{proof} We already saw in Proposition~\ref{monadiciseffective} 
that every monadic adjunction is effective.
Conversely,  let us show that if $\bcatE$ is  tame,
then every  effective adjunction 
$(F, U, \eta,\varepsilon)\co \objX \rightarrow \objY$ in $\bcatE$ 
 is monadic.
For this, we show 
that the adjunction
\[
\xymatrix@C=1.5cm{
\bcatE[\objK,\objX] \ar@<1ex>[r]^{\bcatE[\objK, F]} \ar@{}[r]|{\bot} & \bcatE[\objK,\objY]  \ar@<1ex>[l]^{\bcatE[\objK, U]}}
\]
is  monadic for every $\objK\in \bcatE$.
We apply the Crude Monadicity Theorem.
The category~$\bcatE[\objK,\objY]$ has reflexive coequalizers 
and the functor~$\bcatE[\objK, U]\co \bcatE[\objK, \objY] \to \bcatE[\objK, \objX]$ preserves them
by the hypothesis on $\bcatE$.
Hence it remains to show that the functor~$\bcatE[\objK, U]$ is conservative.
Let $f\co M\to N$ be a 2-cell in~$\bcatE[\objK,\objY ]$ and suppose
that the 2-cell $U\circ f\co U\circ M\to U\circ N$ is invertible.
Let us show that $f$ is invertible.
Consider the following commutative diagram,
\[
\xymatrix{
F \circ U\circ F \circ U \circ M \ar[d]_{F \circ U\circ F \circ U \circ f}
\ar@<0.8ex>[rr]^-{ \varepsilon\circ  F \circ U\circ M} 
\ar@<-0.8ex>[rr]_-{F\circ U\circ \varepsilon\circ M } 
&&  F \circ U\circ M \ar[d]^{ F \circ U \circ f}  \ar[rr]^-{\varepsilon\circ M }  && M  \ar[d]^{ f}\\
F \circ U\circ F \circ U \circ N
\ar@<0.8ex>[rr]^-{ \varepsilon\circ  F \circ U\circ N} 
\ar@<-0.8ex>[rr]_-{F\circ U\circ \varepsilon\circ N } 
&&  F \circ U\circ N \ar[rr]_-{\varepsilon\circ N}  && \, N \, . }
\]
The top fork of the diagram is a coequalizer diagram, since the
 the adjunction $F\dashv U$ is effective and
the functor $\bcatE[M, \objY] \co \bcatE[\objY, \objY] \to \bcatE[\objK, \objY]$ 
preserves reflexive coequalizers.
Similarly,  the bottom fork of the diagram is a coequalizer diagram.
But the left and middle vertical cells of the diagram
are invertible, since the 2-cell  $U\circ f$ is invertible.
It follows that $f$ is invertible.
We have proved that the adjunction~$F\dashv U$ in $\bcatE$ is monadic.
It follows by duality that the adjunction~$U^\op \dashv F^\op$ in $\bcatE^\op$
is monadic if and only if it is effective. But the adjunction $U^\op \dashv F^\op$
is monadic in $\bcatE^\op$ if and only if the adjunction $F \dashv U$ is opmonadic in $\bcatE$.
Moreover, the adjunction~$U^\op \dashv F^\op$
is effective in~$\bcatE^\op$ if and only if the adjunction $F \dashv U$ is effective.
This proves that the adjunction $F\dashv U$ is opmonadic
if and only if it is effective.
\end{proof}

\begin{corollary} \label{thm:EM=eff=K} Let $\bcatE$ be a tame bicategory and $(F, U) \co \objX \rightarrow  \objY$
 be an adjunction in $\bcatE$. The following conditions are equivalent:
\begin{enumerate}[(i)]
\item the adjunction $(F, U) \co \objX \rightarrow  \objY$  is effective, 
\item the morphism $U \co  \objY \to \objX$ is an Eilenberg-Moore object for the monad $U\circ F \co \objX \to \objX$,
\item the morphism $F \co  \objX \to \objY$ is a Kleisli object for the monad $U \circ F \co \objX \to \objX$. \qed
\end{enumerate}
\end{corollary}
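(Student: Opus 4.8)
The statement to prove is Corollary~\ref{thm:EM=eff=K}, which asserts the equivalence of three conditions on an adjunction $(F, U) \co \objX \rightarrow \objY$ in a tame bicategory $\bcatE$: being effective, having $U$ exhibit an Eilenberg-Moore object for $U \circ F$, and having $F$ exhibit a Kleisli object for $U \circ F$. The plan is to deduce this directly from Theorem~\ref{effectiveismonadic} together with the definitions of monadic and opmonadic adjunctions given in Definition~\ref{thm:monadicopmonadic}. The point is that the corollary is essentially a reformulation of the theorem once one unwinds what ``monadic'' and ``opmonadic'' mean.

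First I would recall that, by Definition~\ref{thm:monadicopmonadic}(i), saying the adjunction $(F, U)$ is monadic is \emph{by definition} the assertion that $U \co \objY \to \objX$, equipped with its canonical left $U \circ F$-action, is an Eilenberg-Moore object for the monad $U \circ F \co \objX \to \objX$. Thus condition (ii) of the corollary is literally the condition that the adjunction is monadic. Dually, by Definition~\ref{thm:monadicopmonadic}(ii), saying $(F,U)$ is opmonadic is by definition the assertion that $F \co \objX \to \objY$, equipped with its canonical right $U \circ F$-action, is a Kleisli object for $U \circ F$; so condition (iii) is precisely the condition that the adjunction is opmonadic. Condition (i) is, verbatim, the condition that the adjunction is effective.

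With these three identifications in place, the corollary reduces to the statement that, for an adjunction in a tame bicategory, being effective, monadic, and opmonadic are equivalent. But this is exactly what Theorem~\ref{effectiveismonadic} establishes (it in fact also includes bimonadic, which we do not need here). Therefore the proof is simply: ``This is a reformulation of Theorem~\ref{effectiveismonadic}, using that condition (ii) expresses that the adjunction is monadic and condition (iii) expresses that it is opmonadic, as in Definition~\ref{thm:monadicopmonadic}.''

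\textbf{Main obstacle.} There is no real mathematical obstacle; the work has all been done in Theorem~\ref{effectiveismonadic}. The only thing requiring a line of care is making explicit that the ``$U \circ F$-action'' appearing in the definition of Eilenberg-Moore object (resp.\ Kleisli object) is the canonical one coming from the adjunction — i.e.\ $\lambda = U \circ \varepsilon \circ F$ on the module side — so that conditions (ii) and (iii) of the corollary match the conditions in Definition~\ref{thm:monadicopmonadic} on the nose rather than merely up to a choice of action. Since the adjunction is given, this canonical action is determined, and the match is immediate. Hence the proof is a one-paragraph citation of the theorem, and the appropriate body is short:

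\begin{proof}
This is a reformulation of Theorem~\ref{effectiveismonadic}. Indeed, by Definition~\ref{thm:monadicopmonadic}, condition~(ii) asserts exactly that the adjunction $(F, U)$ is monadic, while condition~(iii) asserts exactly that it is opmonadic; condition~(i) is that it is effective. The equivalence of these three properties for an adjunction in a tame bicategory is the content of Theorem~\ref{effectiveismonadic}.
\end{proof}
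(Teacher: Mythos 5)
Your proposal is correct and matches the paper's intended argument: the corollary is stated with no separate proof precisely because conditions (ii) and (iii) are, by Definition~\ref{thm:monadicopmonadic}, the statements that the adjunction is monadic and opmonadic respectively, so the equivalence with (i) is immediate from Theorem~\ref{effectiveismonadic}. Your remark about the canonical $U \circ F$-action coming from the adjunction is a reasonable point of care but changes nothing.
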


\begin{corollary} \label{thm:EM=K} Let $ \bcatE$ a tame bicategory.
If $U \co \objX^A\to \objX$ is an Eilenberg-Moore object for a monad $A \co \objX \to \objX$,
then its left adjoint   $F \co \objX \to \objX^A$ is a Kleisli object for $A$.
Conversely, if $F \co \objX \to \objX_A$ is a Kleisli  object for $A$,
then its right adjoint $U \co \objX_A\to \objX$ is  an Eilenberg-Moore object for $A$.
\qed
\end{corollary}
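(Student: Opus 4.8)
The plan is to derive both assertions from Corollary~\ref{thm:EM=eff=K}, which identifies effective, monadic and opmonadic adjunctions in a tame bicategory, together with the facts that an Eilenberg--Moore object (Proposition~\ref{thm:dualityexact}) and a Kleisli object (Proposition~\ref{thm:dualityexactKL}) each come with an adjoint and a \emph{canonical invertible} monad map $\pi \co A \to U \circ F$.

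For the first assertion, suppose $U \co \objX^A \to \objX$ is an Eilenberg--Moore object for $A$. By Proposition~\ref{thm:dualityexact} it has a left adjoint $F \co \objX \to \objX^A$ with $\pi \co A \iso U \circ F$. By construction the adjunction $(F,U) \co \objX \to \objX^A$ is monadic, hence effective by part~(i) of Proposition~\ref{monadiciseffective}; so Corollary~\ref{thm:EM=eff=K} yields that $F$ is a Kleisli object for the monad $U \circ F$. It then remains to transport this conclusion along $\pi$: since $\pi$ is an invertible monad map, precomposition of a right action with $M \circ \pi$ defines, for every $\objK \in \bcatE$, an isomorphism of categories $\bcatE[\objX,\objK]_A \iso \bcatE[\objX,\objK]_{U \circ F}$ over $\bcatE[\objX,\objK]$, pseudo-naturally in $\objK$; hence the prestacks $\bcatE[\objX,-]_A$ and $\bcatE[\objX,-]_{U \circ F}$ are isomorphic and have the same representing objects. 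Therefore $F$ is a Kleisli object for $A$.

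The converse is dual. If $F \co \objX \to \objX_A$ is a Kleisli object for $A$, then by Proposition~\ref{thm:dualityexactKL} it has a right adjoint $U \co \objX_A \to \objX$ with $\pi \co A \iso U \circ F$; the adjunction $(F,U)$ is opmonadic, hence effective by part~(ii) of Proposition~\ref{monadiciseffective}, so Corollary~\ref{thm:EM=eff=K} gives that $U$ is an Eilenberg--Moore object for $U \circ F$, and transporting along $\pi$ exactly as above shows it is an Eilenberg--Moore object for $A$. Alternatively, the second statement follows from the first applied to the monad $A^\op$ in the tame bicategory $\bcatE^\op$, using the duality between Eilenberg--Moore and Kleisli objects recorded before Proposition~\ref{thm:KLconservative}. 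I expect the only non-formal point to be the transport along $\pi$, which is nevertheless a routine check that the categories of right $A$-modules and of right $(U \circ F)$-modules agree under the monad isomorphism.
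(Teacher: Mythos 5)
Your proof is correct and follows essentially the route the paper intends for this corollary (stated there without proof): apply Corollary~\ref{thm:EM=eff=K} to the monadic (resp.\ opmonadic) adjunction supplied by Proposition~\ref{thm:dualityexact} (resp.\ Proposition~\ref{thm:dualityexactKL}), then identify the monad $U \circ F$ with $A$ via the invertible monad map $\pi$. The only difference is that you spell out the transport of the Kleisli/Eilenberg--Moore universal property along $\pi$, a routine step the paper leaves tacit.
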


Note that Corollary~\ref{thm:EM=K} implies that Kleisli objects and Eilenberg-Moore objects coincide in a tame bicategory.
The next proposition involves the notion of a tame homomorphism between tame bicategories introduced in
Definition~\ref{def:tame}.

\begin{proposition} \label{properpreserves}  Let $\bcatE$ and $\bcatF$ be tame bicategories.  A tame homomorphism 
$\Phi\co \bcatE \to \bcatF$
preserves monadic (and hence also opmonadic, bimonadic and effective) adjunctions. It thus preserves Eilenberg-Moore 
(and hence also Kleisli) objects.
\end{proposition}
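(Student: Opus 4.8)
The plan is to reduce everything to the characterization of effectivity via coequalizers, which is manifestly preserved by a tame homomorphism, and then bootstrap back up using Theorem~\ref{effectiveismonadic}. So the first step is to recall that an adjunction $(F, U, \eta, \varepsilon) \co \objX \to \objY$ in $\bcatE$ is monadic if and only if it is effective (Theorem~\ref{effectiveismonadic}), i.e.\ if and only if the fork
\[
\xymatrix@C=1.2cm{
F \circ U \circ F \circ U \ar@<0.8ex>[r]^-{\varepsilon \circ F \circ U} \ar@<-0.8ex>[r]_-{F \circ U \circ \varepsilon} & F \circ U \ar[r]^-{\varepsilon} & 1_\objY}
\]
is a coequalizer diagram in $\bcatE[\objY, \objY]$. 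The key observation is that this fork is a \emph{reflexive} coequalizer: the common section is $F \circ \eta \circ U \co F \circ U \to F \circ U \circ F \circ U$, which splits both parallel arrows by the triangular identities. Hence, since $\Phi$ is tame, the functor $\Phi_{\objY, \objY}$ preserves this reflexive coequalizer, and, using the pseudo-functoriality isomorphisms of $\Phi$ to identify $\Phi(F \circ U \circ F \circ U)$ with $\Phi F \circ \Phi U \circ \Phi F \circ \Phi U$ and so on, the image fork exhibits the adjunction $(\Phi F, \Phi U, \Phi \eta, \Phi \varepsilon) \co \Phi \objX \to \Phi \objY$ as effective in $\bcatF$.

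The second step is to run Theorem~\ref{effectiveismonadic} in reverse in $\bcatF$: since $\bcatF$ is tame and the image adjunction is effective, it is also monadic, opmonadic and bimonadic. This gives the first sentence of the proposition. For the second sentence, recall that by Proposition~\ref{thm:dualityexact} an Eilenberg-Moore object $U \co \objX^A \to \objX$ for a monad $A$ carries an adjunction $(F, U, \eta, \varepsilon) \co \objX \to \objX^A$ which is monadic (with $U \circ F \iso A$), hence effective. Applying the first part, the adjunction $(\Phi F, \Phi U) \co \Phi \objX \to \Phi(\objX^A)$ is monadic, so by Corollary~\ref{thm:EM=eff=K} the morphism $\Phi U \co \Phi(\objX^A) \to \Phi \objX$ is an Eilenberg-Moore object for the monad $\Phi U \circ \Phi F \iso \Phi(U \circ F) \iso \Phi(A)$. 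That is, $\Phi$ preserves Eilenberg-Moore objects; the statement for Kleisli objects follows dually (or directly, since a Kleisli object for $A$ is the left adjoint of its Eilenberg-Moore object by Corollary~\ref{thm:EM=K}, and $\Phi$ preserves adjunctions).

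The main obstacle, such as it is, is bookkeeping rather than conceptual: one must check carefully that the coherence isomorphisms of $\Phi$ (pseudo-functoriality for horizontal composition) transport the effectivity fork correctly, so that the image of the canonical fork for $(F, U)$ really is \emph{the} canonical fork for $(\Phi F, \Phi U)$ up to the expected invertible $2$-cells; this is where one uses that $\Phi \eta$ and $\Phi \varepsilon$ are the unit and counit of the image adjunction (Section~\ref{sec:revbt}, ``Homomorphisms''). One also needs the elementary but essential point that the effectivity fork is reflexive, so that tameness of $\Phi$ (which only promises preservation of \emph{reflexive} coequalizers) suffices. No lengthy calculation is required; everything follows by assembling Theorem~\ref{effectiveismonadic}, Proposition~\ref{thm:dualityexact} and the definition of tameness.
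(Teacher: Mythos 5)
Your proposal is correct and follows essentially the same route as the paper: reduce to effectivity, observe that the effectivity fork is a reflexive coequalizer (your explicit section $F\circ\eta\circ U$ makes precise what the paper leaves implicit), use tameness of $\Phi$ to preserve it, and then invoke Theorem~\ref{effectiveismonadic} and Corollary~\ref{thm:EM=eff=K} to transfer back to monadic/opmonadic/bimonadic adjunctions and to Eilenberg-Moore and Kleisli objects. The only difference is that you unfold the Eilenberg-Moore step via Proposition~\ref{thm:dualityexact}, where the paper simply cites Corollary~\ref{thm:EM=eff=K}; this is a matter of detail, not of method.
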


\begin{proof} Let us show that $\Phi$ preserves effective adjunctions.
If $(F, U, \eta,\varepsilon)\co \objX \to \objY$ is an effective adjunction   in  $\bcatE$,  let us show that
the adjunction $( \Phi F, \Phi U,  \Phi \eta,\Phi\varepsilon)\co \Phi \objX \to \Phi \objX$ is effective.
The fork 
 \begin{equation*}
\xymatrix{
F \circ U\circ F \circ U 
\ar@<0.8ex>[rr]^-{ \varepsilon\circ  F \circ U} 
\ar@<-0.8ex>[rr]_-{F\circ U\circ \varepsilon } 
&&  F \circ U \ar[r]^-{\varepsilon}  & 1_\objY 
}\end{equation*} 
 is a reflexive coequalizer diagram in  $\bcatE[\objY,\objY]$, 
 since the adjunction $(F, U, \eta,\varepsilon)$ is effective.
 Hence also  the fork 
 \begin{equation*}
\xymatrix{
\Phi F \circ \Phi U\circ \Phi F \circ \Phi U 
\ar@<0.8ex>[rr]^-{ \Phi \varepsilon\circ  \Phi F \circ \Phi U} 
\ar@<-0.8ex>[rr]_-{\Phi F\circ \Phi U\circ \Phi \varepsilon } 
&&  \Phi F \circ \Phi U \ar[r]^-{\Phi \varepsilon}  & 1_{ \Phi \objY} 
}\end{equation*} 
is a reflexive coequalizer, since 
the functor $\Phi_{\objX, \objY} \co  \bcatE[\objY,\objY] \to \bcatF[\Phi \objY,\Phi\objY]$ 
preserves reflexive coequalizers for every $\objY\in \bcatE$.
This proves that $\Phi$ preserves effective adjunctions.
It then follows from Theorem~\ref{effectiveismonadic} that
$\Phi$ preserves monadic, opmonadic and bimonadic adjunctions.
It thus preserves Eilenberg-Moore and Kleisli objects
by Corollary~\ref{thm:EM=eff=K}.
\end{proof}

\section{Monad theory in bicategories of bimodules} 
\label{sec:montbb}

Recall from Remark~\ref{rem:je} that  for every tame bicategory $\bcatE$ there is a homomorphism 
\[
\JE \co  \bcatE \rightarrow \bimE
\]
which maps an object~$\objX \in \bcatE$ to the identity monad  $\objX/1_\objX$
We introduce the appropriate notion of morphism between tame bicategories and make an
observation about the functorial character of the bimodule construction.

\begin{proposition} \label{thm:EMinBimpartone}
Let $\bcatE$ be a tame bicategory. The bicategory $ \bimE$ is tame
and the homomorphism 
$\JE \co  \bcatE \rightarrow \bimE$ is full and faithful and proper.
\end{proposition}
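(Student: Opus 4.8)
The plan is to prove the three assertions separately, in order: that $\bimE$ is tame, that $\JE$ is full and faithful, and that $\JE$ is proper. All three will rely on the hypothesis that $\bcatE$ is tame and on the monadicity result of Proposition~\ref{bimoduleisbimonadic}.

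For the tameness of $\bimE$, recall that $\bimE[\objX/A,\objY/B] = \bcatE[\objX,\objY]^B_A$. By Proposition~\ref{bimoduleisbimonadic} the forgetful functor $U \co \bcatE[\objX,\objY]^B_A \to \bcatE[\objX,\objY]$ is monadic, being the forgetful functor of the monad $\bcatE[A,B]$, whose underlying endofunctor sends $M$ to $B \circ M \circ A$. Since horizontal composition in $\bcatE$ preserves coequalizers of reflexive pairs in each variable, this endofunctor preserves reflexive coequalizers; as $\bcatE[\objX,\objY]$ has reflexive coequalizers, the standard argument then shows that $\bcatE[\objX,\objY]^B_A$ has reflexive coequalizers and that $U$ creates them. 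To see that relative composition preserves reflexive coequalizers in each variable, I would first reduce to the level of underlying morphisms, using that the relevant forgetful functors create reflexive coequalizers, and then exploit the fact that $N \circ_B M$ is by construction the coequalizer of the reflexive pair $N \circ B \circ M \rightrightarrows N \circ M$ obtained from $N$, $B$ and $M$ by horizontal composition. Since horizontal composition preserves reflexive coequalizers in each variable, replacing $M$, respectively $N$, by a reflexive coequalizer and interchanging the two colimits yields that $N \circ_B (-)$ and $(-) \circ_B M$ preserve reflexive coequalizers.

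For the full faithfulness of $\JE$, I would observe that $\JE_{\objX,\objY}$ is the functor $\bcatE[\objX,\objY] \to \bcatE[\objX,\objY]^{1_\objY}_{1_\objX}$ equipping a morphism with its essentially unique $(1_\objY, 1_\objX)$-bimodule structure. Applying Proposition~\ref{bimoduleisbimonadic} again, $\bcatE[\objX,\objY]^{1_\objY}_{1_\objX}$ is the category of algebras for the monad $\bcatE[1_\objX, 1_\objY]$, whose underlying endofunctor $M \mapsto 1_\objY \circ M \circ 1_\objX$ is isomorphic to the identity; hence that monad is isomorphic to the identity monad, its forgetful functor $U$ is an equivalence of categories, and since $U \circ \JE_{\objX,\objY}$ is isomorphic to the identity, $\JE_{\objX,\objY}$ is an equivalence of categories as well. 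Thus $\JE$ is full and faithful, and since each $\JE_{\objX,\objY}$ is an equivalence it preserves and reflects reflexive coequalizers, which gives the properness of $\JE$. I expect the only point requiring genuine care to be the interchange-of-colimits step for relative composition; the rest is formal manipulation with monadic forgetful functors under the assumption that $\bcatE$ is tame.
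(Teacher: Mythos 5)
Your proof is correct and follows essentially the same route as the paper: reflexive coequalizers in the hom-categories via the monadicity of Proposition~\ref{bimoduleisbimonadic}, preservation by relative composition by writing $N \circ_B (-)$ as a reflexive coequalizer of the functors $N \circ B \circ (-)$ and $N \circ (-)$ and commuting colimits, and properness from the hom-equivalences $\JE_{\objX,\objY}$. The only cosmetic differences are that you treat the right-hand variable by the same interchange argument where the paper invokes the duality of Remark~\ref{thm:dualitybimod}, and you deduce full faithfulness from the monadicity result where the paper simply observes that identity-bimodule structures are unique; both variants are fine.
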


 \begin{proof} Let us show that 
$\JE \co  \bcatE \rightarrow \bimE$ 
is  fully  faithful.
A morphism $M\co \objX\to \objY$ in $\bcatE[\objX,\objY]$  has a unique structure 
of~$(1_\objY,1_\objX)$-bimodule, and every 2-cell $f\co M\to N$ in $\bcatE[\objX,\objY]$
is a map of~$(1_\objY,1_\objX)$-bimodules.
This shows that the functor 
\[
\myJ_{\objX, \objY} \co  \bcatE[\objX,\objY]\to \bimE[\objX/1_\objX,\objY/1_\objY]
\]
is  an isomorphism of categories for every pair of objects $\objX,\objY\in \bcatE$.

Let us now show that the bicategory $ \bimE$ is tame. First of all, 
recall that the  category $ \bcatE[ \objX, \objY]$ has  reflexive coequalizers by the assumption that $\bcatE$ is tame.
Moreover,  the monad $B\circ (-)\circ A\co  \bcatE[ \objX, \objY]\to  \bcatE[ \objX, \objY]$
preserves reflexive coequalizers since it is defined by composition.
By Proposition~\ref{bimoduleisbimonadic} 
 the category~$\bcatE[ \objX, \objY]^B_A$ has reflexive coequalizers
and the  forgetful functor $\bcatE[ \objX, \objY]^B_A\to \bcatE[ \objX, \objY]$
preserves and reflects reflexive coequalizers. 
Let us now show that the horizontal composition functors of $\bimE$ preserve coequalizers on the left.
For this we have to 
show that the functor $N\circ_B(-) \co  [ \objX/A, \objY/B]\to [ \objX/A, \objZ/C]$
preserves reflexive coequalizers for every  morphism
$N\co  \objY/B\to  \objZ/C$ in $\bimE$. 
The following square commutes
\[
\xymatrix{
\bcatE[ \objX, \objY]^B_A\ar[rr]^{N\circ_B(-)}   \ar[d]_-{U_1}  & & \bcatE[ \objX, \objZ]^C_A   \ar[d]^-{U_2}    \\
\bcatE[ \objX, \objY]^B  \ar[rr]_{N\circ_B(-)}   &&  \bcatE[ \objX, \objZ]  }
\]
by definition of the functor $N\circ_B(-)$, where $U_1$
and $U_2$ are the forgetful functors.
Moreover, the functors $U_1$
and $U_2$ preserve and reflect  reflexive coequalizers.
Hence, it suffices to show that the composite
\[
U_2(N\circ_B(-))\co \bcatE[ \objX, \objY]^B_A\to\bcatE[ \objX, \objY]^C_A\to \bcatE[ \objX, \objZ]
\]
preserves reflexive coequalizers.
But for this it suffices to show that the functor
\[
N\circ_B(-)\co \bcatE[ \objX, \objY]^B \to \bcatE[ \objX, \objZ]
\]
preserves reflexive coequalizers, since the square commutes
and the functor $U_1$ preserves reflexive coequalizers.
For every $M\in \bcatE[ \objX, \objY]^B$
we have a coequalizer diagram
\begin{equation}
\label{equ:relcomptensor}
\xymatrix@C=1.2cm{
N  \circ  B \circ M  \ar@<0.8ex>[r]^-{\rho  \circ M}  \ar@<-0.8ex>[r]_-{N \circ \lambda} 
&  N \circ   M \ar[r]^{q} & N\circ_B M  }
\end{equation}
in the category $\bcatE[ \objX, \objZ]$,
where $\rho_N$ is the right action of $B$ on $N$ and $\lambda_M$
is the left action of $B$ on $M$.
The functors 
\[
N\circ B\circ (-)  \co [ \objX, \objY]\to [ \objX, \objZ] \, , \quad 
 N\circ (-)  \co [ \objX, \objY]\to [ \objX, \objZ]
\]
preserve reflexive coequalizers, since the bicategory $\bcatE$ is tame.
Hence also their composite 
with the  forgetful functor $[\objX, \objY]^B\to [ \objX, \objY]$.
This shows that the functor $N\circ_B (-)$ is a colimit of functors
preserving reflexive coequalizers.
It follows that the functor $N\circ_B (-)$ 
preserves reflexive coequalizers, since colimits commute with colimits.
We have proved that  the horizontal composition functors of $\bimE$ preserve coequalizers on the left.
It follows by the duality of Remark~\ref{thm:dualitybimod} that the horizontal composition functors of $\bimE$ preserve coequalizers 
also on the right, and hence $\bcatE$ is  tame.

It remains to show that the  homomorphism 
$\JE \co  \bcatE \rightarrow \bimE$ is proper.
But $\JE$ preserves local reflexive
coequalizers, since the functor
$\myJ_{\objX, \objY} \co  \bcatE[ \objX, \objY]\to \bimE[ \objX/1_\objX, \objY/1_\objY]$
 is an equivalence of categories
for every  pair of objects $ \objX, \objY\in \bcatE$.
\end{proof}

Proposition~\ref{thm:EMinBimpartone} implies that $\JE \co  \bcatE \to \bimE$ can be regarded 
as an inclusion $\bcatE \subseteq \bimE$. Because of this, in the following we will 
 identify an object $\objX\in  \bcatE$ 
with the object $\objX/1_\objX$ of $\bimE$ and the category $ \bcatE[\objX,\objY]$  with the category $\bimE[\objX/1_\objX,\objY/1_\objX]$
 for every pair $\objX,\objY\in \bcatE$.  Our next goal is to establish that $\bimE$ is 
Eilenberg-Moore complete. We begin with  two observations about the relationship between monads 
in~$\bcatE$ and in~$\bimE$.

\begin{proposition} \label{thm:EMinBimparttwo}
Let $\bcatE$ a tame bicategory.
\begin{enumerate}[(i)]
\item An adjunction in $\bcatE$
is monadic in $\bcatE$ if and only if it is monadic  in  $\bimE $.
\item A morphism $U\co \objY\to \objX$  in $\bcatE$
is an Eilenberg-Moore object for a monad $A \co \objX \to \objX$ 
if and only if it is an Eilenberg-Moore object for  $A \co \objX/1_\objX \to \objX/1_\objX$
in  $\bimE$. 
\end{enumerate}
\end{proposition}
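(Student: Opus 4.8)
The plan is to reduce both statements to the characterizations of monadicity and of Eilenberg-Moore objects that are already available, exploiting the full and faithful inclusion $\JE \co \bcatE \to \bimE$ established in Proposition~\ref{thm:EMinBimpartone}. For part (i), recall that an adjunction $(F, U, \eta, \varepsilon) \co \objX \to \objY$ in a bicategory is monadic if and only if for every test object $\objK$ the induced adjunction of hom-categories is monadic in $\Cat$. So I would fix an object of $\bimE$, say $\objK/C$, and compare the two hom-categories: in $\bcatE$ we test against objects $\objK \in \bcatE$, i.e.\ against $\objK/1_\objK$, whereas in $\bimE$ we test against all monads $\objK/C$. The key point is that it suffices to test monadicity against a \emph{generating} family of objects, and the objects $\objK/1_\objK$ coming from $\bcatE$ suffice for this purpose because $\bimE[\objK/C, \objX/1_\objX] = \bcatE[\objK,\objX]_C$ is monadic over $\bcatE[\objK,\objX]$ (by Proposition~\ref{bimoduleisbimonadic}), so a limit/colimit condition holds in $\bimE[\objK/C, -]$ as soon as it holds after composing with this monadic forgetful functor, which lands in $\bcatE[\objK, -]$. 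Concretely, I would invoke Theorem~\ref{effectiveismonadic}: an adjunction in a tame bicategory is monadic if and only if it is effective, and effectiveness is a condition purely about a coequalizer in a single hom-category $\bcatE[\objY,\objY]$. Since $\JE$ is full and faithful and tame (hence preserves and reflects reflexive coequalizers in hom-categories, by the proof of Proposition~\ref{thm:EMinBimpartone}), the fork witnessing effectiveness in $\bcatE[\objY,\objY]$ is a coequalizer if and only if its image in $\bimE[\objY/1_\objY, \objY/1_\objY]$ is; and $\JE$ of an adjunction is an adjunction with the same unit and counit. Therefore the adjunction is effective in $\bcatE$ iff it is effective in $\bimE$, and by Theorem~\ref{effectiveismonadic} applied in each (tame, by Proposition~\ref{thm:EMinBimpartone}) bicategory, iff it is monadic in each.

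For part (ii), I would argue that a morphism $U \co \objY \to \objX$ in $\bcatE$ equipped with a left $A$-action is an Eilenberg-Moore object for $A$ in $\bcatE$ if and only if it is one for $A \co \objX/1_\objX \to \objX/1_\objX$ in $\bimE$. One direction is essentially formal once part (i) is in hand: by Proposition~\ref{thm:dualityexact}, an Eilenberg-Moore object $U$ comes with a left adjoint $F$ and the comparison monad map $A \to U \circ F$ is invertible, so the adjunction $(F, U)$ is a monadic adjunction presenting $A$; by Corollary~\ref{thm:EM=eff=K}, conversely, an object $U$ is Eilenberg-Moore for a monad iff the corresponding adjunction is effective. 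Thus, in a tame bicategory, ``$U$ is Eilenberg-Moore for $A$'' unwinds to ``there is an adjunction $F \dashv U$ which is effective and whose generated monad is $A$.'' Since all of this data --- the morphisms $F$, $U$, the 2-cells $\eta$, $\varepsilon$, the monad structure on $A$, and the comparison isomorphism --- lives in the image of the full and faithful inclusion $\JE$, and since effectiveness transports across $\JE$ in both directions as established in part (i), the statement ``$U \co \objY/1_\objY \to \objX/1_\objX$ is Eilenberg-Moore for $A \co \objX/1_\objX \to \objX/1_\objX$'' in $\bimE$ is equivalent to the corresponding statement in $\bcatE$. I would also remark that Proposition~\ref{properpreserves} gives the ``only if'' direction directly, since $\JE$ is a tame homomorphism and therefore preserves Eilenberg-Moore objects; the content is the converse.

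The main obstacle I anticipate is the reduction, in part (i), of monadicity in $\bimE$ (a priori tested against all objects $\objK/C$) to monadicity tested only against objects in the image of $\bcatE$. The clean way around this is precisely to route everything through the notion of an \emph{effective} adjunction, since effectiveness is formulated as a single reflexive coequalizer in one hom-category and so is manifestly preserved and reflected by the full and faithful, tame inclusion $\JE$; the equivalence of effectiveness with monadicity is then supplied by Theorem~\ref{effectiveismonadic} in the tame bicategories $\bcatE$ and $\bimE$ (tameness of the latter being Proposition~\ref{thm:EMinBimpartone}). With that pivot, part (i) becomes a short verification and part (ii) follows by combining it with Corollary~\ref{thm:EM=eff=K} and Proposition~\ref{thm:dualityexact}; the remaining work is routine bookkeeping about bimodule structures on $1_\objX$-$1_\objY$-bimodules, which is exactly the content already recorded in the proof of Proposition~\ref{thm:EMinBimpartone}.
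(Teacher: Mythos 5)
Your proposal is correct and follows essentially the same route as the paper: both reduce monadicity to effectiveness via Theorem~\ref{effectiveismonadic} (using tameness of $\bcatE$ and $\bimE$), observe that effectiveness is a coequalizer condition in the single hom-category $\bcatE[\objY,\objY]$, on which $\JE$ induces an isomorphism, so it transfers both ways, and then deduce part (ii) from Corollary~\ref{thm:EM=eff=K}. The preliminary musings about testing monadicity against all objects $\objK/C$ are unnecessary, as you yourself conclude by pivoting to effectiveness, which is exactly the paper's argument.
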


\begin{proof} 
Let us show that an adjunction $\bcatE$
is monadic if and only if it is monadic  in  $\bimE $.
But an adjunction in $\bcatE$
 is  effective if and only if it is effective in $\bimE $,
since the functor 
\[
\mathrm{J}_{\objX, \objY} \co  \bcatE[\objY, \objY]\to  \bimE[\objY, \objY]
\]
is an equivalence of categories (it is actually an isomorphism of categories).
The result then follows from Theorem~\ref{effectiveismonadic},
since  $\bcatE$ and $\bimE$ are tame.
Corollary~\ref{thm:EM=eff=K} implies that a morphism $U\co \objY\to \objX$ in $\bcatE$
is an Eilenberg-Moore object for a monad  $A \co \objX \to \objX$ 
if and only if it is an Eilenberg-Moore object for~$A \co \objX/1_\objX \to \objX/1_\objX$
in $\bimE$. 
\end{proof}

Let  $A \co \objX \to \objX$ be a monad in a tame bicategory $\bcatE$.
Then the morphism $A\co  \objX\to  \objX$
has the structure of a left
$A$-module $F\co  \objX \to  \objX/A$ and of a right $A$-module $U \co   \objX/A\to  \objX$.
We wish to show that we have an adjunction $F \dashv U$ in $\bimE$.
Since  $U \circ_{A} F = A\circ_A A \iso A$ and $F \circ  U  = A \circ A$, we 
define the unit $\eta \co  1_ \objX \to U \circ_{A} F$  to be $\eta_A\co 1_ \objX\to A$ and the counit $\varepsilon \co  
F \circ U \to 1_{\objX/A}$  to be $\mu_A\co A\circ A\to A$. 
To prove that we have an adjunction, we need to show that the triangular identities 
hold. This amounts to proving that
\[
(A\circ_A \mu_A)\cdot (\eta_A \circ  A)=1_A \, , \quad  
(\mu_A \circ_A A)\cdot (A\circ \eta_A)=1_A  \, .
\]
But we have $A\circ_A \mu_A =\mu_A $,
since the 2-cell $\mu_A \co  A\circ A\to A$ is a map of left $A$-modules. 
Thus,
\[
(A\circ_A \mu_A )\cdot (\eta_A \circ  A)=\mu_A \cdot (\eta_A \circ A)=1_A \, , 
\]
since $\eta_A$ is a unit for the multiplication $\mu_A$. Dually, we have
$\mu_A \circ_A A=\mu_A$, since the 2-cell $\mu_A\co A\circ A\to A$ is a map of right $A$-modules.
Thus,
\[
(\mu_A \circ_A A)\cdot (A\circ \eta )=\mu_A \cdot ( A \circ \eta_A)=1_A \, , 
\]
since $\eta_A$ is a unit for the multiplication $\mu_A$.

\begin{lemma} \label{thm:adjunctionfrommonad5}
Let $\bcatE$ be a tame bicategory.
Let  $A \co  \objX \to \objX$ be a monad in $\bimE$.
Then the adjunction $(F, U) \co  \objX  \rightarrow \objX/A$ in $\bimE$ described above is monadic
and the monad $U \circ_A F\co  \objX \to \objX$ is  isomorphic to $A \co  \objX \to \objX$. Hence,
$U \co  \objX/A \to \objX$ is an Eilenberg-Moore object for~$A$. 
\end{lemma}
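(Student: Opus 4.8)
The plan is to verify the triangular identities for the putative adjunction $(F,U)\co \objX \rightarrow \objX/A$ in $\bimE$, then invoke the characterization of Eilenberg--Moore objects in a tame bicategory established earlier in this section. First I would record that, since $\bcatE$ is tame, so is $\bimE$ by Proposition~\ref{thm:EMinBimpartone}, so that Theorem~\ref{effectiveismonadic} and Corollary~\ref{thm:EM=eff=K} are available to us: in $\bimE$, an adjunction is monadic if and only if it is effective, and in that case the left-hand leg of the adjunction exhibits its right-hand leg as the Eilenberg--Moore object of the induced monad. Thus once the triangular identities are checked, it suffices to prove that the adjunction $(F,U,\eta,\varepsilon)$ is effective, i.e.\ that a certain fork is a reflexive coequalizer in $\bimE[\objX/A,\objX/A] = \bcatE[\objX,\objX]^A_A$.

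The triangular identities were essentially discharged in the paragraph preceding the statement: using that $\mu_A \co A \circ A \to A$ is a map of left $A$-modules one gets $A \circ_A \mu_A = \mu_A$, whence $(A \circ_A \mu_A)\cdot(\eta_A \circ A) = \mu_A \cdot (\eta_A \circ A) = 1_A$; dually, $\mu_A \circ_A A = \mu_A$ gives $(\mu_A \circ_A A)\cdot(A \circ \eta_A) = 1_A$. So $F \dashv U$ in $\bimE$ with unit $\eta_A$ and counit $\mu_A$, and the induced monad $U \circ_A F$ has underlying $1$-cell $A \circ_A A \iso A$ via the unit isomorphism $\ell_A$ (or $r_A$) of $\bimE$ from~\eqref{equ:leftunitisomorphism}; a short check, using that these unit isomorphisms are compatible with the monad structure on $A$, shows this isomorphism of $1$-cells is an isomorphism of monads $U \circ_A F \iso A$ in $\bimE[\objX,\objX]$.

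It then remains to see that the adjunction is monadic, for which I would show it is effective and apply Corollary~\ref{thm:EM=eff=K}. The counit is $\varepsilon = \mu_A$, and the relevant fork is
\[
\xymatrix@C=1.4cm{
A \circ_A A \circ_A A
\ar@<0.8ex>[r]^-{\mu_A \circ_A A}
\ar@<-0.8ex>[r]_-{A \circ_A \mu_A}
& A \circ_A A \ar[r]^-{\mu_A} & 1_{\objX/A} = A \, ,}
\]
which, transported along the coherence isomorphisms $A \circ_A A \iso A$, becomes the fork
\[
\xymatrix@C=1.4cm{
A \circ A
\ar@<0.8ex>[r]^-{\mu_A}
\ar@<-0.8ex>[r]_-{\mu_A}
& A \ar[r]^-{1_A} & A \, ,}
\]
whose second and third arrows coincide, so it is trivially a (reflexive) coequalizer. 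Hence the adjunction is effective, so monadic, and by Corollary~\ref{thm:EM=eff=K} the morphism $U \co \objX/A \to \objX$ is an Eilenberg--Moore object for $U \circ_A F \iso A$ in $\bimE$. The main obstacle here is not any deep argument but the bookkeeping: one must be careful to track the unit isomorphisms $\ell$, $r$, $a$ of $\bimE$ and to confirm that the identifications $A \circ_A A \iso A$ intertwine the two legs $\mu_A \circ_A A$ and $A \circ_A \mu_A$ with the honest multiplication $\mu_A$, so that the fork genuinely degenerates; this is where invoking the universal property of the relative composition (and in particular the defining diagrams~\eqref{equ:leftunitisomorphism} and~\eqref{equ:rightunitisomorphism}) does the real work.
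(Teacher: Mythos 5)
Your framing is correct up to a point: the triangular identities from the preceding paragraph, the tameness of $\bimE$ (Proposition~\ref{thm:EMinBimpartone}), the reduction of monadicity to effectiveness via Theorem~\ref{effectiveismonadic} and Corollary~\ref{thm:EM=eff=K}, and the identification $U\circ_A F\iso A$ as monads (using that $\mu_A$ is a bimodule map) all match the paper. But the heart of the proof --- effectiveness --- is where your argument breaks down. In $\bimE$ the composite $F\circ U$ is taken over the intermediate object $\objX/1_\objX$, so $F\circ U=A\circ A$ (plain composition), not $A\circ_A A$; the counit is $\mu_A\co A\circ A\to A$, and the effectiveness fork is isomorphic to $A\circ A\circ A\rightrightarrows A\circ A\to A$ with parallel legs $\mu_A\circ A$ and $A\circ\mu_A$ (this is exactly~\eqref{equ:fork537}). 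These two legs are \emph{not} equal, so the fork does not degenerate; your claim that after transporting along $A\circ_A A\iso A$ the parallel arrows coincide and the coequalizer becomes trivial is false, and moreover circular, since $A\circ_A A$ is by definition the coequalizer in $\bcatE[\objX,\objX]$ of this very parallel pair, so the isomorphism $A\circ_A A\iso A$ cannot be used to trivialize the fork without already assuming what must be proved.

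What is actually needed, and what your proposal omits, is an argument that this fork is a coequalizer in the category $\bcatE[\objX,\objX]^A_A$ of $(A,A)$-bimodules, not merely in $\bcatE[\objX,\objX]$. The paper's argument: the image of the fork under the forgetful functor $\bcatE[\objX,\objX]^A_A\to\bcatE[\objX,\objX]$ is a split coequalizer, with splittings $\eta_A\circ A\circ A$ and $\eta_A\circ A$, and since this forgetful functor is monadic (Proposition~\ref{bimoduleisbimonadic}) it creates coequalizers of split pairs; hence the fork is a coequalizer of bimodules, the adjunction is effective, and Corollary~\ref{thm:EM=eff=K} gives that $U\co\objX/A\to\objX$ is an Eilenberg--Moore object for $U\circ_A F\iso A$. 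Without this split-fork-plus-monadicity step (or some substitute for it), your proof has a genuine gap at its only substantive point.
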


\begin{proof}
Let us begin by verifying that the monad $U \circ_A F$ is  isomorphic to $A$.
Obviously, $U \circ_A F = A\circ_A A= A$.
The unit  of the monad $U \circ_A F$ is defined to be the unit $\eta$ of the 
adjunction $F \dashv U$.  But we have  $\eta=\eta_A$ by definition.
The multiplication of the monad $U \circ_A F$ is defined to be the 2-cell
$U \circ_A \varepsilon \circ_A F$.
But we have 
\[
U\circ_A \varepsilon \circ_A F=A\circ_A \mu_A \circ_A A=\mu_A \, , 
\]
since $\mu_A\co A\circ A\to A$ is a map of $(A,A)$-bimodules.
Let us now show that the adjunction  $(F, U, \eta,\varepsilon)$
is monadic. By Proposition~\ref{thm:EMinBimpartone} the bicategory $\bimE$ is tame
and therefore, by Theorem~\ref{effectiveismonadic},  it suffices to show that the adjunction
is effective. For this we have to show that the fork
 \begin{equation*}
\xymatrix{
F\circ U\circ_A F\circ U 
\ar@<0.8ex>[rr]^-{\varepsilon \circ_A  F\circ U } 
\ar@<-0.8ex>[rr]_-{ F\circ U \circ\!_A\ \varepsilon} 
&& F \circ U \ar[r]^-{\varepsilon}  &   1_{\objX/A} }
 \end{equation*}
is a coequalizer diagram in the category of $(A,A)$-bimodules.
 But this fork is isomorphic to 
 \begin{equation}
 \label{equ:fork537}
\xymatrix{
A\circ A\circ A
\ar@<0.8ex>[rr]^-{\mu_A \circ A } 
\ar@<-0.8ex>[rr]_-{A\circ \mu_A } 
&& A\circ A \ar[r]^-{\mu_A}  &   A }
 \end{equation}
since $F=U=A$, $A\circ_A A \iso A$ and $\varepsilon=\mu_A$.
Let us show that the fork in~\eqref{equ:fork537}  is a coequalizer diagram.
But its image by
the forgetful functor $U\co \bcatE[\objX,\objX]^A_A\to \bcatE[\objX,\objX]$
splits in the category~$\bcatE[\objX,\objX]$, as we have the following diagram
 \begin{equation*}
\xymatrix{
A\circ A\circ A
\ar@<0.8ex>[rr]^-{\mu_A \circ A } 
\ar@<-0.8ex>[rr]_-{A\circ \mu_A } 
&& A\circ A \ar[r]^-{\mu_A}   \ar@/_2pc/[ll]_-{\eta_A \circ A\circ A}   &   \, A \, . \ar@/_2pc/[l]_-{\eta_A \circ A} }
 \end{equation*}
This shows that the fork in~\eqref{equ:fork537} is a coequalizer diagram, since the functor $U$ 
is monadic, as observed in Proposition~\ref{bimoduleisbimonadic}.
 \end{proof}

\begin{remark} Let $\mathrm{Und}_\bcatE \co \Mnd(\bcatE) \to \bcatE $ be the homomorphism mapping a monad $(\objX, A)$ 
to its underlying object~$\objX$. For a monad $(\objX, A)$ in $\bcatE$, the bimodule $U \co  \objX/A \to \objX$ 
of Lemma~\ref{thm:adjunctionfrommonad5} can be viewed as  a morphism 
\[
U^A \co   R(\obj X,A) \to (\JE \circ \mathrm{Und}_\bcatE) (\objX,A) \, , 
\]
in $\bimE$, where $R_\bcatE \co \MndE \to \bimE$ is the homomorphism defined via Lemma~\ref{monadmorphtomod}.
The family of morphisms $U^A \co \objX/A\to  \objX$, for $(X, A) \in \MndE$, can then be seen as the components of a pseudo-natural transformation 
fitting in the diagram
\[
\xymatrix{
\Mnd(\bcatE) \ar[r]^-{\mathrm{Und}_\bcatE} \ar@/_1pc/[dr]_-{R_\bcatE}    & \bcatE \ar[d]^{\JE} \\
 & \bimE \, .}
 \]
For a monad morphism $(F,\phi)\co  (\objX,A) \to (\objY,B)$, the required
pseudo-naturality 2-cell, which should fit in the diagram
\[
\xymatrix@C=2cm{
\objX/A  \ar[r]^{U^A} \ar[d]_{R(F)}  \ar@{}[dr]|{\Downarrow \, u_F} & \objX/1_{\objX}  \ar[d]^{F}\\
\objY/B \ar[r]_{U^B}    & \objY/1_{\objY} , }
\]
is given by the following chain of isomorphisms and equalities:
\[
F \circ U^A = F\circ A \iso B\circ_B F\circ A=B\circ_B R(F) = U^B \circ_B R(F) \, . \qedhere
\]
\end{remark}

\medskip

Lemma~\ref{thm:adjunctionfrommonad5} shows that every monad in $\bcatE$ admits an Eilenberg-Moore 
object in $\bimE$. Below, we show that in fact every monad in $\bimE$ has an Eilenberg-Moore object in
$\bimE$. In order to do this, we need some preliminary observations. 
Let~$(A, \mu_A, \eta_A)$ be a monad on~$\objX \in \bcatE$. Then, for a monad 
 $(B, \mu_B, \eta_B)$ on $\objX$ and a map of monads $\pi\co A\to B$, 
the morphism $B \co  \objX \to \objX$ has the structure of an $(A,A)$-bimodule, which we 
denote by~$B_A \co  \objX/ A \to \objX/A$. 
The left action~$\lambda\co A\circ B\to B$ and the right action
 $\rho\co B \circ A\to B$ are defined by the following diagrams
  \begin{equation*}
\xymatrix{
  A \circ B \ar[r]^{\pi\circ B} \ar@/_1pc/[dr]_-{\lambda}  & B  \circ B \ar[d]_-{\mu_B} &
 \ar[l]_{B  \circ \pi } B\circ A  \ar@/^1pc/[dl]^-{\rho}  \\ 
 &B \, . & }
\end{equation*}
Furthermore, the bimodule $B_A\co  \objX/A \to \objX/A$ has the structure of a monad  in $\bimE$
with the multiplication $\mu^{B}\co B\circ_A B\to B$
 defined by the following commutative diagram, determined by the universal property of $B \circ_A B$, as follows:
\begin{equation*}
\xymatrix{
B \circ A \circ B 
\ar@<0.8ex>[r]^-{ \rho\circ B} 
\ar@<-0.8ex>[r]_-{B \circ  \lambda } 
&  B \circ B \ar[r]^{q} \ar@/_1pc/[dr]_-{\mu_B}  &B  \circ_A B \ar[d]^-{\mu^{B}} \\ 
 & & B \, .}
\end{equation*} 
The unit of the monad $B_A \co  \objX/A \to \objX/A$ is the 2-cell $\pi\co A\to B$.
This defines a functor $(-)_A\co A\backslash \Mon_\bcatE(\objX)\to  \Mon_{\bimE}(\objX/A)$.

\begin{lemma} \label{monadovermonda}
For every monad $(A, \mu_A, \eta_A)$ in a tame category $\bcatE$,
the functor 
\[
(-)_A\co A\backslash \Mon_\bcatE(\objX)\to  \Mon_{\bimE}(\objX/A)
\]
is essentially surjective.
\end{lemma}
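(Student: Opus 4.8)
The goal is to show that the functor $(-)_A \co A\backslash \Mon_\bcatE(\objX) \to \Mon_{\bimE}(\objX/A)$ is essentially surjective, so starting from an arbitrary monad $(M, \mu_M, \eta_M)$ on $\objX/A$ in $\bimE$, I must produce a monad $(B, \mu_B, \eta_B)$ on $\objX$ in $\bcatE$ together with a monad map $\pi \co A \to B$ such that $B_A \iso M$ as monads in $\bimE$. The first step is to unwind what the data of $M$ is: it is a $(B,A)$-bimodule, wait---no, it is an $(A,A)$-bimodule $M \co \objX \to \objX$ in $\bcatE$, equipped with a unit $\eta_M \co A \to M$ (a bimodule map, since the identity $1_{\objX/A}$ is $A$ with its canonical bimodule structure) and a multiplication $\mu_M \co M \circ_A M \to M$ satisfying associativity and the unit laws in $\bimE$. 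The plan is to take $B$ to be the underlying morphism of $M$, to take $\pi \defeq \eta_M \co A \to B$, and to define the monad structure $(\mu_B, \eta_B)$ on $B$ in $\bcatE$ by first setting $\eta_B \defeq \eta_M \cdot \eta_A$, i.e. the composite $1_\objX \xrightarrow{\eta_A} A \xrightarrow{\eta_M} M$, and then defining $\mu_B \co B \circ B \to B$ as the composite of the quotient map $q \co B \circ B \to B \circ_A B$ (which exists since $B\circ B$ maps to the reflexive coequalizer $B\circ_A B$) with $\mu_M \co B\circ_A B \to B$.

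The key steps, in order, are as follows. First, verify that $(B, \mu_B, \eta_B)$ is a monad in $\bcatE$: associativity of $\mu_B$ will follow from associativity of $\mu_M$ in $\bimE$ together with the fact that the canonical comparison $B \circ B \circ B \to B \circ_A (B \circ_A B)$ is compatible with $q$ on both sides (using that $\bcatE$ is tame, so that composing the reflexive coequalizer diagram defining $B \circ_A B$ with $B$ on either side is again a coequalizer); the unit laws for $\mu_B$ will follow from the unit laws for $\mu_M$ combined with the identity $q \cdot (B \circ \eta_B) = $ the canonical right-unit comparison, which in turn rests on the explicit description of the unit isomorphisms $\ell$, $r$ of $\bimE$ in~\eqref{equ:leftunitisomorphism} and~\eqref{equ:rightunitisomorphism}. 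Second, check that $\pi = \eta_M \co A \to B$ is a monad map in the sense of Definition~\ref{thm:monad}: the unit condition is $\eta_B = \pi \cdot \eta_A$, which holds by our definition of $\eta_B$; the multiplication condition $\mu_B \cdot (\pi \circ \pi) = \pi \cdot \mu_A$ unwinds, after composing with $q$, to the statement that $\eta_M$ is a monad-unit-compatible bimodule map, i.e. the unit law $\mu_M \cdot (\eta_M \circ_A M) = 1_M$ (and its mirror) in $\bimE$ pulled back along $q$. Third, and finally, check that the $(A,A)$-bimodule structure on $B$ induced from the monad map $\pi$ via the diagrams preceding the lemma coincides with the original $(A,A)$-bimodule structure of $M$, and that the monad structure $(B_A, \mu^{B}, \pi)$ in $\bimE$ built from $\pi$ as in the construction of $(-)_A$ agrees with $(M, \mu_M, \eta_M)$; here the unit is literally $\pi = \eta_M$, and $\mu^B = \mu_M$ because both are characterized by the same universal property relative to $q \co B \circ B \to B \circ_A B$.

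The main obstacle I anticipate is the third step: showing that the left and right $A$-actions on $B$ obtained from $\pi$ via $\lambda = \mu_B \cdot (\pi \circ B)$ and $\rho = \mu_B \cdot (B \circ \pi)$ recover exactly the actions that $M$ came equipped with. The subtlety is that $\mu_B$ was itself defined through $\mu_M$ and $q$, so I need to trace the identity $\mu_M \cdot (\eta_M \circ_A M) = 1_M$ (left unit law for $M$ in $\bimE$) through the coequalizer $q$ to conclude that $\lambda$ equals the original left action $\lambda_M$; the point is that $q \cdot (\pi \circ B) \co A \circ B \to B \circ_A B$ factors as the left-unit comparison followed by $\ell_M^{-1}$ up to the identifications $A \circ_A B \iso B$, so that $\mu_M$ post-composed with it yields $\lambda_M$. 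This is a diagram chase of moderate length that relies essentially on tameness (to know the relevant coequalizers persist under composition) and on the explicit split-fork descriptions of the unit isomorphisms of $\bimE$. Once the actions are matched, the equality of monad structures is immediate from the shared universal property, and essential surjectivity follows. All verifications are routine given the formal machinery established in Sections~\ref{sec:regbbb} and~\ref{sec:montbb}, so I would state the construction explicitly and leave the bulk of the diagram chasing to the reader, as is done elsewhere in the paper.
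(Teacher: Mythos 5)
Your construction is exactly the one in the paper: take $B$ to be the underlying morphism of the given monad, $\mu_B$ the composite of the canonical map $q\co B\circ B\to B\circ_A B$ with the given multiplication, $\eta_B$ the composite of $\eta_A$ with the given unit, and $\pi$ the unit itself; the paper then simply asserts that the monad axioms hold and that $B_A\iso M$ is immediate. Your additional discussion of how the induced $A$-actions match the original bimodule structure (via the unit laws and the split-fork description of the unit isomorphisms of $\bimE$) correctly fills in the verifications the paper leaves to the reader, so the proposal is correct and takes essentially the same route.
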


\begin{proof}  Let $(E, \mu, \eta)$ be a monad  on $\objX/A$ in $\bimE$. Thus,
we have a morphism $E \co  \objX \to \objX$ equipped with the structure of an $(A,A)$-bimodule
together with bimodule maps $\mu \co  E\circ_A E\to E$ and $\eta \co A\to E$  satisfying the 
monad axioms. We define a monad $(B, \mu_B, \eta_B)$ on $\objX$ as follows. The morphism
$B \co  \objX \to \objX$ is given by $E \co  \objX \to \objX$ itself. 
The multiplication $\mu_B \co  B \circ B \to B$
is obtained by composing the canonical map
$q \co  E\circ E\to E\circ_A E$ with $\mu \co  E\circ_A E\to E$,
and the unit $\eta_B \co  1_{\objX}\to B$ is defined to be the composite 
of the unit $\eta_A \co  1_{\objX}\to A$ of the monad $A$
with the unit $\eta \co  A \to E$ of the monad $E$. It is easy to verify that the monad axioms are satisfied.
We can also define a monad map $\pi \co A\to B$ by letting $\pi \defeq \eta$. 
It is now immediate that~$B_A \iso E$, as required.
\end{proof}

Let $\objX \in \bcatE$. Let $A = (A, \mu_A, \eta_A)$, $B =(B, \mu_B, \eta_B)$ be monads on $\objX$.
If $\pi\co A\to B$ is a map of monads, then the morphism $B\co  \objX \to  \objX$ has the structure of 
both an $(A,B)$-bimodule  and a $(B,A)$-bimodule. These bimodules  will be denoted 
by~$U \co   \objX/B\to  \objX/A$ and $F\co   \objX/A\to   \objX/B$, respectively. We wish to show that these morphisms are adjoint.
In order to do so, let us define the unit of the adjunction $\eta \co  \Id_{\objX/A} \to U \circ_B F$ as the composite of 
$\pi \co  A \to B$ with the isomorphism $B \iso B \circ_B B$. We then define the counit  of the adjunction $\varepsilon \co  F \circ_A U
\to \Id_{\objX/B}$  as the multiplication $\mu^B \co  B \circ_A B \to B$ of the monad $B_A \co  \objX/A \to \objX/A$ defined above.
It remains to verify the triangular laws, which in this case amounts to verifying the commutativity of the following diagrams:
\[
\xymatrix@C=1.5cm{
B \ar[r]^-{B \circ_A \pi} \ar@/_1pc/[dr]_-{1_B} & B \circ_A B \circ_B  B \ar[d]^-{\mu^B \circ_B B} \\
 & B \, , } \qquad
 \xymatrix@C=1.5cm{
 B \circ_B \circ_A B \ar[d]_-{B \circ_B \mu^B} & B \ar[l]_-{\pi \circ_A B}  \ar@/^1pc/[dl]^-{1_B} \\
  B \, . & }
\]
For the diagram on the left-hand side, observe that $\mu^B \circ_B B=\mu^B$, since  
$\mu^B \co  B\circ_A B\to B$ is a map of right $B$-modules. 
Thus,
\[
(\mu^B \circ_B B) \cdot (B \circ_A \pi)=\mu^B \cdot (B\circ_A \pi)=1_B \, ,
\]
since $\pi$ is the unit of the monad $B_A \co  \objX/A \to \objX/A$. 
For the diagram on the right-hand side, dually, we have $B\circ_B \mu^B =\mu^B$,
since $\mu^B\co B\circ_A B\to B$ is a map of left $B$-modules. 
Thus,
\[
(U\circ_B  \mu^B)\cdot (\pi \circ_A U)=\mu^B \cdot (\pi \circ_A B)=1_B=1_U \, , 
\]
since $\pi$ is the unit of the monad $B_A$. We have therefore proved that $(F, U, \eta,\varepsilon) \co \objX/A \to \objY/B$
is an adjunction.

\begin{proposition} \label{EMovermonad}
The adjunction $(F, U, \eta, \varepsilon) \co \objX/A \to \objX/B$ defined above is monadic and the
monad $U \circ_B F \co  \objX/A \to \objX/A$ is isomorphic to the monad 
$B_A \co  \objX/A \to \objX/A$. Hence, the bimodule $U \co  \objX/B \to \objX/A$
is an Eilenberg-Moore object for the monad $B_A \co  \objX/A \to \objX/A$.
\end{proposition}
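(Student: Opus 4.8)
The plan is to reduce everything to already-established facts about monads in tame bicategories, so that only two verifications are needed: that the monad $U \circ_B F$ is isomorphic to $B_A$, and that the adjunction $(F, U, \eta, \varepsilon)$ is effective. Once these are in hand, the final assertion that $U \co \objX/B \to \objX/A$ is an Eilenberg-Moore object for $B_A \co \objX/A \to \objX/A$ follows immediately from Corollary~\ref{thm:EM=eff=K}, since $\bimE$ is tame by Proposition~\ref{thm:EMinBimpartone} and an effective adjunction exhibits its right adjoint as an Eilenberg-Moore object for the generated monad.

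First I would compute $U \circ_B F$. By the definition of relative composition in $\bimE$, since $F$ is the $(B,A)$-bimodule $B$ with left $B$-action given by $\mu_B$ and $U$ is the $(A,B)$-bimodule $B$ with right $B$-action given by $\mu_B$, the object $U \circ_B F$ is the reflexive coequalizer of $B \circ B \circ B \rightrightarrows B \circ B$ with the two maps $\mu_B \circ B$ and $B \circ \mu_B$; this coequalizer is $B$, split by $\eta_B$, exactly as in the last coequalizer diagram displayed in the proof of Lemma~\ref{thm:adjunctionfrommonad5}. So $U \circ_B F \iso B$ as a morphism. Next I would check that the induced monad structure on $U \circ_B F$ — whose unit is $\eta$ and whose multiplication is $U \circ_B \varepsilon \circ_B F$ — agrees with the monad structure on $B_A$ (whose unit is $\pi$ and whose multiplication is $\mu^B$). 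The unit is $\pi$ on both sides by the definition of $\eta$; for the multiplication one unfolds $U \circ_B \varepsilon \circ_B F = B \circ_B \mu^B \circ_B B$ and uses that $\mu^B \co B \circ_A B \to B$ is a map of both left and right $B$-modules, so relative composition with $B$ on either side leaves $\mu^B$ unchanged. This gives the desired isomorphism of monads $U \circ_B F \iso B_A$ in $\bimE[\objX/A, \objX/A]$.

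Then I would establish that the adjunction $(F, U, \eta, \varepsilon)$ in $\bimE$ is monadic, which by Theorem~\ref{effectiveismonadic} (applicable since $\bimE$ is tame) is equivalent to showing it is effective, i.e.\ that the fork
\[
\xymatrix{
F \circ_A U \circ_B F \circ_A U
\ar@<0.8ex>[rr]^-{\varepsilon \circ_B F \circ_A U}
\ar@<-0.8ex>[rr]_-{F \circ_A U \circ_B \varepsilon}
&& F \circ_A U \ar[r]^-{\varepsilon} & 1_{\objX/B}
}
\]
is a coequalizer diagram in $\bimE[\objX/B, \objX/B] = \bcatE[\objX,\objX]^B_B$. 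Using $F = U = B$, the isomorphisms $F \circ_A U \iso B_A$ from the previous paragraph, and $1_{\objX/B} = B$ with $\varepsilon = \mu^B$, this fork is isomorphic to $B_A \circ_A B_A \circ_A B_A \rightrightarrows B_A \circ_A B_A \to B_A$ built from $\mu^B$. Applying the forgetful functor $\bcatE[\objX,\objX]^B_B \to \bcatE[\objX,\objX]$, which creates reflexive coequalizers by Proposition~\ref{bimoduleisbimonadic}, this becomes the image of the bar-type fork for the monad $B_A$ acting on the category $\bcatE[\objX,\objX]^B$ (or directly on $\bcatE[\objX,\objX]$), which splits using $\eta_A$ (respectively $\pi$) as a contracting homotopy in the manner of the split fork displayed in the proof of Lemma~\ref{thm:adjunctionfrommonad5}. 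Hence the fork is a coequalizer and the adjunction is effective, so monadic. By Corollary~\ref{thm:EM=eff=K}, $U \co \objX/B \to \objX/A$ is an Eilenberg-Moore object for $U \circ_B F \iso B_A$.

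\textbf{Main obstacle.} The routine-but-delicate part is the bookkeeping of relative composites: one must keep straight which $B$ plays which role (morphism underlying $F$, underlying $U$, or underlying $1_{\objX/B}$), track the $A$- versus $B$-relative composition symbols, and verify that the coequalizer presentations of the various $\circ_A$ and $\circ_B$ composites used in identifying the bar fork are genuinely compatible with the module structures. The one conceptual point where care is needed is checking that the split-fork argument, which lives in $\bcatE[\objX,\objX]$, actually detects a coequalizer in $\bcatE[\objX,\objX]^B_B$; this is exactly where monadicity of the forgetful functor from Proposition~\ref{bimoduleisbimonadic} (and the tameness of $\bcatE$, ensuring the relevant composition functors preserve reflexive coequalizers) does the work. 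None of this is hard, but it is the place where an error would most easily creep in.
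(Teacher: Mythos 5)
Your proposal is correct and follows essentially the same route as the paper's proof: reduce monadicity to effectiveness via tameness of $\bimE$ and Theorem~\ref{effectiveismonadic}, identify the fork with the bar fork $B\circ_A B\circ_A B \rightrightarrows B\circ_A B \to B$, split its image under the monadic forgetful functor of Proposition~\ref{bimoduleisbimonadic} using $\pi$, and match the monad structures ($\pi$ as unit, $B\circ_B \mu^B\circ_B B=\mu^B$ as multiplication). The only difference is the order of the two verifications, which is immaterial.
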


\begin{proof} 
Let us show that the adjunction is monadic. 
By Proposition~\ref{thm:EMinBimpartone},  the bicategory $\bimE$ is tame
and therefore, by Theorem~\ref{effectiveismonadic}, it suffices to show that the adjunction is effective.
For this we have to show that the fork
 \begin{equation}\label{fork539}
\xymatrix{
F\circ_A U\circ_B F\circ_A U 
\ar@<1.2ex>[rrr]^-{\mu^B \circ_B  F\circ_A U } 
\ar@<-1.2ex>[rrr]_-{ F\circ_A U \circ_B\mu^B } 
&&& F \circ_A U \ar[rr]^-{\mu^B }  &&   B }
 \end{equation}
is a coequalizer diagram in the category of $(B,B)$-bimodules.
But the fork in~\eqref{fork539}  is isomorphic to the fork
 \begin{equation}\label{fork540}
\xymatrix{
B\circ_A B \circ_A B 
\ar@<1.2ex>[rr]^-{\mu^B \circ_A B } 
\ar@<-1.2ex>[rr]_-{B \circ_A \mu^B } 
&&B\circ_A B \ar[rr]^-{\mu^B}  &&   B \, , }
 \end{equation}
since $F=U=B$ and $B\circ_B B=B$.
But the  image of the fork in~\eqref{fork540} under 
the forgetful functor $U\co \bcatE[\objX,\objX]^B_B\to \bcatE[\objX,\objX]$
splits in the category $\bcatE[\objX,\objX]$,
  \begin{equation*}
 \xymatrix{
B\circ_A B \circ_A B 
\ar@<1.0ex>[rr]^-{\mu^{B/A} \circ_A B } 
\ar@<-1.0ex>[rr]_-{B \circ_A \mu^{B/A} } 
&&B\circ_A B \ar[rr]^-{\mu^{B/A}}    \ar@/_3pc/[ll]_-{\pi \circ_A B \circ_A B}  &&    \ar@/_3pc/[ll]_-{\pi \circ_A B} B \, .
} 
\end{equation*}
Since the functor $U$ 
is monadic, as observed in Proposition~\ref{bimoduleisbimonadic},
this shows that the fork in~\eqref{fork540} is a coequalizer diagram.
We have proved that the adjunction $(F, U, \pi,\mu^B)$
is monadic.

Finally, let us show that the monad $U\circ_B F \co  \objX/A \to \objX/A$ is isomorphic to $B_A \co  \objX/A \to \objX/A$. 
First of all, we have
\[
U \circ_B F = B \circ_B B =  B \, .
\]
Secondly, the multiplication of $U \circ_B F$ is given by $U \circ_B \varepsilon \circ_B F \co  U \circ_B F \circ_A  U \circ_B F \to U \circ_B F$,
which is $B \circ_B \mu^B \circ_B B \co  B \circ_B B \circ_A  B \circ_B B \to  B$. But $B \circ_B \mu^B \circ_B = \mu^B$, as required.
Finally, the units of $U \circ_B F$ and $B_A$ coincide, since both are given by $\pi \co  A \to B$.
\end{proof}

\begin{definition} 
We say that a bicategory $\bcatE$ is \emph{Eilenberg-Moore complete} (\resp \emph{Kleisli complete}) 
if every monad in $ \bcatE$ admits an Eilenberg-Moore object (\resp Kleisli object).
\end{definition}

For example, the 2-category $\Cat$ is both Eilenberg-Moore complete and Kleisli complete. 
A bicategory $\bcatE$ is Eilenberg-Moore complete if and only if its opposite $\bcatE^\op$ is Kleisli complete. Since Eilenberg-Moore and Kleisli objects coincide
in a tame bicategory by Corollary~\ref{thm:EM=K}, a tame bicategory is Eilenberg-Moore complete if and only if it is Kleisli 
complete.

\begin{theorem} \label{Bimbiseffective}
For any tame category $\bcatE$, the bicategory $\bimE$ is Eilenberg-Moore complete.
\end{theorem}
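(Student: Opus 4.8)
The plan is to show that any monad in $\bimE$ has an Eilenberg-Moore object in $\bimE$, using the preparatory results about monads of the form $B_A$ built up in this section. A monad in $\bimE$ lives on some object $\objX/A$ of $\bimE$, where $A \co \objX \to \objX$ is a monad in $\bcatE$. The first step is to invoke Lemma~\ref{monadovermonda}: since $\bcatE$ is tame, the functor $(-)_A \co A\backslash \Mon_\bcatE(\objX) \to \Mon_{\bimE}(\objX/A)$ is essentially surjective, so every monad $E$ on $\objX/A$ in $\bimE$ is isomorphic to one of the form $B_A$ for some monad $B$ on $\objX$ in $\bcatE$ together with a monad map $\pi \co A \to B$. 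Because Eilenberg-Moore objects are invariant under equivalence (indeed under isomorphism of monads) — this follows from their definition as representing objects for the prestack $\bimE[-,\objX/A]^{E}$ — it suffices to produce an Eilenberg-Moore object for $B_A$.

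The second step is to apply Proposition~\ref{EMovermonad}. That proposition exhibits, for a monad map $\pi \co A \to B$ in $\bcatE$, a monadic adjunction $(F, U, \eta, \varepsilon) \co \objX/A \to \objX/B$ in $\bimE$ whose induced monad $U \circ_B F \co \objX/A \to \objX/A$ is isomorphic to $B_A$, and it states precisely that $U \co \objX/B \to \objX/A$ is an Eilenberg-Moore object for $B_A \co \objX/A \to \objX/A$. (Technically one also uses Corollary~\ref{thm:EM=eff=K}, equivalently Proposition~\ref{thm:dualityexact}, to pass between "the adjunction is monadic" and "$U$ is an Eilenberg-Moore object for the induced monad"; but this is already packaged into the statement of Proposition~\ref{EMovermonad}.) Transporting along the isomorphism $E \iso B_A$ from the first step yields an Eilenberg-Moore object for $E$ in $\bimE$.

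So the proof is essentially a two-line assembly: given a monad $E$ on $\objX/A$ in $\bimE$, choose $(B, \pi)$ with $B_A \iso E$ by Lemma~\ref{monadovermonda}, and then $U \co \objX/B \to \objX/A$ from Proposition~\ref{EMovermonad} is an Eilenberg-Moore object for $B_A$, hence for $E$. The only point requiring a word of care is that the objects of $\bimE$ are pairs $\objX/A$ and a monad therein is, by unwinding the definition, exactly a monad $E \co \objX \to \objX$ in $\bcatE$ equipped with an $(A,A)$-bimodule structure and bimodule maps playing the role of multiplication and unit — which is what makes Lemma~\ref{monadovermonda} applicable. There is no real obstacle here; all the work has been front-loaded into Lemma~\ref{monadovermonda} and Proposition~\ref{EMovermonad}, and the present theorem is their immediate corollary. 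I would write it as follows.

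\begin{proof}
Let $E$ be a monad in $\bimE$. By the definition of $\bimE$, the monad $E$ lives on an object of the form $\objX/A$, where $A \co \objX \to \objX$ is a monad in $\bcatE$, and consists of a morphism $E \co \objX \to \objX$ equipped with an $(A,A)$-bimodule structure together with bimodule maps $\mu \co E \circ_A E \to E$ and $\eta \co A \to E$ satisfying the monad axioms, i.e.\ a monad on $\objX/A$ in $\bimE$ in the sense considered before Lemma~\ref{monadovermonda}. By Lemma~\ref{monadovermonda}, since $\bcatE$ is tame, there exist a monad $B = (B, \mu_B, \eta_B)$ on $\objX$ in $\bcatE$ and a monad map $\pi \co A \to B$ such that $B_A \iso E$ as monads in $\bimE$. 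By Proposition~\ref{EMovermonad}, the bimodule $U \co \objX/B \to \objX/A$ is an Eilenberg-Moore object for the monad $B_A \co \objX/A \to \objX/A$. Since Eilenberg-Moore objects are defined as representing objects for the associated prestack, they are invariant (up to equivalence) under isomorphism of monads; hence $U \co \objX/B \to \objX/A$ is also an Eilenberg-Moore object for $E$. Therefore every monad in $\bimE$ admits an Eilenberg-Moore object, so $\bimE$ is Eilenberg-Moore complete.
\end{proof}
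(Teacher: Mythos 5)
Your proof is correct and follows exactly the paper's own argument: reduce an arbitrary monad $E$ on $\objX/A$ to one of the form $B_A$ via Lemma~\ref{monadovermonda}, then invoke Proposition~\ref{EMovermonad} to get the Eilenberg-Moore object $U \co \objX/B \to \objX/A$. Your added remark about transporting the Eilenberg-Moore object along the isomorphism $E \iso B_A$ is a point the paper leaves implicit, and it is handled correctly.
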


\begin{proof} Let us show that every monad $E = (E, \mu, \eta)$ 
over an object $ \objX/A\in \bimE$
admits an Eilenberg-Moore object. By Lemma~\ref{monadovermonda},
we have $E \iso B_A$ for a monad $(B, \mu_B, \eta_B)$ on $\objX$ and a map of monads
$\pi\co A\to B$ in  $\Mon(\objX)$.
It then follows from Proposition~\ref{EMovermonad} that
the bimodule $U \co   \objX/B\to   \objX/A$ defined above
is an Eilenberg-Moore object for the monad $B_A \co \objX \to \objX$.
\end{proof}

\begin{proposition} \label{prop:equivbimod}
A tame bicategory $\bcatE$ is Eilenberg-Moore complete if and only if the homomorphism
$\JE \co  \bcatE \rightarrow \bimE$
is an equivalence.
\end{proposition}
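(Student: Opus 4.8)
The plan is to prove the two implications separately. For the forward direction, suppose $\bcatE$ is Eilenberg-Moore complete. I want to show that $\JE \co \bcatE \to \bimE$ is an equivalence. By Proposition~\ref{thm:EMinBimpartone}, $\JE$ is already full and faithful, so it remains to show that it is essentially surjective. An arbitrary object of $\bimE$ has the form $\objX/A$ for a monad $A \co \objX \to \objX$ in $\bcatE$. By hypothesis, $A$ admits an Eilenberg-Moore object $U \co \objX^A \to \objX$ in $\bcatE$. By Corollary~\ref{thm:EM=eff=K} (or Proposition~\ref{thm:dualityexact}), the associated adjunction $(F, U) \co \objX \to \objX^A$ is monadic in $\bcatE$ with $U \circ F \iso A$. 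By part~(i) of Proposition~\ref{thm:EMinBimparttwo}, this adjunction is also monadic in $\bimE$, so $U \co \objX^A/1_{\objX^A} \to \objX/1_\objX$ is an Eilenberg-Moore object for $A \co \objX/1_\objX \to \objX/1_\objX$ in $\bimE$. On the other hand, by Lemma~\ref{thm:adjunctionfrommonad5}, the bimodule $U \co \objX/A \to \objX$ is also an Eilenberg-Moore object for $A \co \objX/1_\objX \to \objX/1_\objX$ in $\bimE$. Since Eilenberg-Moore objects are unique up to equivalence, we obtain $\objX^A/1_{\objX^A} \simeq \objX/A$ in $\bimE$, that is, $\JE(\objX^A) \simeq \objX/A$. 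Hence $\JE$ is essentially surjective, and therefore an equivalence.

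For the converse direction, suppose $\JE \co \bcatE \to \bimE$ is an equivalence. I must show that every monad in $\bcatE$ has an Eilenberg-Moore object in $\bcatE$. Let $A \co \objX \to \objX$ be a monad in $\bcatE$. By Lemma~\ref{thm:adjunctionfrommonad5}, the bimodule $U \co \objX/A \to \objX$ is an Eilenberg-Moore object for $A$ (viewed as a monad on $\objX/1_\objX$) in $\bimE$. Since $\JE$ is an equivalence of bicategories, its (essentially surjective, full and faithful) quasi-inverse transports this Eilenberg-Moore object back into $\bcatE$: more precisely, there is an object $\objY \in \bcatE$ with $\JE(\objY) = \objY/1_\objY \simeq \objX/A$ in $\bimE$, and the bimodule $U \co \objX/A \to \objX$ corresponds under this equivalence to a morphism $U' \co \objY \to \objX$ in $\bcatE$. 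The key point is that equivalences of bicategories preserve and reflect representability of prestacks and hence Eilenberg-Moore objects; applying part~(ii) of Proposition~\ref{thm:EMinBimparttwo} in the reverse direction, $U' \co \objY \to \objX$ is an Eilenberg-Moore object for $A \co \objX \to \objX$ in $\bcatE$. Thus $\bcatE$ is Eilenberg-Moore complete.

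The main technical point to be careful about is the transport of Eilenberg-Moore objects along the equivalence $\JE$ in the converse direction. An Eilenberg-Moore object is a representing object for the prestack $\bcatE[-, \objX]^A \co \bcatE^\op \to \Cat$, and one must check that this representability is genuinely detected by $\JE$: concretely, that for $\objX \in \bcatE$, the prestack $\bimE[-, \objX/1_\objX]^A$ on $\bimE$ restricts along $\JE$ to the prestack $\bcatE[-, \objX]^A$ on $\bcatE$ (using that $\JE$ is full and faithful and that $\bimE[\objK/1_\objK, \objX/1_\objX] = \bcatE[\objK, \objX]$ as categories, with left $A$-module structures matching), and that representability of a prestack on $\bimE$ whose representing object lies in the essential image of $\JE$ descends to representability of its restriction. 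Both are routine but deserve explicit mention. I would also remark that, since a tame bicategory is Eilenberg-Moore complete if and only if it is Kleisli complete (Corollary~\ref{thm:EM=K}), the dual statement for Kleisli completeness follows immediately. Finally, combining this proposition with Theorem~\ref{Bimbiseffective}, which asserts that $\bimE$ is always Eilenberg-Moore complete, one sees that $\JE \co \bimE \to \Bim(\bimE)$ is an equivalence, exhibiting $\bimE$ as the Eilenberg-Moore completion of $\bcatE$.
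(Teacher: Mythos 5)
Your proof is correct and takes essentially the same route as the paper: the forward direction (full and faithfulness from Proposition~\ref{thm:EMinBimpartone}, then essential surjectivity by comparing the two Eilenberg-Moore objects $\objX^A$ and $\objX/A$ for $A$ in $\bimE$ via Proposition~\ref{thm:EMinBimparttwo} and Lemma~\ref{thm:adjunctionfrommonad5}) is the paper's argument verbatim. Your converse is just a more explicit unwinding of the paper's one-line appeal to the Eilenberg-Moore completeness of $\bimE$ (Theorem~\ref{Bimbiseffective}) transported along the biequivalence $\JE$, so no substantive difference.
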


\begin{proof} If $\JE$
is an equivalence, then  $\bcatE$ is Eilenberg-Moore complete because $\bimE$ is Eilenberg-Moore complete, as proved in Theorem~\ref{Bimbiseffective}.
Conversely, let us assume that $\bcatE$ is Eilenberg-Moore complete and show 
that~$\JE$ is an equivalence. Recall that~$\JE$ is full and faithful by Proposition~\ref{thm:EMinBimpartone}.
Hence it suffices to show that~$\JE$ is essentially surjective.
For this we have to show that every object  $\objX/A\in \bimE $
is equivalent to an object of~$\bcatE$.
The monad~$(\objX,A)$ admits an Eilenberg-Moore object
$U \co \objX^A\to \objX$ in $\bcatE$, since $\bcatE$ is Eilenberg-Moore complete by hypothesis.
This Eilenberg-Moore object is also an Eilenberg-Moore object in 
$\bimE$ by Proposition~\ref{thm:EMinBimparttwo}.
Hence we have an equivalence $\objX^A\simeq \objX/A$
in $\bimE$, since any two Eilenberg-Moore objects for a monad 
are equivalent.
\end{proof}

\section{Bicategories of bimodules as Eilenberg-Moore completions}
\label{sec:bicbem}

Recall from Section~\ref{sec:regbbb}  that, for tame bicategories~$\bcatE $ and~$\bcatF$, we write $\REG[\bcatE, \bcatF]$ for
the full sub-bicategory of~$\HOM[\bcatE, \bcatF]$ 
whose objects are tame homomorphisms. Clearly, the composite of two tame homomorphisms
is proper.  

\begin{definition} Given  a tame bicategory~$\bcatE$ and a tame and Eilenberg-Moore complete bicategory~$\bcatE'$, we say that a tame homomorphism~$J\co \bcatE \to \bcatE'$
exhibits~$\bcatE'$ as  the \emph{Eilenberg-Moore completion of~$\bcatE$ as a tame bicategory}
if the  homomorphism
\[
(-)\circ J\co  \REG[\bcatE', \bcatF] \to \REG[\bcatE, \bcatF]
\]
is a biequivalence for any tame and Eilenberg-Moore complete bicategory $ \bcatF$.
\end{definition}

 It follows from this definition that such an Eilenberg-Moore completion of a tame bicategory, if it exists, is unique up to biequivalence. If $\bcatE$ a tame bicategory, then
the bicategory~$\bimE$ is tame by Theorem~\ref{thm:EMinBimpartone} and Eilenberg-Moore complete by Theorem~\ref{Bimbiseffective}
and the homomorphism~$\JE\co \bcatE\to \bimE$ 
is tame by Proposition~\ref{thm:EMinBimpartone}.
The next theorem is a special case of a result obtained independently in~\cite{GarnerR:enrcfc}.

 \begin{theorem} \label{thm:completiontheorem} For a tame bicategory $\bcatE$, the homomorphism
$\JE \co  \bcatE\to \Bim(\bcatE)$ exhibits~$\bimE$ as the Ei\-len\-berg-Moore completion of $\bcatE$ as a tame bicategory.
\end{theorem}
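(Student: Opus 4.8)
The plan is to verify the universal property directly: given any tame and Eilenberg–Moore complete bicategory $\bcatF$, I need to show that the homomorphism $(-) \circ \JE \co \REG[\bimE, \bcatF] \to \REG[\bcatE, \bcatF]$ is a biequivalence. Since $\JE$ is full and faithful (Proposition~\ref{thm:EMinBimpartone}), precomposition with it is automatically locally fully faithful on the relevant hom-bicategories of tame homomorphisms; the real content is to construct, for each tame homomorphism $\Phi \co \bcatE \to \bcatF$, a tame homomorphism $\widehat{\Phi} \co \bimE \to \bcatF$ together with a pseudo-natural equivalence $\widehat{\Phi} \circ \JE \simeq \Phi$, and to show this assignment is essentially surjective and fully faithful on 2-cells (modifications).

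First I would construct $\widehat{\Phi}$ using Eilenberg–Moore completeness of $\bcatF$. For an object $\objX/A \in \bimE$, the tame homomorphism $\Phi$ sends the monad $A \co \objX \to \objX$ in $\bcatE$ to a monad $\Phi(A) \co \Phi(\objX) \to \Phi(\objX)$ in $\bcatF$; since $\bcatF$ is Eilenberg–Moore complete, this monad has an Eilenberg–Moore object, which I take to be $\widehat{\Phi}(\objX/A)$. For a bimodule $M \co \objX/A \to \objY/B$, I need to produce a morphism $\widehat\Phi(M) \co \Phi(\objX)^{\Phi(A)} \to \Phi(\objY)^{\Phi(B)}$ in $\bcatF$. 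Here the key mechanism is the equivalence, in a tame bicategory, between Eilenberg–Moore objects and the objects $\objX/A$ of $\Bim(\bcatF)$ (Proposition~\ref{prop:equivbimod}, using that $\bcatF$ is Eilenberg–Moore complete so $\mathrm{J}_\bcatF \co \bcatF \to \Bim(\bcatF)$ is a biequivalence): via $\Bim(\Phi) \co \bimE \to \Bim(\bcatF)$ (Remark~\ref{thm:bimfunctorial}, which applies since $\Phi$ is tame) followed by a quasi-inverse of $\mathrm{J}_\bcatF$, I obtain the desired homomorphism $\widehat\Phi \defeq \mathrm{J}_\bcatF^{-1} \circ \Bim(\Phi) \co \bimE \to \bcatF$. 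This already packages all the coherence data. To see $\widehat\Phi$ is tame, I would use that $\Bim(\Phi)$ preserves local reflexive coequalizers (its action on hom-categories agrees with that of $\Phi$, which is tame) and that $\mathrm{J}_\bcatF^{-1}$, being a biequivalence of tame bicategories, preserves them as well (Proposition~\ref{properpreserves} applied to the quasi-inverse, or directly since it is an equivalence on hom-categories).

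Next I would check $\widehat\Phi \circ \JE \simeq \Phi$. Unwinding the definitions, $\Bim(\Phi) \circ \JE = \mathrm{J}_\bcatF \circ \Phi$ (both send $\objX$ to $\Phi(\objX)/1_{\Phi(\objX)}$ and act as $\Phi$ on morphisms and $2$-cells, up to the canonical coherence isomorphisms coming from $\Phi$ being a homomorphism), so $\widehat\Phi \circ \JE = \mathrm{J}_\bcatF^{-1} \circ \mathrm{J}_\bcatF \circ \Phi \simeq \Phi$, using the unit equivalence of the biequivalence $\mathrm{J}_\bcatF$. For fully faithfulness of $(-) \circ \JE$ on $2$-cells and $1$-cells between tame homomorphisms, I would argue that any tame homomorphism $\Psi \co \bimE \to \bcatF$ is itself determined up to equivalence by its restriction $\Psi \circ \JE$: indeed, for $\objX/A \in \bimE$, Lemma~\ref{thm:adjunctionfrommonad5} exhibits $\objX/A$ as (the Eilenberg–Moore object in $\bimE$ of) the monad $A$ on $\objX \in \bcatE \subseteq \bimE$, and a tame homomorphism preserves Eilenberg–Moore objects by Proposition~\ref{properpreserves}, so $\Psi(\objX/A)$ is forced to be the Eilenberg–Moore object of $\Psi(\JE A) = (\Psi \circ \JE)(A)$, which is determined up to equivalence by $\Psi \circ \JE$; similarly for morphisms, using that every bimodule $M \co \objX/A \to \objY/B$ arises, via the adjunctions of Lemma~\ref{thm:adjunctionfrommonad5}, from morphisms and $2$-cells already living in the image of $\JE$. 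This gives a quasi-inverse to $(-) \circ \JE$ and makes it a biequivalence.

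The main obstacle I anticipate is the bookkeeping of coherence: verifying that $\widehat\Phi$ as defined is genuinely a homomorphism of bicategories (associativity and unit constraints) and that the equivalences $\widehat\Phi \circ \JE \simeq \Phi$ assemble into the data required for $(-) \circ \JE$ to be a biequivalence — i.e.\ checking compatibility of the various pseudo-natural equivalences and modifications. Much of this is delegated to the already-established functoriality of $\Bim(-)$ (Remark~\ref{thm:bimfunctorial}) and to the biequivalence $\mathrm{J}_\bcatF \co \bcatF \simeq \Bim(\bcatF)$ (Proposition~\ref{prop:equivbimod}), so the strategy is essentially to reduce everything to these two facts plus the characterization of objects and morphisms of $\bimE$ as Eilenberg–Moore data (Lemma~\ref{thm:adjunctionfrommonad5}, Proposition~\ref{EMovermonad}). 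Since, as the excerpt notes, this result is a special case of the theorem of Garner and Shulman~\cite{GarnerR:enrcfc}, I would also remark that one may alternatively simply invoke their result, and that the proof above is the specialization of their argument to the present setting; the detailed coherence verifications are routine and I would relegate them to Appendix~\ref{app:proof}.
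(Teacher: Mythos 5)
Your object-level construction is exactly the one the paper uses: you take $\widehat\Phi=\Theta\circ\Bim(\Phi)$ where $\Theta$ is a quasi-inverse of $\JF$ (the paper even arranges the choices so that $\Theta\circ\JF=\Id_\bcatF$ and hence $\widehat\Phi\circ\JE=\Phi$ on the nose), and tameness of $\widehat\Phi$ is checked as you indicate. So essential surjectivity of $(-)\circ\JE$ on objects is fine.

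The gap is in your treatment of the hom-categories, i.e.\ the pseudo-natural transformations and modifications. First, the claim that precomposition with $\JE$ is ``automatically locally fully faithful'' because $\JE$ is full and faithful is not correct: restriction along a (non-essentially-surjective) full and faithful homomorphism can in general both identify distinct modifications and fail to be full, so this has to be proved, not assumed. Second, the heart of the matter --- that every pseudo-natural transformation $M\co\Phi_{\mid\bcatE}\to\Psi_{\mid\bcatE}$ extends to $\Phi\to\Psi$, and that every modification between restricted transformations extends uniquely --- is only asserted in your sketch (``similarly for morphisms \ldots''), but it is precisely here that the work lies: one must produce components $M'(\objX/A)\co\Phi(\objX/A)\to\Psi(\objX/A)$ lying over $M(\objX)$ via the universal property of the Eilenberg--Moore objects $\Phi(\objX/A)$ and $\Psi(\objX/A)$, and then verify pseudo-naturality with respect to \emph{all} bimodules $\objX/A\to\objY/B$, not just the morphisms coming from $\bcatE$, together with the coherence conditions. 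The paper's device for this is to regard a pseudo-natural transformation between the restrictions as a homomorphism $\bcatE\to\bcatF^{(1)}$ into the bicategory of $1$-cells of $\bcatF$, prove that $\bcatF^{(1)}$ (and likewise $\bcatF^{(2)}$, for modifications) is again tame and Eilenberg--Moore complete, and then re-apply the object-level extension argument; uniqueness of the extended modifications then comes from the $2$-dimensional part of the universal property of Eilenberg--Moore objects (Lemmas~\ref{restriction1-cell} and~\ref{restriction2-cell}). Your outline would need either this device or an equivalent direct construction with the attendant coherence verifications; appealing to Lemma~\ref{thm:adjunctionfrommonad5} and Proposition~\ref{properpreserves} alone only determines $\Psi$ on objects up to equivalence, which is an essential-surjectivity statement and does not by itself yield the required equivalences of hom-categories. (Citing Garner--Shulman instead is legitimate, but then you are invoking the theorem rather than proving it.)
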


\begin{proof} See Appendix~\ref{app:proof}.
\end{proof}

\begin{remark} As shown in~\cite{LackS:fortm}, for a 2-category $\bcatE$, not necessarily tame, it is possible to define its Eilenberg-Moore completion
$\emE$, which comes equipped with a 2-functor $\mathrm{I}_\bcatE \co  \bcatE \to \emE$ satisfying a suitable universal property. The definitions of $\emE$ 
and $\mathrm{I}_\bcatE \co  \bcatE \to \emE$ make sense also when $\bcatE$ is a bicategory, in which case $\emE$ is also a bicategory and $\mathrm{I}_\bcatE$ is a homomorphism.
We can then relate $\emE$ and $\bimE$ via a homomorphims $\Gamma \co  \emE \to \bimE$, defined below, which makes the following diagram commute:
\[
\xymatrix{
\bcatE \ar[r]^-{\mathrm{I}_\bcatE} \ar@/_1pc/[dr]_-{\JE} & \emE \ar[d]^-{\Gamma}  \\
 & \bimE \, .}
 \]
 For a bicategory $\bcatE$, the objects and the morphisms of $\emE$ are the same as those of the bicategory $\MndE$ recalled
 in Section~\ref{sec:monadmorphisms}. Given morphisms $(M, \phi), (G, \psi) \co 
 (\objX, A) \to (\objY, B)$, a 2-cell $f \co  (M,\phi) \to (M', \phi')$ in $\emE$, instead, is a 2-cell $f \co  M \to M' \circ A$ making the following diagram
 commute:
 \[
 \xymatrix{
 B \circ M \ar[r]^{\phi} \ar[d]_{B \circ f} & M \circ A \ar[r]^{f \circ A} & M' \circ A \circ A \ar[d]^{M' \circ \mu} \\
 B \circ M' \circ A \ar[r]_{\phi' \circ A} & M' \circ A  \circ A \ar[r]_{M' \circ \mu} & M' \circ A \, .}
 \]
 The homomorphism $\Gamma \co  \emE \to \bimE$ is defined exactly as the homomorphism $R \co  \MndE \to \bimE$ of 
 Section~\ref{sec:monadmorphisms} on objects and morphisms. For 
 a 2-cell $f \co  (M, \phi) \to (M', \phi')$ in $\emE$, we define $\Gamma(f) \co  M \circ A \to M' \circ A$ as the composite
 \[
 \xymatrix@C=2cm{
 M \circ A \ar[r]^-{f \circ A} &  M' \circ A  \circ A \ar[r]^-{M' \circ \mu_A} & M' \circ A}
 \]
The commutativity of the required diagrams follows easily.
\end{remark} 

\medskip

\begin{proposition} \label{thm:bimequiv}
Let $\bcatE \subseteq \bcatF$ be an inclusion of tame bicategories. 
If every object of
$\bcatF$ is an Eilenberg-Moore object (or, equivalently, a Kleisli object) for a monad in $\bcatE$, then the 
induced inclusion~$\bimE \subseteq \bimF$ is an equivalence.
\end{proposition}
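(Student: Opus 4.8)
The plan is to show that the full and faithful homomorphism $\bimE \subseteq \bimF$ (which exists and is full and faithful by Remark~\ref{thm:bimfunctorial}, since the inclusion $\bcatE \subseteq \bcatF$ is in particular tame) is essentially surjective. So let $\objY/B \in \bimF$ be an arbitrary object, where $\objY \in \bcatF$ and $B \co \objY \to \objY$ is a monad in $\bcatF$. By hypothesis, there is a monad $A \co \objX \to \objX$ in $\bcatE$ and an Eilenberg-Moore object $U \co \objY \to \objX$ for $A$ in $\bcatF$ exhibiting $\objY$; that is, $\objY \simeq \objX^A$ in $\bcatF$. The strategy is to transport the monad $B$ along this equivalence to a monad on $\objX^A$, use Lemma~\ref{monadovermonda} to recognize it as coming from a monad on $\objX$ living over $A$ via a monad map, observe that this latter monad lies in $\bcatE$ (here is where being an inclusion of tame bicategories is used), and then invoke Proposition~\ref{EMovermonad} to produce an Eilenberg-Moore object for it inside $\bimE$ whose image in $\bimF$ is equivalent to $\objY/B$.

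More precisely, first I would use Proposition~\ref{thm:EMinBimparttwo}(ii) to view $U \co \objY \to \objX$ as an Eilenberg-Moore object for the monad $A \co \objX/1_\objX \to \objX/1_\objX$ in $\bimF$, so that $\objX/A \simeq \objY/1_\objY$ in $\bimF$ by uniqueness of Eilenberg-Moore objects up to equivalence. Transporting the monad $B$ on $\objY$ (equivalently, the monad on $\objY/1_\objY$ in $\bimF$) across this equivalence yields a monad on $\objX/A$ in $\bimF$. By Lemma~\ref{monadovermonda} applied to the tame bicategory $\bcatF$, such a monad is isomorphic to $B'_A$ for some monad $(B', \mu_{B'}, \eta_{B'})$ on $\objX$ in $\bcatF$ together with a monad map $\pi \co A \to B'$. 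Now the crucial point: since $\objX \in \bcatE$ and $\bcatE \subseteq \bcatF$ is an \emph{inclusion}, the hom-category $\bcatE[\objX, \objX]$ is a full subcategory of $\bcatF[\objX, \objX]$, closed under the composition and units of $\bcatF$; hence the monad $B'$ and the monad map $\pi \co A \to B'$, being built from data in $\bcatF[\objX,\objX]$ that actually underlies $1$-cells $\objX \to \objX$ in $\bcatE$, constitute a monad and monad map in $\bcatE$. (If one prefers, $B'$ is constructed as $U' \circ_B F'$-style composite of $1$-cells already in $\bcatE$.)

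With $A \co \objX \to \objX$ and $B' \co \objX \to \objX$ now monads in $\bcatE$ and $\pi \co A \to B'$ a monad map in $\bcatE$, Proposition~\ref{EMovermonad}, applied within the tame bicategory $\bcatE$, provides a bimodule $U \co \objX/B' \to \objX/A$ in $\bimE$ that is an Eilenberg-Moore object for the monad $B'_A \co \objX/A \to \objX/A$ in $\bimE$. Since Eilenberg-Moore objects are preserved by the full and faithful, tame inclusion $\bimE \subseteq \bimF$ (by Proposition~\ref{properpreserves}, as that inclusion is a tame homomorphism), this $U$ is also an Eilenberg-Moore object for $B'_A$ in $\bimF$. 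But in $\bimF$ we have identified $B'_A$ with the monad on $\objX/A \simeq \objY/1_\objY$ obtained by transporting $B$; and an Eilenberg-Moore object for the monad $B$ on $\objY/1_\objY$ in $\bimF$ is, by Lemma~\ref{thm:adjunctionfrommonad5} (or directly by Proposition~\ref{thm:EMinBimparttwo}), precisely $\objY/B$. By uniqueness of Eilenberg-Moore objects up to equivalence, $\objX/B' \simeq \objY/B$ in $\bimF$, so $\objY/B$ is in the essential image of $\bimE \subseteq \bimF$. This completes the proof that the inclusion is an equivalence.

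\textbf{Main obstacle.} The bookkeeping around transporting the monad $B$ across the equivalence $\objX/A \simeq \objY/1_\objY$ and then checking that the resulting monad $B'$ on $\objX$, and the monad map $\pi$, genuinely live in the sub-bicategory $\bcatE$ rather than merely in $\bcatF$ is the delicate step; it is exactly here that the hypothesis that $\bcatE \subseteq \bcatF$ is an \emph{inclusion} of tame bicategories (full and faithful, and compatible with composition) is indispensable, and one must be careful that "full subcategory closed under composition and units" really does make $B'$ a monad object of $\bcatE$. The remaining verifications — that the various Eilenberg-Moore objects are preserved and reflected along the inclusions, and that uniqueness up to equivalence can be chained — are routine given Proposition~\ref{thm:EMinBimparttwo}, Proposition~\ref{properpreserves}, and the earlier results on tame bicategories.
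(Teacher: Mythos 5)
Your proposal is correct and follows essentially the same route as the paper's proof: identify $\objY/1_\objY \simeq \objX/A$ in $\bimF$ using that $\JF$ preserves Eilenberg-Moore objects (your Proposition~\ref{thm:EMinBimparttwo} step) together with Lemma~\ref{thm:adjunctionfrommonad5}, transport $B$ to a monad on $\objX/A$ lying (up to isomorphism) in $\bimE$ by fullness of the inclusion, apply Lemma~\ref{monadovermonda} and Proposition~\ref{EMovermonad}, and conclude $\objX/B' \simeq \objY/B$ by uniqueness of Eilenberg-Moore objects. The only (immaterial) difference is that you apply Lemma~\ref{monadovermonda} in $\bcatF$ and then descend $B'$ and $\pi$ to $\bcatE$, whereas the paper first descends the transported monad to $\bimE$ and then applies the lemma there.
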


\begin{proof} We have the following diagram:
\[
\xymatrix{
\bcatE \ar[r] \ar[d]_{\JE} & \bcatF \ar[d]^{\JF} \\
\bimE \ar[r] & \, \bimF \, .}
\]
We show that  $\bimE \subseteq \bimF$ is essentially surjective.
 Let $(\objY, B) \in \bimF$. By the hypothesis, 
  $\objY \in \bcatF$ is an Eilenberg-Moore object for a monad~$A \co \objX \to \objX$ in $\bcatE$. 
By  Proposition~\ref{thm:EMinBimpartone}, the homomorphism
$\JF \co  \bcatF \to \bimF$ is tame and so, by Proposition~\ref{properpreserves},
 it preserves Eilenberg-Moore objects. Hence, $(\objY, 1_\objY) \in \bimF$ is an Eilenberg-Moore object 
 for~$A \co (\objX, 1_\objX) \to (\objX, 1_\objX)$ in~$\bimF$. But also $(\objX,A) \in \bimE$ is an Eilenberg-Moore object for the same monad
by Lemma~\ref{thm:adjunctionfrommonad5} and so it is also an Eilenberg-Moore object for it  in~$\bimF$.
Therefore, there is an equivalence~$\objY \simeq (\objX, A)$ in $\bimF$ and so there is also an equivalence~$(\objY, B) \simeq  ( (\objX, A),  E)$ in~$\bimF$ for some monad $E \co (\objX, A) \to 
(\objX,A)$ in $\bimE$. By Lemma~\ref{monadovermonda}, $E$ must have
the form~$B'_A \co  (\objX, A) \to (\objX,A)$ for some monad $B' \co \objX \to \objX$ and monad map~$\pi \co  A \to B'$. 
By Proposition~\ref{EMovermonad}, an Eilenberg-Moore object for $E$ is  given by~$(\objX,B')$, which is therefore equivalent to~$(\objY, B)$.
Since~$(\objX, B') \in \bimE$, we have
the required essential surjectivity.
\end{proof}

Proposition~\ref{thm:bimequiv} implies the known fact that the inclusion~$\bimE \subseteq \Bim(\bimE)$ is an equivalence~\cite{CarboniA:axibb}. In 
particular, we have that $\DistV \subseteq \Bim(\DistV)$ is an equivalence.  We now apply Proposition~\ref{thm:bimequiv} to the  inclusion $\OpdV \subseteq \Bim(\CatSymV)$ of~\eqref{equ:opdbim}.

\begin{theorem} \label{thm:keybiequiv}
The inclusion $\OpdV \subseteq \Bim(\CatSymV)$ is an equivalence.
\end{theorem}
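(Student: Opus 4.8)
The plan is to apply Proposition~\ref{thm:bimequiv} to the inclusion $\SymV \subseteq \CatSymV$, which by construction induces the inclusion $\OpdV = \Bim(\SymV) \subseteq \Bim(\CatSymV)$ of~\eqref{equ:opdbim}. To invoke that proposition, I need two ingredients. First, both $\SymV$ and $\CatSymV$ must be tame; this is exactly Corollary~\ref{thm:smattame}. Second, and this is the substantive hypothesis, I must show that every object of $\CatSymV$ is an Eilenberg-Moore object (equivalently, by Corollary~\ref{thm:EM=K}, a Kleisli object) for a monad in the subbicategory $\SymV$.

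\textbf{The main step.} Recall that the objects of $\CatSymV$ are small $\catV$-categories $\catX$, while the objects of $\SymV$ are sets. Given a small $\catV$-category $\catX$, let $X \defeq \Obj(\catX)$, viewed as a discrete $\catV$-category, so that $X$ is an object of $\SymV$. I claim that $\catX$, as an object of $\CatSymV$, is the Eilenberg-Moore object of a suitable monad $A \co X \to X$ in $\SymV$. The natural candidate for $A$ arises from the fact that $\catX$ is itself a monad on $X$ in $\MatV$ (as recalled in the example ``Categories as monads'' in Section~\ref{sec:monmb}): the composition and identities of $\catX$ make the matrix $\catX[-,-] \co X \times X \to \catV$ into a monad, and a monad on $X$ in $\MatV$ extends via the homomorphism $\delta \co \MatV \to \SMatV$ (and passing to opposites, $\MatV^\op \to \SMatV^\op = \SymV$) to a monad $A$ on $X$ in $\SymV$. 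Working in the opposite bicategory, this amounts to exhibiting $\catX$ as a Kleisli object for a monad in $\SDistV$ coming from a monad in $\SMatV$: and indeed, the inclusion $\CatV \hookrightarrow \DistV$ and the construction of $\SDistV$ via the Gabriel factorization should make $\catX$ the Kleisli object (equivalently Eilenberg-Moore object, since $\SDistV$ is tame by Theorem~\ref{thm:sdisttame}) of the monad $\delta(\catX)$ on the set $X$ in $\SMatV$. Concretely, I would verify the universal property directly: for any small $\catV$-category $\catZ$, the category of right $\delta(\catX)$-modules with codomain $\catZ$ in $\SMatV$ is equivalent to $\SDistV[X, \catZ]$ equipped with the appropriate action, and this should match $\SDistV[\catX, \catZ]$ by unfolding the definitions of $S$-distributors and the extension operation $(-)^e$; the key point is that an $S$-distributor out of $\catX$ is the same as an $S$-distributor out of the discrete category $X$ together with a compatible action of the morphisms of $\catX$, which is precisely a $\delta(\catX)$-module structure.

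\textbf{Conclusion.} Once this universal property is established, Proposition~\ref{thm:bimequiv} applies verbatim to the inclusion $\SymV \subseteq \CatSymV$ and yields that the induced inclusion $\Bim(\SymV) \subseteq \Bim(\CatSymV)$ is an equivalence, i.e.\ $\OpdV \subseteq \Bim(\CatSymV)$ is an equivalence, as claimed.

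\textbf{Expected obstacle.} The routine part is quoting tameness and Proposition~\ref{thm:bimequiv}. The real work is the identification of $\catX$ as an Eilenberg-Moore/Kleisli object for a $\SymV$-monad: one must pin down the correct monad (the one induced from the $\MatV$-monad structure of $\catX$ via $\delta$, then dualized) and carefully check the universal property against the explicit formulas~\eqref{equ:Sdistcomp} for composition in $\SDistV$ and~\eqref{equ:fprime2} for $(-)^e$. I expect the bookkeeping to be the analogue, one categorical level up, of the classical observation that a $\catV$-category is a monad in $\MatV$ and that presheaves on it are its modules; the subtlety is tracking how the free-symmetric-monoidal functor $S$ interacts with these module structures, which is where Lemma~\ref{bulletlambda} and the behaviour of $(-)^e$ on $\iota_\bullet$ will be needed.
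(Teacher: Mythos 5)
Your proposal follows essentially the same route as the paper's own proof: reduce via Proposition~\ref{thm:bimequiv} (and tameness, Corollary~\ref{thm:smattame}) to showing that each small $\catV$-category $\catX$, as an object of $\SDistV$, is a Kleisli object for the monad on $\Obj(\catX)$ in $\SMatV$ induced by $\catX$, and then observe that a right action of this monad on an $S$-distributor $M \co \Obj(\catX) \sym \catK$ is precisely an extension of $M$ to a $\catV$-functor $S(\catK)^\op \tensorvcat \catX \to \catV$. The paper makes this concrete by exhibiting the universal right module $F \co \Obj(\catX) \sym \catX$ with $F[\vec{x};x'] = \catX[x,x']$ for $\vec{x}=(x)$ and $0$ otherwise, which is exactly the object your sketch points to.
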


\begin{proof} By duality, it is sufficient to prove that the inclusion $\Bim(\SMatV) \subseteq \Bim(\SDistV)$ is an equivalence.
In order to do so, we apply Proposition~\ref{thm:bimequiv} and show that every object of $\SDistV$ is a Kleisli object for
a monad in $\SMatV$. Let $\catX$ be  a small $\catV$-category. Since monads in
$\SMatV$ are operads, we can regard $\catX$ also as a monad in $\SMatV$. So, we show
that $\catX$, viewed as an object of $\SDistV$, is a Kleisli object for $\catX$, viewed as a monad in~$\SMatV$. In order
to do so, we begin by defining a right $\catX$-module with domain $\Obj(\catX)$, 
\[
F  \co  \Obj(\catX) \sym \catX \, , 
\] 
in $\SDistV$. The $\catV$-functor $F \co  S(\catX)^\op \tensorvcat \Obj(\catX) \to \catV$ is defined by letting
\[
F[ \vec{x}; x']   \defeq 
\left\{
\begin{array}{ll} 
\catX[ x,x']   & \text{if } \vec{x} = (x) \text{ for some }  x \in \catX \, , \\
 0  & \text{otherwise.} 
\end{array}
\right.
\]
The right $\catX$-action is then defined by the composition operation of $\catX$ in the evident way. In order to show that
$F \co  \Obj(\catX) \sym \catX$ is the required Kleisli object, we need to show that, for every small~$\catV$-category~$\catK$, the functor
\[
\SDistV[\catX, \catK]  \to   \SDistV[\Obj(\catX), \catK]_\catX \, , 
\]
defined by composition with $F$, is an equivalence of categories, where
$ \SDistV[\Obj(\catX), \catK]_\catX$ denotes the category of right $\catX$-modules with 
codomain $\catK$. In order to see this, observe that to give a right $\catX$-action
on an $S$-distributor $M \co  \Obj(\catX) \sym \catK$, \ie a $\catV$-functor
$M \co  S(\catK)^\op \tensorvcat \Obj(\catX) \to \catV$, is the same thing as extending
$M$ to a $\catV$-functor  $M' \co  S(\catK)^\op \tensorvcat \catX \to \catV$.
\end{proof}

We can now prove the main result of the paper.

\begin{theorem} \label{thm:opdcc}
The bicategory $\OpdV$ is cartesian closed.
\end{theorem}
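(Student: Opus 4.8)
The plan is to assemble the result from the three main ingredients that have been developed in the paper, combining them exactly as outlined in the introduction and at the end of Section~\ref{sec:carcbb}. First I would recall that by definition $\OpdV = \Bim(\SymV)$, and that by Theorem~\ref{thm:keybiequiv} the inclusion $\OpdV \subseteq \Bim(\CatSymV)$, induced by the inclusion $\SymV \subseteq \CatSymV$, is an equivalence of bicategories. Hence it suffices to prove that $\Bim(\CatSymV)$ is cartesian closed, since cartesian closure is invariant under equivalence of bicategories (a terminal object, cartesian products and exponentials are all defined by representability conditions, which transfer across an equivalence).

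Next I would invoke Theorem~\ref{thm:ccbimod}, which states that for a tame bicategory $\bcatE$, if $\bcatE$ is cartesian closed then so is $\bimE$. To apply this with $\bcatE = \CatSymV$, I need two facts already established: that $\CatSymV$ is tame (Corollary~\ref{thm:smattame}), and that $\CatSymV$ is cartesian closed (Theorem~\ref{thm:smondistiscc}). Both are available, so Theorem~\ref{thm:ccbimod} immediately yields that $\Bim(\CatSymV)$ is cartesian closed. Combining this with the equivalence $\OpdV \simeq \Bim(\CatSymV)$ from Theorem~\ref{thm:keybiequiv} gives the conclusion.

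The proof is therefore short: it is essentially a citation of Theorem~\ref{thm:smondistiscc}, Corollary~\ref{thm:smattame}, Theorem~\ref{thm:ccbimod} and Theorem~\ref{thm:keybiequiv}, in that logical order. I would also add a sentence making explicit the universal properties: the product of operads $A$ and $B$ is $A \sqcap B$ with $|A \sqcap B| = |A| \sqcup |B|$, the exponential is $B^A$ with $\Alg_\rigR(B^A) \simeq \OpdR[A,B]$, and the terminal object is the operad $\top$ with empty set of sorts; these follow by tracing the products and exponentials of $\Bim(\CatSymV)$ described in the proofs of Proposition~\ref{thm:bimcart} and Theorem~\ref{thm:ccbimod} through the equivalence of Theorem~\ref{thm:keybiequiv}. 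The only genuine subtlety — and the reason the result required the whole paper rather than a one-line argument — is the verification that $\OpdV$ is equivalent to $\Bim(\CatSymV)$ rather than merely included in it; but that work has already been carried out in Theorem~\ref{thm:keybiequiv}, so here it is simply used as a black box.

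\begin{proof}
By definition, $\OpdV = \Bim(\SymV) = \Bim(\SMatV^\op)$, and by Theorem~\ref{thm:keybiequiv} the inclusion $\OpdV \subseteq \Bim(\CatSymV)$ of~\eqref{equ:opdbim} is an equivalence of bicategories. Since the properties of having a terminal object, binary cartesian products and exponentials are all expressed by representability conditions on prestacks, they are invariant under equivalence of bicategories; hence it suffices to show that $\Bim(\CatSymV)$ is cartesian closed. By Corollary~\ref{thm:smattame}, the bicategory $\CatSymV$ is tame, and by Theorem~\ref{thm:smondistiscc} it is cartesian closed. Therefore, by Theorem~\ref{thm:ccbimod}, the bicategory $\Bim(\CatSymV)$ is cartesian closed. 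Combining this with the equivalence $\OpdV \simeq \Bim(\CatSymV)$, we conclude that $\OpdV$ is cartesian closed. Tracing the products and exponentials of $\Bim(\CatSymV)$, as described in the proofs of Proposition~\ref{thm:bimcart} and Theorem~\ref{thm:ccbimod}, through this equivalence, one recovers the operads $A \sqcap B$, $B^A$ and $\top$ and the universal properties described in the introduction.
\end{proof}
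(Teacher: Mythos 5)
Your proof is correct and follows exactly the paper's argument: cite Theorem~\ref{thm:smondistiscc} and Corollary~\ref{thm:smattame}, apply Theorem~\ref{thm:ccbimod} to get that $\Bim(\CatSymV)$ is cartesian closed, and transfer the result along the equivalence of Theorem~\ref{thm:keybiequiv}. The extra sentence on invariance of cartesian closure under biequivalence and the tracing of the universal properties are harmless elaborations of what the paper leaves implicit.
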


\begin{proof} Recall that the bicategory $\CatSymV$ is cartesian closed by Theorem~\ref{thm:smondistiscc} and so the associated bicategory
of bimodules~$\Bim(\CatSymV)$ 
is cartesian closed by Theorem~\ref{thm:ccbimod}.  The result follows since, as stated in Theorem~\ref{thm:keybiequiv}, $\OpdV$ is equivalent to~$\Bim(\CatSymV)$. 
\end{proof}

We conclude by illustrating the cartesian closed structure of $\OpdV$. For simplicily, let us now denote an operad by the name
of its underlying symmetric sequence and omit the mention of its underlying set of sorts. Thus, we write simply $A$ rather than 
$(X,A)$. If we denote products, exponentials and the terminal object in $\OpdV$ by
\[
A \sqcap B \, ,  \quad B^A \, , \quad \top
\]
respectively, the cartesian closed structure gives us equivalences
\begin{eqnarray*} 
\OpdV[ A, B_1 \sqcap B_2]  & \simeq  & \OpdV[  A,B_1] \times \OpdV[ A, B_2] \, , \\
\OpdV[ A \sqcap B, C ]  & \simeq &  \OpdV[ A, C^B ] \, , \\
\OpdV[ A, \top]  & \simeq & \mathrm{1} \, . 
\end{eqnarray*}
Furthermore, since the operad $\top$ has an empty set of sorts, we have that 
\[
\OpdV[\top, A] \simeq \Alg_\catV(A) \, .
\]
Therefore, as a special case of the natural isomorphisms characterizing products and exponentials, we have
equivalences
\begin{eqnarray*}
\Alg_\catV(A \sqcap B) & \simeq & \Alg_\catV(A) \sqcap \Alg_\catV(B) \, ,  \\
\Alg_\catV(B^A) & \simeq & \OpdV(A,B) \, .
\end{eqnarray*}
This shows that the algebras for $A_1 \sqcap A_2$ are pairs consisting of an $A$-algebra and a $B$-algebra, while
the algebras for $B^A$ are the $(B,A)$-bimodules. These equivalences can actually be generalized by replacing 
$\catV$ with an arbitrary $\catV$-rig $\rigR = (\rigR, \diamond, e)$, thus obtaining the equivalences mentioned in the introduction, since 
the canonical $\catV$-rig homomorphism from $\catV$ to $\rigR$ (defined by mapping $X \in \catV$ 
to~$X \otimes e \in \rigR$) preserves all the relevant structure.

\appendix

\chapter{A compendium of bicategorical definitions}
\label{sec:combd}

\begin{definition*} 
A \emph{bicategory} $\bcatE$ consists of the data in (i)-(vi) subject to the axioms in (vii)-(viii), below. 
\begin{enumerate}[(i)]
\item  A class $\Ob(\bcatE)$ of \emph{objects}. We will 
write simply $\objX \in \bcatE$ instead of $\objX \in \Ob(\bcatE)$.
\item For  every $\objX, \objY \in \bcatE$, a category $\bcatE[\objX,\objY]$ . An object of $\bcatE[\objX,\objY]$
is called a \emph{$1$-cell} or a \emph{morphism} and written $A \co  \objX \rightarrow \objY$, while a morphism $f\co  A\rightarrow B$ of $\bcatE[\objX,\objY]$ is called 
a~\emph{$2$-cell} . 
\item For every $\objX, \objY, \objZ \in \bcatE$, a functor 
\[
(-) \circ (-)  : \bcatE(\objY,\objZ) \times \bcatE(\objX,\objY) \to \bcatE(\objX,\objZ) 
\]
which associates a  morphism $B \circ A \co 
 \objX \rightarrow \objZ$ to a pair  of morphisms $A \co  \objX \rightarrow \objY$, $B \co  \objY \rightarrow \objZ$  and 
a 2-cell~$f \circ g : B \circ A \rightarrow
B' \circ A'$
 to a pair of 2-cells~$f \co  A \rightarrow A'\co  \objX \rightarrow \objY $
 and $g \co  B \rightarrow B'\co   \objY \rightarrow \objZ$.
\item For every $\objX \in \bcatE$, a morphism $1_\objX \co  \objX \rightarrow \objX$.
\item A natural isomorphism with components
\[
\alpha_{A,B,C} \co  (C \circ B) \circ A \rightarrow C \circ (B \circ A) 
\]
for $A \co  \objX \rightarrow \objY$, $B \co  \objY \rightarrow \objZ$ and $C \co  \objZ \rightarrow U$.
\item Two natural isomorphisms with components
\[
\lambda_A \co  1_\objY \circ A \rightarrow A  \, , \quad
\rho_A \co  A \circ 1_\objX \rightarrow A
\]
for  $A \co  \objX \rightarrow \objY$. 
\item For all $A \co  \objX\rightarrow \objY$, $B \co  \objY \rightarrow \objZ$, $C \co  \objZ \rightarrow U$ and $D \co U \rightarrow V$,
the following diagram commutes:
\[
\begin{xy} 
(0,20)*+{((D \circ C) \circ B) \circ A} ="1"; 
(-40,0)*+{(D \circ (C \circ B) ) \circ A}="2"; 
(-25,-20)*+{D \circ ( ( C \circ B) \circ A) }="3"; 
(25,-20)*+{D \circ (C  \circ (B \circ A))\, .}="4"; 
(40,0)*+{ (D \circ C) \circ (B \circ A)}="5";
{\ar_{\alpha \circ A} "1";"2"};
{\ar^{\alpha} "1";"5"};
{\ar^{\alpha} "5";"4"};
{\ar_{\alpha} "2";"3"};
{\ar_{D \circ \alpha} "3";"4"};
\end{xy}
\] 
\item For all $A \co  \objX \rightarrow \objY$ and $B \co  \objY \rightarrow \objZ$, the following diagram commutes:
\[
\begin{xy} 
(-25,10)*+{(B \circ 1_\objY) \circ A}="1"; 
(25,10)*+{B \circ (1_\objY \circ A) }="2"; 
(0,-10)*+{B \circ A \, .}="3"
{\ar^{\alpha}"1";"2"}
{\ar_{\rho \circ A} "1";"3"}
{\ar^{B \circ \lambda} "2";"3"}
\end{xy}
\]
\end{enumerate}
\end{definition*}

\begin{definition*} 
A \emph{homomorphism}  $\Phi : \bcatE \rightarrow \bcatF$ 
of bicategories consists of the data in (i)-(iv) subject to axioms (v)-(vi) below.
\begin{enumerate}[(i)]
\item A function $\Phi : \Ob(\bcatE) \rightarrow \Ob(\bcatF)$.
\item For every $\objX, \objY \in \bcatE$,  a functor 
\[
\Phi_{\objX,\objY} : \bcatE(\objX,\objY) \rightarrow \bcatF(
\Phi \objX, \Phi \objY) \, .
\] 
Below, we write $\Phi$ instead of $\Phi_{\objX,\objY}$.
\item A natural isomorphism with components
\[
\phi_{B,A} \co  \Phi(B \circ A) \rightarrow \Phi(B) \circ \Phi(A) \, , 
\]
for $A \co  \objX \rightarrow \objY$ and $B \co  \objY \rightarrow \objZ$.
\item For each $\objX \in \bcatE$, an isomorphism $\iota_\objX \co  \Phi(1_\objX) \rightarrow 1_{\Phi \objX}$.
\item For every $A \co  \objX \rightarrow \objY$, $B \co  \objY \rightarrow \objZ$
and $C \co  \objZ \rightarrow \objU$, the following diagram commutes:
\[
\begin{xy} 
(0,20)*+{\Phi( ( C \circ B) \circ A)} ="1"; 
(-40,0)*+{\Phi( C \circ B) \circ \Phi(A)}="2"; 
(-40,-20)*+{(\Phi(C) \circ \Phi(B)  ) \circ \Phi(A)}="3"; 
(40,-20)*+{ \Phi(C) \circ \Phi(B \circ A)  }="4"; 
(40,0)*+{\Phi( C \circ  ( B \circ A) )}="5";
(0,-40)*+{  \Phi(C) \circ ( \Phi(B) \circ \Phi(A) ) \, . }="6";
{\ar_{\phi} "1";"2"};
{\ar^{\Phi(\alpha)} "1";"5"};
{\ar^{ \phi} "5";"4"};
{\ar_{\phi \circ \Phi(A)} "2";"3"};
{\ar_{\alpha} "3";"6"};
{\ar^{\Phi(C) \circ \phi} "4";"6"};
\end{xy}
\] 
\item For every $A \co  \objX \rightarrow \objY$, the following diagrams commute:
\[
\xymatrix{
\Phi(1_\objY \circ A) \ar[r]^-{\phi} \ar@/_2pc/[ddr]_-{\Phi(\lambda)} & \Phi(1_\objY) \circ \Phi(A) \ar[d]^{\iota \circ \Phi(A)} \\
 & 1_{\Phi(\objY)} \circ \Phi(A) \ar[d]^{\lambda} \\
 & \Phi(A)  \, . }  \hspace{1.2cm}
\xymatrix{
\Phi(A) \circ \Phi(1_\objX) \ar[d]_{\Phi(A) \circ \iota} & \Phi(A \circ 1_\objX) \ar[l]_-{\phi} \ar@/^2pc/[ddl]^-{\Phi(\rho)} \\
\Phi(A) \circ 1_{\Phi(\objX)} \ar[d]_{\rho} & \\
\Phi(A) \, . }
 \]
 \end{enumerate}
 \end{definition*}

\begin{definition*} 
A {\em pseudo-natural transformation} 
$P \co  \Phi \rightarrow \Psi$ between two homomorphism $\bcatE \rightarrow \bcatF$
consists of the data in (i)-(ii) subject to the axioms (iii)-(iv) below.
\begin{enumerate}[(i)]
\item For each  $\objX \in \bcatE$, a morphism $P(\objX) \co  \Phi\objX \rightarrow \Psi\objX$ 
\item For each morphism  $A \co  \objX \rightarrow \objY$, an invertible 2-cell
\[
\xymatrix@C=1.5cm@R=1.5cm{
\Phi \objX \ar[r]^{P({\objX })} \ar[d]_{\Phi(A)}  \ar@{}[dr]|{\Downarrow \, p_A} &  \Psi\objX \ar[d]^{\Psi(A)}   \\
\Phi \objY \ar[r]_{P({\objY })}  & \Psi Y \, .}
\]
\item  For  every $A \co  \objX \rightarrow \objY$ and $B \co  \objY \rightarrow \objZ$, we have
\[
{\vcenter{\hbox{\xymatrix@C=1.5cm@R=1.5cm{
\Phi \objX  \ar[r]^{P(\objX)}  \ar[d]_{\Phi A}  \ar@{}[dr]|{\Downarrow \, p_A }& \Psi \objX \ar[d]^{\Psi A} \\
\Phi \objY \ar[r]^{P(\objY)}  \ar[d]_{\Phi B}  \ar@{}[dr]|{\Downarrow \, p_B} & \Psi \objY \ar[d]^{\Psi B}  \\
\Phi \objZ \ar[r]_{P(\obj Z)} & \Psi \objZ }}}}  \quad = \quad
{\vcenter{\hbox{\xymatrix@C=1.5cm@R=1.5cm{
\Phi \objX  \ar[r]^{P(\objX)} \ar[dd]_{\Phi (B \circ A)}  \ar@{}[ddr]|{\Downarrow \, p_{B \circ A}} & \Psi \objX \ar[dd]^{\Psi(B \circ A)} \\
& \\
\Phi \objZ \ar[r]_{P(\obj Z)} & \, \Psi \objZ \, .}}}}
\]
 \item For every $\objX \in \bcatE$, 
\[
{\vcenter{\hbox{\xymatrix@C=1.5cm@R=1.5cm{
\Phi \objX \ar[r]^{P({\objX })} \ar[d]_{\Phi(1_X)} \ar@{}[dr]|{\Downarrow \, p_{1_\objX} } &  \Psi\objX \ar[d]^{\Psi(1_{\objX})}     \\
\Phi \objX \ar[r]_{P({\objX})} & \Psi\objX}}}}
\quad = \quad 
{\vcenter{\hbox{\xymatrix@C=1.5cm@R=1.5cm{
\Phi \objX \ar[r]^{P({\objX })} \ar[d]_{1_{\Phi \obj X}}  &   \Psi\objX \ar[d]^{1_{\Psi \objX}}  \\
\Phi \objX \ar[r]_{P(\objX ) }   & \Psi\objX  \, .}}}}
\]
\end{enumerate}
\end{definition*}

\begin{definition*} 
Let $P, Q : \Phi \rightarrow \Psi$ be pseudo-natural transformations. A {\em modification} 
$\sigma : P \rightarrow Q$
consists of a family of 2-cells $\sigma_\objX : P(\objX) \rightarrow Q(\objX)$ such that
\[
\begin{xy}
(-12,8)*+{\Phi \objX}="4";
(-12,-12)*+{\Phi \objY}="3";
(12,8)*+{\Psi \objX}="2";
(12,-12)*+{\Psi \objY}="1"; 
(0,-4)="5";
(0,-9)="6";
(0,10)="7";
(0,6)="8";
{\ar@/^1pc/^{P(\objX)} "4";"2"};
{\ar@/_1pc/_{Q(\objX)} "4";"2"};
{\ar@/_1pc/_{Q(\objY)} "3";"1"};
{\ar_{\Phi(A)} "4";"3"};
{\ar^{\Psi(A)} "2";"1"};
{\ar@{=>}^{\ \scriptstyle{q_\objY}} "5";"6"};
{\ar@{=>}^{\; \scriptstyle{\sigma_\objX}} "7";"8"};
\end{xy}
\qquad = \qquad  
\begin{xy}
(-12,8)*+{\Phi \objX}="4";
(-12,-12)*+{\Psi \objX }="3";
(12,8)*+{\Psi \objX}="2";
(12,-12)*+{\Psi \objY \, .}="1"; 
(0,6)="5";
(0,0)="6";
(0,-10)="7";
(0,-14)="8";
{\ar@/^1pc/^{P(\objX)} "4";"2"};
{\ar@/^1pc/^{P(\objY)} "3";"1"};
{\ar@/_1pc/_{Q(\objY)} "3";"1"};
{\ar_{\Phi A} "4";"3"};
{\ar^{\Psi A} "2";"1"};
{\ar@{=>}^{\ \scriptstyle{p_A}} "5";"6"};
{\ar@{=>}^{\; \scriptstyle{\sigma_\objY}} "7";"8"};
\end{xy}
\]

\end{definition*}

\chapter{A technical proof}
\label{app:proof}

\section{Preliminaries} 

In order to prove Theorem~\ref{thm:completiontheorem} we need to recall some facts about the the
formal theory of monads. Let us consider a fixed bicategory $\bcatE$, which
for the moment we do not need to suppose being tame. The inclusion  homomorphism 
\[
\mathrm{Inc}_\bcatE \co  \bcatE\to \Mnd(\bcatE)
\]
takes an object~$\objX $ to the pair~$(\objX, 1_\objX)$,
where~$1_\objX$ is the identity monad on~$\objX$.

\medskip

As proved in~\cite{StreetR:fortm} in the context of 2-categories,
the bicategory $\bcatE$ admits Eilenberg-Moore objects if and only if 
the inclusion homomorphism~$\mathrm{Inc} \co  \bcatE\to \Mnd(\bcatE)$
has a right adjoint 
\[
\EM\co \Mnd(\bcatE)\to \bcatE \, .
\]
It will be useful to give an explicit description of it.
The homomorphism $\EM \co  \Mnd(\bcatE)\to \bcatE$
 takes  a monad $(\objX,A)$ to
the Eilenberg-Moore object $\objX^{A}$
and the counit of the adjunction is the monad morphism 
$(U^A,\lambda^A)\co(\objX^{A}, 1_{\objX^{A}})\to (\objX,A)$,
where  $\lambda^A \co A \circ U^A \to U^A$ is the left action of $A$ on 
 $U^A \co\objX^{A} \to \objX$ that is part of the structure of an Eilenberg-Moore object.
The image by $\EM$ of a lax  monad morphism
$(M,\phi)\co(\objX,A) \to (\objY,B)$ is constructed as follows:
the 2-cell
\[
\xymatrix@C=2cm{
B\circ M\circ U^A \ar[r]^{\phi \circ U^A} & 
M\circ A\circ U^A \ar[r]^{M\circ \lambda^A} &
M\circ  U^A}
\]
is  a left action of the monad $B$ on $M\circ U^A$. 
There is then a morphism $M' \co\objX^{A}\to 
\objY^{B}$ together with an isomorphism of left $B$-modules 
\[
\xymatrix@C=1.5cm{
\objX^A \ar[r]^{U^A} \ar[d]_{M'} 
\ar@{}[dr]|{\quad \Downarrow \, \sigma_M} & \objX \ar[d]^{M}  \\
\objY^{B} \ar[r]_{U^B}  &  \, \objY \, .}
\]
By definition, $\EM(M,\phi) \defeq M'$.
Finally, let us describe the image by $\EM$ of a
monad $2$-cell  $\alpha\co(M_1, \phi_1) \rightarrow (M_2, \phi_2)$.
By construction, $\EM( \alpha) $
is the unique 2-cell $\alpha' \co M'_1  \to M'_2$ such that the following 
cylinder commutes:
\[
\begin{xy}
(-12,8)*+{\objX^A}="4";
(-12,-12)*+{\objX}="3";
(12,8)*+{\objY^B}="2";
(12,-12)*+{\objY}="1"; 
(0,-4)="5";
(0,-9)="6";
(0,10)="7";
(0,6)="8";
(0,-10)="9";
(0,-14)="10";
{\ar@/^1pc/^{M'_1} "4";"2"};
{\ar@/_1pc/_{M'_2} "4";"2"};
{\ar@/^1pc/^{M_1} "3";"1"};
{\ar@/_1pc/_{M_2} "3";"1"};
{\ar_{U^A} "4";"3"};
{\ar^{U^B} "2";"1"};
{\ar@{=>}^{\; \scriptstyle{\alpha'}} "7";"8"};
{\ar@{=>}^{\; \scriptstyle{\alpha}} "9";"10"};
\end{xy}
\]

\section{The proof} 

Let us now recall the statement of Theorem~\ref{thm:completiontheorem} and give its proof.

\begin{theorem} 
\label{thm:appcompletiontheorem} If $ \bcatE$ is a tame bicategory,
then the  homomorphism 
\[
\JE \co \bcatE\to \bimE
\]
exhibits $\bimE$ as an Eilenberg-Moore completion of $\bcatE$ as a tame bicategory.
\end{theorem}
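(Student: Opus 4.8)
The plan is to verify the universal property of the Eilenberg-Moore completion directly, by constructing an explicit quasi-inverse to the precomposition homomorphism $(-)\circ \JE \co \REG[\bimE, \bcatF] \to \REG[\bcatE, \bcatF]$ for an arbitrary tame and Eilenberg-Moore complete bicategory $\bcatF$. Given a tame homomorphism $\Phi \co \bcatE \to \bcatF$, we must produce a tame homomorphism $\overline{\Phi} \co \bimE \to \bcatF$ together with an equivalence $\overline{\Phi} \circ \JE \simeq \Phi$, and show these assignments are functorial up to coherent equivalence and mutually quasi-inverse. First I would define $\overline{\Phi}$ on objects by sending a monad $\objX/A$ to the Eilenberg-Moore object $(\Phi \objX)^{\Phi(A)}$ in $\bcatF$, which exists since $\bcatF$ is Eilenberg-Moore complete and $\Phi(A)$ is a monad on $\Phi\objX$. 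This is where the formal theory of monads recalled in the preliminary section of the appendix enters: the assignment $(\objX, A) \mapsto (\Phi\objX)^{\Phi(A)}$ should be realized as the composite $\EM_\bcatF \circ \Mnd(\Phi) \circ \mathrm{Inc}_\bcatE' $, or more precisely we extend $\Phi$ to $\Mnd(\Phi) \co \MndE \to \Mnd(\bcatF)$ and then postcompose with the right adjoint $\EM_\bcatF \co \Mnd(\bcatF) \to \bcatF$ to $\mathrm{Inc}_\bcatF$.

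The key step is to descend this assignment along the homomorphism $R_\bcatE \co \MndE \to \bimE$ of Section~\ref{sec:monadmorphisms}. Recall $R_\bcatE$ is the identity on objects and sends a lax monad morphism $(F,\phi)$ to the bimodule $F \circ A$; since every $1$-cell of $\bimE$ is a bimodule, not every bimodule arises strictly from a lax monad morphism, but the relevant point is that $R_\bcatE$ is \emph{bijective on objects} and, after passing to Eilenberg-Moore objects in $\bcatF$ which only depend on the monad up to the bimodule structure, the composite $\EM_\bcatF \circ \Mnd(\Phi)$ factors through $R_\bcatE$ up to coherent equivalence. Concretely, for a bimodule $M \co \objX/A \to \objY/B$ in $\bimE$ one should define $\overline{\Phi}(M) \co (\Phi\objX)^{\Phi A} \to (\Phi\objY)^{\Phi B}$ using the universal property of the Eilenberg-Moore object $(\Phi\objY)^{\Phi B}$: the composite $U^{\Phi B}$-free-functor with $\Phi(M)$ carries a left $\Phi(B)$-action coming from the left action of $B$ on $M$, hence factors through $(\Phi\objY)^{\Phi B}$. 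Tameness of $\Phi$ and of $\bcatF$ is used precisely here to guarantee that the relevant reflexive coequalizers defining composition of bimodules are preserved, so that $\overline{\Phi}$ respects horizontal composition up to coherent isomorphism; this makes $\overline{\Phi}$ a well-defined tame homomorphism. The equivalence $\overline{\Phi} \circ \JE \simeq \Phi$ then follows because $\JE(\objX) = \objX/1_\objX$ and $(\Phi\objX)^{1_{\Phi\objX}} \simeq \Phi\objX$.

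For the other composite, given a tame homomorphism $\Psi \co \bimE \to \bcatF$, I would show $\overline{\Psi \circ \JE} \simeq \Psi$ by using Proposition~\ref{prop:equivbimod}: since $\bimE$ is Eilenberg-Moore complete by Theorem~\ref{Bimbiseffective}, every object $\objX/A$ of $\bimE$ is the Eilenberg-Moore object, computed \emph{in} $\bimE$, of the monad $A$ on $\objX/1_\objX = \JE(\objX)$ (this is exactly Lemma~\ref{thm:adjunctionfrommonad5}). A tame homomorphism preserves Eilenberg-Moore objects by Proposition~\ref{properpreserves}, so $\Psi(\objX/A)$ is the Eilenberg-Moore object of $\Psi(\JE(\objX))/\Psi(A) = (\Psi \circ \JE)(\objX)/(\Psi\circ\JE)(A)$ in $\bcatF$, which is by definition $\overline{\Psi \circ \JE}(\objX/A)$. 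Extending this to morphisms and $2$-cells via the universal property of Eilenberg-Moore objects and checking naturality gives the required equivalence, and the $2$-cell level coherences follow from the uniqueness clauses in that universal property.

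The main obstacle I expect is the bookkeeping of coherence data: checking that $\overline{\Phi}$ genuinely assembles into a homomorphism (the constraint $2$-cells $\overline{\Phi}(M)\circ\overline{\Phi}(N) \iso \overline{\Phi}(M \circ_B N)$ and their compatibility with associativity) requires carefully tracking how the universal property of Eilenberg-Moore objects in $\bcatF$ interacts with the reflexive coequalizers computing relative composition in $\bimE$, and verifying that all the induced comparison $2$-cells satisfy the hexagon and triangle axioms. This is where tameness is genuinely needed in several places at once, and where a fully rigorous argument is lengthy; I would organize it by isolating a lemma stating that for a tame homomorphism $\Phi$, the square relating $R_\bcatE$, $R_\bcatF$, $\Mnd(\Phi)$ and $\EM_\bcatF$ commutes up to a coherent pseudo-natural equivalence, and then deriving the homomorphism axioms for $\overline{\Phi}$ formally from that lemma together with the homomorphism axioms for $R$, $\Mnd(\Phi)$ and $\EM$ established in~\cite{StreetR:fortm}. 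As the theorem itself notes, the full verification coincides with a result of Garner and Shulman~\cite{GarnerR:enrcfc}, so an alternative, and perhaps cleaner, route would be to cite their general theorem and merely indicate how our setting (tame bicategories as $\catV$-enriched categories, in their sense) is an instance of it.
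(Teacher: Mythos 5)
Your overall strategy is sound and its main ingredients coincide with the paper's: the universal property is tested against an arbitrary tame, Eilenberg-Moore complete $\bcatF$; the extension of a tame $\Phi\co\bcatE\to\bcatF$ sends $\objX/A$ to an Eilenberg-Moore object for $\Phi(A)$ in $\bcatF$; and the comparison $\overline{\Psi\circ\JE}\simeq\Psi$ rests on Lemma~\ref{thm:adjunctionfrommonad5} together with Proposition~\ref{properpreserves}, exactly as in the paper. Where you differ is in how the extension is organized. The paper does not build $\overline{\Phi}$ by hand: it factors it as $\Theta\circ\Bim(\Phi)$, where $\Bim(\Phi)\co\bimE\to\Bim(\bcatF)$ is the homomorphism of Remark~\ref{thm:bimfunctorial} (this is where tameness of $\Phi$ is spent) and $\Theta$ is a quasi-inverse of the equivalence $\JF\co\bcatF\to\Bim(\bcatF)$ given by Proposition~\ref{prop:equivbimod}, with the choices arranged so that $\Theta\circ\JF=\Id_\bcatF$ strictly, giving surjectivity on objects on the nose. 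This factorization dissolves precisely the ``main obstacle'' you flag: the homomorphism constraints for $\overline{\Phi}$ and their interaction with the reflexive coequalizers computing relative composition never have to be verified directly, since they are inherited from $\Bim(\Phi)$ and from the existence of a quasi-inverse to a full, faithful and essentially surjective homomorphism. If you insist on the direct construction, note also that $\overline{\Phi}(M)$ must be induced by the relative composite $\Phi(M)\circ_{\Phi A}U^{\Phi A}$ (not by a plain composite with the forgetful morphism, as your wording suggests); otherwise composition of bimodules will not be respected.

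The genuine gap is at the hom-category level of the biequivalence. Producing $\overline{\Phi}$ with $\overline{\Phi}\circ\JE\simeq\Phi$ and $\overline{\Psi\circ\JE}\simeq\Psi$ only yields essential surjectivity of $(-)\circ\JE$; one must still show that the restriction functor $[\Phi,\Psi]\to[\Phi_{\mid\bcatE},\Psi_{\mid\bcatE}]$ is an equivalence, i.e.\ extend pseudo-natural transformations and modifications along $\JE$, and your proposal disposes of this in one sentence about ``uniqueness clauses''. For modifications the uniqueness clause of the Eilenberg-Moore universal property does suffice (this is Lemma~\ref{restriction2-cell}), but for pseudo-natural transformations the extension is not a formal one-liner: the paper introduces the bicategories of $1$-cells $\bcatF^{(1)}$ and of $2$-cells $\bcatF^{(2)}$, proves they are again tame and Eilenberg-Moore complete, regards a pseudo-natural transformation $\Phi_{\mid\bcatE}\to\Psi_{\mid\bcatE}$ as a tame homomorphism $\bcatE\to\bcatF^{(1)}$, and then reuses the object-level extension argument while arranging sources and targets (Lemma~\ref{restriction1-cell}). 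Some device of this kind, or an explicit componentwise construction whose pseudo-naturality is checked against all bimodules, is needed before your plan is a complete proof; citing the Garner--Shulman result instead is legitimate, but then the content of the appendix is being outsourced rather than proved.
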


\begin{proof} We will prove that the homomorphism
\begin{equation}
\label{equ:compwithj}
(-)\circ \JE \co \mathrm{HOM}^p \big( \bimE, \bcatF) \big) \to 
\mathrm{HOM}^p \big(\bcatE , \bcatF \big)
\end{equation}
is a surjective equivalence for any Eilenberg-Moore complete tame bicategory $ \bcatF$.

\medskip

Let us first show that the homomorphism is surjective on objects.
The  homomorphism $\JF \co \bcatF \to  \bimF$
is an equivalence by Proposition~\ref{prop:equivbimod}, since the bicategory $\bcatF $ is Eilenberg-Moore complete.
Hence there is a homomorphism $\Theta\co \Bim(\bcatF)\to \bcatF$
together with a pseudo-natural transformation 
\[
E\co \Id_\bcatF\to \Theta \circ \JF \, , 
\]
whose components are equivalences in $\bcatF$.
Let us show that the pair $(\Theta,E)$ can be chosen so that the pseudo-natural transformation $E$
is the identity. The homomorphism $\Theta$ is constructed 
by first choosing
an Eilenberg-Moore object $U^A\co\objX^A\to \objX$ in $\bcatF$
for each object $\objX/A\in \Bim(\bcatF)$ and letting $\Theta(\objX/A)=\objX^A$, 
and then by choosing 
for each morphism
 $M\co \objX/A\to \objY/B$
in $\Bim(\bcatF)$, a morphism $\Theta(M)\co\objX^A \to \objY^B$
in $\bcatF$
together with an isomorphism of left $B$-modules $\sigma_M \co  R^B\circ \Theta(M) \to M\circ_A R^A$
\[
\xymatrix{
\objX^{A}\ar[d]_{U^A}\ar[r]^{\Theta(M)} \ar@{}[dr]|{\Downarrow \, \sigma_M} & \objY^{B}  \ar[d]^{U^B} \\
\objX  \ar[r]_{M} & \, \objY \, .}
\]
The value of $\Theta$ on 2-cells is determined by these choices afterward.
More precisely, if $\alpha\co M\to N$
is a 2-cell, then $\Theta(\alpha)\co \Theta(M)\to \Theta(N)$ is the unique 2-cell
such that the following equality of pasting diagrams holds:
\[
\begin{xy}
(-12,8)*+{\objX^A}="4";
(-12,-12)*+{\objX}="3";
(12,8)*+{\objY^B}="2";
(12,-12)*+{\objY}="1"; 
(0,-4)="5";
(0,-9)="6";
(0,10)="7";
(0,6)="8";
{\ar@/^1pc/^{\Theta(M)} "4";"2"};
{\ar@/_1pc/_{\Theta(N)} "4";"2"};
{\ar@/_1pc/_{N} "3";"1"};
{\ar_{R^A} "4";"3"};
{\ar^{R^B} "2";"1"};
{\ar@{=>}^{\ \scriptstyle{\sigma_N}} "5";"6"};
{\ar@{=>}^{\; \scriptstyle{\Theta(\alpha)}} "7";"8"};
\end{xy}
\qquad = \qquad  
\begin{xy}
(-12,8)*+{\objX^A}="4";
(-12,-12)*+{\objX }="3";
(12,8)*+{\objY^B}="2";
(12,-12)*+{\objY}="1"; 
(0,6)="5";
(0,0)="6";
(0,-10)="7";
(0,-14)="8";
{\ar@/^1pc/^{\Theta(M)} "4";"2"};
{\ar@/^1pc/^{M} "3";"1"};
{\ar@/_1pc/_{N} "3";"1"};
{\ar_{R^A} "4";"3"};
{\ar^{R^B} "2";"1"};
{\ar@{=>}^{\ \scriptstyle{\sigma_M}} "5";"6"};
{\ar@{=>}^{\; \scriptstyle{\alpha}} "7";"8"};
\end{xy}
\]
If $A=1_\objX$, we can choose $\objX^A=\objX$ and $U^A=1_\objX$.
And if  $M\co \objX/1_\objX\to \objY/1_\objY$ 
we can choose $\Theta(M)=M$ and $\sigma_M$ to be the identity 2-cell.
It follows from these choices that we have $\Theta(\alpha)=\alpha$ for every
 $\alpha\co M \to N$.
Thus, $ \Theta \circ \JF= \Id_\bcatF$.

\medskip

Let us now show that every tame homomorphism $\Phi\co\bcatE \to  \bcatF$
can be extended as a tame homomorphism  $\Phi'\co \Bim(\bcatE)  \to  \bcatF$.
As observed in~Remark~\ref{thm:bimfunctorial}, the homomorphism $\Phi\co\bcatE \to  \bcatF$
has a natural extension $\Bim(\Phi)\co \Bim(\bcatE) \to  \Bim(\bcatF)$.
 Then, the following square commutes by construction
\[
\xymatrix{
\bcatE  \ar[rr]^\Phi \ar[d]_{\JE}&& \bcatF\ar[d]^{\JF } \\
\Bim(\bcatE)  \ar[rr]_{\Bim(\Phi)} && \Bim(\bcatF) \, .
}
\]
It is easy to verify that the homomorphism $\Bim(\Phi)$ is proper.
Let us put $\Phi'=\Theta \circ \Bim(\Phi)$.
Then we have
\[
\Phi' \circ \JE =\Theta \circ \Bim(\Phi)\circ \JE=
\Theta \circ \JE \circ \Phi= \Id_\bcatF\circ \Phi=\Phi \, .
\]
We have proved that the homomorphism in~\eqref{equ:compwithj} 
is surjective on objects. 
For any tame homomorphism $\Phi\co\Bim(\bcatE) \to  \bcatF$, let us define
$\Phi_{\mid \bcatE} \defeq \Phi \circ \JE $. 
For a pair of tame  homomorphisms  $\Phi,\Psi\co\Bim(\bcatE) \to  \bcatF$, 
we can then define the restriction functor
\[
\Res(\Phi,\Psi)\co[\Phi,\Psi]\to [\Phi_{\mid   \bcatE}, \Psi_{\mid  \bcatE}]
\]
in the evident way.  It remains to show that the functor $\Res(\Phi,\Psi)$ is an equivalence of categories
surjective on objects.
We will prove that the functor $\Res(\Phi,\Psi)$ is surjective on objects
in Lemma~\ref{restriction1-cell}, and that it is full and faithful in Lemma~\ref{restriction2-cell}.
\end{proof}

We need to prove a few intermediate results.
We first recall the bicategory of 1-cells $ \bcatE^{(1)}$  of a bicategory $ \bcatE$.
The bicategory $ \bcatE^{(1)}$ is equipped with a universal pseudo-natural transformation $U\co s_0\to t_0$
between two homomorphisms 
\[
\xymatrix{ 
\bcatE^{(1)}\ar@<1.0ex>[rr]^-{s_0} \ar@<-1.0ex>[rr]_-{t_0}  &&  \bcatE.
}
\]
The universality of $U$ means that for any bicategory $ \bcatF$
and any pseudo-natural transformation $M\co \Phi\to \Psi$ between a  pair of homomorphisms 
$\Phi,\Psi\co   \bcatF \to  \bcatE$, there exists a \emph{unique} homomorphism
$\Theta\co  \bcatF \to \bcatE^{(1)}$ such that $s_0\circ \Theta=\Phi$, $t_0\circ \Theta=\Psi$ and $U\Theta=M$.
By construction, an  object of $ \bcatE^{(1)}$ is a 1-cell $M\co \objX_0\to \objX_1$
in the bicategory $ \bcatE$.  A 1-cell $M\to N$ of $ \bcatE^{(1)}$ 
is a pseudo-commutative square
\[
\xymatrix{
\objX_0\ar[d]_{M}\ar[r]^{E_0} \ar@{}[dr]|{\stackrel{\alpha}{\rightarrow}} & \objY_0 \ar[d]^{N} \\
\objX_1 \ar[r]_{E_1}& \objY_1
}
\]
defined by a triple $(E_0,E_1,\alpha)$,  where $\alpha \co  E_1\circ M \to  N\circ E_0$ is
an isomorphism. 
Composition of 1-cells is defined by pasting squares.
If $(F_0,F_1,\beta)\co  M\to N$ is another pseudo-commutative square
then a  2-cell $(E_0,E_1,\alpha)\to (F_0, F_1,\beta)$ in $ \bcatE^{(1)}$
is a cylinder
\[
\xymatrix{
\objX_0\ar[dd]_{M} \ar@/^1pc/[rr]^-{E_0} 
\ar@{}[rr]|{\Downarrow \, \gamma_0}  \ar@/_1pc/[rr]_-{F_0} && \objY_0 \ar[dd]^{N} \\
&&\\
\objX_1 \ar@/^1pc/[rr]^-{E_1}
\ar@{}[rr]|{\Downarrow \, \gamma_1}  \ar@/_1pc/[rr]_-{F_1}  && \objY_1.
}
\]
defined by a pair of 2-cells $\gamma_0\co E_0\to F_0$,  $\gamma_1\co E_1\to F_1$
such that the following square commutes,
\[
\xymatrix@C=1.5cm{
E_1\circ M  \ar[r]^{\gamma_1 \circ M} \ar[d]_{\alpha}  &\ar[d]^{\beta}  F_1\circ M \\
N \circ E_0 \ar[r]_{N\circ \gamma_0 }&  N\circ F_0  }
\]
Vertical composition of 2-cells in $\bcatE^{(1)}$ is defined  component-wise.
Observe that the source and target
homomorphisms
\[
\xymatrix{ 
\bcatE^{(1)}\ar@<0.8ex>[rr]^-{s_0 } \ar@<-0.8ex>[rr]_-{t_0}  &&  \bcatE
}
\]
are preserving composition strictly. The pseudo-natural stansformation $U\co s_0\to t_0$
takes the object $M\co \objX_0\to \objX_1$ of $ \bcatE^{(1)}$ to the 1-cell 
$M\co s_0(M)\to t_0(M)$ of $ \bcatE$, and it takes the 1-cell $(E_0,E_1,\alpha)\co M\to N$
to the isomorphism $\alpha\co E_1\circ M \to  N\circ E_0$.

\begin{lemma}\label{tameit1cell} 
If $\bcatE$ is tame, then so is its bicategory of $1$-cells $\bcatE^{(1)}$.
\end{lemma}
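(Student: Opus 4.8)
The plan is to show directly that $\bcatE^{(1)}$ satisfies the two conditions in Definition~\ref{def:tame}, namely that its hom-categories have reflexive coequalizers and that horizontal composition preserves coequalizers in each variable. The key observation is that all the relevant structure in $\bcatE^{(1)}$ is computed \emph{componentwise} over $\bcatE$: an object of $\bcatE^{(1)}[M,N]$ is a triple $(E_0,E_1,\alpha)$, a 2-cell is a pair $(\gamma_0,\gamma_1)$ compatible with the isomorphisms, and vertical composition of 2-cells is defined component-wise, as recalled just above. So I would first fix objects $M \co \objX_0 \to \objX_1$ and $N \co \objY_0 \to \objY_1$ of $\bcatE^{(1)}$ and analyze $\bcatE^{(1)}[M,N]$.

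For the first condition, suppose we have a reflexive pair in $\bcatE^{(1)}[M,N]$, say $(\gamma_0,\gamma_1), (\delta_0,\delta_1) \co (E_0,E_1,\alpha) \to (F_0,F_1,\beta)$ with a common section. Projecting to components gives reflexive pairs $\gamma_0,\delta_0 \co E_0 \to F_0$ in $\bcatE[\objX_0,\objY_0]$ and $\gamma_1,\delta_1 \co E_1 \to F_1$ in $\bcatE[\objX_1,\objY_1]$. Since $\bcatE$ is tame, both have coequalizers, say $q_0 \co F_0 \to G_0$ and $q_1 \co F_1 \to G_1$. The plan is to equip $(G_0,G_1)$ with the structure of an object of $\bcatE^{(1)}[M,N]$: one needs an isomorphism $\tilde\alpha \co G_1 \circ M \to N \circ G_0$. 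This is where tameness of $\bcatE$ is used again in the \emph{other} direction: the functors $(-) \circ M \co \bcatE[\objX_1,\objY_1] \to \bcatE[\objX_0,\objY_1]$ and $N \circ (-) \co \bcatE[\objX_0,\objY_0] \to \bcatE[\objX_0,\objY_1]$ preserve coequalizers, so $q_1 \circ M \co F_1 \circ M \to G_1 \circ M$ and $N \circ q_0 \co N \circ F_0 \to N \circ G_0$ are both coequalizers of the corresponding images of the reflexive pair; since the isomorphism $\beta \co F_1 \circ M \to N \circ F_0$ identifies these two reflexive pairs (by the cylinder compatibility condition defining 2-cells), it descends to a unique isomorphism $\tilde\alpha$ between the coequalizers. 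Then $(q_0,q_1) \co (F_0,F_1,\beta) \to (G_0,G_1,\tilde\alpha)$ is a 2-cell in $\bcatE^{(1)}$, and I would check it is the coequalizer of the original reflexive pair, which follows readily from the componentwise description of 2-cells and the universal properties of $q_0,q_1$.

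For the second condition, recall that horizontal composition of 1-cells in $\bcatE^{(1)}$ is defined by pasting pseudo-commutative squares, which again means it acts componentwise: the composite of $(E_0,E_1,\alpha) \co M \to N$ and $(F_0,F_1,\beta) \co N \to P$ has underlying morphisms $F_0 \circ E_0$ and $F_1 \circ E_1$ in $\bcatE$, with the evident pasted isomorphism. So the composition functors of $\bcatE^{(1)}$ project, via $s_0$ and $t_0$, onto the composition functors of $\bcatE$ (which strictly preserve composition, as noted above), and a 2-cell in $\bcatE^{(1)}$ is a coequalizer precisely when both its components are. Since coequalizers in $\bcatE^{(1)}[M,N]$ are computed componentwise by the previous paragraph, and horizontal composition in $\bcatE$ preserves coequalizers in each variable by tameness, it follows that horizontal composition in $\bcatE^{(1)}$ does too.

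The main obstacle, and the only place requiring genuine care, is the construction of the descended isomorphism $\tilde\alpha$ and the verification that $(G_0,G_1,\tilde\alpha)$ together with $(q_0,q_1)$ really is the coequalizer in $\bcatE^{(1)}[M,N]$ — that is, that a cocone in $\bcatE^{(1)}$ corresponds exactly to a compatible pair of cocones in $\bcatE$ satisfying the cylinder condition, so that the universal property transfers. This is a diagram chase using that $(-)\circ M$ and $N \circ (-)$ preserve coequalizers; everything else is bookkeeping with the componentwise definitions. I would present the argument for reflexive coequalizers in detail and then remark that preservation of coequalizers by composition is immediate from the componentwise description together with tameness of $\bcatE$.
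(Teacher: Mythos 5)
Your proposal is correct and is essentially the paper's own argument: the paper packages $\bcatE^{(1)}[M,N]$ as a pseudo-pullback of $(-)\circ M$ and $N\circ(-)$, so that reflexive coequalizers exist and are computed componentwise (tameness of $\bcatE$ being what lets the structure isomorphism descend), and notes that the projection $(s_0,t_0)$ to $\bcatE[\objX_0,\objY_0]\times\bcatE[\objX_1,\objY_1]$ preserves and reflects them, whence composition preserves them in each variable. Your explicit construction of $\tilde\alpha$ and the componentwise argument for preservation by composition are exactly the unpacking of that proof.
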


\begin{proof} If $M\co \objX_0\to \objX_1$ and $N\co \objY_0\to \objY_1$,
then the bicategory $ \bcatE^{(1)}[M,N]$ is defined by a pseudo-pullback square
\[
\xymatrix{
 \bcatE^{(1)}[M,N]\ar[d]\ar[rr]&&  \bcatE[\objX_0, \objY_0] \ar[d]^{N\circ (-)} \\
 \bcatE[ \objX_1, \objY_1] \ar[rr]_{(-)\circ M} &&  \bcatE[\objX_0,\objY_1] }
\]
It follows that the bicategory $ \bcatE^{(1)}[M,N]$ admits reflexive coequalizers, since the functors $N\circ (-)$
and $(-)\circ M$ are preserving reflexive coequalizers. 
Moreover, the functor
\[
(s,t)\co  \bcatE^{(1)}[M,N]\to  \bcatE[\objX_0, \objY_0] \times
 \bcatE[\objX_1, \objY_1] 
 \]
preserves and reflects reflexive coequalizers.
It follows that the composition functor
\[
(-) \circ (-)  \co  \bcatE^{(1)}[N,P]\times  \bcatE^{(1)}[M,N]\to  \bcatE^{(1)}[M,P]
\]
preserves reflexive coequalizers in each variable.
Hence the bicategory $ \bcatE^{(1)}$ is tame.
\end{proof}

A monad on an object  $M\co \objX_0\to \objX_1$ of the
 bicategory $ \bcatE^{(1)}$ is a 6-tuple 
 \[
 A=(A_0,\mu_0,\eta_0, A_1, \mu_1,\eta_1,\alpha) \, , 
 \]
where $A_0=(A_0,\mu_0,\eta_0)$ is a monad on $\objX_0$, $A_1=(A_1,\mu_1,\eta_1)$ is a monad on $\objX_1$
and $\alpha \co  A_1\circ M \to   M\circ A_0$ is an isomorphism
satisfying the coherence conditions expressed by the  diagrams:
\[
\xymatrix@C=1.2cm{
A_1\circ A_1\circ M\ar[d]_-{\mu_1\circ M}\ar[r]^-{A_1\circ \alpha}&A_1\circ M\circ A_0 \ar[r]^-{\alpha\circ A_0} & M\circ  A_0\circ A_0\ar[d]^-{M\circ \mu_0} \\
A_1\circ M \ar[rr]_\alpha&& M\circ A_0
}\quad\quad 
\xymatrix@C=1.2cm{
M \ar[r]^-{\eta_1\circ M} \ar@/_1pc/[dr]_-{M\circ \eta_0} & A_1\circ M \ar[d]^-{\alpha}\\
  & M\circ A_0}.
\]
It follows that we have a morphism $(M,\alpha) \co (\objX_0, A_0)\to (\objX_1, A_1)$
in the bicategory~$\Mnd(\bcatE)$, defined in Section~\ref{sec:monadmorphisms}.

\begin{lemma} If $\bcatE$ is Eilenberg-Moore complete, then so is its 
bicategory of $1$-cells $\bcatE^{(1)}$.
\end{lemma}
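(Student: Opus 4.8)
The plan is to reduce the claim to the description of monads in $\bcatE^{(1)}$ given just above the statement, together with the hypothesis that $\bcatE$ is Eilenberg-Moore complete and the explicit construction of $\EM \co \Mnd(\bcatE) \to \bcatE$ recalled in the Preliminaries of this appendix. A monad on an object $M \co \objX_0 \to \objX_1$ of $\bcatE^{(1)}$ is a $6$-tuple $(A_0, A_1, \alpha)$ (suppressing the multiplications and units) consisting of monads $A_0$ on $\objX_0$ and $A_1$ on $\objX_1$ together with an invertible $2$-cell $\alpha \co A_1 \circ M \to M \circ A_0$ satisfying the two coherence diagrams displayed above; equivalently, as noted there, it is a lax monad morphism $(M, \alpha) \co (\objX_0, A_0) \to (\objX_1, A_1)$ in $\Mnd(\bcatE)$ whose underlying $2$-cell is invertible.

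First I would form the Eilenberg-Moore objects $U^{A_0} \co \objX_0^{A_0} \to \objX_0$ and $U^{A_1} \co \objX_1^{A_1} \to \objX_1$ in $\bcatE$, which exist since $\bcatE$ is Eilenberg-Moore complete. Since $(M, \alpha) \co (\objX_0, A_0) \to (\objX_1, A_1)$ is a lax monad morphism and its underlying $2$-cell $\alpha$ is invertible, applying the homomorphism $\EM \co \Mnd(\bcatE) \to \bcatE$ yields a morphism $\EM(M, \alpha) = M' \co \objX_0^{A_0} \to \objX_1^{A_1}$ together with an invertible $2$-cell $\sigma_M \co U^{A_1} \circ M' \to M \circ U^{A_0}$ of left $A_1$-modules. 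I claim that the object $M' \co \objX_0^{A_0} \to \objX_1^{A_1}$ of $\bcatE^{(1)}$, equipped with the $1$-cell
\[
(U^{A_0}, U^{A_1}, \sigma_M^{-1}) \co M' \longrightarrow M
\]
in $\bcatE^{(1)}$ (where $\sigma_M^{-1} \co M \circ U^{A_0} \to U^{A_1} \circ M'$ is the required invertible structure $2$-cell), is an Eilenberg-Moore object in $\bcatE^{(1)}$ for the monad $(A_0, A_1, \alpha)$ on $M$. To verify the universal property, let $P \co \objK_0 \to \objK_1$ be an object of $\bcatE^{(1)}$; a left $(A_0,A_1,\alpha)$-module structure on a $1$-cell $(E_0, E_1, \gamma) \co P \to M$ unwinds, componentwise, to a left $A_0$-action on $E_0 \co \objK_0 \to \objX_0$, a left $A_1$-action on $E_1 \co \objK_1 \to \objX_1$, together with the single compatibility condition coming from the square $\gamma$ being a module map. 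By the universal property of $U^{A_0}$ and $U^{A_1}$ in $\bcatE$, the first two data correspond to morphisms into $\objX_0^{A_0}$ and $\objX_1^{A_1}$, and I would check that the compatibility condition is exactly what is needed for these to assemble into a $1$-cell $P \to M'$ in $\bcatE^{(1)}$, naturally in the $2$-cell variable. This gives an equivalence $\bcatE^{(1)}[P, M'] \simeq (\bcatE^{(1)}[P, M])^{(A_0,A_1,\alpha)}$, which is the definition of an Eilenberg-Moore object.

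The routine but slightly fiddly part — the main obstacle — is the bookkeeping in that last step: one has to match the coherence data defining a left module over $(A_0, A_1, \alpha)$ in $\bcatE^{(1)}$ (which involves the structure isomorphism $\gamma$ of the $1$-cell and the isomorphism $\alpha$ of the monad) against the compatibility isomorphism $\sigma_M$ produced by $\EM$ on the lax monad morphism $(M, \alpha)$, and to confirm that the correspondence is pseudo-natural in $P$. This is a direct pasting-diagram verification using the explicit description of $\EM$ on $1$-cells and $2$-cells from the Preliminaries; no new idea is required beyond unwinding definitions, exactly as in the companion lemma showing $\bcatE^{(1)}$ is tame when $\bcatE$ is.
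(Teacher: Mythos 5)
Your proposal is correct and follows essentially the same route as the paper: apply the homomorphism $\EM \co \Mnd(\bcatE) \to \bcatE$ to the lax monad morphism $(M,\alpha)$ determined by the monad in $\bcatE^{(1)}$, and check that the resulting square $(U^{A_0}, U^{A_1}, \sigma_M) \co M' \to M$ is an Eilenberg-Moore object by verifying the universal property componentwise. (Only a cosmetic remark: with the paper's orientation convention for $1$-cells of $\bcatE^{(1)}$, the structure $2$-cell of $M' \to M$ is $\sigma_M \co U^{A_1} \circ M' \to M \circ U^{A_0}$ itself rather than its inverse, but since it is invertible this is immaterial.)
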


\begin{proof} Let us show that every monad
in $ \bcatE^{(1)}$ admits an Eilenberg-Moore object.
We will use the homomorphism $\EM\co  \Mnd(\bcatE)\to \bcatE$.
If $A=(A_0,\mu_0,\eta_0, A_1, \mu_1,\eta_1,\alpha)$
is a monad on an object  $M\co \objX_0\to \objX_1$ of the
 bicategory $ \bcatE^{(1)}$,
 then  $(M,\alpha) \co (\objX_0, A_0)\to (\objX_1, A_1)$ is a morphism
 in~$\Mnd(\bcatE)$.
 Let $U_0\co \objX_0^{A_0}\to \objX_0$
 be an Eilenberg-Moore object for $A_0$
and $U_1\co \objX_1^{A_1}\to \objX_1$
 be an Eilenberg-Moore object for $A_1$.
There is then a morphism  $M^A \defeq \EM(M,\alpha)\co \objX_0^{A_0}\to 
 \objX_1^{A_1}$ in the bicategory $ \bcatE$
 together with an isomorphism of left $A_1$-modules 
 $\sigma\co  U_1\circ M^{A} \to M\circ U_0$,
 \[
 \xymatrix{
\objX_0^{A_0}\ar[d]_{U_0}\ar[r]^{M^A} \ar@{}[dr]|{\Downarrow \, \sigma} & \objX_1^{A_1} \ar[d]^{U_1} \\
\objX_0 \ar[r]_{M} & \objX_1.
}
\]
It is easy to verify that the morphism $(U_0, U_1,\sigma)\co  M^{A} \to M$ 
in $ \bcatE^{(1)}$ is an Eilenberg-Moore object
for the monad $A$.
 \end{proof}

\begin{lemma}\label{restriction1-cell} 
The restriction functor
$\Res(\Phi,\Psi)\co [\Phi,\Psi]\to [\Phi\mid  \bcatE, \Psi \mid\bcatE]$
is surjective on objects
for any pair of tame  homomorphisms 
$\Phi,\Psi\co \Bim(\bcatE) \to  \bcatF$.
\end{lemma}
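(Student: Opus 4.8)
The goal is to show that given a pseudo-natural transformation $M \co \Phi\mid\bcatE \to \Psi\mid\bcatE$ between the restrictions of two tame homomorphisms $\Phi, \Psi \co \Bim(\bcatE) \to \bcatF$, we can find a pseudo-natural transformation $\widetilde{M} \co \Phi \to \Psi$ whose restriction along $\JE$ is (isomorphic to, or equal to) $M$. The plan is to exploit the universal property of the bicategory of $1$-cells $\bcatE^{(1)}$, which was introduced precisely to classify pseudo-natural transformations: a pseudo-natural transformation between homomorphisms into a bicategory $\bcatK$ corresponds to a single homomorphism into $\bcatK^{(1)}$ lying over the two given homomorphisms.

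First I would apply the universal property of $\bcatF^{(1)}$ to the given pseudo-natural transformation $M \co \Phi\mid\bcatE \to \Psi\mid\bcatE$: this yields a \emph{unique} homomorphism $\Theta_0 \co \bcatE \to \bcatF^{(1)}$ with $s_0 \circ \Theta_0 = \Phi \circ \JE$, $t_0 \circ \Theta_0 = \Psi \circ \JE$ and $U\Theta_0 = M$. The key point is that $\Theta_0$ is tame: this follows because $s_0$ and $t_0$ jointly reflect reflexive coequalizers (as in the proof of Lemma~\ref{tameit1cell}, the functor $(s,t)$ on hom-categories preserves and reflects reflexive coequalizers), and both $s_0 \circ \Theta_0 = \Phi \circ \JE$ and $t_0 \circ \Theta_0 = \Psi \circ \JE$ are tame since $\Phi$, $\Psi$ and $\JE$ are tame. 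Now $\bcatF^{(1)}$ is tame by Lemma~\ref{tameit1cell} and Eilenberg-Moore complete by the lemma preceding this one (since $\bcatF$ has both properties). Therefore the surjectivity-on-objects part of Theorem~\ref{thm:appcompletiontheorem} --- which has already been established in the proof above, prior to the invocation of this lemma --- applies to the tame homomorphism $\Theta_0 \co \bcatE \to \bcatF^{(1)}$: there exists a tame homomorphism $\Theta \co \Bim(\bcatE) \to \bcatF^{(1)}$ with $\Theta \circ \JE = \Theta_0$ (on the nose, using the construction $\Theta = \Theta' = \widehat{\Theta_0} \circ \Bim(\mathrm{id})$ composed with the retraction $\Bim(\bcatF^{(1)}) \to \bcatF^{(1)}$, exactly as $\Phi'$ is built from $\Phi$ in the main proof).

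Next I would post-compose with the source and target homomorphisms and with $U$. Set $\widetilde{M} \defeq U\Theta \co s_0 \circ \Theta \to t_0 \circ \Theta$, a pseudo-natural transformation in $\bcatF$. We need to identify its source and target with $\Phi$ and $\Psi$. The homomorphisms $s_0 \circ \Theta$ and $\Phi$ are both tame homomorphisms $\Bim(\bcatE) \to \bcatF$ and they agree after restriction along $\JE$: indeed $s_0 \circ \Theta \circ \JE = s_0 \circ \Theta_0 = \Phi \circ \JE$. By the uniqueness built into the completion construction --- or more carefully, because the restriction functor $\Res$ we are analyzing will be shown full and faithful in Lemma~\ref{restriction2-cell}, but we only need the weaker statement that a tame homomorphism on $\Bim(\bcatE)$ restricting to $\Phi\mid\bcatE$ is isomorphic to $\Phi$, which follows from essential uniqueness of the extension --- there is an invertible modification $s_0 \circ \Theta \cong \Phi$, and similarly $t_0 \circ \Theta \cong \Psi$. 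Transporting $\widetilde{M}$ along these invertible modifications produces a pseudo-natural transformation $\Phi \to \Psi$. Finally, restricting this transformed $\widetilde{M}$ along $\JE$ recovers $U\Theta_0 = M$ up to the appropriate coherence, so $\Res(\Phi,\Psi)$ hits the isomorphism class of $M$; since $[\Phi\mid\bcatE, \Psi\mid\bcatE]$ is a category and we only need surjectivity on objects, modifying within an isomorphism class is harmless, and in fact with the strict choices made ($s_0, t_0$ preserve composition strictly, and the extension $\Theta$ is built to restrict to $\Theta_0$ on the nose) one gets $\Res(\Phi,\Psi)$ surjective on objects directly.

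The main obstacle I anticipate is the careful bookkeeping needed to guarantee that the extension $\Theta$ of $\Theta_0$ can be chosen so that $s_0 \circ \Theta = \Phi$ and $t_0 \circ \Theta = \Psi$ \emph{on the nose}, rather than merely up to equivalence; this requires rerunning the strict-choices argument from the main proof of Theorem~\ref{thm:appcompletiontheorem} (where $\Theta$ on identity monads and on $1$-cells between identity monads is chosen to be the identity) and checking that $s_0$, $t_0$ being strict homomorphisms lets this propagate. If strictness cannot be arranged, the fallback is to prove surjectivity only up to isomorphism and then observe this suffices for surjectivity on objects of the functor $\Res(\Phi,\Psi)$ --- but this in turn leans on the essential uniqueness of extensions, which should be extracted as a small separate observation before this lemma. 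The rest --- tameness of $\Theta_0$, the universal property of $\bcatF^{(1)}$, and Eilenberg-Moore completeness of $\bcatF^{(1)}$ --- is routine given the lemmas already proved.
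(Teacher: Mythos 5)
Your first half coincides with the paper's argument: you regard $M$ as a homomorphism $\bcatE\to\bcatF^{(1)}$, observe it is tame because its composites with $s_0,t_0$ are, note $\bcatF^{(1)}$ is tame and Eilenberg--Moore complete, and invoke the already-established object-surjectivity part of Theorem~\ref{thm:appcompletiontheorem} to extend it to a tame homomorphism on $\Bim(\bcatE)$. The gap is in the step you yourself flag as the main obstacle and then do not carry out: arranging $s_0\circ\Theta=\Phi$, $t_0\circ\Theta=\Psi$ and $\Theta\circ\JE=\Theta_0$ on the nose. This is not mere bookkeeping; it is the mathematical heart of the lemma, and the paper resolves it with a specific mechanism you never identify. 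The only freedom in the extension construction is the choice, for each monad, of an Eilenberg--Moore object in $\bcatF^{(1)}$; the point is that since $A\co\objX/A\to\objX/1_\objX$ is an Eilenberg--Moore object for $A$ in $\Bim(\bcatE)$ (Lemma~\ref{thm:adjunctionfrommonad5}) and $\Phi,\Psi$ are tame, hence preserve Eilenberg--Moore objects (Proposition~\ref{properpreserves}), the morphisms $\Phi(A)\co\Phi(\objX/A)\to\Phi(\objX)$ and $\Psi(A)\co\Psi(\objX/A)\to\Psi(\objX)$ are themselves Eilenberg--Moore objects in $\bcatF$ for the monads $\Phi(A)$ and $\Psi(A)$, so the Eilenberg--Moore object of the monad $M(A)$ in $\bcatF^{(1)}$ may be chosen to have exactly these as its two components. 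With that choice one gets $s_0(M')=\Phi$ and $t_0(M')=\Psi$, and choosing $M'(\objX/1_\objX)=M(\objX)$ with identity structure 2-cells gives $M'_{\mid\bcatE}=M$ exactly, hence genuine surjectivity on objects.

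Your fallback does not repair this. First, the ``essential uniqueness of extensions'' you appeal to (a tame homomorphism on $\Bim(\bcatE)$ restricting to $\Phi_{\mid\bcatE}$ is pseudo-naturally equivalent to $\Phi$) is not available at this stage: it is in effect a consequence of the biequivalence being proved, and establishing it directly amounts to constructing a pseudo-natural transformation over $\bimE$ extending an identity, i.e.\ an instance of the very lemma you are proving. The object-level equivalences $\Phi(\objX/A)\simeq(s_0\circ\Theta)(\objX/A)$ do follow from uniqueness of Eilenberg--Moore objects, but assembling them coherently into an invertible modification is precisely the work being avoided. Second, even granting it, transporting $U\Theta$ along invertible modifications and restricting along $\JE$ only recovers an object isomorphic to $M$ in $[\Phi_{\mid\bcatE},\Psi_{\mid\bcatE}]$, which yields essential surjectivity rather than the surjectivity on objects asserted by the lemma (adequate for an equivalence of hom-categories once Lemma~\ref{restriction2-cell} is in place, but weaker than the statement and than what the paper's strict-choice argument delivers).
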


\begin{proof} Let us show that any pseudo-natural transformation $M\co \Phi_{\mid  \bcatE} \to \Psi_{\mid\bcatE}$
admits an extension $M'\co \Phi \to \Psi$.
The pseudo-natural transformation $M$ is defining a homomorphism
$M\co  \bcatE \to  \bcatF^{(1)}$.
The homomorphism $M$ is proper, since the homomorphisms $s_0(M)=\Phi_{\mid  \bcatE}$
and $t_0(M)=\Psi_{\mid  \bcatE}$ are proper.
It follows by the first part of the proof of Theorem~\ref{thm:appcompletiontheorem}  
that the homomorphism $M$ can be extended
as a tame homomorphism $M'\co \Bim(\bcatE) \to  \bcatF^{(1)}$.
It remains to show that $M'$ can be chosen so that $s_0(M') =\Phi$
and $t_0(M')=\Psi$.
\[
\xymatrix{
\bcatE  \ar[rr]^{M}\ar[d]_{  \JE }&& \bcatF^{(1)} \ar[d]^{(s_0,t_0)} \\
\Bim(\bcatE) \ar[rru]^{M'}  \ar[rr]_{(\Phi,\Psi)}&& \bcatF  \times \bcatF  }
\]
The homomorphism $M\co  \bcatE \to  \bcatF^{(1)}$ sends 
$\objX \in \bcatE$ to a morphism $M(\objX)\co \Phi (\objX)\to \Psi (\objX)$ in $\bcatF$
and a monad~$A$ on~$\objX$ in $\bcatE$, to a monad~$M(A)$ on~$(M(\objX)$ 
in~$\bcatF^{(1)}$. Thus,
\[
M(A)=(\Phi(A), \Phi(\mu), \Phi( \eta), \Psi(A), \Psi(\mu), \Psi(\eta), \alpha) \, , 
\]
where  $\Phi(A)=(\Phi(A), \Phi(\mu), \Phi( \eta))$ is a monad on $\Phi(\objX)$,
 $\Psi(A)=( \Psi(A), \Psi(\mu), \Psi(\eta))$  is a monad on $\Psi(\objX)$
and $\alpha$ is an invertible 2-cell 
\[
\alpha\co  \Psi(A)\circ M(\objX)\to  M(\objX) \circ \Phi(A)
\]
defining a monad morphism $(M(\objX),\alpha)\co (\Phi (\objX),\Phi (A))\to
(\Psi (\objX),\Psi (A)) $,
\[
\xymatrix{
\Phi(\objX)\ar[d]_{M(\objX)}\ar[r]^{\Phi(A)}   \ar@{}[dr]|{\Downarrow \, \alpha} & \Phi(\objX) \ar[d]^{M(\objX)} \\
\Psi(\objX) \ar[r]_{\Psi(A)}& \Psi(\objX) \, . } 
\]
The morphism $A\co \objX/A \to \objX/1_\objX$ in $\Bim(\bcatE)$
is an Eilenberg-Moore object for the monad $A \co \objX \to \objX$
by Lemma~\ref{thm:adjunctionfrommonad5}. Hence, the morphism
$\Phi(A)\co \Phi(\objX/A) \to  \Phi( \objX)$
is an Eilenberg-Moore object for the monad $\Phi(A)$ on the object $\Phi (\objX)$,
since the homomorphism~$\Phi$ is proper.
Similarly,  the morphism~$\Psi(A)\co \Psi(\objX/A) \to  \Psi( \objX)$
is an Eilenberg-Moore object for the monad $\Psi(A)$,
since the homomorphism~$\Psi$ is proper. 
The homomorphism~$M' \co \bimE \to \bcatF^{(1)}$ can be constructed by choosing for each $\objX/A \in  \Bim(\bcatE)$
 a morphism $M'(\objX/A)\co \Phi(\objX/A)\to \Psi(\objX/A)$
together with an isomorphism of left $\Psi(A)$-modules
$\sigma_A\co \Psi(A)\circ M(\objX/A)\to M(\objX)\circ \Phi(A)$,
\[
\xymatrix@C=1.5cm{
\Phi(\objX/A)\ar[r]^{ M'(\objX/A)}\ar[d]_{\Phi(A)}  \ar@{}[dr]|{\Downarrow \, \sigma_A} & \Psi(\objX/A) \ar[d]^{\Psi(A)} \\
\Phi(\objX) \ar[r]_{M(\objX)} & \Psi(\objX) }
\]
We then have $s_0(M')=\Phi$ and   $t_0(M')=\Psi$.
If $A=1_\objX$, we can choose $M(\objX/A)=M(\objX)$
and $\sigma_A=\id$. In which case we have $M'_{\mid \bcatE}=M$, as required.
\end{proof}

We now recall the definition of the bicategory of 2-cells $ \bcatE^{(2)}$ of the bicategory $ \bcatE$.
The bicategory $ \bcatE^{(2)}$ is equipped with a universal modification $\alpha\co s_1\to t_1$
between a pair of pseudo-natural transformations
\[
\xymatrix{ 
 \bcatE^{(2)} \ar@<1.0ex>[rr]^-{s_1 } \ar@<-1.0ex>[rr]_-{t_1}  && \bcatE^{(1)} }
\]
such that
 \[
s_0 \, s_1=s_0 \, t_1 \, , \quad t_0 \, s_1  = t_0 \, t_1 \, .
\]
By construction, an object of $\bcatE^{(2)}$ is a 
2-cell $\alpha\co M_0\to M_1$ of the
bicategory $ \bcatE$. We refer to $\alpha$ as a \emph{disk}
and represent it as follows:
\[
\xymatrix{
\objX_0 \ar@/_1.5pc/[dd]_-{M_0} \ar@{}[dd]|{\stackrel{\alpha}{\Longrightarrow}}
\ar@/^1.5pc/[dd]^-{M_1}  & \\
&\\
\objX_1 \, .& }
\]
If  $\beta\co N_0 \to N_1\co \objY_0\to \objY_1$ is another disk,
then a  1-cell $(E_0, E_1,\gamma_0, \gamma_1) \co \alpha \to \beta$ in  $\bcatE^{(2)}$
is a 4-tuple where $\gamma_0\co E_1\circ M_0\to N_0\circ E_0$
and $\gamma_1\co E_1\circ M_1\to N_1\circ E_0$ are invertible 2-cells
such that the following square commutes,
\[
\xymatrix{
E_1\circ M_0\ar[d]_{E_1\circ \alpha}  \ar[r]^{\gamma_0}& N_0\circ E_0\ar[d]^{\beta \circ E_0} \\
E_1 \circ M_1\ar[r]_{ \gamma_1 }&N_1\circ E_0 \, . }
\]
We represent such a 2-cell by  a cylinder
\[
\xymatrix{
\objX_0 \ar@/_1.5pc/[dd]_-{M_0} \ar@{}[dd]|{\stackrel{\alpha}{\Longrightarrow}} \ar[rrr]^{E_0}
\ar@/^1.5pc/[dd]^-{M_1}
&&&\objY_0 \ar@/_1.5pc/[dd]_-{N_0} \ar@{}[dd]|{\stackrel{\beta}{\Longrightarrow}} \ar@/^1.5pc/[dd]^-{N_1} \\
&&&\\
\objX_1\ar[rrr]^{E_1}&&& \; \objY_1 \, . } 
\]
Composition of 1-cells in $\bcatE^{(2)}$  is defined  by pasting cylinders.
A 2-cell 
\[
(\sigma_0, \sigma_1) \co (E_0, E_1,\gamma_0, \gamma_1) \to (F_0, F_1,\delta_0, \delta_1)
\] 
in $\bcatE^{(2)}$ is a pair of 2-cells $\varepsilon_0\co E_0\to F_0$ and  $\varepsilon_1\co E_1\to F_1$ such that
 the following two squares commute:
\[
\xymatrix@C=1.2cm{
E_1\circ M_0\ar[d]_{\varepsilon_1\circ M_0}  \ar[r]^{\gamma_0}& N_0\circ E_0\ar[d]^{N_0\circ \varepsilon_0} \\
F_1 \circ M_0\ar[r]_{\delta_0 }& N_0\circ F_0 \, , 
}\quad \quad  \quad 
\xymatrix@C=1.2cm{
E_1\circ M_1\ar[d]_{\varepsilon_1\circ M_1}  \ar[r]^{\gamma_1}& N_1\circ E_0\ar[d]^{N_1\circ \varepsilon_0} \\
F_1 \circ M_1\ar[r]_{\delta_1 }& N_1\circ F_0 \, .
}
\]
We represent such a 2-cell as a ``deformation'' of cylinders
\[
\xymatrix{
\objX_0 \ar@/_1.5pc/[dd]_-{M_0} \ar@{}[dd]|{\stackrel{\alpha}{\Longrightarrow}} \ar@/^.8pc/[rrr]^-{E_0} 
\ar@{}[rrr]|{ \Downarrow \, \varepsilon_0}  
\ar@/_.8pc/[rrr]_-{F_0}
\ar@/^1.5pc/[dd]^-{M_1}
&&&\objY_0 \ar@/_1.5pc/[dd]_-{N_0}  \ar@{}[dd]|{\stackrel{\beta}{\Longrightarrow}} \ar@/^1.5pc/[dd]^-{N_1} \\
&&&\\
\objX_1 \ar@/^.8pc/[rrr]^-{F_1}
\ar@/_.8pc/[rrr]_-{E_1}  \ar@{}[rrr]|{\Uparrow \, \varepsilon_1}  &&&\objY_1 \, .
}
\]
Composition of 2-cells in $\bcatE^{(1)}$  is defined  component-wise.
The source and target
homomorphisms
\[
\xymatrix{ 
\bcatE^{(2)}\ar@<0.8ex>[rr]^-{s_1 } \ar@<-0.8ex>[rr]_-{t_1}  &&  \bcatE^{(1)}
}
\]
are preserving composition strictly.

\begin{lemma}\label{tameit2cell}  
If $\bcatE$ is tame, then so is its bicategory of $2$-cells $\bcatE^{(2)}$
 \end{lemma}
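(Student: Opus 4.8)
The plan is to mimic the proof of Lemma~\ref{tameit1cell} very closely, since the bicategory of 2-cells $\bcatE^{(2)}$ is built from the bicategory of 1-cells $\bcatE^{(1)}$ in exactly the same fibred fashion that $\bcatE^{(1)}$ is built from $\bcatE$. First I would fix two disks $\alpha \co M_0 \to M_1$ and $\beta \co N_0 \to N_1$, say with $M_0, M_1 \co \objX_0 \to \objX_1$ and $N_0, N_1 \co \objY_0 \to \objY_1$, and describe the hom-category $\bcatE^{(2)}[\alpha,\beta]$ as a pseudo-limit. By unwinding the definition of a $1$-cell and a $2$-cell in $\bcatE^{(2)}$, an object of $\bcatE^{(2)}[\alpha,\beta]$ is a $4$-tuple $(E_0,E_1,\gamma_0,\gamma_1)$ with $\gamma_0 \co E_1 \circ M_0 \to N_0 \circ E_0$ and $\gamma_1 \co E_1 \circ M_1 \to N_1 \circ E_0$ invertible and satisfying one compatibility square, and a morphism is a pair $(\varepsilon_0,\varepsilon_1)$ of $2$-cells in $\bcatE$ making two squares commute. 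Thus $\bcatE^{(2)}[\alpha,\beta]$ is exhibited as a pseudo-limit (a pseudo-pullback-type construction) of a finite diagram built from the categories $\bcatE[\objX_0,\objY_0]$ and $\bcatE[\objX_1,\objY_1]$ and the composition functors $N_i \circ (-)$ and $(-) \circ M_j$ for $i,j \in \{0,1\}$; equivalently, it is a pseudo-pullback of $\bcatE^{(1)}[M_0,N_0] \times_{\bcatE[\objX_0,\objY_1]^2} \bcatE^{(1)}[M_1,N_1]$ over the appropriate cospan.

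The key steps, in order, would be: (1) observe that each category $\bcatE[\objX_i,\objY_j]$ has reflexive coequalizers by tameness of $\bcatE$; (2) observe that each functor $N_i \circ (-)$ and $(-) \circ M_j$ preserves reflexive coequalizers, again by tameness of $\bcatE$ (composition preserves coequalizers in each variable); (3) conclude that the pseudo-limit $\bcatE^{(2)}[\alpha,\beta]$ has reflexive coequalizers, computed componentwise, and that the forgetful functor
\[
(s,t) \co \bcatE^{(2)}[\alpha,\beta] \to \bcatE[\objX_0,\objY_0] \times \bcatE[\objX_1,\objY_1]
\]
preserves and reflects them; this is the step where the closure conditions from tameness (reflexive coequalizers of the structural invertible $2$-cells $\gamma_0,\gamma_1$) are checked, exactly as in Lemma~\ref{tameit1cell}; (4) deduce that the horizontal composition functor
\[
(-) \circ (-) \co \bcatE^{(2)}[\beta,\delta] \times \bcatE^{(2)}[\alpha,\beta] \to \bcatE^{(2)}[\alpha,\delta]
\]
preserves reflexive coequalizers in each variable, since it is computed by pasting cylinders and its composite with the reflecting functor $(s,t)$ is a product of horizontal composition functors of $\bcatE$, each of which preserves reflexive coequalizers. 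Putting (3) and (4) together gives that $\bcatE^{(2)}$ is tame.

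The main obstacle I expect is step (3): one must verify carefully that a reflexive coequalizer computed componentwise in the $E_0$- and $E_1$-components actually lifts to a reflexive coequalizer in $\bcatE^{(2)}[\alpha,\beta]$, i.e.\ that the invertibility of $\gamma_0$ and $\gamma_1$ and the compatibility square are preserved under passage to the coequalizer. This is precisely the point where tameness of $\bcatE$ is used: since $N_0 \circ (-)$, $(-) \circ M_0$, $N_1 \circ (-)$ and $(-) \circ M_1$ all preserve reflexive coequalizers, the comparison $2$-cells $\gamma_0, \gamma_1$ for the colimiting cone are obtained as coequalizers of invertible $2$-cells and hence remain invertible, and the compatibility square, being a coequalizer of commuting squares, still commutes. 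Once this is in place, everything else is a routine diagram chase formally identical to the $\bcatE^{(1)}$ case, so the proof can simply refer back to Lemma~\ref{tameit1cell} for the pattern and indicate only the modifications needed for the extra component.
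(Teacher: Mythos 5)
Your proposal is correct and follows essentially the same route as the paper, whose proof of this lemma is literally "similar to the proof of Lemma~\ref{tameit1cell}": exhibit each hom-category of $\bcatE^{(2)}$ as a (pseudo-)limit of hom-categories of $\bcatE$ along the composition functors, observe that reflexive coequalizers are computed componentwise and that the forgetful functor $(s,t)$ to $\bcatE[\objX_0,\objY_0]\times\bcatE[\objX_1,\objY_1]$ preserves and reflects them, and deduce that horizontal composition (pasting of cylinders) preserves reflexive coequalizers in each variable. The only slip is cosmetic: the fibred product of $\bcatE^{(1)}[M_0,N_0]$ and $\bcatE^{(1)}[M_1,N_1]$ should be taken over $\bcatE[\objX_0,\objY_0]\times\bcatE[\objX_1,\objY_1]$ (identifying the underlying pairs $(E_0,E_1)$), with the compatibility square then cutting out a full subcategory closed under these coequalizers, exactly as you argue.
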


\begin{proof} Similar to the proof of Lemma~\ref{tameit1cell}.
\end{proof}

A monad on an object  $\gamma\co M\to N\co \objX_0\to \objX_1$ of the
 bicategory $ \bcatE^{(2)}$ is an 8-tuple 
 \[
 A=(A_0,\mu_0,\eta_0, A_1, \mu_1,\eta_1,\alpha, \beta) \, ,
 \]
where $(A_0,\mu_0,\eta_0)$ is a monad on $\objX_0$,
$(A_1,\mu_1,\eta_1)$ is a monad on $\objX_1$,
 $\alpha\co A_1\circ M \to M\circ A_0$  and $\beta\co A_1\circ N \to N\circ A_0$
are invertible 2-cells such that $(M, \alpha)$ and $(N, \beta)$ are monad morphisms and the following diagram commutes,
\[
\xymatrix{
 A_1\circ M  \ar[d]_{A_1\circ \gamma }\ar[r]^{\alpha}&M\circ A_0 \ar[d]^{\gamma\circ A_0} \\
A_1\circ N \ar[r]_{\beta}& N\circ A_0.
}
\]
It follows that  we have a monad 2-cell $ \gamma \co (M,\alpha)\to (N,\beta)$ in $\mathrm{Mnd}(\bcatE)$,
\[
\xymatrix{
\objX_0 \ar@/_1.5pc/[dd]_-{M} \ar@{}[dd]|{\stackrel{\gamma}{\Longrightarrow}} \ar[rrr]^{A_0}
\ar@/^1.5pc/[dd]^-{N}
&&&\objX_0 \ar@/_1.5pc/[dd]_-{M} \ar@{}[dd]|{\stackrel{\gamma}{\Longrightarrow}} \ar@/^1.5pc/[dd]^-{N} \\
&&&\\
\objX_1\ar[rrr]^{A_1}&&&\objX_1 \, .}
\]

\begin{lemma} If $\bcatE$ is Eilenberg-Moore complete, then so is 
its bicategory of $2$-cells $\bcatE^{(2)}$.
\end{lemma}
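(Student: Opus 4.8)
The statement to prove is that if $\bcatE$ is Eilenberg-Moore complete, then so is its bicategory of $2$-cells $\bcatE^{(2)}$. The approach mirrors the one used in the preceding lemma for $\bcatE^{(1)}$: reduce the construction of Eilenberg-Moore objects in $\bcatE^{(2)}$ to the homomorphism $\EM \co \Mnd(\bcatE) \to \bcatE$, which exists because $\bcatE$ is Eilenberg-Moore complete. First I would take a monad $A = (A_0, \mu_0, \eta_0, A_1, \mu_1, \eta_1, \alpha, \beta)$ on an object $\gamma \co M \to N \co \objX_0 \to \objX_1$ of $\bcatE^{(2)}$, as described in the paragraph immediately above the statement. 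Unpacking the data, $A_0$ is a monad on $\objX_0$, $A_1$ is a monad on $\objX_1$, and the invertible $2$-cells $\alpha$ and $\beta$ make $(M, \alpha)$ and $(N, \beta)$ into monad morphisms $(\objX_0, A_0) \to (\objX_1, A_1)$ in $\Mnd(\bcatE)$, while the commuting square involving $\gamma$ says exactly that $\gamma \co (M, \alpha) \to (N, \beta)$ is a monad $2$-cell in $\Mnd(\bcatE)$.

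Next I would apply $\EM$ to produce the candidate Eilenberg-Moore object. Choose Eilenberg-Moore objects $U_0 \co \objX_0^{A_0} \to \objX_0$ for $A_0$ and $U_1 \co \objX_1^{A_1} \to \objX_1$ for $A_1$. Applying $\EM$ to the monad morphisms $(M, \alpha)$ and $(N, \beta)$ yields morphisms $M^A = \EM(M,\alpha) \co \objX_0^{A_0} \to \objX_1^{A_1}$ and $N^A = \EM(N,\beta) \co \objX_0^{A_0} \to \objX_1^{A_1}$ in $\bcatE$, each equipped with an isomorphism of left $A_1$-modules $\sigma \co U_1 \circ M^A \to M \circ U_0$ and $\tau \co U_1 \circ N^A \to N \circ U_0$. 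Applying $\EM$ to the monad $2$-cell $\gamma \co (M,\alpha) \to (N,\beta)$ yields a $2$-cell $\gamma^A = \EM(\gamma) \co M^A \to N^A$ in $\bcatE$, so $\gamma^A$ is an object of $\bcatE^{(2)}$. The claim is then that the data $(U_0, U_1, \sigma, \tau) \co \gamma^A \to \gamma$, which by the compatibility of $\EM(\gamma)$ with the cylinders for $M^A$ and $N^A$ assembles into a $1$-cell in $\bcatE^{(2)}$, is an Eilenberg-Moore object for the monad $A$.

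The verification of the universal property is the routine but slightly tedious part. For a disk $\delta \co P_0 \to P_1 \co \objK_0 \to \objK_1$ in $\bcatE^{(2)}$, a left $A$-action on $\delta$ amounts to a pair of left actions — a left $A_0$-action on $P_0$ and a left $A_1$-action on $P_1$ — compatible with $\delta$ in the sense dictated by $\alpha$ and $\beta$. By the universal property of $\objX_0^{A_0}$ and $\objX_1^{A_1}$ in $\bcatE$, these actions are classified, up to coherent isomorphism, by morphisms into $\objX_0^{A_0}$ and $\objX_1^{A_1}$; the compatibility condition forces these to be the legs of a $1$-cell into $\gamma^A$ in $\bcatE^{(2)}$. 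This is essentially a two-fold application of the characterization of Eilenberg-Moore objects recalled in Section~\ref{sec:montrb}, diagram~\eqref{equ:emchar}, together with the explicit description of $\EM$ on monad morphisms and $2$-cells given in Appendix~\ref{app:proof}. I expect the main obstacle to be purely bookkeeping: tracking the invertible $2$-cells $\alpha$, $\beta$, $\sigma$, $\tau$ and $\gamma^A$ through the pasting diagrams that define $1$-cells and $2$-cells in $\bcatE^{(2)}$, and checking that the coherence squares commute. Since this is entirely analogous to the proof of the corresponding lemma for $\bcatE^{(1)}$, I would state that the verification is a straightforward diagram chase, parallel to the $1$-cell case, and omit the details.

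\begin{proof} Similar to the proof of the corresponding lemma for $\bcatE^{(1)}$, using the homomorphism $\EM \co \Mnd(\bcatE) \to \bcatE$. Given a monad $A = (A_0, \mu_0, \eta_0, A_1, \mu_1, \eta_1, \alpha, \beta)$ on an object $\gamma \co M \to N \co \objX_0 \to \objX_1$ of $\bcatE^{(2)}$, the pairs $(M, \alpha)$ and $(N, \beta)$ are monad morphisms $(\objX_0, A_0) \to (\objX_1, A_1)$ and $\gamma$ is a monad $2$-cell between them. Choosing Eilenberg-Moore objects $U_0 \co \objX_0^{A_0} \to \objX_0$ for $A_0$ and $U_1 \co \objX_1^{A_1} \to \objX_1$ for $A_1$, we set $M^A \defeq \EM(M, \alpha)$, $N^A \defeq \EM(N, \beta)$ and $\gamma^A \defeq \EM(\gamma) \co M^A \to N^A$, which is an object of $\bcatE^{(2)}$. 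The isomorphisms of left $A_1$-modules $\sigma \co U_1 \circ M^A \to M \circ U_0$ and $\tau \co U_1 \circ N^A \to N \circ U_0$ coming from the construction of $\EM$, together with $\gamma^A$, assemble into a $1$-cell $(U_0, U_1, \sigma, \tau) \co \gamma^A \to \gamma$ in $\bcatE^{(2)}$. A routine verification, parallel to the one for $\bcatE^{(1)}$ and using the characterization of Eilenberg-Moore objects in~\eqref{equ:emchar} applied separately at $\objX_0^{A_0}$ and $\objX_1^{A_1}$, shows that this $1$-cell is an Eilenberg-Moore object for the monad $A$ in $\bcatE^{(2)}$.
\end{proof}
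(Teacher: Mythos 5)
Your proposal follows the same route as the paper: view $(M,\alpha)$ and $(N,\beta)$ as monad morphisms and $\gamma$ as a monad 2-cell in $\Mnd(\bcatE)$, apply the homomorphism $\EM$ to obtain $M^A$, $N^A$ and $\gamma^A$ together with the module isomorphisms, and take $(U_0,U_1,\sigma,\tau)\co \gamma^A\to\gamma$ as the Eilenberg-Moore object, with the universal property left as a routine verification (exactly as the paper does). The argument is correct and matches the paper's proof in all essentials.
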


\begin{proof}  Let us show that every monad
in $ \bcatE^{(2)}$ admits an Eilenberg-Moore object. 
We will use the homomorphism $\EM\co  \Mnd(\bcatE)\to \bcatE$.
Let $A=(A_0,\mu_0,\eta_0, A_1, \mu_1,\eta_1,\alpha, \beta)$
be a monad on an object  $\gamma\co M\to N$ of $\bcatE^{(2)}$,
where $M \, N \co \objX_0 \to \objX_1$. 
Then,  $\alpha\co A_1\circ M \to M\circ A_0$ 
determines a monad morphism $(M,\alpha)\co A_0\to A_1$
 in~$\mathrm{Mnd}(\bcatE)$,  
 $\beta\co A_1\circ N \to N\circ A_0$
 determines a monad morphism $(N,\beta)\co A_0\to A_1$,
 and  $\gamma\co  M \rightarrow  N$  determines a
monad 2-cell $( M, \alpha) \rightarrow~(N, \beta)$.

Let $U_0\co \objX_0^{A_0}\to \objX_0$
 be an Eilenberg-Moore object for $A_0$
and $U_1\co \objX_1^{A_1}\to \objX_1$
 be an Eilenberg-Moore object for $A_1$.
There then is a morphism  $M^A \co \objX_0^{A_0}\to 
 \objX_1^{A_1}$ in the bicategory $ \bcatE$, defined by letting $M^A \defeq \EM(M,\alpha)$,
 together with an isomorphism of left $A_1$-modules 
\[
\xymatrix{
\objX_0^{A_0}\ar[d]_{U_0}\ar[rr]^{M^A} \ar@{}[drr]|{\Downarrow \, \sigma_M} &&  \objX_1^{A_1} \ar[d]^{U_1} \\
\objX_0 \ar[rr]_{M}&& \objX_1 \, . }
\]
Similarly,  there is a morphism  $N^A \co \objX_0^{A_0}\to 
 \objX_1^{A_1}$ in~$\bcatE$
 together with an isomorphism of left $A_1$-modules 
 \[
 \xymatrix{
\objX_0^{A_0}\ar[d]_{R_0}\ar[rr]^{N^A} \ar@{}[drr]|{\Downarrow \, \sigma_N} &&  \objX_1^{A_1} \ar[d]^{R_1} \\
\objX_0 \ar[rr]_{N}&& \objX_1 \, . }
\] 
If we define $\gamma^A \co  M^A\to N^A$ by letting $\gamma ^A \defeq \EM(\gamma)$, then the following 
square commutes 
\[
\xymatrix{
U_1 \circ M^A \ar[d]_{R_1 \circ \gamma^A}  \ar[r]^{\sigma_M }& M \circ R_0 \ar[d]^{\gamma \circ R_0} \\
U_1 \circ N\ar[r]^{ \sigma_N}& N \circ R_0 \, . }
\]
This means that the following cylinder in $\bcatE$ commutes:
\[
\xymatrix{
\objX_0^{A_0} \ar@/_1.5pc/[dd]_-{M^A} \ar@{}[dd]|{\stackrel{\gamma^A}{\Longrightarrow}} \ar[rrr]^{U_0}
\ar@/^1.5pc/[dd]^-{N^A}
&&&\objX_0 \ar@/_1.5pc/[dd]_-{M} \ar@{}[dd]|{\stackrel{\gamma}{\Longrightarrow}} \ar@/^1.5pc/[dd]^-{N} \\
&&&\\
\objX_1^{A_1} \ar[rrr]_{U_1}&&&\objX_1 \, .}
\]
It is easy to verify that the morphism $(U_0, U_1,\sigma_M, \sigma_N)\co \gamma^A\to \gamma$ 
in $ \bcatE^{(2)}$ is an Eilenberg-Moore object for the monad $A$.
\end{proof}

\begin{lemma}\label{restriction2-cell} 
The restriction functor
$\Res(\Phi,\Psi)\co [\Phi,\Psi]\to [\Phi_{\mid  \bcatE}, \Psi_{\mid\bcatE}]$
is full and faithful
for any pair of tame  homomorphisms 
$\Phi,\Psi\co \Bim(\bcatE) \to  \bcatF$.
\end{lemma}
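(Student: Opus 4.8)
The plan is to mirror the proof of Lemma~\ref{restriction1-cell}, replacing the bicategory of $1$-cells $\bcatF^{(1)}$ by the bicategory of $2$-cells $\bcatF^{(2)}$ in order to get fullness, while handling faithfulness directly through the universal property of Eilenberg--Moore objects. Throughout, $\Phi,\Psi\co\bimE\to\bcatF$ are tame (hence proper), $\bcatF$ is tame and Eilenberg--Moore complete, and $M,N\co\Phi\to\Psi$ are pseudo-natural transformations. For faithfulness, let $\sigma,\tau\co M\to N$ be modifications with $\sigma_{\mid\bcatE}=\tau_{\mid\bcatE}$, i.e.\ $\sigma_{\objX/1_\objX}=\tau_{\objX/1_\objX}$ for all $\objX\in\bcatE$. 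Since a modification is determined by its family of components, it suffices to prove $\sigma_{\objX/A}=\tau_{\objX/A}$ for every $\objX/A\in\bimE$. By Lemma~\ref{thm:adjunctionfrommonad5} the bimodule $U\co\objX/A\to\objX/1_\objX$ exhibits $\objX/A$ as an Eilenberg--Moore object for the monad $A$ in $\bimE$; since $\Psi$ is proper, Proposition~\ref{properpreserves} shows that $\Psi(U)\co\Psi(\objX/A)\to\Psi(\objX)$ exhibits $\Psi(\objX/A)$ as an Eilenberg--Moore object for $\Psi(A)$, so the functor $\bcatF[\Phi(\objX/A),\Psi(\objX/A)]\to\bcatF[\Phi(\objX/A),\Psi(\objX)]^{\Psi(A)}$ of~\eqref{equ:emchar} is an equivalence. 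The modification axiom for $\sigma$ (resp.\ $\tau$) at $U$ identifies the image of $\sigma_{\objX/A}$ under this equivalence with a left $\Psi(A)$-module map built solely from $\sigma_{\objX/1_\objX}$ and the pseudo-naturality data of $M,N$ at $U$; as $\sigma_{\objX/1_\objX}=\tau_{\objX/1_\objX}$ and the equivalence is faithful, we conclude $\sigma_{\objX/A}=\tau_{\objX/A}$.

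For fullness, recall that $\bcatF^{(2)}$ is constructed so that homomorphisms $\bcatG\to\bcatF^{(2)}$ correspond to modifications in $\mathrm{HOM}[\bcatG,\bcatF]$, compatibly with the structural homomorphisms $\bcatF^{(2)}\to\bcatF$ and $\bcatF^{(2)}\to\bcatF^{(1)}$. Given a modification $\rho\co M_{\mid\bcatE}\to N_{\mid\bcatE}$, the corresponding classifying homomorphism $\hat\rho\co\bcatE\to\bcatF^{(2)}$ has the property that its composites with the projections to $\bcatF$ are $\Phi_{\mid\bcatE}$ and $\Psi_{\mid\bcatE}$, and its composites with the projections to $\bcatF^{(1)}$ are the homomorphisms classifying $M_{\mid\bcatE}$ and $N_{\mid\bcatE}$. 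Since these composites are all tame and the hom-categories of $\bcatF^{(2)}$ are pseudo-pullbacks whose canonical projections preserve and reflect reflexive coequalizers (as in the proof of Lemma~\ref{tameit2cell}), $\hat\rho$ is tame. By Lemma~\ref{tameit2cell} the bicategory $\bcatF^{(2)}$ is tame, and by the preceding lemma it is Eilenberg--Moore complete, so the extension argument in the first part of the proof of Theorem~\ref{thm:appcompletiontheorem} produces a tame homomorphism $\hat\rho'\co\bimE\to\bcatF^{(2)}$ with $\hat\rho'\circ\JE=\hat\rho$. Exactly as in the final paragraph of Lemma~\ref{restriction1-cell}, one arranges that the projections $\bcatF^{(2)}\to\bcatF$ applied to $\hat\rho'$ recover $\Phi$ and $\Psi$ and that the projections $\bcatF^{(2)}\to\bcatF^{(1)}$ applied to $\hat\rho'$ recover the homomorphisms classifying $M$ and $N$; the modification $\sigma\co M\to N$ then classified by $\hat\rho'$ satisfies $\sigma_{\mid\bcatE}=\rho$, as required.

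The main obstacle is precisely the bookkeeping in the second paragraph: guaranteeing that the extension $\hat\rho'$ restricts \emph{strictly}, along the structural homomorphisms of $\bcatF^{(2)}$, to the prescribed data $(\Phi,\Psi,M,N)$ rather than merely to equivalent data. This is the $\bcatF^{(2)}$-analogue of the closing argument of Lemma~\ref{restriction1-cell}, and it requires unwinding the notion of a monad in $\bcatF^{(2)}$ --- an $8$-tuple $(A_0,\mu_0,\eta_0,A_1,\mu_1,\eta_1,\alpha,\beta)$ --- together with the explicit description (from the lemma immediately preceding the present one) of its Eilenberg--Moore object in terms of the Eilenberg--Moore objects $\EM(\cdot)$ of the monads $\Phi(A),\Psi(A)$ and the structural bimodules of $A$. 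Since $\Phi,\Psi$ are proper they already carry these Eilenberg--Moore objects, and $M,N$ already supply the relevant $2$-cells, so the choices in the extension construction can be made to coincide with the given data; making this bookkeeping watertight is the only non-routine part of the argument.
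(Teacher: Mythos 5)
Your proposal is correct in substance, but the fullness half follows a different route from the paper. The paper treats fullness and faithfulness together: given a modification $\alpha\co M_{\mid\bcatE}\to N_{\mid\bcatE}$, it builds the extension componentwise, using that $\Phi(A)\co\Phi(\objX/A)\to\Phi(\objX)$ and $\Psi(A)\co\Psi(\objX/A)\to\Psi(\objX)$ are Eilenberg--Moore objects (since $\Phi,\Psi$ are tame and $A\co\objX/A\to\objX/1_\objX$ is an Eilenberg--Moore object in $\bimE$ by Lemma~\ref{thm:adjunctionfrommonad5}); the universal property then gives a \emph{unique} 2-cell $\alpha'(\objX/A)\co M(\objX/A)\to N(\objX/A)$ closing the cylinder whose back and front faces are the pseudo-naturality isomorphisms $\sigma_M,\sigma_N$ and whose far face is $\alpha(\objX)$ --- existence yields fullness, uniqueness yields faithfulness. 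Your faithfulness argument coincides with the paper's uniqueness step. For fullness you instead classify the modification as a homomorphism $\bcatE\to\bcatF^{(2)}$ and apply the extension theorem for tame homomorphisms; this is precisely the machinery the paper develops (Lemma~\ref{tameit2cell} and the Eilenberg--Moore completeness of $\bcatF^{(2)}$) but does not actually deploy in its proof of the present lemma. Your route obtains the modification axioms for the extension for free from the classifying property of $\bcatF^{(2)}$, at the cost of the strictness bookkeeping you flag, namely arranging that the extension restricts along the structural homomorphisms to exactly $(\Phi,\Psi,M,N)$ rather than to equivalent data. That bookkeeping does go through --- since $\Phi,\Psi$ are tame they already carry the relevant Eilenberg--Moore objects, and $M(\objX/A)$, $N(\objX/A)$ equipped with $\sigma_M$, $\sigma_N$ are themselves admissible choices of the Eilenberg--Moore lifts, so all choices in the extension can be made to coincide with the given data, exactly as in the closing step of Lemma~\ref{restriction1-cell} --- but the paper's direct construction sidesteps it entirely, at the (implicit) cost of verifying the modification axioms for $\alpha'$ by hand.
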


\begin{proof} If $M,N\co \Phi \to \Psi$ are pseudo-natural transformations,
let us show that  every modification $\alpha\co M_{\mid \bcatE} \to N_{\mid \bcatE}$
admits a unique extension $\alpha'\co M \to N$.
For every $\objX/A \in \bcatE$, we have an open cylinder,
\[
\xymatrix{
\Phi(\objX/A)  \ar@/_1.5pc/[dd]_-{M(\objX/A)}^{} \ar[rrr]^{\Phi(A)}
\ar@/^1.5pc/[dd]^-{N(\objX/A)}
&&&\Phi(\objX). \ar@/_1.5pc/[dd]_-{M(\objX)}
\ar@{}[dd]|{\stackrel{\alpha(\objX)}{\Longrightarrow}}
\ar@/^1.5pc/[dd]^-{N(\objX)} \\
&&&\\
\Psi(\objX/A) \ar[rrr]^{\Psi(A)}&&&\Psi(\objX).
}
\]
with back and front faces given by isomorphisms
\[
\sigma_M\co \Psi(A)\circ M(\objX/A)\simeq M(\objX)\circ \Phi(A) \, ,  \quad 
\sigma_N\co \Psi(A)\circ N(\objX/A)\simeq N(\objX)\circ \Phi(A) \, .
\]
The morphism $\Phi(A)\co \Phi(\objX/A) \to  \Phi( \objX)$
is an Eilenberg-Moore object for~$\Phi(A)$
and the morphism $\Psi(A)\co \Psi(\objX/A) \to  \Psi( \objX)$
is an Eilenberg-Moore object for~$\Psi(A)$.
The 2-cell $\alpha(\objX)\co M(\objX) \to N(\objX)$
is a monad 2-cell. It follows that there is a unique 2-cell
$\alpha'(\objX/A)\co M(\objX/A) \to N(\objX/A)$
such that the following square commutes
\[
\xymatrix{
 \Psi(A) \circ M(\objX/A)  \ar[d]_{\Psi(A)\circ \alpha'(\objX/A)}\ar[r]^{\sigma_M}&M(\objX) \circ  \Phi(A) \ar[d]^{\alpha(\objX)\circ \Phi(A)} \\
\Psi(A)\circ N(\objX/A) \ar[r]_{\sigma_N}& N(\objX)\circ \Phi(A) \, .}
\]
We obtain a full cylinder
\[
\xymatrix{
\Phi(\objX/A)  \ar@/_2.pc/[dd]_-{M(\objX/A)}
\ar@{}[dd]|{\stackrel{\alpha'(\objX/A)}{\Longrightarrow}}
 \ar[rrr]^{\Phi(A)}
\ar@/^2.pc/[dd]^-{N(\objX/A)}
&&&\Phi(\objX) \ar@/_2.pc/[dd]_-{M(\objX)}
\ar@{}[dd]|{\stackrel{\alpha(\objX)}{\Longrightarrow}}
\ar@/^2.pc/[dd]^-{N(\objX)} \\
&&&\\
\Psi(\objX/A) \ar[rrr]_{\Psi(A)}&&&\Psi(\objX).
}
\]
This defines the extension $\alpha'\co M \to N$
of the modification $\alpha\co M_{\mid \bcatE} \to N_{\mid \bcatE}$.
The uniqueness of $\alpha'$ is clear from the construction.
\end{proof} 

\backmatter

\end{document}